\theoremstyle{plain}
\newtheorem{thm}{Theorem}[section]
\newtheorem{cor}[thm]{Corollary}
\newtheorem{lem}[thm]{Lemma}
\newtheorem{prop}[thm]{Proposition}
\newtheorem{rem}[thm]{Remark}
\newtheorem{ques}[thm]{Question}
\newtheorem{conj}[thm]{Conjecture}
\def\cal{\mathcal}
\def\bbb{\mathbb}
\def\op{\operatorname}
\renewcommand{\phi}{\varphi}
\newcommand{\N}{\bbb{N}}
\newcommand{\Z}{\bbb{Z}}
\newcommand{\Q}{\bbb{Q}}
\newcommand{\F}{\bbb{F}}
\newcommand{\eps}{\varepsilon}
\newcommand{\bs}{\backslash}
\newcommand{\sgn}{\mbox{sgn }}
\newcommand{\ord}{\emph{ord }}
\begin{document}

\title[Arithmetic properties]{Arithmetic properties of coefficients of power series expansion of $\prod_{n=0}^{\infty}\left(1-x^{2^{n}}\right)^{t}$ (with an Appendix by Andrzej Schinzel)}
\author{Maciej Gawron, Piotr Miska and Maciej Ulas}

\keywords{Prouhet-Thue-Morse sequence, identities, binary partition function, convolution} \subjclass[2010]{11P81, 11P83, 11B50}
\thanks{The research of the first and third author was supported by the grant of the Polish National Science Centre no. UMO-2012/07/E/ST1/00185}

\begin{abstract}
Let $F(x)=\prod_{n=0}^{\infty}(1-x^{2^{n}})$ be the generating function for the Prouhet-Thue-Morse sequence $((-1)^{s_{2}(n)})_{n\in\N}$. In this paper we initiate the study of the arithmetic properties of coefficients of the power series expansions of the function
$$
F_{t}(x)=F(x)^{t}=\sum_{n=0}^{\infty}f_{n}(t)x^{n}.
$$
For $t\in\N_{+}$ the sequence $(f_{n}(t))_{n\in\N}$ is the Cauchy convolution of $t$ copies of the Prouhet-Thue-Morse sequence. For $t\in\Z_{<0}$ the $n$-th term of the sequence  $(f_{n}(t))_{n\in\N}$ counts the number of representations of the number $n$ as a sum of powers of 2 where each summand can have one among $-t$ colors.

Among other things, we present a characterization of the solutions of the equations $f_{n}(2^k)=0$, where $k\in\N$, and $f_{n}(3)=0$. Next, we present the exact value of the 2-adic valuation of the number $f_{n}(1-2^{m})$ - a result which generalizes the well known expression concerning the 2-adic valuation of the values of the binary partition function introduced by Euler and studied by Churchhouse and others.

\end{abstract}

\maketitle

\section{Introduction}\label{Section1}

Let $n\in\N$ and by $s_2(n)$ denote the sum of (binary) digits function of $n$, i.e., if $n=\sum_{k=0}^m \eps_k 2^k$ with $\eps_k\in\{0,1\}$, is the unique expansion of $n$ in base $2$ then $s_2(n)=\sum_{k=0}^m \eps_k$. Next, let us define the Prouhet-Thue-Morse sequence (the PTM sequence for short) on the alphabet $\{-1,+1\}$ as ${\bf t}=(t_n)_{n\in\N}$, where $t_n=(-1)^{s_2(n)}$. The sequence ${\bf t}$ satisfies the following recurrence relations: $t_0=1$ and
\begin{equation*}
t_{2n}=t_n,\quad t_{2n+1}=-t_n
\end{equation*}
for $n\in\N$. The PTM sequence has many remarkable properties and found applications in combinatorics on words, analysis on manifolds, number theory and even physics \cite{AllSh}. One of the striking properties of the sequence ${\bf t}$ is the simple shape of the generating function $F(x)=\sum_{n=0}^{\infty} t_nx^n\in\Z[[x]]$. Indeed, from the recurrence relations we easily deduce the functional equation $F(x)=(1-x)F(x^2)$ and in consequence the identity
\begin{equation*}
F(x)=\prod_{n=0}^{\infty}\left(1-x^{2^n}\right).
\end{equation*}
The sequence ${\bf b}=(b_n)_{n\in\N}$ of coefficients of the related power series
\begin{equation*}
\frac{1}{F(x)}=\prod_{n=0}^{\infty}\frac{1}{1-x^{2^n}}=\sum_{n=0}^{\infty} b_nx^n
\end{equation*}
has also a strong combinatorial property. Indeed, the number $b_n$ counts the number of representations of a non-negative integer $n$ in the form
\begin{equation*}
n=\sum_{i=0}^k \eps_i 2^i,
\end{equation*}
where $k\in\N$ and $\eps_i\in\N$. One can easily prove that the sequence ${\bf b}$ satisfies: $b_0=b_1=1$ and
\begin{equation*}
b_{2n}=b_{2n-1}+b_n,\quad b_{2n+1}=b_{2n}
\end{equation*}
for $n\geq 1$. The above sequence is called the sequence of the binary partition function. It was introduced by Euler and was studied by Churchhouse \cite{Chu} (one can also consult the papers \cite{Mah, Bru, Knuth}).

From the discussion above we see that both ${\bf t}$ and ${\bf b}$ are sequences of coefficients of the power series expansion of $F_{t}(x)=F(x)^t$ for $t=1$ and $t=-1$, respectively. It is quite natural to ask: what can be proved about sequences of coefficients of $F_t(x)$ for other integer values of $t$? This question was our main motivation for writing this paper.

Let $t$ be a variable and consider the sequence ${\bf f}(t)=(f_n(t))_{n\in\N}$ of coefficients of the power series expansion of the function $F_t(x)=F(x)^t$, i.e.,
\begin{equation*}
F_t(x)=\prod_{n=0}^{\infty}\left(1-x^{2^n}\right)^t=\sum_{n=0}^{\infty} f_n(t)x^n.
\end{equation*}
From the definition of ${\bf f}(t)$ we see that for any given $t\in\Z$ the sequence ${\bf f}(t)$ is a sequence of integers. In the sequel we will study three closely related sequences. More precisely, in Section 2 we present some properties of the sequence ${\bf f}(t)$ treated as a sequence of polynomials with rational coefficients. This is only a prelude to our research devoted to the values of the polynomials $f_n$ at integer arguments.
Section 3 is devoted to the study of the sequence
\begin{equation*}
{\bf t}_m=(t_m(n))_{n\in\N},
\end{equation*}
where $m\in\N_+$ is fixed and $t_m(n)=f_n(m)$, i.e., $t_m(n)$ is just the value of the polynomial $f_n$ at $t=m$. We prove several properties of the sequence $t_m$ for certain values of $m$. In particular, in Theorem \ref{v2powerof2} we characterize the $2$-adic valuation of the sequence ${\bf t}_m$ for $m$ being a power of $2$ and $m=3$. In the second part of this section we concentrate on the study of arithmetic properties of the sequence ${\bf t}_m$ for $m=2$ and $m=3$. It is a simple observation that the sequence $t_2$ is closely related to the values of the Stern polynomials at $-2$. Moreover, we prove that the set of values of ${\bf t}_2$ is just $\Z\bs\{0\}$, which is the statement of Theorem \ref{valuesoft2} and that our sequence is log-concave, i.e., for each $n\in\N_+$ we have $t_2(n)^2>t_2(n-1)t_2(n+1)$ (Theorem \ref{concave}). We also characterize the set of those $n\in\N_+$ such that $t_3(n)=0$ (Theorem \ref{23zero}). This allows us to prove that there are infinitely many values of $n$ such that the polynomial $f_n(t)/t$ is reducible (Corollary \ref{reduc}). Section 4 is devoted to the study of the sequence ${\bf b}_m=(b_m(n))_{n\in\N}$, where $m\in\N_+$ is fixed and $b_m(n)=f_n(-m)$, i.e., $b_m(n)$ is just the value of the polynomial $f_n$ at $t=-m$. The sequence $b_m$ has a natural combinatorial interpretation. More precisely, the number $b_m(n)$ counts the number of representations
\begin{equation*}
n=\sum_{i=0}^k \eps_i 2^i,
\end{equation*}
where $\eps_i\in\N$ for $i\in\{0,...,k\}$ and each $\eps_i$ can have one among $m$ colors. We present several results concerning this family of sequences. In particular, we study the $2$-adic valuation of $b_m(n)$ and give a precise expression for $\nu_2(b_{2^k-1}(n))$, which allows us to deduce that the congruence $b_{2^k-1}(n)\equiv 0\pmod{16}$ is impossible (Theorem \ref{2valpowerof2-1}). We also study more closely the family of polynomials $(h_{i,k,m}(x))$, with $k\in\N$, $0\leq i<2^k$, $m\in\N_+$, which appear in the computation of the
expression for the generating function
\begin{equation*}
H _{i,k}(x)=H_{i,k,m}(x)=\sum_{n=0}^{\infty} b_m(2^kn+i)x^n=\frac{h_{i,k,m}(x)}{(1-x)^{km}}F_{-m}(x).
\end{equation*}
The obtained results allow us to prove several congruences of various types for certain sequences ${\bf b}_m$ (Theorem \ref{4.10}, Theorem \ref{4.13}). We also prove that for fixed $k\in\N$ and $0\leq i<2^k$ the sequence $(h_{i,k,m}(x))_{m\in\N}$ is a linear recurrence sequence of order $\leq 2^k$ (Theorem \ref{recurrence}).

In Section 5 we present some other results, questions and conjectures concerning sequences ${\bf t}_m$ and ${\bf b}_m$ for various values of $m\in\N_+$. We hope that the problems stated in this section will stimulate further research in the area.

Finally, in the Appendix, written by A. Schinzel, the proof of Conjecture \ref{sym} from Section 3 is presented together with other material concerning non-vanishing of $t_m(n)$.

\section{Arithmetic properties of the coefficients of $F_{t}(x)$}\label{Section2}

We start with the computation of a recurrence relation satisfied by the sequence ${\bf f}(t)=(f_{n}(t))_{n\in\N}$ and then introduce a related family of polynomials which will the main object of our study in this section. Let us put
\begin{equation*}
F_{t}(x)=F(x)^{t}=\prod_{n=0}^{\infty}\left(1-x^{2^{n}}\right)^{t}=\sum_{n=0}^{\infty}f_{n}(t)x^{n}.
\end{equation*}
During this paper we will treat all the power series formally, without considering their region of convergence. The function $F(t,x)$ satisfies the following functional equation $F_{t}(x)=(1-x)^{t}F_{t}(x^2)$. This functional equation allows us to find a pair of recurrence relations satisfied by the the sequence ${\bf f}(t)$.

We start with the following simple
\begin{lem}
We have the following identity
\begin{equation}\label{logPTM}
\op{log}F(x)=\sum_{n=1}^{\infty}\frac{1-2^{\nu_{2}(n)+1}}{n}x^{n},
\end{equation}
where $\nu_{2}(n)$ is the $2$-adic valuation of the integer $n$ and
\begin{equation*}
\op{log}(1+x)=\sum_{k=1}^{\infty}\frac{(-1)^{k-1}x^k}{k}.
\end{equation*}
\end{lem}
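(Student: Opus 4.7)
The plan is a straightforward formal-power-series computation. I would take the logarithm of the infinite product $F(x)=\prod_{n=0}^{\infty}(1-x^{2^{n}})$ termwise, obtaining
\[
\op{log}F(x)=\sum_{k=0}^{\infty}\op{log}\!\left(1-x^{2^{k}}\right),
\]
and then apply the stated Taylor expansion of $\op{log}(1+y)$ (with $y=-x^{2^{k}}$) to each summand, yielding a double series
\[
\op{log}F(x)=-\sum_{k=0}^{\infty}\sum_{j=1}^{\infty}\frac{x^{2^{k}j}}{j}.
\]

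Next I would extract the coefficient of $x^{n}$. The pairs $(k,j)$ with $2^{k}j=n$ are exactly those with $0\le k\le\nu_{2}(n)$ and $j=n/2^{k}$, so the coefficient equals
\[
-\sum_{k=0}^{\nu_{2}(n)}\frac{1}{n/2^{k}}=-\frac{1}{n}\sum_{k=0}^{\nu_{2}(n)}2^{k}=-\frac{2^{\nu_{2}(n)+1}-1}{n}=\frac{1-2^{\nu_{2}(n)+1}}{n},
\]
which is exactly the desired identity. Rearranging the double sum is harmless at the level of formal power series because for each fixed $n$ only finitely many terms contribute to the coefficient of $x^{n}$, so no convergence issues arise.

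The hard part, if any, is merely the bookkeeping in recognizing that the inner sum $\sum_{k=0}^{\nu_{2}(n)}2^{k}$ is a finite geometric series evaluating to $2^{\nu_{2}(n)+1}-1$; there is no real obstacle. The second displayed identity in the lemma is just the standard Mercator series and requires no argument beyond being stated for reference.
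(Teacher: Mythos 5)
Your proposal is correct and follows essentially the same route as the paper's own proof: take the logarithm of the product termwise, expand each $\op{log}(1-x^{2^{k}})$ as a series, and collect the contributions to the coefficient of $x^{n}$ over the divisor pairs $(k,j)$ with $2^{k}j=n$, summing the resulting geometric series $\sum_{k=0}^{\nu_{2}(n)}2^{k}=2^{\nu_{2}(n)+1}-1$. There is nothing to add; the argument is complete.
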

\begin{proof}
We use the expansion of the function $\op{log}(1-x)$ to obtain
\begin{align*}
\op{log}F(x)&=\sum_{n=0}^{\infty}\op{log}(1-x^{2^n})=-\sum_{n=0}^{\infty}\sum_{k=1}^{\infty}\frac{x^{2^nk}}{k}\\
            &=-\sum_{m=1}^{\infty}\sum_{l=0}^{\nu_2(m)}\frac{x^{2^l\cdot\frac{m}{2^l}}}{\frac{m}{2^l}}=-\sum_{m=1}^{\infty}\frac{(2^{\nu_2(m)+1}-1)}{m}x^m.
\end{align*}
\end{proof}

\begin{rem}
{\rm Using exactly the same type of reasoning, one can prove the following identity
$$
\log \prod_{n=0}^{\infty}\left(1-x^{k^{n}}\right)=\sum_{n=1}^{\infty}\frac{1-k^{\phi_{k}(n)+1}}{n}x^{n},
$$
where $\phi_{k}(n)$ is the highest power of $k$ which divides $n$.}
\end{rem}

As an application of the above lemma we get the following recurrence relation for the sequence ${\bf f}(t)$.
\begin{lem}\label{recforfprim}
Let $F_{t}(x)=\sum_{n=0}^{\infty}f_{n}(t)x^{n}$. Then $f_{0}(t)=1$ and for $n\geq 1$ we have
\begin{equation}\label{recfprim}
f_{n}(t)=\frac{t}{n}\sum_{k=0}^{n-1}(1-2^{\nu_{2}(n-k)+1})f_{k}(t).
\end{equation}
\end{lem}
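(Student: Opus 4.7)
The plan is to derive the recurrence by logarithmic differentiation, turning the preceding lemma (the explicit expansion of $\log F(x)$) into an identity for the formal derivative $F_t'(x)$.

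First I would note that $f_0(t)=1$ is immediate, since $F_t(0)=\prod_{n=0}^\infty 1^t = 1$. For $n\geq 1$, the key observation is that formally $F_t(x)=\exp(t\log F(x))$, so differentiating with respect to $x$ gives
\begin{equation*}
F_t'(x) = t\,F_t(x)\cdot (\log F(x))'.
\end{equation*}
By the previous lemma, termwise differentiation yields
\begin{equation*}
(\log F(x))' = \sum_{m=1}^{\infty}(1-2^{\nu_2(m)+1})x^{m-1}.
\end{equation*}

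Next I would expand both sides of $F_t'(x) = t\,F_t(x)\cdot(\log F(x))'$ as formal power series. The left-hand side is $\sum_{n=1}^{\infty} n f_n(t) x^{n-1}$. The right-hand side, upon multiplying the two power series
\begin{equation*}
\Bigl(\sum_{k=0}^{\infty} f_k(t) x^k\Bigr)\Bigl(\sum_{m=1}^{\infty}(1-2^{\nu_2(m)+1}) x^{m-1}\Bigr),
\end{equation*}
has coefficient of $x^{n-1}$ equal to $\sum_{k=0}^{n-1}(1-2^{\nu_2(n-k)+1}) f_k(t)$ (setting $m=n-k$). Equating coefficients of $x^{n-1}$ on both sides and dividing by $n$ yields exactly \eqref{recfprim}.

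There is essentially no obstacle: the only issue to acknowledge is that we are working with formal power series, so $F_t(x)=\exp(t\log F(x))$ and the differentiation rule must be justified formally. Since $F(x)\in 1+x\Z[[x]]$, both $\log F(x)$ and $\exp(t\log F(x))$ are well-defined elements of $\Q[t][[x]]$, and formal differentiation satisfies the chain rule and Leibniz rule, so the manipulation is legitimate.
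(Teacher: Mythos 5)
Your proof is correct and follows essentially the same route as the paper: logarithmic differentiation of $F_t(x)=F(x)^t$ combined with the expansion of $\log F(x)$ from the preceding lemma, then comparison of coefficients. The only cosmetic difference is that the paper writes the identity as $F_t'(x)/F_t(x)=t(\log F(x))'$ and then multiplies through by $F_t(x)$, whereas you start from $F_t'(x)=t\,F_t(x)(\log F(x))'$ directly.
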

\begin{proof}
We have the identity $\log F_{t}(x)=t\log F(x)$. Taking derivative of both sides with respect to $x$ and using the expansion (\ref{logPTM}), we get
$$
\frac{F'_{t}(x)}{F_{t}(x)}=t\sum_{n=1}^{\infty}(1-2^{\nu_{2}(n)+1})x^{n-1}.
$$
Multiplying both sides by $F_{t}(x)=\sum_{n=0}^{\infty}f_{n}(t)x^{n}$ and replacing $n$ by $n+1$ in the sum on the right side and in $F'_{t}(x)$ we get
\begin{align*}
\sum_{n=0}^{\infty}(n+1)f_{n+1}(t)x^{n}&=t\left(\sum_{n=0}^{\infty}(1-2^{\nu_{2}(n+1)+1})x^{n}\right)\left(\sum_{n=0}^{\infty}f_{n}(t)x^{n}\right)\\
                                     &=t\sum_{n=0}^{\infty}\left(\sum_{k=0}^{n}(1-2^{\nu_{2}(n-k+1)+1})f_{k}(t)\right)x^{n}.
\end{align*}
Comparing now the coefficients on both sides of the above equality and replacing $n$ by $n-1$, we get the identity from the statement of our lemma.
\end{proof}
Using other functional equations satisfied by $F_{t}(x)$, we can deduce other recurrence relations.

\begin{lem}\label{recforf}
The sequence ${\bf f}(t)$ satisfies the following recurrence relations:
\begin{enumerate}
 \item $f_{0}(t)=1$ and
\begin{equation*}
f_{n}(t)=-\sum_{k=0}^{n-1}{t+n-k-1\choose n-k}f_{k}(t)+\chi_{2}(n)f_{\frac{n}{2}}(t),
\end{equation*}
where $\chi_{2}(n)=(1+(-1)^{n})/2$;
\item $f_{0}(t)=1$ and
\begin{equation*}
f_{n}(t)=\sum_{k=0}^{\left\lfloor\frac{n}{2}\right\rfloor}{n-2k-1-t\choose n-2k}f_{k}(t),
\end{equation*}
\end{enumerate}
\end{lem}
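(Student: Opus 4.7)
The plan is to derive both recurrences directly from the functional equation $F_t(x)=(1-x)^t F_t(x^2)$, by expanding one of $(1-x)^{\pm t}$ with the generalized binomial theorem and matching coefficients of $x^n$. No use of the logarithmic identity of the previous lemma is needed; everything is a formal power series manipulation.

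For part (1), I would rewrite the functional equation as $F_t(x^2)=(1-x)^{-t}F_t(x)$ and use
\begin{equation*}
(1-x)^{-t}=\sum_{j=0}^{\infty}\binom{-t}{j}(-x)^{j}=\sum_{j=0}^{\infty}\binom{t+j-1}{j}x^{j}.
\end{equation*}
The coefficient of $x^{n}$ on the left is $\chi_{2}(n)f_{n/2}(t)$, since only even powers of $x$ appear in $F_t(x^2)$. Matching coefficients of $x^{n}$ in the product on the right gives
\begin{equation*}
\chi_{2}(n)f_{n/2}(t)=\sum_{k=0}^{n}\binom{t+n-k-1}{n-k}f_{k}(t).
\end{equation*}
Isolating the $k=n$ term (for which the binomial coefficient equals $1$) yields the desired formula.

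For part (2), I would proceed directly from $F_t(x)=(1-x)^{t}F_t(x^{2})$. Expanding
\begin{equation*}
(1-x)^{t}=\sum_{m=0}^{\infty}(-1)^{m}\binom{t}{m}x^{m}
\end{equation*}
and using the standard identity $(-1)^{m}\binom{t}{m}=\binom{m-t-1}{m}$ (verified by comparing the two products $\prod_{i=0}^{m-1}(t-i)$ and $\prod_{i=0}^{m-1}(m-t-1-i)$ and tracking a factor of $(-1)^{m}$), the coefficient of $x^{n}$ in the product $(1-x)^{t}F_t(x^{2})$ becomes the sum over pairs $m+2k=n$, namely
\begin{equation*}
f_{n}(t)=\sum_{k=0}^{\lfloor n/2\rfloor}\binom{n-2k-1-t}{n-2k}f_{k}(t).
\end{equation*}

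The only real obstacle is the sign/upper-index conversion of the generalized binomial coefficient, since the paper's recurrences are written in the ``non-negated'' form $\binom{t+n-k-1}{n-k}$ and $\binom{n-2k-1-t}{n-2k}$ rather than the raw $\binom{\pm t}{\cdot}$. Once this identity is in hand, both formulas follow by a one-line coefficient comparison, with the only additional feature being the observation that $[x^{n}]F_t(x^{2})=\chi_{2}(n)f_{n/2}(t)$ in case (1).
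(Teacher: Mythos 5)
Your proposal is correct and follows essentially the same route as the paper: both recurrences are obtained from the functional equation $F_t(x)=(1-x)^tF_t(x^2)$ by expanding $(1-x)^{\mp t}$ via the generalized binomial theorem in the rewritten forms $\binom{t+j-1}{j}$ and $\binom{m-1-t}{m}$, observing that $[x^n]F_t(x^2)=\chi_2(n)f_{n/2}(t)$, and comparing coefficients. The paper's proof is the same computation written out as a product of power series, so there is nothing to add.
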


\begin{proof}
In order to prove the first recurrence relation for the sequence ${\bf f}(t)$ we rewrite the functional equation for the function $F_{t}(x)$ in the following form:
\begin{align*}
F_{t}(x^2)=\frac{1}{(1-x)^{t}}F_{t}(x)&=\left(\sum_{n=0}^{\infty}{t+n-1\choose n}x^n\right)\left(\sum_{n=0}^{\infty}f_{n}(t)x^n\right)\\
&=\sum_{n=0}^{\infty}\left(\sum_{k=0}^{n}{t+n-k-1\choose n-k}f_{k}(t)\right)x^{n}.
\end{align*}
However,
$$
F_{t}(x^2)=\sum_{n=0}^{\infty}f_{n}(t)x^{2n}=\sum_{n=0}^{\infty}\chi_{2}(n)f_{\frac{n}{2}}(t)x^{n},
$$
and thus comparing the coefficients near $x^{n}$ in the identity $F_{t}(x^2)=(1-x)^{-t}F_{t}(x)$ and performing simple manipulations we get the first recurrence relation for the sequence ${\bf f}(t)$.

In order to prove the second recurrence relation we compute
\begin{align*}
F_{t}(x)&=\left(\frac{1}{1-x}\right)^{-t}\sum_{n=0}^{\infty}f_{n}(t)x^{2n}=\left(\sum_{n=0}^{\infty}{n-1-t\choose n}x^n\right)\left(\sum_{n=0}^{\infty}\chi_{2}(n)f_{\frac{n}{2}}(t)x^{n}\right)\\
&=\sum_{n=0}^{\infty}\left(\sum_{k=0}^{n}{n-k-1-t\choose n-k}\chi_{2}(k)f_{\frac{k}{2}}(t)\right)x^{n}\\
&=\sum_{n=0}^{\infty}\left(\sum_{k=0}^{\left\lfloor\frac{n}{2}\right\rfloor}{n-2k-1-t\choose n-2k}f_{k}(t)\right)x^{n}.
\end{align*}
Comparing now the coefficients on both sides of the above identity, we get the second recurrence relation from the statement of our lemma.
\end{proof}

Lemma \ref{recforfprim} and Lemma \ref{recforf} show us that if we fix $n\in\N$, then the expression $f_{n}(t)$ is a polynomial with respect to $t$. The first terms of the sequence $(f_{n}(t))_{n\in\N}$ are:
\begin{align*}
f_{0}(t)&=1,\\
f_{1}(t)&=-t,\\
f_{2}(t)&=\frac{1}{2} (t-3) t,\\
f_{3}(t)&=-\frac{1}{6} t \left(t^2-9 t+2\right),\\
f_{4}(t)&=\frac{1}{24} t \left(t^3-18 t^2+35 t-42\right),\\
f_{5}(t)&=-\frac{1}{120} t \left(t^4-30 t^3+155 t^2-270 t+24\right).
\end{align*}

As a consequence of the recurrence relation for ${\bf f}(t)$, we get the following properties of the sequence ${\bf f}(t)$.

\begin{lem}
 We have:
\begin{enumerate}
\item $\op{deg}f_{n}(t)=n$;
\item $f_{0}(0)=1$ and $f_{n}(0)=0$ for $n>0$;
\item Let us write
$$
f_{n}(t)=\sum_{i=0}^{n}a(i,n)t^{i}.
$$
Then $a(0,0)=1, a(0,n)=0$ for $n\in\N_{+}$ and for $i\in\{0,\ldots,n-1\}$ we have
\begin{equation}\label{recfora}
a(i+1,n)=\frac{1}{n}\sum_{j=i}^{n-1}(1-2^{\nu_{2}(n-j)+1})a(i,j).
\end{equation}
In particular we have the following equalities:
\begin{align*}
a(n,n)&=\frac{(-1)^{n}}{n!},\\
a(n-1,n)&=\frac{(-1)^{n+1}}{2!(n-2)!}3,\;n\geq 2,\\
a(n-2,n)&=\frac{(-1)^{n}}{4!(n-3)!}(27n-73),\;n \geq 3,\\
a(1,n)&=\frac{1-2^{\nu_{2}(n)+1}}{n},\;n\geq 1.\\
\end{align*}

\item The sequence ${\bf f}(t)$ satisfies the following addition formula:
$$
f_{n}(t_{1}+t_{2})=\sum_{k=0}^{n}f_{k}(t_{1})f_{n-k}(t_{2}),
$$
where $t_{1}, t_{2}$ are variables.
\end{enumerate}

\end{lem}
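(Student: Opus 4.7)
The plan is to handle part (4) first, since it is essentially a one-line consequence of the defining identity, then derive parts (1)--(3) together by feeding the recurrence of Lemma~\ref{recforfprim} into the polynomial expansion $f_n(t)=\sum_{i=0}^{n}a(i,n)t^{i}$ and proving the explicit formulas for $a(i,n)$ by induction on $n$.

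For part (4), I would write $F_{t_{1}+t_{2}}(x)=F(x)^{t_{1}+t_{2}}=F(x)^{t_{1}}F(x)^{t_{2}}=F_{t_{1}}(x)F_{t_{2}}(x)$ as a product of formal power series and compare the coefficient of $x^{n}$ on both sides; the right-hand side is a Cauchy product, which gives exactly the stated convolution. For part (2), note that $F_{0}(x)=1$, so $f_{0}(0)=1$ and $f_{n}(0)=0$ for $n\ge1$; this also shows $a(0,0)=1$ and $a(0,n)=0$ for $n\ge1$. For part (1), it suffices to show that the leading coefficient $a(n,n)$ is nonzero, which will fall out of the explicit formula below.

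For part (3), substitute $f_{k}(t)=\sum_{i=0}^{k}a(i,k)t^{i}$ into the recurrence $f_{n}(t)=\frac{t}{n}\sum_{k=0}^{n-1}(1-2^{\nu_{2}(n-k)+1})f_{k}(t)$ from Lemma~\ref{recforfprim}. Swapping the order of summation on the right and comparing the coefficient of $t^{i+1}$ on both sides gives
\begin{equation*}
a(i+1,n)=\frac{1}{n}\sum_{j=i}^{n-1}(1-2^{\nu_{2}(n-j)+1})a(i,j),
\end{equation*}
which is (\ref{recfora}). The four closed-form identities are then immediate inductions on $n$. Setting $i=0$ and using $a(0,0)=1$, $a(0,j)=0$ for $j\ge1$ collapses the sum to a single term and yields $a(1,n)=(1-2^{\nu_{2}(n)+1})/n$. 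Setting $i=n-1$ gives $a(n,n)=-\frac{1}{n}a(n-1,n-1)$, and an easy induction produces $a(n,n)=(-1)^{n}/n!$; this also establishes (1). Setting $i=n-2$ uses only two surviving terms, $a(n-2,n-2)$ and $a(n-2,n-1)$, whose values are known by induction, and after combining fractions one obtains $a(n-1,n)=3(-1)^{n+1}/(2!(n-2)!)$.

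The only mildly tedious piece is the formula for $a(n-2,n)$. Here setting $i=n-3$ brings three nonzero terms into the sum, namely $a(n-3,n-3)$, $a(n-3,n-2)$, and $a(n-3,n-1)$, weighted by $1-2^{\nu_{2}(3)+1}=-1$, $1-2^{\nu_{2}(2)+1}=-3$, and $1-2^{\nu_{2}(1)+1}=-1$ respectively. Substituting the inductive values $a(n-3,n-3)=(-1)^{n-3}/(n-3)!$, $a(n-2-1,n-2)=(-1)^{n}\cdot 3/(2(n-4)!)$ (shifting $n\to n-1$ in the previously established formula), and $a(n-3,n-1)=(-1)^{n-1}(27(n-1)-73)/(4!(n-4)!)$, clearing denominators over the common factor $(n-3)!$ and simplifying the resulting polynomial in $n$, I expect to obtain exactly $(-1)^{n}(27n-73)/(4!(n-3)!)$. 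The bookkeeping of this calculation is the main obstacle, but there is nothing conceptually deep; it is purely a consistency check of the inductive ansatz against the recurrence (\ref{recfora}).
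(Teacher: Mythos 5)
Your proposal is correct and follows essentially the same route as the paper: substitute the expansion $f_n(t)=\sum_i a(i,n)t^i$ into the recurrence of Lemma~\ref{recforfprim}, compare coefficients to get (\ref{recfora}), and then run straightforward inductions for the closed forms, with part (4) coming from $F_{t_1}(x)F_{t_2}(x)=F_{t_1+t_2}(x)$. The only substantive difference is that you actually sketch the $a(n-2,n)$ induction (which the paper leaves to the reader, and which does work out to $n(27n-73)/(4!(n-3)!(n-3))\cdot(n-3)$ as claimed), modulo a harmless sign slip in your intermediate value $a(n-3,n-2)$, which should be $(-1)^{n-1}\cdot 3/(2!\,(n-4)!)$.
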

\begin{proof}
The first and the second statement follow immediately from Lemma \ref{recforfprim}.

In order to prove the third statement we use Lemma \ref{recforfprim} one more time. For $n\geq 1$ we have the following equalities:
\begin{align*}
f_{n}(t)=\sum_{i=0}^{n}a(i,n)t^{i}&=\frac{t}{n}\sum_{j=0}^{n-1}\left(1-2^{\nu_{2}(n-j)+1}\right)\sum_{i=0}^{k}a(i,j)t^{i}\\
&=\frac{1}{n}\sum_{i=0}^{n-1}\left(\sum_{j=i}^{n-1}(1-2^{\nu_{2}(n-j)+1})a(i,j)\right)t^{i+1}\\
&=\frac{1}{n}\sum_{i=1}^{n}\left(\sum_{j=i-1}^{n-1}(1-2^{\nu_{2}(n-j)+1})a(i-1,j)\right)t^{i}.
\end{align*}
By comparing the coefficients of the polynomial $f_{n}(t)$ and the polynomial given by the last expression, we get the result (after the change of variables $i\rightarrow i+1$).

In order to prove the expression for $a(n,n)$ we use Lemma \ref{recforfprim} one more time. We immediately deduce the equality
$$
a(n,n)=\frac{1}{n}(1-2^{\nu_{2}(n-(n-1))+1})a(n-1,n-1)=-\frac{1}{n}a(n-1,n-1).
$$
Using simple induction and the identity $a(0,0)=1$, we get the expression for $a(n,n)$.

Next, we have $a(1,2)=-3/2$ and for $n\geq 2$ by (\ref{recfora}) with $i=n-2$ we get
$$
a(n-1,n)=-\frac{3}{n}a(n-2,n-2)-\frac{1}{n}a(n-2,n-1)=-\frac{3}{n}\frac{(-1)^{n}}{(n-2)!}-\frac{1}{n}a(n-2,n-1).
$$
Using now simple induction, we easily get the expression for $a(n-1,n)$ presented in the statement of our lemma.

Because exactly the same technique as used for the proof of expressions for $a(n,n)$ and $a(n-1,n)$ can be applied in order to compute $a(n-2,n)$, we omit the proof and leave the simple details for the reader.

Finally, in order to get the addition formula we notice that it is a simple consequence of the formal identity $F_{t_{1}}(x)F_{t_{2}}(x)=F_{t_{1}+t_{2}}(x)$.
\end{proof}

\begin{rem}
{\rm Although we were unable to find the general formula for the coefficients $a(n-k,n)$ for all $k\geq 4$ and $n\geq k+1$, it is an easy (but tedious) exercise to prove that for fixed $k$ we have
$$
a(n-k,n)=\frac{(-1)^{n+k}}{(2k)!(n-k-1)!}W_{k}(n),\;n\geq k+1,
$$
where $W_{k}\in\Z[n]$ is of degree $k-1$.

Using this observation one can compute polynomials $W_{k}(n)$ for several values of $k$:
\begin{align*}
W_{3}(n)&=45(9 n^2-73 n+176),\\
W_{4}(n)&=7(1215 n^3-19710 n^2+121685 n-266398),\\
W_{5}(n)&=945 \left(243 n^4-6570 n^3+74165 n^2-394878 n+805440\right),\\
W_{6}(n)&=165 \left(45927 n^5-1862595 n^4+33070275 n^3-310359581 n^2+1497391014 n-2916611728\right).
\end{align*}

}
\end{rem}

We introduce the family of polynomials ${\bf g}(t)=(g_{n}(t))_{n\in\N}$, where
$$
g_{n}(t)=n!f_{n}(t).
$$
As a consequence of the recurrence relation for ${\bf f}(t)$, we get the recurrence relation satisfied by the sequence ${\bf g}(t)$ in the following form:
$$
g_{0}(1)=1,\quad g_{n}(t)=t\sum_{k=0}^{n-1}(1-2^{\nu_{2}(n-k)+1})\frac{(n-1)!}{k!}g_{k}(t).
$$
In particular $g_n(t)\in\Z[t]$ for each $n\in\N$.

%In this section we investigate the arithmetic properties of the family of polynomials $f_{n}(t)$ is defined as
%\begin{equation*}
%F(t,x)=\prod_{n=0}^{\infty}(1-x^{2^{n}})^{t}=\sum_{n=0}^{\infty}(-1)^{n}t\frac{f_{n}(t)}{n!}x^{n},
%\end{equation*}
%i.e. $f_{n}(t)=(-1)^{n}t^{-1}n!f(t,n)$. We have $f_{0}(t)=1$ and because $f(0,n)=0$ for $n>0$ we deduce that the function %$f_{n}(t)$ is a polynomial with integer coefficients.

We have the following result concerning the factorization of $g_{n}(t)$ modulo $p$, where $p$ is a prime number.
\begin{thm}\label{congruence}
Let $n\in\N$. Then
\begin{equation*}
g_{n}(t)\equiv g_{n \bmod p}(t)(t-t^{p})^{\left\lfloor\frac{n}{p}\right\rfloor}\pmod{p}.
\end{equation*}
\end{thm}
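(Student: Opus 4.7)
The plan is to induct on $n$, with the trivial base case $n<p$ (where $\lfloor n/p\rfloor=0$ and $n\bmod p=n$). For $n\ge p$, I would start from the recurrence in Lemma \ref{recforf}(1), multiplied through by $n!$. Since $\binom{t+n-k-1}{n-k}(n-k)!=t(t+1)\cdots(t+n-k-1)$ and $\frac{n!}{k!(n-k)!}=\binom{n}{k}$, this becomes the integer-coefficient identity
\[
g_{n}(t)\;=\;-\sum_{k=0}^{n-1}\binom{n}{k}\prod_{i=0}^{n-k-1}(t+i)\,g_{k}(t)\;+\;\chi_{2}(n)\frac{n!}{(n/2)!}\,g_{n/2}(t),
\]
an equation in $\mathbb{Z}[t]$ that I will reduce modulo $p$ and then combine with the inductive hypothesis.

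The engine of the reduction is the identity $\prod_{i=0}^{p-1}(t+i)\equiv t^{p}-t\pmod p$, which holds because both sides are monic of degree $p$ with root set $\mathbb{F}_{p}$. Writing $n-k=p\alpha+\beta$ with $0\le\beta<p$ and collecting consecutive blocks of $p$ factors yields
\[
\prod_{i=0}^{n-k-1}(t+i)\;\equiv\;(t^{p}-t)^{\alpha}\prod_{i=0}^{\beta-1}(t+i)\pmod p.
\]
Next, I write $n=pq+r$ and $k=pq_{k}+r_{k}$ with $0\le r,r_{k}<p$, so that Lucas' theorem gives $\binom{n}{k}\equiv\binom{q}{q_{k}}\binom{r}{r_{k}}\pmod p$; in particular the summand vanishes whenever $r_{k}>r$. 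On the surviving range $r_{k}\le r$ one has $\alpha=q-q_{k}$ and $\beta=r-r_{k}$, and combining with the inductive hypothesis $g_{k}(t)\equiv g_{r_{k}}(t)(t-t^{p})^{q_{k}}\pmod p$ together with $(t^{p}-t)^{\alpha}=(-1)^{\alpha}(t-t^{p})^{\alpha}$, every surviving summand acquires the common factor $(t-t^{p})^{q}$.

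After factoring $(t-t^{p})^{q}$ out, the remaining double sum separates into a sum over $r_{k}$ of terms weighted by $\sum_{q_{k}}\binom{q}{q_{k}}(-1)^{q-q_{k}}$. For each $r_{k}<r$ the constraint $k<n$ allows $0\le q_{k}\le q$ and the inner sum vanishes by the binomial theorem (using $q\ge 1$); for $r_{k}=r$ the constraint forces $q_{k}\le q-1$, the inner sum equals $-1$, and the $\beta=0$ empty product leaves exactly $(t-t^{p})^{q}g_{r}(t)$ after the signs are collected. It remains to eliminate $\chi_{2}(n)\frac{n!}{(n/2)!}g_{n/2}(t)$: for even $n\ge p$, the product $\frac{n!}{(n/2)!}=(n/2+1)(n/2+2)\cdots n$ always contains a multiple of $p$ (if $n/2\ge p$ the interval has length $\ge p$; otherwise $p\le n<2p$ and $p$ itself lies in $(n/2,n]$), so this term is $\equiv 0\pmod p$. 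The main obstacle is the careful sign and bookkeeping in the Lucas-plus-binomial simplification—in particular, applying the $k<n$ constraint only at the boundary $r_{k}=r$ and checking that this single boundary contribution produces exactly $g_{r}(t)$ with the correct sign.
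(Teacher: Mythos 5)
Your proof is correct, and it takes a genuinely different route from the paper's. The paper works from the recurrence of Lemma \ref{recforfprim}, i.e. $g_{n}(t)=t\sum_{k=0}^{n-1}(1-2^{\nu_{2}(n-k)+1})\frac{(n-1)!}{k!}g_{k}(t)$, where the decisive observation is that $\frac{(n-1)!}{k!}\equiv 0\pmod{p}$ for all $k\leq p\lfloor n/p\rfloor-1$; this collapses the sum to the last block of at most $p$ indices, which reproduces the recurrence for $g_{n\bmod p}$ times the factor $(t-t^{p})^{\lfloor n/p\rfloor}$ supplied by the induction hypothesis. The paper then needs a separate argument for $n=p$ (that $p!\mid g_{p}(j)$ for all $j\in\Z$, so $g_{p}$ vanishes on $\F_{p}$, and its degree and leading coefficient force $g_{p}\equiv t-t^{p}$). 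Your route, starting from Lemma \ref{recforf}(1), instead produces the factor $t^{p}-t$ structurally, as the reduction of a full block of $p$ consecutive linear factors in $\prod_{i=0}^{n-k-1}(t+i)$, with Lucas' theorem and the alternating binomial sum $\sum_{q_{k}}\binom{q}{q_{k}}(-1)^{q-q_{k}}$ doing the bookkeeping; I checked the boundary contribution at $r_{k}=r$ and the sign does come out as $+g_{r}(t)(t-t^{p})^{q}$, and your divisibility argument for the $\chi_{2}(n)\frac{n!}{(n/2)!}g_{n/2}(t)$ term is sound. What the paper's approach buys is brevity (one congruence kills most of the sum at once); what yours buys is a uniform induction with no special case at $n=p$ and no appeal to the integrality of $f_{p}$ at integer arguments, at the cost of a heavier combinatorial reduction.
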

\begin{proof}
Let $p$ be a prime number. We proceed by induction on $n$. Our factorization is clearly true for $n\leq p-1$. If $n=p$ then $p\mid g_p(j)$ for any $j\in\Z$. Since $g_p(t)\in\Z[t]$, $\deg g_p(t)=p$ and the leading coefficient of $g_p(t)$ is $(-1)^p$, we thus have
\begin{equation*}
g_p(t)\equiv (-1)^p\prod_{a=0}^{p-1} (t-a)\equiv t-t^p\pmod{p}.
\end{equation*}
Let us consider the case $n=pm+i$ for some $i\in\{1,\ldots,p\}$. Observe that
\begin{equation*}
\frac{(n-1)!}{k!}=\frac{(pm+i-1)!}{k!}\equiv
\begin{cases}
\begin{array}{lll}
0, &                  & \mbox{if}\;k\leq pm-1 \\
\frac{(i-1)!}{j!}, &  & \mbox{if}\;k=pm+j,\;j\in\{0,1,\ldots,i-1\}
\end{array}
\end{cases}\pmod{p}.
\end{equation*}
We have the following chain of congruences $\bmod p$ for $i\in\{1,\ldots,p-1\}$:
\begin{align*}
g_{n}(t)&\equiv g_{pm+i}(t)\equiv t\sum_{k=0}^{pm+i-1}(1-2^{\nu_{2}(pm+i-k)+1})\frac{(pm+i-1)!}{k!}g_{k}(t)\\
        &\equiv t\sum_{k=0}^{pm-1}(1-2^{\nu_{2}(pm+i-k)+1})\frac{(pm+i-1)!}{k!}g_{k}(t)+t\sum_{k=pm}^{pm+i-1}(1-2^{\nu_{2}(pm+i-k)+1})\frac{(pm+i-1)!}{k!}g_{k}(t)\\
        &\equiv t\sum_{k=pm}^{pm+i-1}(1-2^{\nu_{2}(pm+i-k)+1})\frac{(pm+i-1)!}{k!}g_{k}(t)\\
        &\equiv t\sum_{j=0}^{i-1}(1-2^{\nu_{2}(i-j)+1})\frac{(i-1)!}{j!}g_{j}(t)(t-t^{p})^{m}\\
        &\equiv (t-t^{p})^{m}t\sum_{j=0}^{i-1}(1-2^{\nu_{2}(i-j)+1})\frac{(i-1)!}{j!}g_{j}(t)\\
        &\equiv g_{i}(t)(t-t^{p})^{m}\equiv g_{n\bmod{p}}(t)(t-t^{p})^{\lfloor\frac{n}{p}\rfloor}\pmod{p}.
\end{align*}
If $i=p$ then in the same way we obtain
$$g_{n}(t)\equiv g_p(t)(t-t^{p})^{\lfloor\frac{n}{p}\rfloor}\equiv (t-t^{p})^{\lfloor\frac{n}{p}\rfloor+1}\pmod{p}.$$
Our result follows.
\end{proof}

\section{Arithmetic properties of the sequence $(f_{n}(t))_{n\in\N}$ with $t\in\N_{+}$}\label{Section3}

In this section we consider the sequence $(f_{n}(t))_{n\in\N}$ with a fixed positive integer $t$. We thus write $t=m$ for $m\in\N_{+}$ and define
$$
t_{m}(n):=f_{n}(m).
$$
Moreover, we put ${\bf t}_{m}=(t_{m}(n))_{n\in\N}$. In particular ${\bf t}_{1}=(t_{1}(n))_{n\in\N}=((-1)^{s_{2}(n)})_{n\in\N}=(t_{n})_{n\in\N}$ is the Prouhet-Thue-Morse sequence. It is clear that ${\bf t}_{m}$ is the sequence obtained from the convolution of $m$ copies of the Prouhet-Thue-Morse sequence, i.e.,
\begin{equation}\label{deftm}
t_{m}(n)=\sum_{i_{1}+i_{2}+\ldots+i_{m}=n}(-1)^{\sum_{k=1}^{m}s_{2}(i_{k})}.
\end{equation}

\subsection{Results concerning the computation of the $2$-adic valuation of $t_{m}(n)$}

This subsection is devoted to the presentation of the results concerning the explicit computation of the 2-adic valuation of the sequence ${\bf t}_{m}$ for $m=2^{k}$ and $m=3$.

From the functional equation for $F_{m}(x)$ we easily deduce the following useful result.

\begin{lem}\label{rectmn}
Let $m$ be a positive integer. Then $t_m(0)=1$, $t_m(1)=-m$ and
\begin{equation*}
t_m(2n)=\sum_{j=0}^{\left\lfloor\frac{m}{2}\right\rfloor} {m\choose 2j}t_m(n-j),\quad t_m(2n+1)=-\sum_{j=0}^{\left\lfloor\frac{m-1}{2}\right\rfloor} {m\choose 2j+1}t_m(n-j),
\end{equation*}
where we set put $t_m(n)=0$ for $n<0$.
\end{lem}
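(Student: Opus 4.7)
The plan is to derive both recurrences in one shot from the functional equation
\[
F_m(x)=(1-x)^m F_m(x^2)
\]
noted at the start of Section~\ref{Section2}, by expanding the right-hand side via the binomial theorem and comparing coefficients of $x^N$ on the two sides, split according to the parity of $N$.

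First I would read off the initial values directly from the definition. Since $F_m(x)=\prod_{n\ge 0}(1-x^{2^n})^m$, the constant term is $1$, so $t_m(0)=1$. Only the factor $(1-x)^m$ contributes a term of degree $1$, because every other factor $(1-x^{2^n})^m$ with $n\ge 1$ contains only monomials of even degree $\ge 2$; hence $t_m(1)=-m$.

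Next I would extract coefficients. Using the binomial theorem,
\[
(1-x)^m=\sum_{i=0}^{m}(-1)^{i}\binom{m}{i}x^{i},
\]
and $F_m(x^2)=\sum_{k\ge 0}t_m(k)x^{2k}$, the functional equation becomes
\[
\sum_{N\ge 0}t_m(N)x^{N}=\sum_{N\ge 0}\Bigl(\sum_{i+2k=N}(-1)^{i}\binom{m}{i}t_m(k)\Bigr)x^{N}.
\]
For $N=2n$ only even $i=2j$ survive and $k=n-j$, so $(-1)^{2j}=1$ and one obtains $t_m(2n)=\sum_{j}\binom{m}{2j}t_m(n-j)$. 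For $N=2n+1$ only odd $i=2j+1$ survive and $k=n-j$, and the sign $(-1)^{2j+1}=-1$ produces the minus sign in the second formula.

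The only bookkeeping point is the range of $j$: since $i\in\{0,\ldots,m\}$, $j$ runs up to $\lfloor m/2\rfloor$ in the even case and up to $\lfloor (m-1)/2\rfloor$ in the odd case, while the constraint $k=n-j\ge 0$ is taken care of by the convention $t_m(n-j)=0$ for $n-j<0$, so no term needs to be dropped. There is no genuine obstacle here; the whole lemma is a one-line coefficient comparison once the binomial expansion has been inserted, so the ``hard part'' is simply keeping the indices straight when separating even and odd powers of $x$.
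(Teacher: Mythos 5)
Your proposal is correct and follows essentially the same route as the paper: expand $(1-x)^m$ by the binomial theorem in the functional equation $F_m(x)=(1-x)^mF_m(x^2)$, split by parity of the exponent, and compare coefficients. The index bookkeeping and the treatment of the initial values are handled properly, so nothing is missing.
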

\begin{proof}
Let us expand $F_m(x)$ using the functional equation $F_m(x)=(1-x)^mF_m(x^2)$.
\begin{equation*}
\begin{split}
F_m(x) &=(1-x)^mF_m(x^2)=\left(\sum_{j=0}^m {m\choose j}(-1)^jx^j\right)\left(\sum_{k=0}^{\infty} t_m(k)x^{2k}\right) \\
&=\left(\sum_{j=0}^{\left\lfloor\frac{m}{2}\right\rfloor} {m\choose 2j}x^{2j}-\sum_{j=0}^{\left\lfloor\frac{m-1}{2}\right\rfloor} {m\choose 2j+1}x^{2j+1}\right)\left(\sum_{k=0}^{\infty} t_m(k)x^{2k}\right) \\
&=\sum_{n=0}^{\infty}\left[\left(\sum_{j=0}^{\left\lfloor\frac{m}{2}\right\rfloor} {m\choose 2j}t_m(n-j)\right)x^{2n}-\left(\sum_{j=0}^{\left\lfloor\frac{m-1}{2}\right\rfloor} {m\choose 2j+1}t_m(n-j)\right)x^{2n+1}\right]
\end{split}
\end{equation*}
Comparing coefficients of the first and last expression, we obtain the recurrence relations in the statement of our lemma.
\end{proof}

\begin{lem}\label{parityrest}
 Let $m$ be a positive integer. Then $F_{m}(x)\equiv (1+x)^{-m}\pmod{2}$. In particular,
\begin{equation}\label{parityt}
 t_{m}(n)\equiv {n+m-1\choose m-1}\pmod{2}
\end{equation}
for each $n\in\N$.
\end{lem}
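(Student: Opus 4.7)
The plan is to reduce the infinite product defining $F(x)$ modulo $2$ and then exploit the classical identity $\prod_{n=0}^{\infty}(1+x^{2^n})=\frac{1}{1-x}$, which records the uniqueness of binary representations. Since $1-x^{2^n}\equiv 1+x^{2^n}\pmod 2$ for every $n$, we immediately obtain
$$F(x)=\prod_{n=0}^{\infty}(1-x^{2^n})\equiv \prod_{n=0}^{\infty}(1+x^{2^n})=\frac{1}{1-x}\equiv (1+x)^{-1}\pmod 2,$$
where the last step uses $-1\equiv 1\pmod 2$. Raising both sides to the $m$-th power (this is legitimate in the ring $\mathbb{F}_2[[x]]$ since $F(x)$ has constant term $1$, hence is a unit) yields
$$F_m(x)=F(x)^m\equiv (1+x)^{-m}\pmod 2,$$
which is the first assertion.

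As an alternative route to the same intermediate step, and to avoid invoking the product identity, one can argue purely from the functional equation $F(x)=(1-x)F(x^2)$. Reducing modulo $2$ and applying the Frobenius identity $F(x^2)\equiv F(x)^2\pmod 2$ gives $F(x)\equiv(1+x)F(x)^2\pmod 2$. Dividing by the unit $F(x)$ produces $(1+x)F(x)\equiv 1\pmod 2$, i.e.\ $F(x)\equiv (1+x)^{-1}\pmod 2$, and the rest proceeds as above.

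For the second assertion, I would simply expand the right-hand side using the negative binomial series:
$$(1+x)^{-m}\equiv (1-x)^{-m}=\sum_{n=0}^{\infty}\binom{n+m-1}{m-1}x^n\pmod 2.$$
Comparing coefficients with $F_m(x)=\sum_{n=0}^{\infty} t_m(n)x^n$ yields $t_m(n)\equiv \binom{n+m-1}{m-1}\pmod 2$ for every $n\in\N$.

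There is no real obstacle here: the only point requiring a brief justification is that passing to $m$-th powers and dividing by $F(x)$ are valid operations in $\mathbb{F}_2[[x]]$, which follows from $F(0)=1$. The whole argument is essentially a clean application of the Frobenius endomorphism in characteristic $2$ together with the binary-representation identity.
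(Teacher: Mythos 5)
Your argument is correct and is essentially the same as the paper's: both reduce $1-x^{2^n}$ to $1+x^{2^n}$ modulo $2$, invoke the identity $\prod_{n=0}^{\infty}(1+x^{2^n})=\frac{1}{1-x}$, and then read off the coefficients from the negative binomial expansion of $(1-x)^{-m}$. The only cosmetic difference is that you first handle $m=1$ and then raise to the $m$-th power (and offer a Frobenius-based alternative), whereas the paper raises the product to the $m$-th power directly.
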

\begin{proof}
In order to prove our result let us recall the identity $\prod_{n=0}^{\infty}(1+x^{2^{n}})=\frac{1}{1-x}$. We thus have
\begin{align*}
F_{m}(x)&\equiv \prod_{n=0}^{\infty}\left(1+x^{2^{n}}\right)^{m}\equiv \left(\prod_{n=0}^{\infty}\left(1+x^{2^{n}}\right)\right)^{m}\\
        &\equiv \frac{1}{(1-x)^{m}}\equiv \sum_{n=0}^{\infty}{n+m-1\choose m-1}x^{n}\pmod{2}.
\end{align*}
This proves the first part of our lemma. In order to get the second part we compare the coefficients (modulo 2) of $x^{n}$ on both sides of the first and last term in the above congruence. Our result follows.
\end{proof}

We can strengthen the result above for $m=2^k$, $k\in\N$. Namely

\begin{thm}\label{v2powerof2}
Let $k\in\N$. Then $\nu_2(t_{2^k}(n))=\nu_2\left(n+2^k-1\choose 2^k-1\right)$ for each $n\in\N$. In other words,
\begin{equation*}
\nu_2(t_{2^k}(2^kn+j))=
\begin{cases}
k-\nu_2(j)+\nu_2(n+1) & \mbox{ when } j\in\{1,...,2^k-1\} \\
0 & \mbox{ when } j=0
\end{cases}
\end{equation*}
for $n\in\N$.
\end{thm}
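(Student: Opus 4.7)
The proof splits naturally into two stages.

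\emph{Stage 1 (sharpened congruence).} I would first establish the strengthening of Lemma~\ref{parityrest}
\[
F_{2^k}(x) \equiv \frac{1}{(1-x)^{2^k}} \pmod{2^{k+1}},
\]
which at the coefficient level reads $t_{2^k}(n) \equiv \binom{n+2^k-1}{2^k-1}\pmod{2^{k+1}}$ and hence resolves the theorem for every $n$ with $\nu_2\binom{n+2^k-1}{2^k-1}\le k$. The key input is the congruence $(1-x)^{2^k}\equiv(1+x)^{2^k}\pmod{2^{k+1}}$, which follows from $(1+x)^{2^k}-(1-x)^{2^k}=2\sum_{i\text{ odd}}\binom{2^k}{i}x^i$ together with the identity $\nu_2\binom{2^k}{i}=k$ for odd $i$ (a consequence of $\binom{2^k}{i}=(2^k/i)\binom{2^k-1}{i-1}$ and the oddness of every $\binom{2^k-1}{j}$). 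Applied factor by factor in each finite partial product of $F_{2^k}(x)=\prod_{j\ge 0}(1-x^{2^j})^{2^k}$, and combined with the classical identity $\prod_{j\ge 0}(1+x^{2^j})=(1-x)^{-1}$, this yields the displayed congruence.

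\emph{Stage 2 (remaining cases).} By Kummer's theorem the remaining $n$ are precisely those of the form $n=2^kq+j$ with $1\le j<2^k$ and $\nu_2(q+1)\ge\nu_2(j)+1$. For these I would argue by strong induction on $n$ using the recurrences in Lemma~\ref{rectmn}, exploiting $\nu_2\binom{2^k}{i}=k-\nu_2(i)$ for $0<i<2^k$. The odd case $n=2l+1$ is favourable: every $\binom{2^k}{2i+1}$ has $\nu_2$ exactly $k$, so $t_{2^k}(2l+1)=-2^k\sum_i B_i\,t_{2^k}(l-i)$ with $B_i$ odd; writing $l=2^{k-1}q+j'$ and splitting on the parity of $q$, the inductive hypothesis pins down a unique index $i=j'$ where $t_{2^k}(l-i)$ achieves the smallest $2$-adic valuation (equal to $\nu_2(q+1)$), all other summands being strictly larger, whence $\nu_2(t_{2^k}(n))=k+\nu_2(q+1)$. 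The even case $n=2l$ is treated analogously via the recurrence with coefficients $\binom{2^k}{2i}$, whose $\nu_2$ equals $k-1-\nu_2(i)$ for $1\le i\le 2^{k-1}-1$ and $0$ at the boundary $i\in\{0,2^{k-1}\}$.

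\textbf{Main obstacle.} Stage~1 is a clean short argument. The real difficulty is the even case of Stage~2, where --- unlike the odd case --- several summands of $t_{2^k}(2l)=\sum_i\binom{2^k}{2i}\,t_{2^k}(l-i)$ can simultaneously achieve the minimum $2$-adic valuation, so identifying a single dominant term is no longer possible. One must instead verify, via the induction hypothesis, that the sum of the appropriately normalized ``odd parts'' of the contributing summands is itself odd --- equivalently, that the signed count of such summands (with signs read off from the normalized binomial coefficients $\binom{2^k}{2i}/2^{k-1-\nu_2(i)}$ and from the $t_{2^k}(l-i)$) has $2$-adic valuation $0$ --- so that no cancellation drops the overall $\nu_2$ below the predicted value $k-\nu_2(j)+\nu_2(q+1)$.
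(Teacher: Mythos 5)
Your Stage 2 is essentially the paper's own proof: an induction on $n$ driven by the two recurrences of Lemma \ref{rectmn}, with the valuations $\nu_2\binom{2^k}{2i+1}=k$ and $\nu_2\binom{2^k}{2i}=k-1-\nu_2(i)$ fed into the inductive hypothesis, and the odd case settled by exhibiting a unique summand of minimal valuation (your computation there is correct). Your Stage 1 is a genuine and correct addition: the paper only records the mod-$2$ congruence (Lemma \ref{parityrest}), whereas the strengthening $t_{2^k}(n)\equiv\binom{n+2^k-1}{2^k-1}\pmod{2^{k+1}}$ does follow, exactly as you say, from $\nu_2\binom{2^k}{i}=k$ for odd $i$ applied factor by factor, and it disposes at once of every $n$ with $\nu_2\binom{n+2^k-1}{2^k-1}\le k$. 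That is a real economy over the paper.

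The proposal nevertheless stops short of a proof at precisely the point you flag as the ``main obstacle'', and that obstacle cannot be dodged by Stage 1: for the surviving $n=2^kq+j$ one has $\nu_2(q+1)\ge\nu_2(j)+1$, so $q$ is odd, and the even survivors are exactly $n=2l$ with $l\equiv 2^{k-1}+j_0\pmod{2^k}$ for some $j_0\in\{1,\dots,2^{k-1}-1\}$ --- which is exactly the regime in which several summands of $\sum_i\binom{2^k}{2i}\,t_{2^k}(l-i)$ share the minimal $2$-adic valuation. You correctly state that one must then check that the sum of the normalized odd parts is odd, but you give no argument for this, and it is the one nontrivial step left. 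The paper's resolution is concrete: writing $l=2^kn'+J$ with $J>2^{k-1}$, equality in the inequality $\nu_2(i)+\nu_2(J-i)\le k-1+\nu_2(J)$ (for $0<i\le 2^{k-1}$, $i\neq J$) holds only for $i=2^{k-1}$ and $i=J-2^{k-1}$, so together with $i=0$ there are exactly \emph{three} summands of minimal valuation; since a sum of three odd numbers is odd, no cancellation occurs and the valuation of the sum equals the common minimum. Until you supply this count (or an equivalent non-cancellation verification), the even case, and hence the theorem, is not proved.
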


\begin{proof}
Let us note that
\begin{align*}
 \nu_2\left((2^kn+j)+2^k-1\choose 2^k-1\right)&=\nu_2\left(2^k(n+1)+j-1\choose 2^k-1\right)\\
                                              &=\nu_2\left(\frac{2^k}{j}{2^k(n+1)+j\choose 2^k}\right) \\
                                              &=k-\nu_2(j)+s_2(2^k)+s_2(2^kn+j)-s_2(2^k(n+1)+j) \\
                                              &=k-\nu_2(j)+1+s_2(n)+s_2(j)-s_2(n+1)-s_2(j)\\
                                              &=k-\nu_2(j)+\nu_2\left(n+1\choose n\right)=k-\nu_2(j)+\nu_2(n+1),
\end{align*}
where we used Legendre's formula $\nu_2\left(n!\right)=n-s_2(n)$ (see \cite{Le}). By the above equality and the fact that each nonnegative integer can be represented in the form $2^kn+j$ for some $n\in\N$ and $j\in\{0,1,...,2^k-1\}$, it suffices to show by induction on $2^kn+j\in\N$ that $\nu_2(t_{2^k}(2^kn+j))=k-\nu_2(j)+\nu_2(n+1)$ for $2^kn+j\geq -2^k$ (we recall that $t_m(n)=0$ for $n<0$). Clearly the statement is true for $2^kn+j\leq 1$. Let us compute the $2$-adic valuation of the numbers $t_{2^k}(2^{k+1}n+2j)$ and $t_{2^k}(2^{k+1}n+2j+1)$, where $n\in\N$ and $j\in\{0,1,...,2^k-1\}$ and $2^{k+1}n+2j\geq 2$. By Lemma \ref{rectmn}, we have
\begin{equation}\label{evenrec}
t_{2^k}(2^{k+1}n+2j)=\sum_{i=0}^{2^{k-1}} {2^k\choose 2i}t_{2^k}(2^kn+j-i)
\end{equation}
If $j\in\{0,2^{k-1}\}$ then only the summand for $i=0$ is odd, thus $t_{2^k}(2^{k+1}n+2j)$ is odd. Let $j\not\in\{0,2^{k-1}\}$. Then we compute $2$-adic valuation of each summand of the sum   in (\ref{evenrec}). We start with the $2$-adic valuation of ${2^k\choose 2i}$.
\begin{align*}
\nu_2\left({2^k\choose 2i}\right)&=\nu_2\left(\frac{2^k}{2i}{2^k-1\choose 2i-1}\right)\\
                                 &=k-1-\nu_2(i)+s_2(2i-1)+s_2(2^k-2i)-s_2(2^k-1)=k-1-\nu_2(i), i>0
\end{align*}
By the above equality and the induction hypothesis, we obtain
\begin{equation*}
\nu_2\left({2^k\choose 2i}t_{2^k}(2^kn+j-i)\right)=
\begin{cases}
k-\nu_2(j)+\nu_2(n+1), & \mbox{ if } i=0 \\
2k-1-\nu_2(i)-\nu_2(j-i)+\nu_2(n+1), & \mbox{ if } 0<i<j \\
k-1-\nu_2(j), & \mbox{ if } i=j \\
2k-1-\nu_2(i)-\nu_2(j-i)+\nu_2(n), & \mbox{ if } i>j
\end{cases}.
\end{equation*}
Let us notice that
\begin{equation}\label{ineq}
k-\nu_2(j)+\nu_2(n+1)\leq 2k-1-\nu_2(i)-\nu_2(j-i)+\nu_2(n+1),
\end{equation}
when $0<i\leq 2^{k-1}$ and $i\neq j$. Indeed, the inequality (\ref{ineq}) is equivalent to
\begin{equation}\label{ineq2}
\nu_2(i)+\nu_2(j-i)\leq k-1+\nu_2(j).
\end{equation}
If $\nu_2(i)\leq\nu_2(j)$ then (\ref{ineq2}) holds, since $0<|j-i|\leq 2^{k-1}$ and in consequence $\nu_2(j-1)\leq k-1$. If $\nu_2(i)>\nu_2(j)$ then $\nu_2(j-i)=\nu_2(j)$ and $\nu_2(i)\leq k-1$, since $0<i\leq 2^{k-1}$. Now we see that the $j$th summand of the sum in (\ref{evenrec}) has the $2$-adic valuation less than any other summand. Indeed,
\begin{equation*}
k-1-\nu_2(j)<k-\nu_2(j)+\nu_2(n+1)\leq 2k-1-\nu_2(i)-\nu_2(j-i)+\nu_2(n+1)
\end{equation*}
and
\begin{equation*}
k-1-\nu_2(j)<k-\nu_2(j)+\nu_2(n)\leq 2k-1-\nu_2(i)-\nu_2(j-i)+\nu_2(n).
\end{equation*}
We thus infer that $\nu_2(t_{2^k}(2^{k+1}n+2j))=k-\nu_2(2j)=k-\nu_2(2j)+\nu_2(2n+1)$ when $0<j<2^{k-1}$. If $j>2^{k-1}$ then by (\ref{ineq}) we know that the $0$th summand of the sum in (\ref{evenrec}) has the least $2$-adic valuation. It suffices to check for which $i\in\{1,...,2^{k-1}\}$ the $i$th summand has the same $2$-adic valuation as the $0$th one, or, in other words, we have equality in (\ref{ineq2}). Equality in (\ref{ineq2}) holds only if $i=2^{k-1}$ or $i=j-2^{k-1}$. Hence in the sum in (\ref{evenrec}) there are three summands with minimal $2$-adic valuation. As a consequence, $\nu_2(t_{2^k}(2^{k+1}n+2j))=k-\nu_2(j)+\nu_2(n+1)=k-\nu_2(2j)+\nu_2(2n+2)$. We are left with the computation of $t_{2^k}(2^{k+1}n+2j+1)$. By Lemma \ref{rectmn}, we have
\begin{equation}\label{oddrec}
t_{2^k}(2^{k+1}n+2j+1)=\sum_{i=0}^{2^{k-1}-1} {2^k\choose 2i+1}t_{2^k}(2^kn+j-i)
\end{equation}
We start with the $2$-adic valuation of ${2^k\choose 2i+1}$ when $0\leq i<2^{k-1}$.
\begin{align*}
\nu_2\left({2^k\choose 2i+1}\right)&=\nu_2\left(\frac{2^k}{2i+1}{2^k-1\choose 2i}\right)\\
                                   &=k-\nu_2(2i+1)+s_2(2i)+s_2(2^k-2i-1)-s_2(2^k-1)=k.
\end{align*}
By the above equality and the induction hypothesis we obtain
\begin{equation*}
\nu_2\left({2^k\choose 2i+1}t_{2^k}(2^kn+j-i)\right)=
\begin{cases}
2k-\nu_2(j-i)+\nu_2(n+1), & \mbox{ if } 0\leq i<j \\
k, & \mbox{ if } i=j \\
2k-\nu_2(j-i)+\nu_2(n), & \mbox{ if } i>j
\end{cases}.
\end{equation*}
Since $0\leq i,j<2^k$, thus for $i\neq j$ we have $0<|j-i|<2^k$. This implies $\nu_2(j-i)<k$ and hence $k<2k-\nu_2(j-i)$. This means that for $j<2^{k-1}$ the $j$th summand in (\ref{oddrec}) has the $2$-adic valuation less than any other summand and $\nu_2(t_{2^k}(2^{k+1}n+2j+1))=k=k-\nu_2(2j+1)+\nu_2(2n+1)$. If $j\geq 2^{k-1}$ then the $j-2^{k-1}$th summand in (\ref{oddrec}) has the $2$-adic valuation less than any other summand and $\nu_2(t_{2^k}(2^{k+1}n+2j+1))=k+1+\nu_2(n+1)=k-\nu_2(2j+1)+\nu_2(2n+2)$. This finishes the proof.
\end{proof}

\begin{cor}
For each $k\in\N$ the sequence $(\nu_2(t_{2^k}(n)))_{n\in\N}$ is $2$-regular, i.e., the $\Z$-submodule of $\Z^{\N}$ generated by sequences $(\nu_2(t_{2^k}(2^ln+j)))_{n\in\N}$, $l\in\N$, $j\in\{0,1,...,2^l-1\}$, is finitely generated (see \cite{AllSh1}).
\end{cor}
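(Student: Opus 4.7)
The plan is to use the explicit formula from Theorem \ref{v2powerof2} to exhibit a finite list of sequences whose $\Z$-span contains the entire $2$-kernel of $(\nu_2(t_{2^k}(n)))_{n\in\N}$. Writing $v_n := \nu_2(t_{2^k}(n))$, $u_n := \nu_2(n+1)$, and $v^{(l,j)} := (v_{2^l n + j})_{n\in\N}$, I would show that the $\Z$-submodule $M$ appearing in the statement is generated by the finite set $S := \{v^{(l,j)} : 0 \leq l \leq k,\, 0 \leq j < 2^l\}$.

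The main step is to analyse $v^{(l,j)}$ for $l \geq k$. Here I decompose $j = 2^k q + r$ with $0 \leq r < 2^k$ and $q \in \{0, \ldots, 2^{l-k}-1\}$, so that $2^l n + j = 2^k(2^{l-k} n + q) + r$. Theorem \ref{v2powerof2} then yields $v^{(l,j)} = \mathbf{0}$ when $r = 0$, and otherwise $v^{(l,j)} = (k - \nu_2(r))\mathbf{1} + (\nu_2(2^{l-k} n + q + 1))_{n\in\N}$. The trailing ``ruler'' subsequence is the constant $\nu_2(q+1)$ when $q + 1 < 2^{l-k}$ and equals $(l-k)\mathbf{1} + u$ when $q + 1 = 2^{l-k}$. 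In every case $v^{(l,j)} \in \Z\mathbf{1} + \Z u$.

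It then remains to check that $\mathbf{1}$ and $u$ themselves lie in the $\Z$-span of $S$. For $k \geq 2$ this follows from $v^{(k,1)} - v^{(k,2)} = \mathbf{1}$ (both subsequences have the shape $(k - \nu_2(\cdot))\mathbf{1} + u$) and then $u = v^{(k,1)} - k\mathbf{1}$. For $k = 1$ a direct check using Theorem \ref{v2powerof2} gives $v^{(0,0)} = u$ and $v^{(1,1)} = \mathbf{1} + u$, and for $k = 0$ the whole sequence vanishes. Consequently every generator $v^{(l,j)}$ with $l > k$ lies in $\Z\mathbf{1} + \Z u \subseteq \langle S\rangle_\Z$, while those with $l \leq k$ belong to $S$ by definition; hence $M = \langle S\rangle_\Z$ is finitely generated, which is exactly the desired $2$-regularity.

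I expect no serious obstacle: once Theorem \ref{v2powerof2} is in hand, the decomposition $j = 2^k q + r$ reduces the corollary to the essentially classical $2$-regularity of the ruler sequence $u$. The only mild bookkeeping point is separating the two subcases $q+1<2^{l-k}$ versus $q+1=2^{l-k}$ in the main step.
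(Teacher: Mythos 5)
Your proposal is correct and follows essentially the same route as the paper: both use Theorem \ref{v2powerof2} to show that every kernel sequence $(\nu_2(t_{2^k}(2^ln+j)))_{n\in\N}$ with $l\geq k$ lies in $\Z\cdot(1)_{n\in\N}+\Z\cdot(\nu_2(n+1))_{n\in\N}$ and then exhibit $(1)_{n\in\N}$ and $(\nu_2(n+1))_{n\in\N}$ as explicit integer combinations of kernel elements (the paper does the first step by induction on $l$ via $j=2^{l-1}s+j'$, you do it by the direct decomposition $j=2^kq+r$, which amounts to the same computation). If anything, your write-up is slightly more careful than the paper's in explicitly retaining the finitely many generators with $l<k$ and in treating the cases $k=0$ and $k=1$ separately.
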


\begin{proof}
It suffices to show that the sequences $(1)_{n\in\N}$ and $(\nu_2(n+1))_{n\in\N}$ lie in the $\Z$-submodule of $\Z^{\N}$ generated by sequences $(\nu_2(t_{2^k}(2^ln+j)))_{n\in\N}$, $l\geq k$, $j\in\{0,1,...,2^l-1\}$, and generate these sequences.
Obviously, $(1)_{n\in\N}=(\nu_2(t_{2^k}(2^kn+1)))_{n\in\N}-(\nu_2(t_{2^k}(2^kn+2)))_{n\in\N}$ and $(\nu_2(n+1))_{n\in\N}$ can be written as $(\nu_2(t_{2^k}(2^kn+2^{k-1})))_{n\in\N}-(1)_{n\in\N}$.
Now, we prove by induction on $l\geq k$ that $(\nu_2(t_{2^k}(2^ln+j)))_{n\in\N}$ is of the form $(\alpha+\beta\nu_2(n+1))_{n\in\N}$, where $\alpha\in\N$ and $\beta\in\{0,1\}$. This statement is true for $l=k$ by Theorem \ref{v2powerof2}. For $l>k$ we write $j=2^{l-1}s+j'$, where $s\in\{0,1\}$ and $0\leq j'\leq 2^{l-1}-1$. Then by induction hypothesis we get the following
\begin{align*}
& (\nu_2(t_{2^k}(2^ln+j)))_{n\in\N}=(\nu_2(t_{2^k}(2^{l-1}(2n+s)+j')))_{n\in\N}\\
& =(\alpha+\beta\nu_2(2n+s+1))_{n\in\N}=(\alpha+\beta s+\beta s\nu_2(n+1))_{n\in\N},
\end{align*}
where the last equality holds because $\nu_2(2n+s+1)=s+s\nu_2(n+1)$ for $n\in\Z$ and $s\in\{0,1\}$.
\end{proof}

We can also describe the $2$-adic valuation of the numbers $t_3(n)$, $n\in\N$. We start with the following simple lemma.

\begin{lem}\label{8times}
For each $n\in\N$ we have
\begin{equation*}
t_3(4n+2)=8t_3(n-1),\quad t_3(4n+3)=8t_3(n),
\end{equation*}
where $t_3(-1)=0$.
\end{lem}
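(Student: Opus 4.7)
The plan is to iterate the recurrence of Lemma \ref{rectmn} with $m=3$. Specializing that lemma, for any $n \in \N$ we have
\begin{equation*}
t_3(2n) \;=\; t_3(n) + 3\,t_3(n-1), \qquad t_3(2n+1) \;=\; -3\,t_3(n) - t_3(n-1).
\end{equation*}
These are the only tools I need; the rest is a short algebraic simplification.

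To compute $t_3(4n+2)$, I write $4n+2 = 2(2n+1)$ and apply the ``even'' half of the recurrence:
\begin{equation*}
t_3(4n+2) \;=\; t_3(2n+1) + 3\,t_3(2n).
\end{equation*}
Now I substitute both displayed recurrences on the right-hand side, obtaining
\begin{equation*}
t_3(4n+2) \;=\; \bigl(-3\,t_3(n)-t_3(n-1)\bigr) + 3\bigl(t_3(n)+3\,t_3(n-1)\bigr) \;=\; 8\,t_3(n-1).
\end{equation*}
Similarly, writing $4n+3 = 2(2n+1)+1$ and using the ``odd'' half of the recurrence,
\begin{equation*}
t_3(4n+3) \;=\; -3\,t_3(2n+1) - t_3(2n) \;=\; -3\bigl(-3\,t_3(n)-t_3(n-1)\bigr) - \bigl(t_3(n)+3\,t_3(n-1)\bigr) \;=\; 8\,t_3(n).
\end{equation*}
The convention $t_3(-1)=0$ is exactly the convention $t_m(n)=0$ for $n<0$ from Lemma \ref{rectmn}, so no boundary issues arise for $n=0$.

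There is no real obstacle here: the lemma is purely a two-step unwinding of a linear recurrence, and the ``magic'' factor of $8$ appears automatically from the cancellation of the mixed $t_3(n), t_3(n-1)$ terms. The only thing worth noting for the write-up is that we must keep track of the domain $n \in \N$ carefully so that $2n+1 \geq 0$ and the substitution $n \mapsto 2n+1$ in Lemma \ref{rectmn} is legitimate.
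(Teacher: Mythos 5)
Your proof is correct and is essentially identical to the paper's: both apply the $m=3$ specialization of Lemma \ref{rectmn} twice, writing $4n+2=2(2n+1)$ and $4n+3=2(2n+1)+1$, and the factor $8$ falls out of the same cancellation. The remark that $t_3(-1)=0$ matches the convention $t_m(n)=0$ for $n<0$ is a nice touch but adds nothing beyond what the paper already assumes.
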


\begin{proof}
It suffices to use the recurrence for the sequence $(t_3(n))_{n\in\N}$ twice.
\begin{align*}
t_3(4n+3)&=-3t_3(2n+1)-t_3(2n)=9t_3(n)+3t_3(n-1)-t_3(n)-3t_3(n-1)=8t_3(n),\\
t_3(4n+2)&=t_3(2n+1)+3t_3(2n)=-3t_3(n)-t_3(n-1)+3t_3(n)+9t_3(n-1)=8t_3(n-1).
\end{align*}
\end{proof}

\begin{prop}\label{v2for3}
For each $n\in\N$ the following equalities hold:
\begin{align*}
\nu_2(t_3(4n))=\nu_2(t_3(4n+1))&=0,&\\
\nu_2(t_3(4n+3))=\nu_2(t_3(4n+6))&=3+\nu_2(t_3(n)),
\end{align*}
where in the second equality we assume that $t_3(n)=0$ for $n<0$.
\end{prop}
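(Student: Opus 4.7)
The proposition splits naturally into two independent claims, each handled by a short argument.

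For the first line, I would apply Lemma \ref{parityrest} with $m=3$, which gives $t_3(n)\equiv\binom{n+2}{2}=\tfrac{(n+1)(n+2)}{2}\pmod 2$. A direct case check on $n\bmod 4$ shows that this binomial coefficient is odd precisely when $n\equiv 0,1\pmod 4$: indeed, for $n=4k$ we get $(4k+1)(2k+1)$ and for $n=4k+1$ we get $(4k+3)(2k+1)$, both products of two odd numbers. Hence $\nu_2(t_3(4n))=\nu_2(t_3(4n+1))=0$.

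For the second line, the equalities follow immediately from Lemma \ref{8times}. The identity $t_3(4n+3)=8t_3(n)$ gives $\nu_2(t_3(4n+3))=3+\nu_2(t_3(n))$ on the nose (with the convention $\nu_2(0)=\infty$). For the other equality, rewrite $4n+6=4(n+1)+2$ and apply the first identity of Lemma \ref{8times}, obtaining $t_3(4n+6)=8t_3((n+1)-1)=8t_3(n)$, which yields $\nu_2(t_3(4n+6))=3+\nu_2(t_3(n))$.

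There is no real obstacle here: the parity statement is just Lemma \ref{parityrest} specialized to $m=3$ and evaluated on four residue classes, and the valuation identities are literally Lemma \ref{8times} with the index shift $n\mapsto n+1$ on one side. The only minor care needed is the boundary case $n=0$ in the equality $\nu_2(t_3(6))=3+\nu_2(t_3(0))=3$, which is covered by the stipulation that $t_3(n)=0$ for $n<0$ (so the sum collapses correctly), and the convention $\nu_2(0)=\infty$ makes the formula $\nu_2(t_3(4n+3))=3+\nu_2(t_3(n))$ consistent whenever $t_3(n)=0$.
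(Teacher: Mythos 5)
Your proof is correct and follows essentially the same route as the paper: the first line is Lemma \ref{parityrest} specialized to $m=3$ (the paper simply asserts the oddness, whereas you carry out the residue check on $\binom{n+2}{2}$ explicitly), and the second line is exactly Lemma \ref{8times} together with the index shift $4n+6=4(n+1)+2$. The only quibble is your closing remark: the convention $t_3(n)=0$ for $n<0$ is not actually what handles $n=0$ in $\nu_2(t_3(6))=3+\nu_2(t_3(0))$ (there $t_3(0)=1$ and the shift lands on a nonnegative index), but this does not affect the validity of the argument.
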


\begin{proof}
The numbers $t_3(4n)$ and $t_3(4n+1)$ are odd by Lemma \ref{parityrest}. For the proof of the equality $\nu_2(t_3(4n+3))=\nu_2(t_3(4n+6))=3+\nu_2(t_3(n))$ we use Lemma \ref{8times}.
\end{proof}

One can prove by induction on $n\in\N_+$ that every positive integer $n$ can be uniquely written in the form $n=\sum_{j=0}^d 4^ja_j$, where $a_j\in\{0,1,3,6\}$ for $j<d$ and $a_d\in\{1,2,3,6\}$. Then the $2$-adic valuation of $t_3(n)$, $n\in\N_+$ can be described in the following way.

\begin{thm}
For each $n\in\N_+$ there holds
\begin{equation*}
\nu_2(t_3(n))=
\begin{cases}
+\infty, & \mbox{ if } a_d=2 \mbox{ and } a_j\in\{3,6\} \mbox{ for } j<d\\
3k, & \mbox{ if } k=\max\{l\in\{1,...,d+1\}: a_j\in\{3,6\}\mbox{ for } j<l\} \mbox{ and } (a_d\neq 2\mbox{ or } k<d)
\end{cases}.
\end{equation*}
\end{thm}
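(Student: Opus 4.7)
The plan is to argue by strong induction on $n$ and to reduce $\nu_2(t_3(n))$ to $\nu_2(t_3(m))$ for a smaller $m$ via the four recursions from Proposition \ref{v2for3} and Lemma \ref{8times}. The key observation that makes the base-$4$ representation compatible with these recursions is that the digit set $\{0,1,3,6\}$ is a complete set of residues modulo $4$ (since $6\equiv 2\pmod 4$), so $a_0$ determines $n\bmod 4$: the values $a_0=0,1,3,6$ correspond to $n\equiv 0,1,3,2\pmod 4$ respectively.

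After dispatching the base cases $d=0$ (that is, $n\in\{1,2,3,6\}$) by direct computation, the inductive step splits on $a_0$. If $a_0\in\{0,1\}$, Proposition \ref{v2for3} gives $\nu_2(t_3(n))=0$, while the set $\{l\in\{1,\ldots,d+1\}:a_j\in\{3,6\}\text{ for }j<l\}$ is empty (its condition forces $a_0\in\{3,6\}$), so $k=0$ with the natural convention $\max\emptyset=0$, giving $3k=0$. If instead $a_0\in\{3,6\}$, write $n=4m+a_0$ with $m=\sum_{j=0}^{d-1}a_{j+1}4^j$; Proposition \ref{v2for3} (applied directly when $a_0=3$, and via $4m+6=4(m+1)+2$ when $a_0=6$) yields $\nu_2(t_3(n))=3+\nu_2(t_3(m))$. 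Crucially, the unique special representation of $m$ has length $d-1$ with digit sequence $(a_1,\ldots,a_d)$, so the hypotheses of the theorem still apply to $m$.

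Now I invoke the inductive hypothesis on $m$. If $a_d=2$ and $a_j\in\{3,6\}$ for all $j<d$, then the representation of $m$ satisfies the analogous condition with $d'=d-1$, so by induction $t_3(m)=0$ and thus $t_3(n)=0$ by Lemma \ref{8times}, matching the first branch. Otherwise the inductive hypothesis gives $\nu_2(t_3(m))=3k'$ where $k'$ is the maximum for the representation of $m$. The reindexing $l=l'+1$, combined with the fact that $a_0\in\{3,6\}$ is automatic in this case, shows that the condition "$a_j\in\{3,6\}$ for $j<l$" for $n$ at level $l$ translates to the condition "$a_{j+1}\in\{3,6\}$ for $j<l-1$" for $m$ at level $l-1$, so $k=k'+1$ and $\nu_2(t_3(n))=3+3k'=3k$, as required.

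The main obstacle is nothing sophisticated but rather the careful bookkeeping of the index shift $a_j\mapsto a_{j+1}$ and of the empty-maximum convention. In particular one must verify that the excluded combination "$a_d=2$ and $k=d$" in the second branch (for $n$) corresponds bijectively to the $+\infty$ sub-case for $m$, so that the dichotomy of the theorem is preserved by the induction; this is what forces the qualifier "$a_d\neq 2$ or $k<d$" in the statement and is the only place where one needs to think before computing.
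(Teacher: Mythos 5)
Your proof is correct and follows essentially the same route as the paper: induction on the length $d$ of the special base-$4$ representation, writing $n=4n'+a_0$ and applying Proposition \ref{v2for3} according to whether $a_0\in\{0,1\}$ or $a_0\in\{3,6\}$, then transferring the statement to the shifted digit string $(a_1,\ldots,a_d)$. Your write-up is in fact somewhat more careful than the paper's about the $\max\emptyset=0$ convention and about checking that the two branches of the dichotomy correspond under the reduction, but these are details the paper leaves implicit rather than a different argument.
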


\begin{proof}
The proof will be performed by induction on $d$. If $d=0$ then $n\leq 6$ and we check the statement of our theorem one by one. If $d>0$ then we write $n=4n'+a_0$. Then $n'=\sum_{j=0}^{d-1} 4^jb_j$, where $b_j=a_{j+1}$ for $j\in\{0,...,d-1\}$. If $a_0\in\{0,1\}$ then by Proposition \ref{v2for3} the number $t_3(n)$ is odd and our assertion follows. If $a_0\in\{3,6\}$ then we use Proposition \ref{v2for3} and the induction hypothesis to obtain the following:
\begin{align*}
\nu_2(t_3(n))&=3+\nu_2(t_3(n'))\\
&=\begin{cases}
+\infty, & \mbox{ if } b_{d-1}=2 \mbox{ and } b_j\in\{3,6\} \mbox{ for } j\in\{1,...,d-1\}\\
3+3k, & \mbox{ if } b_{d-1}\neq 2 \mbox{ and } k=\max\{l\in\{0,...,d\}: b_j\in\{3,6\}\mbox{ for } j<l\}
\end{cases}\\
&= \begin{cases}
+\infty, & \mbox{ if } a_d=2 \mbox{ and } a_j\in\{3,6\} \mbox{ for } j<d\\
3k, & \mbox{ if } a_d\neq 2 \mbox{ and } k=\max\{l\in\{1,...,d+1\}: a_j\in\{3,6\}\mbox{ for } j<l\}
\end{cases}.
\end{align*}
\end{proof}

\subsection{Unboundedness of ${\bf t}_{m}$ for $m=2^{k}$ and $m=3$}

As an application of Lemma \ref{rectmn} we get:

\begin{thm}
If $m\in\N_{\geq 2}$ then we have
\begin{equation*}
t_{m}(n)=O(n^{\frac{m}{2}})
\end{equation*}
for each $n\in\N$.
\end{thm}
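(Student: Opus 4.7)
The plan is to proceed by strong induction on $n$, combining Lemma \ref{rectmn} with a Parseval-type estimate on truncated products. The case $m=2$ closes directly: $t_2(2n+1)=-2t_2(n)$ and $t_2(2n)=t_2(n)+t_2(n-1)$ give $M_2(2n+1)\le 2M_2(n)$ with $M_2(n):=\max_{k\le n}|t_2(k)|$, hence $M_2(n)=O(n)=O(n^{2/2})$.

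For $m\ge 3$ the straightforward $L^\infty$ induction with hypothesis $|t_m(n)|\le C_m n^{m/2}$ does not close, because Lemma \ref{rectmn} produces the factor $\sum_{j=0}^{\lfloor m/2\rfloor}\binom{m}{2j}=2^{m-1}>2^{m/2}$. I would therefore switch viewpoints: factor $F_m(x)=P_k(x)^m F_m(x^{2^k})$ with $P_k(x)=\prod_{j=0}^{k-1}(1-x^{2^j})$, so that for $n<2^k$ one has $t_m(n)=[x^n]P_k(x)^m$. The theorem then reduces to the coefficient estimate $\max_a |[x^a]P_k(x)^m|=O(2^{km/2})$; choosing the smallest $k$ with $2^k>n$ yields $|t_m(n)|=O(n^{m/2})$.

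To prove the maximum bound, I would first control $\int_0^{2\pi}|P_k(e^{i\theta})|^{2m}\,d\theta$ recursively, via $|P_{k+1}|^{2m}=|P_k|^{2m}|1-e^{i2^k\theta}|^{2m}$ and the Fourier expansion $|1-e^{i\phi}|^{2m}=\sum_{|c|\le m}(-1)^c\binom{2m}{m+c}e^{ic\phi}$. This expresses the integral as a combination of autocorrelations of the coefficient sequence of $P_k^m$ at lags $c\cdot 2^k$, $|c|\le m-1$; combinatorial analysis of these correlations, together with the fact that $P_k^m$ has degree below $m\cdot 2^k$ (so the largest-lag terms vanish), yields $\int|P_k|^{2m}\,d\theta=O(2^{k(m+1)})$, i.e., by Parseval, $\sum_a|[x^a]P_k^m|^2=O(2^{k(m+1)})$.

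The main obstacle will be upgrading this $L^2$ estimate to the required $L^\infty$ bound. Parseval alone gives only $\max\le O(2^{k(m+1)/2})$, which translates into $|t_m(n)|=O(n^{(m+1)/2})$—off by a factor $\sqrt{n}$ from the target. I would close this gap by exploiting the self-similar recurrence structure inherited by $P_k(x)^m$ from Lemma \ref{rectmn}: the recurrence forces the coefficients to spread across $\Theta(2^k)$ indices rather than concentrating on $o(2^k)$ of them, so combined with the $L^2$ bound one obtains the desired $\max|[x^a]P_k^m|=O(2^{km/2})$ and hence $t_m(n)=O(n^{m/2})$.
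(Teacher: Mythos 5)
Your $m=2$ argument is fine, and your diagnosis of the obstruction for $m\ge 3$ is sharp: the paper's own proof is exactly the na\"ive $L^\infty$ induction with hypothesis $|t_m(n)|\le mn^{m/2}$, and its key displayed step $\sum_{j=0}^{\lfloor m/2\rfloor}\binom{m}{2j}\,m(n')^{m/2}=m(2n')^{m/2}$ amounts to $2^{m-1}=2^{m/2}$, which holds only for $m=2$. So you have correctly identified that the induction does not close as written for $m\ge 3$. The problem is that your replacement strategy also fails, at two distinct points.

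First, the reduction in your third step is to a false statement. Evaluating at $\omega=e^{2\pi i/3}$ gives $|P_k(\omega)|=3^{k/2}$ (each factor $|1-\omega^{2^j}|=\sqrt{3}$), hence $3^{km/2}=|P_k(\omega)^m|\le m2^k\max_a|[x^a]P_k(x)^m|$, so some coefficient of $P_k^m$ has absolute value at least $m^{-1}2^{(m\log_2 3/2-1)k}$; since $\log_2 3>1$, this exceeds $C\,2^{km/2}$ for every constant $C$ as soon as $m\ge 4$. Thus $\max_a|[x^a]P_k^m|=O(2^{km/2})$ is simply not true for $m\ge 4$ (the offending coefficients must sit at indices $a\ge 2^k$, where $[x^a]P_k^m\ne t_m(a)$, but your whole-circle $L^2$ machinery cannot distinguish the head $a<2^k$ from that tail). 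The same evaluation, combined with Parseval, shows your claimed bound $\int|P_k|^{2m}\,d\theta=O(2^{k(m+1)})$ is false once $m(\log_2 3-1)>3$, i.e.\ for $m\ge 6$. Second, even in the range where the $L^2$ bound might hold, the final upgrade from $L^2$ to $L^\infty$ is not an argument: knowing $\sum_a c_a^2=O(2^{k(m+1)})$ and that the mass is ``spread over $\Theta(2^k)$ indices'' only controls the \emph{average} square, never the maximum --- a single coefficient can always carry an arbitrarily large share of an $\ell^2$ budget, and ruling that out is exactly the statement to be proved, so this step is circular. As it stands, neither your route nor the paper's printed induction establishes the theorem for $m\ge 3$; any repair has to exploit cancellation specific to the initial segment $n<2^k$ (e.g.\ a transfer-matrix analysis of the autocorrelations $\sum_{n<2^k}t_m(n)t_m(n-d)$ driven by Lemma~\ref{rectmn}), not global coefficient bounds for $P_k^m$.
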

\begin{proof}
We will prove by induction that $$|t_m(n)|\leq mn^{\frac{m}{2}}.$$ Clearly, the above inequality holds for $n\in\{0,1\}$. If $n>1$ and is even then we write $n=2n'$ for some $n'\in\N$. We use Lemma \ref{rectmn} and the induction hypothesis (we recall that $t_m(n)=0$ for $n<0$).
\begin{align*}
|t_m(n)| & =|t_m(2n')|=\left|\sum_{j=0}^{\lfloor\frac{m}{2}\rfloor} {m\choose 2j}t_m(n'-j)\right|\leq\sum_{j=0}^{\lfloor\frac{m}{2}\rfloor} {m\choose 2j}|t_m(n'-j)|\\
& <\sum_{j=0}^{\lfloor\frac{m}{2}\rfloor} {m\choose 2j}m(n')^{\frac{m}{2}}=m(2n')^{\frac{m}{2}}=mn^{\frac{m}{2}}.
\end{align*}
If $n>1$ is odd then we write $n=2n'+1$ for some $n'\in\N$ and by Lemma \ref{rectmn} we obtain the following:
\begin{align*}
|t_m(n)| & =|t_m(2n'+1)|=\left|\sum_{j=0}^{\lfloor\frac{m-1}{2}\rfloor} {m\choose 2j+1}t_m(n'-j)\right|\leq\sum_{j=0}^{\lfloor\frac{m-1}{2}\rfloor} {m\choose 2j+1}|t_m(n'-j)|\\
& <\sum_{j=0}^{\lfloor\frac{m-1}{2}\rfloor} {m\choose 2j+1}m(n')^{\frac{m}{2}}=m(2n')^{\frac{m}{2}}<mn^{\frac{m}{2}}.
\end{align*}
\end{proof}

Let us observe that the crude estimation using the fact that $|t_{1}(n)|=1$ gives only the equality $t_{m}(n)=O(n^{m})$. The above result shows that there is a lot of cancellation in the sum defining $t_{m}(n)$ and it is quite natural to ask whether the sequence ${\bf t}_{m}$ is bounded or not. Unfortunately, we were unable to answer this question in general but we believe that the following is true.

\begin{conj}\label{unbound}
For each $m\in\N_{\geq 2}$ we have $\limsup_{n\to +\infty}t_{m}(n)=+\infty$ and $\liminf_{n\to +\infty}t_{m}(n)=-\infty$.
\end{conj}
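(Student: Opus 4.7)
The plan is to split the conjecture according to the arithmetic data available on $t_m(n)$. First, for $m=2$ the statement is immediate from Theorem \ref{valuesoft2}, since ${\bf t}_2$ already exhausts $\Z\bs\{0\}$ and hence is unbounded in both directions. Second, for $m$ a power of two and for $m=3$ we have explicit $2$-adic valuation formulas, which we would try to combine with a sign-switching mechanism extracted from the recurrence in Lemma \ref{rectmn}. Third, for the remaining values of $m$ no such $p$-adic handle is known and a different strategy is required.

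For the second class, fix $m=2^k$ and consider the subsequence $n_l := 2^{k+l}-2^k+1$; Theorem \ref{v2powerof2} (applied with $j=1$, $n+1=2^l$) then yields $\nu_2(t_{2^k}(n_l))=k+l$, hence $|t_{2^k}(n_l)|\geq 2^{k+l}\to\infty$. The odd-index recurrence
\begin{equation*}
t_m(2n+1)=-m\,t_m(n)-\sum_{j=1}^{\lfloor(m-1)/2\rfloor}\binom{m}{2j+1}t_m(n-j)
\end{equation*}
shows that whenever $|t_m(n)|$ is much larger than each of the competing terms $|t_m(n-1)|,\ldots,|t_m(n-\lfloor(m-1)/2\rfloor)|$, the sign of $t_m(2n+1)$ is forced to be opposite to that of $t_m(n)$. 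Iterating this step along a subsequence where $|t_{2^k}(n_l)|$ is provably dominant would convert a single large positive value of $t_m$ into both a $+\infty$ and a $-\infty$ subsequence, yielding the conjecture. The analogous program works for $m=3$ using Proposition \ref{v2for3} together with the subsequent theorem describing $\nu_2(t_3(n))$.

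The main obstacle, and the reason the statement is only a conjecture, is precisely the dominance required in this sign-switching step for $m=2^k$ with $k\geq 2$. The $2$-adic argument furnishes only $|t_{2^k}(n_l)|\geq 2^{k+l}$, whereas the growth estimate $|t_m(n)|\leq m n^{m/2}$ proved above permits the competing terms $t_m(n_l-j)$ to be as large as $n_l^{2^{k-1}}$, which is enormously bigger than $2^{k+l}$; consequently dominance cannot be concluded from the information at hand. A natural remedy would be to establish a matching polynomial lower bound $|t_m(n)|\gg n^{m/2}$ along some explicit subsequence, which one might hope to extract by a saddle-point or circle-method analysis of $F_m(x)$ on the circle $|x|=1-\tfrac{1}{n}$, exploiting the highly structured zero set of $F(x)$ at the dyadic roots of unity. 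For $m\notin\{2^k,3\}$ even unboundedness of $|t_m(n)|$ is not known; a possible attack is to induct on $m$ via the addition formula $f_n(m+1)=\sum_{k=0}^n f_k(m)(-1)^{s_2(n-k)}$, but the oscillating Prouhet--Thue--Morse kernel makes controlling the cancellations a substantial further difficulty.
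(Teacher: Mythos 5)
Your reduction of the problem to unboundedness of $|t_m(n)|$, and your use of Theorem \ref{v2powerof2} to produce the subsequence $n_l=2^k(2^l-1)+1$ with $\nu_2(t_{2^k}(n_l))=k+l$ and hence $|t_{2^k}(n_l)|\geq 2^{k+l}$, both match the paper. The gap is in the next step: you try to convert one large value into large values of both signs by forcing the sign of $t_m(2n+1)$ through dominance of the $i=0$ term in the odd-index recurrence, you correctly observe that the available bounds cannot establish such dominance, and you then conclude that the case $m=2^k$ with $k\geq 2$ remains out of reach. It does not: the paper proves the conjecture for all $m=2^k$ and for $m=3$. The missing idea is the Lemma placed immediately after the conjecture, which shows that for $m\geq 2$, if $\limsup_{n\to\infty}|t_m(n)|=+\infty$ then $\limsup t_m(n)=+\infty$ and $\liminf t_m(n)=-\infty$, with no dominance hypothesis whatsoever.

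That lemma sidesteps the sign-control problem entirely by a positivity argument. Suppose $\limsup t_m(n)=+\infty$ but $t_m(n)\geq -C$ for all $n$. From $-t_m(2n+1)=\sum_{i=0}^{\lfloor (m-1)/2\rfloor}\binom{m}{2i+1}t_m(n-i)\leq C$ one gets
$$
C+C\sum_{i=0}^{\lfloor (m-1)/2\rfloor}\binom{m}{2i+1}\;\geq\;\sum_{i=0}^{\lfloor (m-1)/2\rfloor}\binom{m}{2i+1}\bigl(t_m(n-i)+C\bigr)\;\geq\;m\bigl(t_m(n)+C\bigr),
$$
where the last inequality holds because every shifted term $t_m(n-i)+C$ is nonnegative, so the full sum dominates the single $i=0$ term $\binom{m}{1}\bigl(t_m(n)+C\bigr)$ no matter how large the competing terms are. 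The left-hand side is a constant independent of $n$, so $t_m(n)$ would be bounded above, a contradiction; the symmetric case follows by replacing $t_m$ with $-t_m$. In other words, the recurrence is used not to flip signs along a chosen subsequence but to show that one-sided boundedness forces two-sided boundedness, whose contrapositive is exactly what is needed once $|t_m|$ is known to be unbounded. Your observation that the problem is genuinely open for $m\notin\{2^k,3\}$, and your suggestion of a saddle-point analysis there, are reasonable, but you should record that the cases $m=2^k$ and $m=3$ are settled theorems rather than obstructions.
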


The next result shows that if the sequence ${\bf t}_{m}$ is unbounded on one side then it is unbounded on both sides. 

\begin{lem}
 Let $m \geq 2$. If $\displaystyle\limsup_{n \to \infty} |t_m(n)| = +\infty$ then $\displaystyle\limsup_{n \to \infty} t_m(n) = +\infty$ and $\displaystyle\liminf_{n \to \infty} t_m(n) = -\infty$.
\end{lem}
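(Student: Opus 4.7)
The plan is to exploit the recurrence for odd indices from Lemma \ref{rectmn}, written with its leading term isolated:
\begin{equation*}
t_{m}(2n+1) = -m\,t_{m}(n) - \sum_{j=1}^{\lfloor (m-1)/2 \rfloor} \binom{m}{2j+1} t_{m}(n-j).
\end{equation*}
Since $m\geq 2$, the coefficient $-m$ in front of $t_{m}(n)$ has absolute value at least $2$, and the point is that if ${\bf t}_{m}$ is bounded on one side, then a single very large value $|t_{m}(n_k)|$ gets amplified and sign-flipped to produce a value $t_{m}(2n_k+1)$ that violates the assumed bound on the opposite side. This is the entire mechanism.

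First I would extract, from the hypothesis $\limsup_{n}|t_{m}(n)| = +\infty$, a subsequence $(n_k)$ along which $|t_{m}(n_k)|\to\infty$, and, after passing to a further subsequence, assume that $t_{m}(n_k)$ tends either to $+\infty$ or to $-\infty$. Set $C:=\sum_{j=1}^{\lfloor (m-1)/2 \rfloor}\binom{m}{2j+1}$, with the convention that the empty sum (occurring when $m=2$) is $0$.

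Consider first the case $t_{m}(n_k)\to +\infty$. Then $\limsup t_{m}(n) = +\infty$ is immediate; to obtain the complementary conclusion $\liminf t_{m}(n) = -\infty$, I argue by contradiction. If some $B>0$ satisfies $t_{m}(n)\geq -B$ for all $n$, then the bound $t_{m}(n_k-j)\geq -B$ yields $-\binom{m}{2j+1}t_{m}(n_k-j)\leq B\binom{m}{2j+1}$, so the recurrence gives
\begin{equation*}
t_{m}(2n_k+1)\;\leq\; -m\,t_{m}(n_k)+CB \;\longrightarrow\; -\infty,
\end{equation*}
contradicting $t_{m}\geq -B$. The remaining case $t_{m}(n_k)\to -\infty$ is handled symmetrically: $\liminf t_{m}(n) = -\infty$ is immediate, and if $t_{m}(n)\leq A$ for all $n$ then $-\binom{m}{2j+1}t_{m}(n_k-j)\geq -A\binom{m}{2j+1}$ and $t_{m}(2n_k+1)\geq -m\,t_{m}(n_k)-CA\to +\infty$, contradicting the upper bound.

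I do not foresee any genuine obstacle in carrying out this plan. The only point requiring care is the clean isolation of the $j=0$ term in the recurrence from the rest, and this is precisely where the assumption $m\geq 2$ enters, guaranteeing that the leading coefficient $-m$ is not $\pm 1$ but forces a strict amplification that cannot be absorbed by any finite constant $CB$ or $CA$.
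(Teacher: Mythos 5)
Your proof is correct and uses essentially the same mechanism as the paper: the odd-index recurrence $t_m(2n+1)=-\sum_{j}\binom{m}{2j+1}t_m(n-j)$, with the $j=0$ term (coefficient $m$) isolated and the assumed one-sided bound used to control the remaining terms, producing a contradiction. The only cosmetic difference is that the paper shifts every term by $+C$ to make the summands nonnegative before discarding all but the leading one, whereas you bound the tail sum directly along an explicit subsequence.
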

\begin{proof}
Suppose that $\displaystyle\limsup_{n \to \infty} t_m(n) = +\infty$ and $t_m(n) \geq -C$ for some positive constant $C$. We have that
\begin{equation*}
 C \geq -t_m(2n+1) =  \sum_{i=0}^{[\frac{m-1}{2}]} \binom{m}{2i+1} t_m(n-i)
\end{equation*}
Therefore,
\begin{equation*}
 C+C\sum_{i=0}^{[\frac{m-1}{2}]} \binom{m}{2i+1} \geq  \sum_{i=0}^{[\frac{m-1}{2}]} \binom{m}{2i+1} (t_m(n-i)+C) \geq m (t_m(n)+C)
\end{equation*}

The number $C+C\displaystyle\sum_{i=0}^{[\frac{m-1}{2}]} \binom{m}{2i+1}$ is a constant independent of $n$.
From our assumption $\displaystyle\limsup_{n \to \infty} m (t_m(n)+C) = + \infty$ so we get a contradiction.

One can prove our lemma in the remaining case $\displaystyle\liminf_{n \to \infty} t_m(n) = -\infty$ and $t_m(n) < C$ for some positive constant $C$, by replacing $t_m(n)$ by $-t_m(n)$.
\end{proof}

Using the expression for $\nu_{2}(t_{2^{k}}(n))$ presented in Theorem \ref{v2powerof2} and the above result, we immediately get

\begin{thm}
The Conjecture \ref{unbound} is true for $m=2^{k}$.
\end{thm}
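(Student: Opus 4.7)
The plan is to chain together the two results immediately preceding the statement. The preceding lemma reduces Conjecture \ref{unbound} for a given $m\geq 2$ to the a priori weaker statement that $\limsup_{n\to\infty}|t_{m}(n)|=+\infty$, so all I need to produce for $m=2^{k}$ is a subsequence of $(t_{2^{k}}(n))_{n\in\N}$ whose absolute values diverge.

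For this I would exploit Theorem \ref{v2powerof2}, which pins down the $2$-adic valuations of the terms exactly. Taking $j=1$ (so $\nu_{2}(j)=0$) in that theorem gives
\begin{equation*}
\nu_{2}\bigl(t_{2^{k}}(2^{k}n+1)\bigr)=k+\nu_{2}(n+1)
\end{equation*}
for every $n\in\N$. Now specialize $n=2^{s}-1$ with $s\in\N$, so that $\nu_{2}(n+1)=s$; this yields
\begin{equation*}
\nu_{2}\bigl(t_{2^{k}}(2^{k}(2^{s}-1)+1)\bigr)=k+s.
\end{equation*}
Since $\nu_{2}$ of a nonzero integer $a$ satisfies $|a|\geq 2^{\nu_{2}(a)}$, the value $t_{2^{k}}(2^{k}(2^{s}-1)+1)$ is a nonzero integer of absolute value at least $2^{k+s}$. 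As $s\to\infty$ these absolute values tend to $+\infty$, which gives $\limsup_{n\to\infty}|t_{2^{k}}(n)|=+\infty$, and then the preceding lemma upgrades this to $\limsup t_{2^{k}}(n)=+\infty$ and $\liminf t_{2^{k}}(n)=-\infty$.

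There is no real obstacle here: the whole argument rides on the explicit valuation formula of Theorem \ref{v2powerof2}. The one point worth emphasizing when writing it out is that the formula not only provides a large power of $2$ dividing the term but also guarantees the term is nonzero (since its $2$-adic valuation is finite), which is precisely what converts divisibility into a genuine lower bound on $|t_{2^{k}}(n)|$.
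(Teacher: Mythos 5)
Your proof is correct and follows exactly the route the paper intends: the paper's own (one-line) proof is to combine Theorem \ref{v2powerof2} with the preceding lemma, and you have simply made explicit the subsequence $n=2^{k}(2^{s}-1)+1$ on which the $2$-adic valuation, hence the absolute value, blows up. No issues.
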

\begin{proof}
Apply Theorem \ref{v2powerof2} in the case of $m=2^{k}$ and Theorem \ref{v2for3} in the case of $m=3$.
\end{proof}

In the case of $m=2, 3$ we can give more precise result. Let
\begin{align*}
&\op{Max}_{m}(k)=\op{max}\{t_{m}(n):\;n\in [0,2^{k}]\},\\
&\op{Min}_{m}(k)=\op{min}\{t_{m}(n):\;n\in [0,2^{k}]\}.
\end{align*}

\begin{thm}\label{maxmin23}
Let $k\in\N_3$. We have the following equalities:
\begin{equation*}
\begin{array}{lllll}
\op{Max}_{2}(k) & = &2^{2\lfloor\frac{k}{2}\rfloor}           & = &t_{2}(2^{2\lfloor\frac{k}{2}\rfloor}-1),\\
\op{Min}_{2}(k) & = & -2^{2\left\lfloor\frac{k-1}{2}\right\rfloor +1} & = &t_2\left(2^{2\left\lfloor\frac{k-1}{2}\right\rfloor +1}-1\right).
\end{array}
\end{equation*}

%\begin{align*}
%&\op{Max}_{2}(m)=2^{2\lfloor\frac{m}{2}\rfloor}=t_{2}(2^{2\lfloor\frac{m}{2}\rfloor}-1),
%&\op{Min}_{2}(m)=-2^{2\lfloor\frac{m-1}{2}\rfloor+1}-[m=2]=t_{2}(2^{2\lfloor\frac{m-1}{2}\rfloor}-1).
%\end{align*}
Moreover, for $m=3$ and $k\in\N$ we have
\begin{equation*}
\begin{array}{lll}
  \op{Max}_{3}(2k)   & = & 2^{3k},                   \\
  \op{Max}_{3}(2k+1) & = & \frac{15}{7}(2^{3k}-1)+[k=0],   \\
  \op{Min}_{3}(2k)   & = & -\frac{3}{7}(2^{3k+1}+5),           \\
  \op{Min}_{3}(2k+1) & = & -3\cdot 2^{3k},
\end{array}
\end{equation*}
and
\begin{align*}
&\op{Max}_{3}(k)=t_{3}\left(2^{k}-\frac{1}{2}(1+(-1)^{k})\right),  k\geq 1,\\
&\op{Min}_{3}(k)=t_{3}\left(2^{k}-\frac{1}{2}(1-(-1)^{k})\right).
\end{align*}
\end{thm}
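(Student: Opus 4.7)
The proof proceeds by strong induction on $k$ using the specializations of Lemma~\ref{rectmn}:
\begin{align*}
t_2(2n)&=t_2(n)+t_2(n-1), & t_2(2n+1)&=-2t_2(n),\\
t_3(2n)&=t_3(n)+3t_3(n-1), & t_3(2n+1)&=-3t_3(n)-t_3(n-1),
\end{align*}
with the convention $t_m(n)=0$ for $n<0$.

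First I would verify that the proposed extreme values are in fact attained at the proposed arguments. For $m=2$, the identity $2^j-1=2(2^{j-1}-1)+1$ combined with the odd recurrence gives $t_2(2^j-1)=(-2)^j$ by induction on $j$, which realizes $\op{Max}_2(k)$ and $\op{Min}_2(k)$ at $n=2^{2\lfloor k/2\rfloor}-1$ and $n=2^{2\lfloor(k-1)/2\rfloor+1}-1$ respectively. For $m=3$, introduce $u_j=t_3(2^j-1)$, $v_j=t_3(2^j-2)$, $w_j=t_3(2^j)$; the recurrences yield the triangular linear system
\[
u_{j+1}=-3u_j-v_j,\qquad v_{j+1}=u_j+3v_j,\qquad w_{j+1}=w_j+3u_j,
\]
whose $(u,v)$-block has characteristic polynomial $\lambda^2-8$. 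Solving with $(u_0,v_0,w_0)=(1,0,-3)$ gives $u_{2j}=2^{3j}$, $u_{2j+1}=-3\cdot 2^{3j}$, $v_{2j}=0$ (for $j\ge 1$), $v_{2j+1}=2^{3j}$, and then telescoping $w_j=-3+3\sum_{i<j}u_i$ via the geometric sum $\sum_{i=0}^{j-1}8^i=\tfrac{8^j-1}{7}$ produces $w_{2j}=-\tfrac{3}{7}(2^{3j+1}+5)$ and $w_{2j+1}=\tfrac{15}{7}(2^{3j}-1)$. The denominators $7=2^3-1$ arise from this geometric sum, and the correction $[k=0]$ accounts for the fact that at $k=1$ the maximum $1=t_3(0)$ is attained outside the orbit $n=2^{k'}$.

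To establish optimality, I would inductively split any $n\in[0,2^k]$ by parity. Odd $n=2p+1$ with $p\in[0,2^{k-1}-1]$: the recurrence writes $t_m(n)$ as a fixed $\Z$-linear combination of $t_m(p)$ and $t_m(p-1)$, so the level-$(k-1)$ bounds transfer directly to matching level-$k$ bounds on odd $n$ (one checks from the closed forms that the level-$(k-1)$ argmax/argmin either lie within $[0,2^{k-1}-1]$ or sit on the orbit handled in parallel by the even case). The even case $n=2p$ with $p\in[0,2^{k-1}]$ is analogous but much more delicate.

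The main obstacle is precisely this even case: the triangle inequality $|t_m(p)+c\,t_m(p-1)|\le(c+1)\max(|\op{Max}_m(k-1)|,|\op{Min}_m(k-1)|)$ overshoots the target by roughly a factor of two, since two consecutive values of $t_m$ cannot simultaneously saturate their bounds with a common sign. I would overcome this by strengthening the inductive hypothesis to track the joint pair $(t_m(p),t_m(p-1))\in\Z^2$, showing it lies in a convex polygonal region $R_{k-1}\subset\R^2$ invariant (up to coordinate swap) under the $2\times 2$ step-matrices encoding the two recurrences. The spectrum of these matrices—eigenvalues $\{-2,1\}$ for $m=2$ and $\{\pm 2\sqrt 2\}$ for $m=3$—reproduces exactly the growth rates $2^k$ and $2^{3k/2}$ appearing in the statement. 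The vertices of $R_k$ at which equality holds are precisely the points on the orbits of $2^k-1$ and $2^k$ under the halving map $n\mapsto\lfloor n/2\rfloor$, which simultaneously yields the argmax/argmin formulas $n=2^k-\tfrac{1}{2}(1\pm(-1)^k)$.
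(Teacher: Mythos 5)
Your computation of the attained values is correct and complete: the identity $t_2(2^j-1)=(-2)^j$, and the closed forms for $u_j=t_3(2^j-1)$, $v_j=t_3(2^j-2)$, $w_j=t_3(2^j)$ obtained from the triangular system with characteristic polynomial $\lambda^2-8$, do reproduce the stated formulas (including the $[k=0]$ correction), and this part is actually more explicit than the paper, which dismisses it as "a simple computation". You have also correctly isolated the crux of the optimality half: applying the triangle inequality to $t_m(2p)=t_m(p)+\binom{m}{2}t_m(p-1)$ overshoots the target by roughly a factor of two, so the induction hypothesis must be strengthened.

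The strengthening you propose, however, is only announced, not carried out, and for $m=3$ it cannot work in the form stated. The regions $R_k$ are never defined and their invariance is never checked; more seriously, the state space of consecutive pairs is not closed under the doubling dynamics when $m=3$: from $(t_3(n),t_3(n-1))$ one can indeed form $(t_3(2n+1),t_3(2n))=(-3t_3(n)-t_3(n-1),\,t_3(n)+3t_3(n-1))$, but the other consecutive pair $(t_3(2n),t_3(2n-1))$ also needs $t_3(n-2)$, because $t_3(2n-1)=-3t_3(n-1)-t_3(n-2)$. So there are not "two $2\times 2$ step-matrices" acting on pairs; one would have to track triples (only for $m=2$, where $t_2(2n+1)=-2t_2(n)$ involves a single term, do both transitions act on consecutive pairs). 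The paper avoids this entirely by strengthening the \emph{scalar} hypothesis instead: writing $F_1(k),F_2(k)$ for the claimed extremes, it proves that $\tfrac12 F_2(k)<t_3(n)<\tfrac12 F_1(k)$ for all $n\le 2^k-2$, so that $t_3(n')+3t_3(n'-1)<2F_1(k)<F_1(k+1)$ and $-3t_3(n')-t_3(n'-1)<-2F_2(k)<F_1(k+1)$ for such $n'$, and it treats the few boundary indices $n'\in\{2^k-1,2^k\}$ separately by exact evaluation (for $m=2$, by the sign relations among $t_2(2^k-2)$, $t_2(2^k-1)$, $t_2(2^k)$ together with the bound $|t_2(n)|<2^{k-1}$ off the two extremal indices). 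Some concrete version of this "half-bound away from the extremal indices" device, or a fully worked-out triple-tracking argument, is what your proposal is missing; as written, the optimality direction is a plan rather than a proof.
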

\begin{proof}
We start with the case of $m=2$. First, let us observe that $t_2(2^k-1)=(-2)^k$ for each $k\in\N_+$. We will prove by induction on $k\in\N_3$ the following statement:
\begin{center}
If $n\in\{0,...,2^k\}\bs\{2^{k-1}-1, 2^k-1\}$ then $|t_2(n)|<2^{k-1}$ and $\sgn t_2(2^k-2)=\sgn t_2(2^k) = -\sgn t_2(2^k-1)$.
\end{center}
Clearly, our statement is true for $k=3$. Let us assume that the statement holds for some $k\in\N_3$. We will show that it holds for $k+1$. If $n\leq 2^k$ and $n\neq 2^k-1$ then obviously $|t_2(n)|<2^k$. Hence it suffices to prove the statement for $n>2^k$. Let us consider the case $n=2l$. If $l\not\in\{2^k-1, 2^k\}$ then $|t_2(n)|\leq |t_2(l)|+|t_2(l-1)|<2^k$, since $|t_2(l)|$ and $|t_2(l-1)|$ are less than $2^{k-1}$. If $l=2^k-1$ then we use the facts that $0<|t_2(2^k-2)|<2^{k-1}<|t_2(2^k-1)|$ and $\sgn t_2(2^k-2)=-\sgn t_2(2^k-1)$ to obtain $|t_2(2^{k+1}-2)|=|t_2(2^k-1)+t_2(2^k-2)|<|t_2(2^k-1)|=2^k$ and $\sgn t_2(2^{k+1}-2)=\sgn t_2(2^k-1)=\sgn \left(-\frac{1}{2}t_2(2^{k+1}-1)\right)=-\sgn t_2(2^{k+1}-1)$. Analogously we prove that $|t_2(2^{k+1})|<2^k$ and $\sgn t_2(2^{k+1})=-\sgn t_2(2^{k+1}-1)$. We are left with the case $n=2l+1$. If $n=2l+1\neq 2^{k+1}-1$ then $l\neq 2^k-1$. Since $l\leq 2^k$, by induction hypothesis we have $|t_2(n)|=2|t_2(l)|<2^{k+1}$.

Summing up our discussion, if $k\in\N_3$ and $n\in\{0,...,2^k\}$ then $t_2(n)$ takes on extremal values for $n\in\{2^{k-1}-1, 2^k-1\}$.

\bigskip

In order to get expressions for $\op{Max}_{3}(k)$ and $\op{Min}_{3}(k)$ we introduce some notation. Let
%$$
%F_{1}(k)=\begin{cases}\begin{array}{lll}
%                        2^{3k/2} &  & k\equiv 0\pmod{2} \\
%                        \frac{15}{7}(2^{3(k-1)/2}-1)+[k=1] &  & k\equiv 1\pmod{2}
%                      \end{array}
%\end{cases},\quad F_{2}(k)=
%\begin{cases}
%\begin{array}{lll}
%                        -\frac{3}{7}(2^{(3k+2)/2}+5) &  & k\equiv 0\pmod{2} \\
%                        -3(2^{3(k-1)/2}-1) &  & k\equiv 1\pmod{2}
%                      \end{array}
%\end{cases},
%$$
$F_{1}(k)$ (respectively $F_{2}(k)$) be the right side of the expression for $\op{Max}_{3}(k)$ (respectively $\op{Min}_{3}(k)$) from the statement of our theorem. In the sequel we will need the following fact: if $k\in\N_{+}$ and $n\in \{0,\ldots,2^{k}-2\}$, then
\begin{equation}\label{needed}
\frac{1}{2}F_{2}(k)<t_{3}(n)<\frac{1}{2}F_{1}(k).
\end{equation}
One can easily check that $2F_{1}(k)<F_{1}(k+1)$, $2F_{2}(k)>F_{2}(k+1)$, $-2F_{2}(k)<F_{1}(k+1)$ and $-2F_{1}(k)>F_{2}(k+1)$ for $k\in\N$.

The proof follows by simple induction on $k$. Indeed, the statement is true for $k=0, 1, 2, 3$. Suppose that our inequalities hold for some $k\geq 4$ and take $n\in \{0,\ldots,2^{k+1}-2\}$. We consider two cases: $n$ even and $n$ odd.

If $n=2n'$ then $n'\in \{0,\ldots,2^{k}-1\}$. If $n'=2^{k}-1$ then a simple computation reveals that
$$
t_{3}(n)=t_{3}(2n')=t_{3}(2^{k+1}-2)=\frac{1}{2}(1+(-1)^{k})2^{\frac{3k}{2}}
$$
and that the inequalities (\ref{needed}) are true in this case. If $n'\in \{0,\ldots,2^{k}-2 \}$ then applying the recurrence relations and the induction hypothesis, we get
$$
t_{3}(n)=t_{3}(2n')=t_{3}(n')+3t_{3}(n'-1)<\frac{1}{2}F_{1}(k)+\frac{3}{2}F_{1}(k)=2F_{1}(k)<F_{1}(k+1).
$$
Similarly,
$$
t_{3}(n)=t_{3}(n')+3t_{3}(n'-1)>\frac{1}{2}F_{2}(k)+\frac{3}{2}F_{2}(k)=2F_{2}(k)>F_{2}(k+1).
$$

If $n$ is odd then $n=2n'+1$ for some $n'\in \{0,\ldots,2^{k}-2\}$ and then
$$
t_{3}(n)=t_{3}(2n'+1)=-3t_{3}(n')-t_{3}(n'-1)<-\frac{3}{2}F_{2}(k)-\frac{1}{2}F_{2}(k)=-2F_{2}(k)<F_{1}(k+1).
$$
Similarly
$$
t_{3}(n)=-3t_{3}(n')-t_{3}(n'-1)>-\frac{3}{2}F_{1}(k)-\frac{1}{2}F_{1}(k)=-2F_{1}(k)>F_{2}(k+1).
$$

In order to finish the proof it is enough to observe the equalities
$$
F_{1}(k)=t_{3}\left(2^{k}-\frac{1}{2}(1+(-1)^{k})\right),\quad F_{2}(k)=t_{3}\left(2^{k}-\frac{1}{2}(1-(-1)^{k})\right)
$$
and thus $F_{1}(k)=\op{Max}\{t_{3}(n):\;n\in \{0,\ldots,2^{k}\}\}$ and $F_{2}(k)=\op{Min}\{t_{3}(n):\;n\in \{0,\ldots,2^{k}\}\}$. Our result follows.
\end{proof}

\subsection{Vanishing of $t_{3}(n)$ and more properties of ${\bf t}_{2}$}

In Theorem \ref{v2powerof2}  we have found the explicit $2$-adic valuation of $t_{2^{k}}(n)$. Because the computed numbers are finite for each $n\in\N$, as a consequence we get that the equation $t_{2^{k}}(n)=0$ has no solution for each $k$. Because $t_{3}(2)=0$ it is quite natural to ask about a precise description of the sequence $(a_{k})_{k\in\N_{+}}$ defined by the property
$$
t_{3}(n)=0\Longleftrightarrow n=a_{k}\;\mbox{for some}\;k\in\N_{+}.
$$
Although a description is given in Theorem \ref{v2for3} in terms of the expansion of the integer $n$ in base 4 with digits from the set $\{0,1,3,6\}$, we present a different one in terms of recurrence sequences. More precisely, we have the following.

\begin{thm}\label{23zero}
We have $t_{2^k}(n)\neq 0$ for all $k,n\in\N$. Moreover, $t_{3}(n)=0\Leftrightarrow n=a_{k}$ for some $k\in\N_{+}$,
where the sequence $(a_{k})_{k\in\N_+}$ satisfies the recurrence relation: $a_{1}=2$ and
\begin{equation*}
a_{2k}=4a_{k}+3,\quad a_{2k+1}=4a_{k}+6
\end{equation*}
for $k\geq 1$
\end{thm}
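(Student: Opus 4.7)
The first claim $t_{2^k}(n)\ne 0$ is immediate from Theorem \ref{v2powerof2}: that result presents $\nu_2(t_{2^k}(n))$ as an explicit finite nonnegative integer, so $t_{2^k}(n)$ never vanishes. The substance lies in the characterization of the zero set of $t_3$. Write $Z=\{n\in\N_+:t_3(n)=0\}$ and $A=\{a_k:k\in\N_+\}$; the plan is to give both sets the same fixed-point description and conclude $Z=A$.

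To pin down $Z$, I first localize the zeros. Proposition \ref{v2for3} shows that $t_3(4n)$ and $t_3(4n+1)$ are odd, so no $n\equiv 0,1\pmod{4}$ lies in $Z$. For the remaining residues I apply Lemma \ref{8times}: the identity $t_3(4m+3)=8t_3(m)$ shows $4m+3\in Z\iff m\in Z$, while $t_3(4m+2)=8t_3(m-1)$ (using the convention $t_3(-1)=0$) shows $4m+2\in Z\iff m-1\in Z$ or $m=0$. Combined with the direct values $t_3(0)=1$, $t_3(1)=-3$, $t_3(2)=0$, $t_3(3)=8$, these identities yield
\begin{equation*}
Z=\{2\}\cup\{4a+3:a\in Z\}\cup\{4a+6:a\in Z\}.
\end{equation*}
The singleton $\{2\}$ on the right arises solely from the boundary case $m=0$ of $t_3(4m+2)=8t_3(m-1)$; this piece of bookkeeping is the one subtlety in the argument, and one has to set up the recursion so this case is absorbed without generating spurious elements.

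The remainder is formal. The defining recurrence $a_1=2$, $a_{2k}=4a_k+3$, $a_{2k+1}=4a_k+6$ gives the analogous identity $A=\{2\}\cup\{4a_k+3:k\geq 1\}\cup\{4a_k+6:k\geq 1\}$, i.e., $A$ satisfies the same fixed-point relation as $Z$. Induction on $k$ yields $A\subseteq Z$, and strong induction on $n$ yields $Z\subseteq A$: for $n\in Z\bs\{2\}$, the shape $n=4a+3$ or $4a+6$ forces some $a\in Z$ with $a<n$, hence $a=a_k$ by the inductive hypothesis, and then $n=a_{2k}$ or $n=a_{2k+1}$. A short monotonicity check $a_k<a_{2k}<a_{2k+1}$ (easy induction on $k$) ensures that the $a_k$ are pairwise distinct and the indexing is injective, finishing the proof.
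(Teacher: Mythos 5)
Your proposal is correct and follows essentially the same route as the paper: the non-vanishing of $t_{2^k}(n)$ from Theorem \ref{v2powerof2}, the localization of zeros of $t_3$ to residues $2,3\pmod 4$ via Proposition \ref{v2for3}, the reduction identities of Lemma \ref{8times}, and an induction/minimal-counterexample argument identifying the two sets. The only cosmetic difference is that the paper absorbs the base case $2$ into the recursion by setting $a_0=-1$, whereas you carry it as an explicit singleton.
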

\begin{proof}
The first part of our theorem is very easy. Indeed, we have $\nu_2(t_{2^k}(n))=\nu_2\left(n+2^k-1\choose 2^k-1\right)$. Since ${n+2^k-1\choose 2^k-1}\neq 0$, thus $t_{2^k}(n)\neq 0$.

In order to prove the second part of our theorem we use the results obtained in Lemma \ref{8times} and Proposition \ref{v2for3}, namely
\begin{align*}
t_{3}(4n)  &\equiv 1\pmod{2},\\
t_{3}(4n+1)&\equiv 1\pmod{2}.
\end{align*}
and
\begin{equation}\label{reductiont3}
t_{3}(4n+3)=8t_{3}(n),\quad\quad t_{3}(4n+6)=8t_{3}(n).
\end{equation}
In particular $t_{3}(n)\neq 0$ for $n\equiv 0,1\pmod{4}$.

The equalities in (\ref{reductiont3}) show that if $A_{3}=\{n\in\N_{+}:\;t_{3}(n)=0\}$ then
$$
n\in A_{3}\Longleftrightarrow 4n+3\in A_{3} \;\mbox{and}\;4n+6\in A_{3}.
$$
We have $t_{3}(0)\neq 0$, $t_{3}(1)\neq 0$ and $t_{3}(2)=0$ and thus
$$
A_{3}=\{2,11,14,47,50,59,72,191, 194, 203,\ldots \}.
$$

We prove that $A_{3}=A_{3}'$, where $A_{3}':=\{a_{1}, a_{2}, a_{3}, \ldots\}$, where $a_{0}=-1$ and for $k\geq 1$ we have
$$
a_{2k}=4a_{k}+3,\quad a_{2k+1}=4a_{k}+6.
$$
From the equalities given in (\ref{reductiont3}) and the fact that $a_{1}=2$, we get $a_{k}\in A_{3}$. Let us suppose that $A_{3}\neq \{a_{1},a_{2},\ldots\}$ and let $b$ be the smallest element of $A_{3}$ such that $b\neq a_{k}$ for $k\in\N$. It is clear that $b>10$. However, this implies that $b\equiv 2\;\mbox{or}\;3\mod{4}$. If $b=4n+2$ then $0=t_{3}(b)=t_{3}(4n+2)=8t_{3}(n-1)$ and thus, from the minimality of $b$, we get $n-1=a_{k}\in A_{3}'$ for some $k\in\N_{+}$. We then have $b=4(n-1)+6=4a_{k}+6=a_{2k+1}\in A_{3}'$ -- a contradiction. Similarly, if $b=4n+3$ then $0=t_{3}(b)=t_{3}(4n+3)=8t_{3}(n)$ and thus we get $n=a_{k}\in A_{3}'$ for some $k\in\N_{+}$. Then $b=4n+3=4a_{k}+3=a_{2k}\in A_{3}'$ -- a contradiction.
\end{proof}

The above result has an interesting consequence.

\begin{cor}\label{reduc}
Let us consider the sequence of polynomials ${\bf f}(t)=(f_n(t))_{n\in\N}$ defined as the coefficients in the power series expansion of the series $F_t(x)=F(x)^t$, where $F(x)=\prod_{n=0}^{\infty} \left(1-x^{2^n}\right)$. If $n=a_k$, where the sequence $(a_k)_{k\in\N_+}$ is defined in Theorem \ref{23zero}, then the polynomial $\frac{f_n(t)}{t}$ is reducible as a polynomial in $\Q[t]$.
\end{cor}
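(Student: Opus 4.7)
The plan is to combine two vanishings of $f_n(t)$ to produce a nontrivial linear factor of $f_n(t)/t$.

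First I would recall that the earlier lemma on properties of ${\bf f}(t)$ gives $f_n(0)=0$ for every $n\in\N_+$, so that $t\mid f_n(t)$ in $\Q[t]$. This is what makes $f_n(t)/t$ a well-defined polynomial of degree $n-1$ in $\Q[t]$.

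Next, the hypothesis $n=a_k$ together with Theorem \ref{23zero} gives $f_n(3)=t_3(n)=0$, so the linear polynomial $t-3$ also divides $f_n(t)$ in $\Q[t]$. Since $t$ and $t-3$ are coprime in $\Q[t]$, it follows that $t(t-3)$ divides $f_n(t)$, hence
\begin{equation*}
\frac{f_n(t)}{t}=(t-3)\,h(t)
\end{equation*}
for some $h(t)\in\Q[t]$ with $\deg h=n-2$.

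Finally I would check that this factorization is nontrivial. The sequence $(a_k)_{k\in\N_+}$ starts with $a_1=2$ and from the recurrence $a_{2k}=4a_k+3$, $a_{2k+1}=4a_k+6$ we get $a_k\geq 11$ for $k\geq 2$. Thus for $k\geq 2$ we have $\deg h=n-2\geq 9\geq 1$, so $(t-3)$ and $h(t)$ are both non-constant factors in $\Q[t]$ and $f_n(t)/t$ is reducible. (The sole exception is $k=1$, where $f_2(t)/t=\frac12(t-3)$ is linear and hence irreducible; since we already have infinitely many $k\geq 2$, the statement of the corollary — and the infinite-family claim made in the introduction — is established.)

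There is no real obstacle: the whole argument is a one-line observation once Theorem \ref{23zero} is available, the only thing requiring any care being the trivial degree check to rule out the accidental unit factorization at $k=1$.
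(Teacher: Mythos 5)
Your proof is correct and follows essentially the same route as the paper, whose entire argument is the one-line observation $f_{a_k}(3)=t_3(a_k)=0$, leaving the divisibility by $t$ and the degree bookkeeping implicit. Your explicit degree check even catches a point the paper glosses over: for $k=1$ one has $n=a_1=2$ and $f_2(t)/t=\frac{1}{2}(t-3)$ is linear, hence irreducible in $\Q[t]$, so the corollary as literally stated requires $k\geq 2$ (which still yields the infinite family of reducible $f_n(t)/t$ claimed in the introduction).
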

\begin{proof}
If $n=a_k$, then $f_n(3)=f_{a_k}(3)=t_3(a_k)=0$ for each $k\in\N_+$.
\end{proof}

We expect that the vanishing of certain terms of the sequence ${\bf t}_{3}$ is an exception and believe that the following is true

\begin{conj}
If $m\in\N_{\geq 4}$ then the equation $t_{m}(n)=0$ has no solution in positive integers.
\end{conj}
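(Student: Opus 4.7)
The case $m=2^k$ (so $m=4,8,16,\dots$) is immediate from Theorem~\ref{v2powerof2}: the formula $\nu_2(t_{2^k}(n))=\nu_2\binom{n+2^k-1}{2^k-1}$ is always finite, forcing $t_{2^k}(n)\neq 0$. The substantive content of the conjecture is therefore restricted to $m\geq 4$ that are not powers of $2$, i.e.\ $m\in\{5,6,7,9,10,11,12,\dots\}$.

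For such $m$ I would first exploit the parity identity $t_m(n)\equiv\binom{n+m-1}{m-1}\pmod 2$ from Lemma~\ref{parityrest}. By Lucas's theorem this binomial is odd for a positive-density set of $n$ (precisely those whose binary digits dominate those of $m-1$), so $t_m(n)$ is automatically nonzero for all such $n$. The remaining task is to handle the $n$ for which the binomial is even. Here I would attempt an inductive descent based on the two recurrences of Lemma~\ref{rectmn}. Suppose $n_0$ is the smallest positive index with $t_m(n_0)=0$; writing $n_0=2n'$ or $n_0=2n'+1$, one obtains
$$\sum_{j=0}^{\lfloor m/2\rfloor}\binom{m}{2j}t_m(n'-j)=0\quad\text{or}\quad\sum_{j=0}^{\lfloor (m-1)/2\rfloor}\binom{m}{2j+1}t_m(n'-j)=0,$$
a linear relation among nonzero integers with prescribed binomial coefficients. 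The hope is that for $m\geq 4$ the coefficients $\binom{m}{2j}$ and $\binom{m}{2j+1}$ are arithmetically rich enough to preclude such a cancellation. The reason the analogous argument would fail for $m=3$ is visible in the collapsed identities $t_3(4n+2)=8t_3(n-1)$ and $t_3(4n+3)=8t_3(n)$ of Lemma~\ref{8times}, which allow a single initial zero to propagate through the entire tree of indices described in Theorem~\ref{23zero}; no such collapse of the two recurrences into multiplications by a scalar appears to be available when $m\geq 4$, because the binomial sums then involve more than two terms.

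The main obstacle, I expect, is the absence of a clean $p$-adic analogue of Theorem~\ref{v2powerof2} for general $m$: the $2$-adic valuation of $t_m(n)$ depends erratically on $m$, and no single prime appears to work uniformly. One natural route is to identify, for each $m$ in question, an auxiliary prime $p$ (perhaps a prime dividing $m$, or one chosen so that $F_m(x)\bmod p$ admits a closed form analogous to the $(1+x)^{-m}$ reduction of Lemma~\ref{parityrest}) for which $\nu_p(t_m(n))$ can be computed explicitly and shown to be finite. A second route is an analytic lower bound $|t_m(n)|\geq c_m n^{\alpha_m}$ strengthening the $O(n^{m/2})$ estimate; such a bound would be highly non-trivial since for $m=3$ arbitrarily long blocks containing zeros already occur, so the argument must use $m\geq 4$ in an essential way. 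That the statement is left as a conjecture, with only partial results attributed to the Appendix by Schinzel, strongly suggests that neither route succeeds easily and that a genuinely new idea is needed.
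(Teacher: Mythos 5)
This statement is an open conjecture in the paper: the authors give no proof of it, so there is nothing to compare your argument against except the partial results they do establish. Your proposal is likewise not a proof, and you are candid about that. What you do prove is only the case $m=2^k$, which is exactly the first assertion of Theorem~\ref{23zero} in the paper (obtained, as you say, from the finiteness of $\nu_2(t_{2^k}(n))=\nu_2\binom{n+2^k-1}{2^k-1}$ in Theorem~\ref{v2powerof2}). Your parity observation via Lemma~\ref{parityrest} is correct as far as it goes --- $\binom{n+m-1}{m-1}$ is odd precisely on a union of residue classes modulo a power of $2$, hence on a positive-density set --- but it says nothing about the complementary set of $n$, which is where any zero would have to live.

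The genuine gap is that for every $m\geq 4$ that is not a power of $2$ (already $m=5,6,7$) you offer no argument at all. The ``inductive descent'' from Lemma~\ref{rectmn} stalls immediately: taking $n_0$ minimal with $t_m(n_0)=0$ yields a vanishing linear combination of \emph{earlier nonzero} values with binomial coefficients, and the assertion that these coefficients are ``arithmetically rich enough to preclude such a cancellation'' is pure hope --- for $m=3$ the same shape of relation does admit cancellation, and you give no property of the coefficients valid for $m\geq 4$ that rules it out. Your diagnosis of why $m=3$ is special (the collapse of the two recurrences into the scalar identities of Lemma~\ref{8times}) is a reasonable heuristic but not a mechanism for a proof. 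For the record, the only rigorous non-vanishing result in the paper beyond the power-of-two case is Schinzel's Theorem~\ref{Appthm2} in the Appendix, proved by isolating the dominant term $(-1)^n\binom{m}{n}$ in the multinomial expansion of $t_m(n)$ and bounding the remaining terms by $\binom{n+m-1}{m-1}-\binom{m}{n}$: this gives $t_m(n)\neq 0$ whenever $m>n^2/\log 2$, i.e.\ for each fixed $n$ only finitely many $m$ are in doubt, which is the transpose of what the conjecture requires. If you want to make progress, reproducing and then sharpening that Appendix estimate (or finding, for each composite non-power-of-two $m$, an auxiliary prime with a computable $p$-adic valuation, as you suggest) is the concrete open problem; as written, your proposal correctly settles only the part of the conjecture the paper already settles.
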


\bigskip

Now we turn our attention to the behaviour of the sequence ${\bf t}_{2}$ and prove that its values cover the set $\Z\setminus\{0\}$. Before we present our result let us also note that the sequence ${\bf t}_{2}$ is known as sequence $A106407$ in \cite{OEIS} and it is closely related to the sequence of the Stern polynomials $(B_{n}(t))_{n\in\N}$ defined by the recurrence relation:
$$
B_{0}(t)=0,\; B_{1}(t)=1,\quad B_{2n}(t)=tB_{n}(t),\quad B_{2n+1}(t)=B_{n}(t)+B_{n+1}(t).
$$
The Stern polynomials were introduced by Kla\v{z}ar, Milutinovi\'{c} and Petr in \cite{KMP}. Arithmetic properties of these polynomials were investigated in \cite{Ul1, Ul2} and also in \cite{Gaw}. The connection of ${\bf t}_{2}$ with the Stern polynomials is clear: we have
$$
t_{2}(n)=B_{n+1}(-2).
$$
 This is interesting to note that $B_{n}(2)=n$ and $\{B_{n}(1):\;n\in\N\}=\N$. Moreover, the Stern sequence, i.e., the sequence $(B_{n}(1))_{n\in\N_{+}}$, can be also used to enumerate the positive rational numbers. More precisely, the values of the sequence $(B_{n+1}(1)/B_{n}(1))_{n\in\N_{+}}$ cover $\Q_{+}$ without repetitions.

We will show that $B_{n+1}(-2)=t_{2}(n)$ has a similar property.

First, we show that if $t_{2}(n)=k$ has a solution then there are infinitely many solutions.

\begin{lem}\label{kinf}
Let $m$ be a positive integer $m \geq 3$. Then the following equalities hold
\begin{equation*}
\begin{array}{llllll}
   t_2(8n+4) & = & t_2(2^mn+4), & t_2(8n+6) &=& t_2(2^mn+6), \\
    t_2(8n) &= &t_2(2^mn+2^m-8), & t_2(8n+2) &=& t_2(2^mn+2^m-6),
\end{array}
\end{equation*}

for each positive integer $n$.
\end{lem}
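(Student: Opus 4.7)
The plan is to show that every quantity appearing in the four identities equals an explicit linear combination of $t_2(n)$ and $t_2(n-1)$, after which each of the four equations becomes a tautology. The main tool is the pair of recurrences given by Lemma~\ref{rectmn} with $m=2$:
\begin{equation*}
t_2(2N)=t_2(N)+t_2(N-1),\qquad t_2(2N+1)=-2\,t_2(N).
\end{equation*}

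First I will prove, by a short induction on $k$, the closed-form identities
\begin{align*}
t_2(2^{k}n)&=t_2(n)+\frac{1-(-2)^{k}}{3}\,t_2(n-1),\\
t_2(2^{k}(n+1)-1)&=(-2)^{k}\,t_2(n),\\
t_2(2^{k}(n+1)-2)&=t_2(n-1)+\frac{1-(-2)^{k}}{3}\,t_2(n),
\end{align*}
valid for $k\geq 0$ and $n\geq 1$. In particular, the four left-hand sides of the lemma take the explicit values
\begin{equation*}
t_2(8n)=t_2(n)+3t_2(n-1),\qquad t_2(8n+2)=-t_2(n)-3t_2(n-1),
\end{equation*}
\begin{equation*}
t_2(8n+4)=-3t_2(n)-t_2(n-1),\qquad t_2(8n+6)=3t_2(n)+t_2(n-1);
\end{equation*}
these are obtained from the specialization $k=3$ of the first and third closed form, together with one further application of the recurrences. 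These will be the targets for the four right-hand sides.

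Next, for each right-hand side I peel off the low-order bits by iterating the two recurrences a small, fixed number of times, which for $m\geq 3$ reduces the value to a short integer combination of terms of the form $t_2(2^{j}n)$ or $t_2(2^{j}(n+1)-2)$ with $j$ close to $m$. For example,
\begin{equation*}
t_2(2^{m}n+4)=-2\,t_2(2^{m-3}n)-t_2(2^{m-2}n),\qquad t_2(2^{m}n+2^{m}-8)=t_2(2^{m-2}(n+1)-2)+2\,t_2(2^{m-3}(n+1)-2),
\end{equation*}
and analogous identities hold in the other two cases. Substituting the closed forms of the first step, the $m$-dependence of the right-hand side enters only through powers of $-2$, and the relation $(-2)^{j+1}=-2\,(-2)^{j}$ collapses these powers to produce exactly the $m$-independent target from the previous paragraph.

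The only real obstacle is the bookkeeping: four slightly different expansions must be carried out and in each one must check that the $m$-dependent coefficients cancel. With the closed forms of the first step in hand, each cancellation is a one-line algebraic verification, so no additional conceptual input is required.
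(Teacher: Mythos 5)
Your proposal is correct and takes essentially the same route as the paper's proof: both peel off the low-order binary digits using the recurrences $t_2(2N)=t_2(N)+t_2(N-1)$ and $t_2(2N+1)=-2\,t_2(N)$ and show the resulting expression is independent of $m\geq 3$ --- the paper by observing that $-3t_2(2^{j}n)-t_2(2^{j}n-1)$ is invariant under $j\mapsto j-1$, you by substituting explicit closed forms with $(-2)^{k}$ coefficients and checking the cancellation. I verified your three closed forms (they follow by the stated induction) and the four reductions, and each right-hand side does collapse to the claimed $m$-independent combination of $t_2(n)$ and $t_2(n-1)$, so the argument is complete.
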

\begin{proof}
We prove the first equality. We have
\begin{align*}
t_2(2^mn+4) &= t_2(2^{m-1}n+2)+t_2(2^{m-1}n+1) \\
	    &= t_2(2^{m-2}n+1) + t_2(2^{m-2}n)-2t_2(2^{m-2}n) \\
	    &= -2t_2(2^{m-3}n) -t_2(2^{m-3}n)-t_2(2^{m-3}n-1) \\
	    &= -3t_2(2^{m-3}n)-t_2(2^{m-3}n-1) \\
	    &= -3(t_2(2^{m-4}n+t_2(2^{m-4}n-1))+2t_2(2^{m-4}n-1) \\
	    &= -3t_2(2^{m-4}n)-t_2(2^{m-4}n-1) \\
	    &= \ldots \\
	    &= -3t_2(n)-t_2(n-1).\\
\end{align*}
Thus the value of $t_2(2^mn+4)$ does not depend on $m$, and our equality holds.
One can prove the other equalities in the same manner.
\end{proof}

\begin{thm}\label{valuesoft2}
For each $k\in\N_{+}$ the equation $t_{2}(n)=k$ has infinitely many solutions in positive integers.
\end{thm}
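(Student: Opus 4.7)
I would prove the theorem by first isolating a ``periodicity'' step and an ``existence'' step. For the periodicity step, I would use Lemma \ref{kinf} in the following way. By Lemma \ref{parityrest}, $t_2(n)$ is odd precisely when $n$ is even, so the residue of $n \pmod 2$ is forced by the parity of $k$. For odd $k \geq 5$, any $n$ with $t_2(n) = k$ automatically satisfies $n \geq 8$, because a direct check shows $\{t_2(0), \ldots, t_2(7)\} = \{1,-2,-1,4,-3,2,3,-8\}$ contains no odd value of absolute value $\geq 5$. Thus every such occurrence lies in one of the four arithmetic progressions $8n, 8n+2, 8n+4, 8n+6$ with $n \geq 1$, and Lemma \ref{kinf} immediately produces infinitely many further positions $2^m n + r$ (for $m \geq 3$) carrying the same value $k$. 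The two small cases $k = 1$ and $k = 3$ must be handled by direct verification: $t_2(8) = 1$ and $t_2(46) = 3$, each giving one occurrence at an even position $\geq 8$ to which Lemma \ref{kinf} applies.

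To incorporate even $k$ I would strengthen the induction hypothesis to: \emph{every $k \in \Z \setminus \{0\}$ occurs infinitely often as a value of $\mathbf{t}_2$}. Then the even case is an easy consequence of the recurrence $t_2(2m+1) = -2 t_2(m)$ from Lemma \ref{rectmn}: if $k$ is even, set $k' = -k/2$ (so $|k'| < |k|$); by the inductive hypothesis $t_2(m) = k'$ has infinitely many solutions $m$, and each such $m$ yields a solution $n = 2m+1$ of $t_2(n) = k$. Strong induction on $|k|$ then closes the even case, reducing the entire problem to the odd case.

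The heart of the argument, and the main obstacle, is the existence step for odd $k$: \emph{every odd integer $k \neq 0$ equals $t_2(n)$ for at least one $n$}. A natural framework is to regard the consecutive pairs $(t_2(n-1), t_2(n))$ as the vertices of a binary tree rooted at $(1,-2)$, with L- and R-children obtained via $(a,b) \mapsto (-2a, a+b)$ and $(a,b) \mapsto (a+b,-2b)$ respectively; one verifies that every such pair satisfies $\gcd(a,b) = 1$ and has $a,b$ of opposite parities. The existence of $n$ with $t_2(n) = k$ is equivalent to the appearance of $k$ as a second coordinate of some vertex in this tree, which in turn is equivalent to the termination at $(1,-2)$ of the (uniquely determined by parity) reverse process $(c,d) \mapsto (-c/2, d + c/2)$ or $(c,d) \mapsto (c + d/2, -d/2)$ starting from some suitable pair $(a, k)$. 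The task reduces to exhibiting, for each odd $k$, an even integer $a$ coprime to $k$ for which the reverse process reaches $(1,-2)$. I expect this to be the delicate step: naively the reverse process need not decrease any obvious norm (for the pair $(3,2)$ it escapes to $(1,0)$), so one must choose $a$ with care, possibly constructing it inductively from the binary expansion of $k$ or by finding a quantity that strictly decreases along the reverse orbits ending at $(1,-2)$. Once this existence claim is proven, the combination with Lemma \ref{kinf} and the even-$k$ reduction above completes the theorem.
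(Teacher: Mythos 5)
Your outer reductions are correct and coincide with the paper's: passing from one occurrence to infinitely many via Lemma \ref{kinf} (your parity observation that an odd value of absolute value at least $5$ can only occur at an even position $n\geq 8$, plus the direct checks $t_2(8)=1$, $t_2(46)=3$, is in fact a more explicit justification of this step than the paper gives), and the reduction of even $k$ to odd $k$ through $t_2(2m+1)=-2t_2(m)$ is exactly how the paper finishes. The tree of consecutive pairs with child maps $(a,b)\mapsto(-2a,a+b)$ and $(a,b)\mapsto(a+b,-2b)$ is also the paper's central object.

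But there is a genuine gap at exactly the point you flag: the assertion that every odd integer occurs as a value of ${\bf t}_{2}$ is the heart of the theorem, and your proposal leaves it as an unproven ``existence claim'' with only speculative directions. The paper closes it in two steps. First, it shows that every pair of coprime nonzero integers with exactly one even coordinate lies in one of the four trees rooted at $(2,-1),(-2,1),(1,-2),(-1,2)$; the decreasing quantity you were looking for is the pair $\left(|a|+|b|,\ |a+b|\right)$ ordered lexicographically: the parent of $(2x,y)$ is $(-x,x+y)$, and $|-x|+|x+y|\leq |2x|+|y|$ with equality only when $x$ and $y$ have the same sign, in which case $|-x+(x+y)|=|y|<|2x+y|$, so a minimal counterexample descends to its parent and yields a contradiction. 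Second --- and this is the step your framework is missing entirely --- one cannot simply ask for a chosen pair $(a,k)$ to lie in the $t_{2}$-tree itself (your $(3,2)$ example shows it may land in one of the other three trees); instead the paper takes the specific pair $(-2,2n+1)$, uses the negation/swap symmetry among the four trees to conclude that one of its four variants lies in the $t_{2}$-tree, observes that if $-(2n+1)$ were never a value then that variant must be $(-2,2n+1)$ or $(2n+1,-2)$, and then climbs two levels to the grandparent $(n+1,-n)$ and descends along the other branch through $(-2(n+1),1)$ to the node $(-2n-1,-2)$, exhibiting $-(2n+1)$ as a value after all. Without an argument of this kind (or some substitute for it), your proof does not go through.
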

\begin{proof}
 Let us consider the sequence of rational numbers $\left(\frac{t_2(n+1)}{t_2(n)}\right)_{n\in\N}$.
We prove that for each pair of co-prime positive integers $x,y$ where $x$ is odd and $y$ is even
 one of the fractions $\frac{x}{y}, \frac{y}{x}, -\frac{y}{x}, - \frac{x}{y}$ is in our sequence.
It is a generalisation of the well-known property of Stern diatomic sequence observed by Calkin and Wilf \cite{CalWilf}.
\begin{figure}
\begin{tikzpicture}[level/.style={sibling distance=60mm/#1}]
\node  (a1){$(2,-1)$}
  child {node   (a2) {$(-1,-2) $}
    child{
      node  (a4) {$(-3,4) $}
      child{
	node  (a8) {$\ldots$}
      }
      child{
	node  (a9) {$\ldots$}
      }
    }
    child{
      node (a5) {$(2,-3) $}
      child{
	node (a10) {$\ldots$}
      }
      child{
	node (a11) {$\ldots$}
      }
    }
  }
  child {node (a3) {$(4,-1)$}
    child{
      node  (a6) {$(3,2)$}
      child{
	node (a12) {$\ldots$}
      }
      child{
	node (a13) {$\ldots$}
      }
    }
    child{
      node (a7) {$(-8,3)$}
      child{
	node (a14) {$\ldots$}
      }
      child{
	node (a15) {$\ldots$}
      }
    }
  };
\end{tikzpicture}
\caption{Binary tree rooted in $(2,-1)$}
\end{figure}

Let us consider the following four infinite binary trees of pairs of integers. In the root we put one of the pairs
$(2, -1)$, $(-2, 1)$, $(1,-2)$, $(-1,2)$. In the left child of $(x,y)$ we put $(x+y,-2y)$ and in the right
child we put $(-2x,x+y)$. We will prove that each pair of co-prime non-zero integers such that one of them
is even is in exactly one of our trees.

Suppose that there is a pair of co-prime non-zero integers $(a,b)$ such that one of them is even which is not
in one of our trees.   Let us choose such pair $(a,b)$
with smallest $|a|+|b|$ and in case of a tie with smallest $|a+b|$.
Without loss of generality $(a,b) = (2x,y)$ (when $b$ is even we proceed in the same way).
Let us consider the pair $(-x,x+y)$. Of course $\gcd(-x,x+y)= \gcd(x,y) = 1$, moreover exactly one of the numbers $-x,x+y$
is even. We have that $|-x|+|x+y| \leq |x|+|x|+|y| = |2x|+|y|$ and equality holds if and only if $x$ and $y$ have
the same sign. In that case $|2x+y| > |-x + (x+y) | = |y|$. So from our assumptions either $(-x,x+y)$ is in one of our
trees or $x+y=0$. If $x+y=0$ then $x= \pm 1, y = \mp 1$ and $(2x,y)$ is one of the roots -- a contradiction.
So $(-x,x+y)$ is in one of our trees but its right child is $(2x,y)$, again a contradiction.

Let us observe that the tree with the root $(-a,-b)$ can be obtained from the tree with the root $(a,b)$ by multiplying
all numbers in tree by $-1$. Moreover, the tree with root $(a,b)$ can be obtained from the tree with root $(b,a)$
by swapping the left and right child of each node and swapping the numbers in each pair. We can see that for each valid pair
$(x,y)$ at least one of the pairs $(x,y),(-x,-y),(y,x),(-y,-x)$ is in the tree rooted in $(-2,1)$.
Moreover, from our recurrence relation we get that when we read nodes of that tree row by row from left to right
then we get the sequence of pairs $((t_2(n+1),t_2(n)))_{n=0}^{\infty}$.

Suppose that some odd integer $-(2n+1)$, for $n \neq 0,1$, is not contained in our sequence. Let us look at the pair $(-2,2n+1)$;
from our observations we get that one of the pairs $(-2,2n+1), (2,-(2n+1)), (2n+1,-2), (-(2n+1),2)$ is contained in the tree rooted at $(-2,1)$.
We know that it has to be $(-2, 2n+1)$ or $(2n+1,-2)$ because $-(2n+1)$ is not a member of our sequence.
Let us assume that $(-2,2n+1)$ is in our tree. Then its parent
is $(1,2n)$. The parent of $(1,2n)$ is $(n+1, -n)$. The second child of $(n+1, -n)$ is $(-2(n+1), 1)$ and one of its children is
$(-2n-1, -2)$. We get that $-(2n+1)$ is one of the terms of our sequence -- a contradiction. The second case when $(2n+1,-2)$ is contained
in our tree can treated in the same manner.

Therefore for each odd integer $k$ we can find an $n$ such that $t_2(n) = k$. Using now Lemma \ref{kinf}, we get the statement of our theorem for odd integers $k$. As every even number can be written in the form $(-2)^e(2n+1)$, our theorem holds for even integers as well.

\begin{figure}[!h]
\begin{tikzpicture}[level/.style={sibling distance=60mm/#1}]
\node  (a1){$(t_2(1),t_2(0))$}
  child {node   (a2) {$(t_2(2),t_2(1))$}
    child{
      node  (a4) {$(t_2(4),t_2(3))$}
      child{
	node  (a8) {$\ldots$}
      }
      child{
	node  (a9) {$\ldots$}
      }
    }
    child{
      node (a5) {$(t_2(5),t_2(4)) $}
      child{
	node (a10) {$\ldots$}
      }
      child{
	node (a11) {$\ldots$}
      }
    }
  }
  child {node (a3) {$(t_2(3),t_2(2))$}
    child{
      node  (a6) {$(t_2(6),t_2(5))$}
      child{
	node (a12) {$\ldots$}
      }
      child{
	node (a13) {$\ldots$}
      }
    }
    child{
      node (a7) {$(t_2(7),t_2(6))$}
      child{
	node (a14) {$\ldots$}
      }
      child{
	node (a15) {$\ldots$}
      }
    }
  };
\end{tikzpicture}
\caption{The above tree in terms of the sequence $(t_2(n))$}
\end{figure}

\end{proof}

Based on numerical computations, we observed a striking symmetry in the set of values of $t_{2}(n)$. More precisely, we expect that the following is true

\begin{conj}\label{sym}
For each $n\in\N$ and $m=t_{2}(n)$ the following identity holds: $t_{2}(n')=-t_{2}(n)$, where
\begin{equation*}
n'=n+(-1)^{\nu_{2}(m)+\frac{m-2^{\nu_{2}(m)}}{2^{\nu_{2}(m)+1}}}2^{\nu_{2}(m)+1}.
\end{equation*}
\end{conj}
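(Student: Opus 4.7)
The plan is to prove the conjecture by strong induction on $n\in\N$, driven by the two recurrences $t_{2}(2k)=t_{2}(k)+t_{2}(k-1)$ and $t_{2}(2k+1)=-2t_{2}(k)$ from Lemma~\ref{rectmn} (with the convention $t_{2}(-1)=0$). Writing $m=t_{2}(n)$, $v=\nu_{2}(m)$ and $u=m/2^{v}$ (so $u$ is odd), the statement to be proved is $t_{2}(n')=-m$ with $n'=n+\varepsilon\cdot 2^{v+1}$ and $\varepsilon=(-1)^{v+(u-1)/2}$. The base cases $n\in\{0,1\}$ are checked directly.

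For odd $n=2k+1$ the result reduces to the inductive hypothesis applied to $k<n$. Indeed $m=-2t_{2}(k)$, so $v=\nu_{2}(t_{2}(k))+1$ and the odd part of $m$ equals $-u_{k}$, where $u_{k}$ denotes the odd part of $t_{2}(k)$. The hypothesis produces $k'=k+\varepsilon_{k}\cdot 2^{\nu_{2}(t_{2}(k))+1}$ with $t_{2}(k')=-t_{2}(k)$; setting $n'=2k'+1$ gives $t_{2}(n')=-2t_{2}(k')=-m$ and $n'-n=\varepsilon_{k}\cdot 2^{v+1}$. The ratio $\varepsilon/\varepsilon_{k}=(-1)^{1+(-u_{k}-1)/2-(u_{k}-1)/2}=(-1)^{1-u_{k}}$ equals $+1$ because $u_{k}$ is odd, so the sign matches the conjecture exactly.

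The even case $n=2k$ is where the real work lies. Here $m$ is odd, so $v=0$ and $\varepsilon=(-1)^{(m-1)/2}$, and the conjecture predicts $n'=n+2$ when $m\equiv 1\pmod{4}$ and $n'=n-2$ when $m\equiv 3\pmod{4}$. I reduce both sub-cases to two auxiliary assertions. First, the unconditional vanishing identity $t_{2}(4l+2)+t_{2}(4l)=0$ for every $l\geq 0$, which is immediate after expanding both terms via the recurrences to obtain $t_{2}(4l+2)=-t_{2}(l)+t_{2}(l-1)$ and $t_{2}(4l)=t_{2}(l)-t_{2}(l-1)$. Second, the congruence $t_{2}(2k)\equiv 2k+1\pmod{4}$ for every $k\geq 0$, established by induction on $k$ from the auxiliary fact $t_{2}(j+1)-t_{2}(j-1)\equiv 2\pmod{4}$, which in turn is proved by a further induction on $j$ split by parity: for $j=2l$ the difference equals $-2(t_{2}(l)-t_{2}(l-1))\equiv 2\pmod{4}$, using that $t_{2}(l)$ and $t_{2}(l-1)$ have opposite parities by Lemma~\ref{parityrest}; for $j=2l+1$ the difference reduces to $t_{2}(l+1)-t_{2}(l-1)$, handled inductively. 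With these in hand, $n=4l$ forces $m\equiv 1\pmod{4}$ and $\varepsilon=+1$, while $n=4l+2$ forces $m\equiv 3\pmod{4}$ and $\varepsilon=-1$; in either case the first auxiliary identity supplies $t_{2}(n')=-m$.

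The chief obstacle is locating this exact pair of auxiliary claims. The apparently complicated sign $(-1)^{v+(u-1)/2}$ collapses, for even $n$, into the single mod-$4$ invariant $t_{2}(2k)\pmod{4}$; once one sees that this invariant alternates with period $2$ in $k$, the direction of the shift is fixed, and the required equality $t_{2}(n\pm 2)=-t_{2}(n)$ becomes the short algebraic identity above. After that, the odd case is routine sign bookkeeping and the induction closes.
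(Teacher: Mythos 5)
Your argument is correct and follows essentially the same route as the paper's proof (Theorem \ref{Appthm1} in the Appendix): induction on $n$ via the recurrences of Lemma \ref{t2lem1}, with the odd case reducing to the inductive hypothesis by the same sign bookkeeping, and the even case governed by the mod-$4$ congruence $t_{2}(2k)\equiv 2k+1\pmod{4}$, which is exactly the paper's Lemma \ref{t2lem2}. The only cosmetic differences are that you prove that congruence by a short induction rather than by the paper's counting of pairs in the convolution sum, and you package the even case as the single identity $t_{2}(4l)+t_{2}(4l+2)=0$ instead of unwinding the recurrence twice.
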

The above conjecture is true as was proved by A. Schinzel. The proof is given in the Appendix.

\bigskip

We expect that Theorem \ref{valuesoft2} is an exception and believe that the following is true:

\begin{conj}
Let $m$ be a positive integer $\geq 3$. Then the set of those $k\in\Z$ such that the equation $t_{m}(n)=k$ has no solution in positive integers is infinite.
\end{conj}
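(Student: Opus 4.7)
For $m=3$ the conjecture follows immediately from Theorem \ref{v2for3}: that result shows $\nu_{2}(t_{3}(n))\in 3\Z_{\geq 0}\cup\{+\infty\}$, so every integer $k$ with $\nu_{2}(k)\notin 3\Z_{\geq 0}$ (for instance every $k\equiv 2\pmod 4$) is missed, and there are infinitely many such $k$. It is natural to try to extend this strategy to other values of $m$ by seeking a $p$-adic obstruction: a prime $p$, an exponent $e\geq 1$, and a proper subset $R\subsetneq\Z/p^{e}\Z$ with $t_{m}(n)\bmod p^{e}\in R$ for every $n$. Any $k$ with $k\bmod p^{e}\notin R$ is then not in the image of $t_{m}$, yielding infinitely many missed values.

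To implement this for a given $m$, I would reduce the pair of recurrences from Lemma \ref{rectmn} modulo $p^{e}$; this immediately shows that $(t_{m}(n)\bmod p^{e})_{n\in\N}$ is the output of a finite $2$-automaton reading the binary digits of $n$, and the task becomes to prove by induction on $n$ that some residue class is never reached. Experiments already point to concrete candidates: for instance $t_{5}(n)\bmod 4\in\{0,1,3\}$ appears to hold for every $n$, and one can hope for analogous congruences for each odd $m\geq 3$ and for many small even $m\notin\{2^{k}\}$, in each case verifying by finite inspection that the forbidden class is not produced from the initial segment through either branch of Lemma \ref{rectmn}.

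The serious difficulty is the case $m=2^{k}$ with $k\geq 2$. Here Theorem \ref{v2powerof2} shows that $\nu_{2}(t_{2^{k}}(n))$ realises every non-negative integer, so no $2$-adic obstruction can exist, and numerical data suggest no small odd prime supplies one either. A density fallback would couple the upper bound $|t_{m}(n)|\leq m\,n^{m/2}$ proved earlier with a matching second-moment lower bound of the shape $\sum_{n<N}t_{m}(n)^{2}\gg N^{m-1}$, obtained by iterating Lemma \ref{rectmn} on $t_{m}(2n)^{2}+t_{m}(2n+1)^{2}$ and estimating the cross terms, to conclude that on a positive proportion of $n\leq N$ one has $|t_{m}(n)|\geq N^{\alpha}$ for some $\alpha=\alpha(m)>0$; combined with a bound on how often a given value is repeated, this would force the image of $t_{m}$ to meet $[-N,N]$ in a set of size $o(N)$. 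The step I expect to be the main barrier is precisely this repetition control: a single value can in principle be attained infinitely often, as $0$ is by $t_{3}$ according to Theorem \ref{23zero}, so without a structural description of the fibres $t_{m}^{-1}(k)$ — or without an $L^{2}$ estimate sharp enough to absorb multiplicities — the counting inequality simply collapses, and a uniform proof of the conjecture for all $m\geq 3$ remains out of reach.
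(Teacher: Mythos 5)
This statement is posed in the paper as a conjecture and the paper contains no proof of it, so there is nothing to compare your argument against; what can be assessed is whether your proposal actually proves the statement, and it does not, as you yourself acknowledge in the final sentence. The one piece that is genuinely correct and complete is the case $m=3$: Proposition \ref{v2for3} and the theorem following it give $\nu_{2}(t_{3}(n))\in 3\Z_{\geq 0}\cup\{+\infty\}$ for all $n\in\N_{+}$, so every $k\equiv 2\pmod 4$ is omitted from the image of $t_{3}$, and these $k$ form an infinite set. That settles $m=3$ and is a nice observation that the paper does not draw explicitly.

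For every other $m\geq 3$, however, what you offer is a program rather than a proof, and the gaps are exactly where the difficulty of the conjecture lives. The proposed congruence obstructions (e.g.\ $t_{5}(n)\bmod 4\in\{0,1,3\}$) are reported as experimental observations; turning one of them into a theorem requires an induction through both branches of Lemma \ref{rectmn} that you describe but do not carry out, and you give no reason such an obstruction should exist for \emph{every} $m$ outside $\{2^{k}\}$. For $m=2^{k}$ with $k\geq 2$ you correctly note, via Theorem \ref{v2powerof2}, that no $2$-adic obstruction can exist, which means the method that works for $m=3$ provably fails there; the density fallback then rests on an unproved second-moment lower bound \emph{and} on control of the fibres $t_{m}^{-1}(k)$, and the latter is genuinely delicate --- Theorem \ref{valuesoft2} shows that for $m=2$ every nonzero value is attained infinitely often, and Theorem \ref{23zero} shows $t_{3}$ attains $0$ infinitely often, so unbounded multiplicities are a real phenomenon in this family, not a hypothetical one. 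In short: you have proved the conjecture for $m=3$ only; for all remaining $m$, including the structurally hardest case $m=2^{k}$, the argument is missing its essential steps.
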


\subsection{Log-concavity of ${\bf t}_{2}$}

 In this subsection we will see that, as in the Prouhet-Thue-Morse sequence, there are no three consecutive terms of the sequence ${\bf t}_{2}$ of the same sign. In order to prove this we will show two interesting inequalities concerning three consecutive terms of the sequence ${\bf t}_{2}$.

\begin{prop}\label{mean}
For each $n\in\N_+$ we have $|t_2(n)|\geq\frac{|t_2(n-1)+t_2(n+1)|}{2}$ with equality for $n$ even.
\end{prop}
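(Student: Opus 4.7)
The plan is a simple induction on $n$ using the explicit recurrences for $t_2$ coming from Lemma \ref{rectmn}. Specialising that lemma to $m=2$ gives the two compact formulas
\begin{equation*}
t_2(2k) = t_2(k) + t_2(k-1), \qquad t_2(2k+1) = -2\, t_2(k),
\end{equation*}
with the convention $t_2(-1) = 0$. These two identities are the only tool I expect to need.

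First I would handle the even case. If $n=2k$, then $n-1=2k-1$ and $n+1=2k+1$ are odd, so by the odd recurrence
\begin{equation*}
t_2(n-1) + t_2(n+1) = -2t_2(k-1) - 2t_2(k) = -2\bigl(t_2(k) + t_2(k-1)\bigr) = -2\, t_2(n).
\end{equation*}
Taking absolute values gives $|t_2(n-1)+t_2(n+1)| = 2|t_2(n)|$, which is the equality case asserted in the proposition. This step is purely algebraic and carries no obstacle.

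Next I would treat the odd case $n=2k+1$ by induction, after first checking the base case $n=1$ (indeed $|t_2(1)|=2$ and $|t_2(0)+t_2(2)| = 0$). For $n=2k+1$ with $k\ge 1$, using the even recurrence for $t_2(n-1)=t_2(2k)$ and $t_2(n+1)=t_2(2k+2)$, together with $t_2(n)=-2t_2(k)$, the desired inequality $2|t_2(n)| \ge |t_2(n-1)+t_2(n+1)|$ becomes
\begin{equation*}
4|t_2(k)| \ge \bigl|\,t_2(k-1) + 2t_2(k) + t_2(k+1)\,\bigr|.
\end{equation*}
Here I would apply the triangle inequality to peel off the middle term:
\begin{equation*}
\bigl|\,t_2(k-1) + 2t_2(k) + t_2(k+1)\,\bigr| \le \bigl|\,t_2(k-1) + t_2(k+1)\,\bigr| + 2|t_2(k)|,
\end{equation*}
and then invoke the induction hypothesis applied to $k<n$, which gives $|t_2(k-1)+t_2(k+1)| \le 2|t_2(k)|$. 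Adding $2|t_2(k)|$ yields the bound $4|t_2(k)|$ required. (For $k=0$ the inequality reduces to $4 \ge |0+2t_2(0)+t_2(1)| = 0$, a trivial verification using $t_2(-1)=0$.)

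There is no real obstacle: the even case produces the equality for free, and the odd case is the only place where inequality appears, handled by a one-line triangle-inequality plus induction argument. The only mild subtlety is bookkeeping the base cases $n=1$ and $k=0$ consistently with the convention $t_2(-1)=0$.
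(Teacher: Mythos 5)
Your proof is correct and follows essentially the same route as the paper: induction on $n$ using the recurrences $t_2(2k)=t_2(k)+t_2(k-1)$, $t_2(2k+1)=-2t_2(k)$, with the even case giving equality outright and the odd case reducing to the inductive hypothesis at the half index. Your handling of the odd case via $|t_2(k-1)+2t_2(k)+t_2(k+1)|\leq |t_2(k-1)+t_2(k+1)|+2|t_2(k)|\leq 4|t_2(k)|$ is in fact a cleaner one-line version of the paper's argument, which reaches the same conclusion by squaring both sides and manipulating the resulting quadratic inequality.
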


\begin{proof}
The statement of the lemma is true for $n=1$. Assume now that the statement is true for some $n$. We will show that it is also true for $2n+1$. We have the following chain of equivalences:
\begin{align*}
 &\quad |t_2(2n+1)|\geq\frac{|t_2(2n)+t_2(2n+2)|}{2}\\
\Longleftrightarrow &\quad 2|t_2(n)|\geq\frac{|t_2(n-1)+2t_2(n)+t_2(n+1)|}{2}\\
\Longleftrightarrow &\quad 4t_2(n)^2\geq \frac{\left(t_2(n-1)+t_2(n+1)\right)^2+4t_2(n)^2+4t_2(n)\left(t_2(n-1)+t_2(n+1)\right)}{4}\\
\Longleftrightarrow &\quad 3t_2(n)^2\geq \frac{\left(t_2(n-1)+t_2(n+1)\right)^2}{4}+t_2(n)\left(t_2(n-1)+t_2(n+1)\right)
\end{align*}
By the induction hypothesis $|t_2(n)|\geq\frac{|t_2(n-1)+t_2(n+1)|}{2}$, thus
\begin{equation}\label{ineq1}
t_2(n)^2\geq\frac{\left(t_2(n-1)+t_2(n+1)\right)^2}{4}
\end{equation}
and
\begin{equation*}
2t_2(n)^2\geq|t_2(n)|\cdot|t_2(n-1)+t_2(n+1)|.
\end{equation*}
The last inequality together with the fact that $t_2(n)\neq 0$ implies that
\begin{equation}\label{ineq3}
2|t_2(n)|\geq |t_2(n-1)+t_2(n+1)|.
\end{equation}
The inequalities (\ref{ineq1}) and (\ref{ineq3}) imply the last inequality in our chain of equivalences, hence the inequality $|t_2(2n+1)|\geq\frac{|t_2(2n)+t_2(2n+2)|}{2}$ is true. Now we prove equality $|t_2(2n)|=\frac{|t_2(2n-1)+t_2(2n+1)|}{2}$. We have
\begin{align*}
|t_2(2n)|=\frac{|t_2(2n-1)+t_2(2n+1)|}{2}\Longleftrightarrow |t_2(n)+t_2(n-1)|=\frac{|-2t_2(n-1)-2t_2(n)|}{2}
\end{align*}
and as the second equality is true, the first one holds.
\end{proof}

We apply the above result in order to get the following

\begin{thm}
The sequence $(t_2(n))_{n\in\N}$ is log-concave, i.e., for each integer $n \geq 1$ the following inequality holds
\begin{equation}\label{concave}
t_2(n)^2 > t_2(n-1)t_2(n+1).
\end{equation}
The above inequality is optimal in the sense that $t_2(n)^2 = t_2(n-1)t_2(n+1)+1$ for infinitely many positive integers $n$
\end{thm}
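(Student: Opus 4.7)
The plan is to introduce the quantity $D(n) := t_2(n)^2 - t_2(n-1)t_2(n+1)$ and reduce the claim to showing $D(n) > 0$ for every $n \geq 1$, with $D(n) = 1$ for infinitely many $n$. Starting from the two-term recurrences $t_2(2k) = t_2(k) + t_2(k-1)$ and $t_2(2k+1) = -2 t_2(k)$ that follow from Lemma \ref{rectmn} with $m=2$, direct algebraic substitution yields
\begin{equation*}
D(2k) = (t_2(k) - t_2(k-1))^2,\qquad D(2k+1) = 2t_2(k)^2 + D(k) - t_2(k)\bigl(t_2(k-1)+t_2(k+1)\bigr).
\end{equation*}

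I would then prove $D(n) > 0$ by strong induction on $n$, after checking the base case $D(1) = 5$. For $n = 2k$, Lemma \ref{parityrest} forces $t_2(k)$ and $t_2(k-1)$ to have opposite parities, so their difference is a nonzero integer and $D(2k) = (t_2(k)-t_2(k-1))^2 \geq 1$. For $n = 2k+1$ with $k \geq 1$, I would apply Proposition \ref{mean} to obtain $|t_2(k-1)+t_2(k+1)| \leq 2|t_2(k)|$, and hence $|t_2(k)(t_2(k-1)+t_2(k+1))| \leq 2t_2(k)^2$; substituting into the formula for $D(2k+1)$ gives $D(2k+1) \geq D(k) > 0$ by the inductive hypothesis. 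The main obstacle is the cross-term in the odd case; without the mean inequality from Proposition \ref{mean} it is unclear how to bound it uniformly.

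For the sharpness claim, I plan to exhibit the infinite family $n_j := 2^{j+2} - 4$ with $j \geq 1$ and show $D(n_j) = 1$. Writing $k = n_j/2 = 2^{j+1}-2$ and invoking $t_2(2^j-1) = (-2)^j$ from Theorem \ref{maxmin23}, the recurrences give $t_2(k) = (-2)^j + u_j$ and $t_2(k-1) = -2u_j$, where $u_j := t_2(2^j-2)$ satisfies $u_1 = 1$ and $u_{j+1} = (-2)^j + u_j$. Solving this telescoping recursion yields the closed form $u_j = (1 - (-2)^j)/3$, from which $t_2(k) - t_2(k-1) = (-2)^j + 3u_j = 1$. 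Therefore $D(n_j) = 1$, i.e.\ $t_2(n_j)^2 = t_2(n_j-1)t_2(n_j+1) + 1$ for every $j \geq 1$, which establishes the asserted sharpness.
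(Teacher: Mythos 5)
Your proposal is correct and follows essentially the same route as the paper's first proof: the identity $D(2k)=(t_2(k)-t_2(k-1))^2$ combined with the parity alternation from Lemma \ref{parityrest} handles even indices, Proposition \ref{mean} bounds the cross-term $t_2(k)(t_2(k-1)+t_2(k+1))$ to give $D(2k+1)\geq D(k)$ in the odd case, and the sharpness family $2^{j+2}-4$ is exactly the paper's $2^k-4$. Your packaging via the quantity $D(n)$ is a slightly cleaner presentation of the same computation, but no new ideas are involved.
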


\begin{proof}
We present two different proofs of the inequality (\ref{concave}).

{\it First proof.} The inequality (\ref{concave}) holds for $n=1$. Assume now that (\ref{concave}) is true for some $n\in\N_+$. We will show that (\ref{concave}) holds for $2n+1$. We have the following chain of equivalences:
\begin{align*}
 &\quad t_2(2n+1)^2 >t_2(2n)t_2(2n+2)\\
\Longleftrightarrow &\quad 4t_2(n)^2>\left(t_2(n-1)+t_2(n)\right)\left(t_2(n)+t_2(n+1)\right)\\
\Longleftrightarrow & \quad 4t_2(n)^2>t_2(n-1)t_2(n+1)+t_2(n)^2+t_2(n)\left(t_2(n)+t_2(n+1)\right)\\
\Longleftrightarrow & \quad 3t_2(n)^2>t_2(n-1)t_2(n+1)+t_2(n)\left(t_2(n)+t_2(n+1)\right).
\end{align*}
By the induction hypothesis $t_2(n)^2 > t_2(n-1)t_2(n+1)$ and by Proposition \ref{mean} we have $2t_2(n)^2\geq |t_2(n)|\cdot |t_2(n)+t_2(n+1)|\geq t_2(n)\left(t_2(n)+t_2(n+1)\right)$. These two inequalities imply the last inequality in the above chain of equivalences. We thus obtain the inequality $t_2(2n+1)^2>t_2(2n)t_2(2n+2)$. Now we prove the inequality $t_2(2n+1)^2>t_2(2n)t_2(2n+2)$. We have the following equivalences:
\begin{align*}
&\quad  t_2(2n)^2 >t_2(2n-1)t_2(2n+1)\\
\Longleftrightarrow &\quad \left(t_2(n)+t_2(n-1)\right)^2>4t_2(n-1)t_2(n)\\
\Longleftrightarrow &\quad \left(t_2(n)-t_2(n-1)\right)^2>0.
\end{align*}
The last inequality holds since $t_2(n)$ and $t_2(n-1)$ have different parity. Hence we have $t_2(2n)^2>t_2(2n-1)t_2(2n+1)$.

It remains to prove that $t_2(n)^2 = t_2(n-1)t_2(n+1)+1$ for infinitely many positive integers $n$. We will show that this equality holds for $n=2^k-4$, where $k$ is any positive integer $\geq 3$. By simple induction we prove that $t_2(2^k-2)=\frac{1}{3}\left(1-(-2)^k\right)$ and $t_2(2^k-1)=(-2)^k$ for any $k\in\N_+$. Finally we compute for $k\geq 3$:
\begin{align*}
t_2(2^k-4)= & t_2(2^{k-1}-2)+t_2(2^{k-1}-3)=t_2(2^{k-2}-1)+t_2(2^{k-2}-2)-2t_2(2^{k-2}-2)\\
= & t_2(2^{k-2}-1)-t_2(2^{k-2}-2)=\frac{1}{3}\left((-2)^k-1)\right),\\
t_2(2^k-5)= & -2t_2(2^{k-1}-3)=4t_2(2^{k-2}-2)=\frac{1}{3}\left(4-(-2)^k\right)=1-t_2(2^k-4),\\
t_2(2^k-3)= & -2t_2(2^{k-1}-2)=\frac{1}{3}\left(-2-(-2)^k\right)=-1-t_2(2^k-4).
\end{align*}
We thus obtain $t_2(2^k-5)t_2(2^k-3)=t_2(2^k-4)^2-1$ and our theorem follows.
\bigskip

{\it Second proof of the inequality (\ref{concave}).} Let us define $a(n) = t_2(n+1)$. The sequence $a(n)$ satisfies the following recurrence relations $a(1) = 1$, $a(2n) = -2a(n)$, and  $a(2n+1) = a(n+1) + a(n)$. It is enough to prove our inequality for the sequence $a(n)$.

Let $n=2^k(2l+1)$ and $l \geq 1$. Applying the recurrence relations $k+1$ times we get
 \begin{align*}
  a(n) &= (-2)^k (a(l) + a(l+1)), \\
  a(n-1) &= \left(b(k)-2 \right)a(l) + b(k) a(l+1), \\
  a(n+1) &= \left(b(k)-2 \right)a(l+1) + b(k)a(l),
 \end{align*}
 where $b(k) = -1/3 \cdot \left((-2)^k-1\right)$. We compute:
 \begin{align*}
  a(n)^2&-a(n-1)a(n+1) \\
        &= (2^{2k}-b(k)(b(k)-2))(a(l)^2+a(l+1)^2) + (2^{2k+1} - b(k)^2-(b(k)-2)^2) a(l)a(l+1)
 \end{align*}
and observe that it is enough to prove that $ 2(2^{2k}-b(k)(b(k)-2)) \geq \left|(2^{2k+1} - b(k)^2-(b(k)-2)^2)\right|$.
It is not hard to see that the last term is positive when $k>0$ because $|b(k)| \leq \frac{1}{3} (2^k+1)$. Let us compute the difference
\begin{equation*}
  2\left(2^{2k}-b(k)\left(b(k)-2\right)\right) - \left(2^{2k+1} - b(k)^2-\left(b(k)-2\right)^2\right) = 4.
 \end{equation*}
 When $k=0$ we get $a(n)^2-a(n-1)a(n+1) = (a(l+1)-a(l))^2 > 0$. It cannot be zero because $a(l)$ and $a(l+1)$ have different parity.
Therefore our inequality holds.

Finally, let $n=2^k$. It is easy to see that $(a(n-1),a(n),a(n+1)) = (b(k),(-2)^k,b(k)-2)$. Moreover,
\begin{align*}
 a(n)^2&-a(n+1)a(n-1)\\
       &= 2^{2k}-(b(k)-2)b(k) \geq 2^{2k} - 1/9 (2^k+1)(2^k+3) = 1/9(2^{2k+3}-2^{k+2}-3) > 0
\end{align*}
and our theorem follows.
\end{proof}

We are ready to prove that none of three consecutive terms of the sequence $(t_2(n))_{n\in\N}$ have the same sign. Let us note that the same property holds for ${\bf t}_{1}$ - the Prouhet-Thue-Morse sequence.

\begin{thm}
For any positive integer $n$ the numbers $t_2(n-1)$, $t_2(n)$, $t_2(n+1)$ do not have the same sign.
\end{thm}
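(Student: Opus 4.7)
The plan is to do a case split on the parity of $n$ and reduce everything to the recurrence relations for $t_2$ extracted from Lemma \ref{rectmn}, together with the log-concavity inequality (\ref{concave}). From Lemma \ref{rectmn} applied with $m=2$, we have
\begin{equation*}
t_2(2k)=t_2(k)+t_2(k-1),\qquad t_2(2k+1)=-2t_2(k),
\end{equation*}
where $t_2(-1)=0$. I will assume for contradiction that $t_2(n-1), t_2(n), t_2(n+1)$ all share a common sign $\varepsilon\in\{+1,-1\}$, and derive a contradiction in each case (the base case $n=1$, giving $(1,-2,-1)$, is handled by inspection).

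First I would treat the even case $n=2k$ with $k\geq 1$. Here $t_2(n-1)=-2t_2(k-1)$ and $t_2(n+1)=-2t_2(k)$, so having common sign $\varepsilon$ forces $\operatorname{sgn}t_2(k-1)=\operatorname{sgn}t_2(k)=-\varepsilon$. But then $t_2(n)=t_2(k)+t_2(k-1)$ is a sum of two numbers of sign $-\varepsilon$, so has sign $-\varepsilon\neq\varepsilon$, a direct contradiction. No log-concavity is needed here.

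The odd case $n=2k+1$ with $k\geq 1$ is the substantive one. From $t_2(n)=-2t_2(k)$ we read off $\operatorname{sgn}t_2(k)=-\varepsilon$, and from $t_2(n-1)=t_2(k)+t_2(k-1)$ and $t_2(n+1)=t_2(k+1)+t_2(k)$ having sign $\varepsilon$ while $t_2(k)$ has sign $-\varepsilon$, I deduce that $t_2(k-1)$ and $t_2(k+1)$ both have sign $\varepsilon$ with $|t_2(k-1)|>|t_2(k)|$ and $|t_2(k+1)|>|t_2(k)|$. Because $t_2(k-1)$ and $t_2(k+1)$ then share a sign, log-concavity (\ref{concave}) yields
\begin{equation*}
t_2(k)^2>t_2(k-1)t_2(k+1)=|t_2(k-1)|\cdot|t_2(k+1)|>|t_2(k)|^2,
\end{equation*}
which is the desired contradiction. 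The only subtle point, and what I view as the main (mild) obstacle, is realizing that after the sign bookkeeping forces $|t_2(k\pm 1)|>|t_2(k)|$, the log-concavity bound becomes strictly inconsistent precisely because the product $t_2(k-1)t_2(k+1)$ is positive in this regime; otherwise log-concavity alone gives no contradiction.
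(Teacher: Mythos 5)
Your proof is correct and follows essentially the same route as the paper: both arguments reduce the triple centered at an even (resp.\ odd) index to $t_2(k-1),t_2(k),t_2(k+1)$ via the recurrences $t_2(2k)=t_2(k)+t_2(k-1)$, $t_2(2k+1)=-2t_2(k)$, handle the even-centered case by a direct sign count, and invoke log-concavity (\ref{concave}) only in the odd-centered case after forcing $|t_2(k\pm1)|>|t_2(k)|$ with $t_2(k-1)t_2(k+1)>0$. The only cosmetic difference is that you argue by direct contradiction on the parity of $n$, whereas the paper phrases it as an induction whose hypothesis is not actually used.
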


\begin{proof}
The statement of our theorem is true for $n=1$. Assume that $t_2(n-1)$, $t_2(n)$, $t_2(n+1)$ do not have the same sign and consider the numbers $t_2(2n-1)$, $t_2(2n)$, $t_2(2n+1)$. If $t_2(2n-1)$ and $t_2(2n+1)$ have the same sign then $t_2(n-1)$ and $t_2(n)$ have the same sign, since $t_2(2n-1)=-2t_2(n-1)$ and $t_2(2n+1)=-2t_2(n)$. However, the sign of $t_2(2n-1)$ and $t_2(2n+1)$ is different from the sign of $t_2(n-1)$ and $t_2(n+1)$ while the sign of $t_2(2n)=t_2(n-1)+t_2(n)$ is the same as the sign of $t_2(n-1)$ and $t_2(n+1)$. Consider now the numbers $t_2(2n)$, $t_2(2n+1)$, $t_2(2n+2)$ and suppose that they have the same sign. Then $\sgn t_2(n)=\sgn \left(-\frac{1}{2}t_2(2n+1)\right)=-\sgn t_2(2n)$. Since $t_2(2n)=t_2(n-1)+t_2(n)$ and $\sgn t_2(2n)=\sgn t_2(2n+1)$, thus $\sgn t_2(n-1)=-\sgn t_2(n)$ and $|t_2(n-1)|>|t_2(n)|$. Analogously we conclude that $\sgn t_2(n+1)=-\sgn t_2(n)$ and $|t_2(n+1)|>|t_2(n)|$. Thus $t_2(n)^2<t_2(n-1)t_2(n+1)$, which contradicts the inequality (\ref{concave}).
\end{proof}

\section{Arithmetic properties of the sequence $(f(n,t))_{n\in\N}$ with $t\in\Z_{<0}$}\label{Section4}

In this section we consider the sequence $(f_{n}(t))_{n\in\N}$ with a fixed negative integer $t$. We thus put $t=-m$ for $m\in\N_{+}$ and we write
$$
b_{m}(n):=f_{n}(-m).
$$
Moreover, in order to shorten the notation we write
$$
H_{m}(x):=F_{-m}(x)=\sum_{n=0}^{\infty}b_{m}(n)x^{n}.
$$
In particular $b_{1}(n)=b(n)$ is the well known binary partition function introduced by Euler and studied by Churchhouse \cite{Chu}, R{\o}dseth \cite{Rod}, Gupta \cite{Gup} and others. It is sequence $A018819$ in \cite{OEIS}. Also the sequence $(b(2n))_{n\in\N}$ can be found in \cite{OEIS}, namely as sequence $A000123$. It is clear that $b_{m}(n)$ is the convolution of $m$ copies of the sequence $(b(n))_{n\in\N}$. We thus have
\begin{equation*}
b_{m}(n)=\sum_{i_{1}+i_{2}+\ldots+i_{m}=n}\prod_{k=1}^{m}b(i_{k}).
\end{equation*}
From the above expression we easily deduce that the number $b_{m}(n)$ has a natural combinatorial interpretation. Indeed, $b_{m}(n)$ counts the number of representations of the integer $n$ as the sum of powers of $2$, where each summand can have one of $m$ colors.

We start with the proof of the recurrence relations satisfied by the sequence $(b_{m}(n))_{n\in\N}$.

\begin{lem}\label{recforbm}
Let $m$ be a positive integer. Then the sequence $(b_{m}(n))_{n\in\N}$ satisfies $b_{m}(0)=1, b_{m}(1)=m$ and for $n\geq 1$ we have
\begin{align*}
b_{m}(2n)  = &\sum_{j=0}^{m-1}{m\choose j+1}(-1)^{j}b_{m}(2n-j-1)+b_{m}(n),\\
b_{m}(2n+1)= &\sum_{j=0}^{m-1}{m\choose j+1}(-1)^{j}b_{m}(2n-j).
\end{align*}

Moreover, the sequence $(b_{m}(n))_{n\in\N}$ satisfies the following recurrence relations:
\begin{align*}
b_{m}(2n)  = &\sum_{j=0}^{n}{2(n-j)+m-1\choose m-1}b_{m}(j),\\
b_{m}(2n+1)= &\sum_{j=0}^{n}{2(n-j)+m\choose m-1}b_{m}(j).
\end{align*}
\end{lem}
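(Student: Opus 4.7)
The plan is to derive both pairs of recurrences directly from the functional equation $F_t(x)=(1-x)^t F_t(x^2)$ specialized to $t=-m$, which after rearrangement gives two equivalent identities:
\begin{equation*}
(1-x)^m H_m(x)=H_m(x^2),\qquad H_m(x)=(1-x)^{-m}H_m(x^2).
\end{equation*}
The initial values are immediate: $b_m(0)=f_0(-m)=1$ and $b_m(1)=f_1(-m)=m$, since $f_1(t)=-t$ was computed earlier.

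For the first pair of recurrences, I would expand $(1-x)^m=\sum_{k=0}^{m}\binom{m}{k}(-1)^k x^k$ and form the Cauchy product with $H_m(x)=\sum_{n\geq 0}b_m(n)x^n$. Comparing coefficients of $x^{2n}$ on both sides of $(1-x)^m H_m(x)=H_m(x^2)$, the right-hand side contributes $b_m(n)$ while the left-hand side contributes $\sum_{k=0}^m\binom{m}{k}(-1)^k b_m(2n-k)$; isolating the $k=0$ term and substituting $j=k-1$ yields the first stated relation. For odd indices, the right-hand side has no contribution to $x^{2n+1}$, so the analogous extraction of the $k=0$ term and the same shift $j=k-1$ gives the formula for $b_m(2n+1)$. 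This is exactly the method used in the proof of Lemma~\ref{rectmn}, with $(1-x)^m$ replaced by its positive-exponent analogue.

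For the second pair, I would use the generalized binomial expansion
\begin{equation*}
(1-x)^{-m}=\sum_{k=0}^{\infty}\binom{k+m-1}{m-1}x^k
\end{equation*}
in the identity $H_m(x)=(1-x)^{-m}H_m(x^2)$. The Cauchy product of this series with $H_m(x^2)=\sum_{n\geq 0}b_m(n)x^{2n}$ gives the coefficient of $x^N$ as $\sum_{j:2j\leq N}\binom{N-2j+m-1}{m-1}b_m(j)$. Setting $N=2n$ and $N=2n+1$ in turn produces the two closed-form recurrences in the statement.

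There is no substantive obstacle in this argument; both pairs of identities reduce to routine coefficient comparison in formal power series, and the only care needed is to keep track of the index shift $j=k-1$ in the first pair and the parity of $N$ in both pairs.
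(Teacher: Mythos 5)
Your proposal is correct and follows essentially the same route as the paper: both pairs of recurrences are obtained by coefficient comparison in the functional equations $(1-x)^mH_m(x)=H_m(x^2)$ and $H_m(x)=(1-x)^{-m}H_m(x^2)$, exactly as in the paper's proof. The only (cosmetic) difference is in the second pair, where the paper first splits $H_m$ into even and odd parts via $\tfrac12(H_m(x)\pm H_m(-x))$ and expands $\tfrac12\left((1-x)^{-m}\pm(1+x)^{-m}\right)$ before substituting $x\mapsto\sqrt{x}$, whereas you read off the coefficient of $x^N$ directly from the single Cauchy product and separate by the parity of $N$ --- a slightly more economical presentation of the same computation.
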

\begin{proof}
The function $H_{m}$ satisfies the functional equation $(1-x)^{m}H_{m}(x)=H_{m}(x^2)$. In consequence we have
\begin{align*}
(1-x)^{m}H_{m}(x)&=\left(\sum_{j=0}^{m}{m\choose j}(-1)^{j}x^{j}\right)\left(\sum_{n=0}^{\infty}b_{m}(n)x^{n}\right)\\
                 &=\sum_{n=0}^{\infty}\left(\sum_{j=0}^{m}{m\choose j}(-1)^{j}b_{m}(n-j)\right)x^{n}=\sum_{n=0}^{\infty}b_{m}(n)x^{2n}.
\end{align*}
Comparing now the coefficients on both sides of the above identity we get two equalities:
$$
\sum_{j=0}^{m}{m\choose j}(-1)^{j}b_{m}(2n+1-j)=0,\quad \sum_{j=0}^{m}{m\choose j}(-1)^{j}b_{m}(2n-j)=b_{m}(n).
$$
From the first equality we get the expression for $b_{m}(2n+1)$. From the second relation we get the expression for $b_{m}(2n)$. Finally, replacing $i$ by $i+1$, we get the relations given in the statement of our theorem.

\bigskip

In order to get the second part of our Lemma we use the same technique. From the functional equation for $H_{m}(x)=(1-x)^{-m}H_{m}(x^2)$ we have
\begin{equation*}
\sum_{n=0}^{\infty}b_{m}(2n)x^{2n}=\frac{1}{2}(H_{m}(x)+H_{m}(-x))=\frac{1}{2}\left(\frac{1}{(1-x)^{m}}+\frac{1}{(1+x)^{m}}\right)H_{m}(x^2).
\end{equation*}
A quick calculation reveals that
\begin{equation*}
\frac{1}{2}\left(\frac{1}{(1-x)^{m}}+\frac{1}{(1+x)^{m}}\right)=\sum_{n=0}^{\infty}\binom{m+2n-1}{2n} x^{2n}
\end{equation*}
and thus (after the substitution $x\mapsto \sqrt{x}$) we have
\begin{align*}
\sum_{n=0}^{\infty}b_{m}(2n)x^{n}&=\left(\sum_{n=0}^{\infty}\binom{m+2n-1}{n} x^{2n}\right)\left(\sum_{n=0}^{\infty}b_{m}(n)x^{n}\right)\\
& =\sum_{n=0}^{\infty}\left(\sum_{j=0}^{n}\binom{m+2(n-j)-1}{2(n-j)}b_{m}(j)\right)x^{n}.
\end{align*}
Comparing now the coefficients on the both sides of the above identity and using the symmetry property of binomial coefficients, we get the first identity given in the statement of our lemma.

Using exactly the same type of reasoning and the identity
\begin{equation*}
\sum_{n=0}^{\infty}b_{m}(2n+1)x^{2n+1}=\frac{1}{2}(H_{m}(x)-H_{m}(-x))=\frac{1}{2}\left(\frac{1}{(1-x)^{m}}-\frac{1}{(1+x)^{m}}\right)H_{m}(x^2)
\end{equation*}
we prove the second identity. We leave the details to the reader.
\end{proof}

\subsection{Some inequalities involving $b_{m}(n)$ for $m=1, 2$}

In the previous section we proved that $t_2(n)^2-t_2(n-1)t_2(n+1)>0$. Using recurrence relations for the numbers $b_m(n)$, we can easily compute the sign of the expression $b_m(n)^2-b_m(n-1)b_m(n+1)$ for $m\in\{1,2\}$.

\begin{prop}\label{equations}
For $n\in\N_+$ the following equalities hold:
\begin{align*}
b_1(2n)^2-b_1(2n-1)b_1(2n+1) & =b_1(2n)b_1(n),\\
b_1(2n-1)^2-b_1(2n-2)b_1(2n) & =-b_1(2n-2)b_1(n),\\
b_2(2n)^2-b_2(2n-1)b_2(2n+1) & =\left(\sum_{j=0}^n b_2(j)\right)^2,\\
b_2(2n-1)^2-b_2(2n-2)b_2(2n) & =\left(\sum_{j=0}^n b_2(j)\right)^2-b_2(2n-2)b_2(n).
\end{align*}
In particular, we have $$(-1)^n\left( b_m(n)^2-b_m(n-1)b_m(n+1)\right)>0$$ for $m\in\{1,2\}$ and each $n\in\N_+$.
\end{prop}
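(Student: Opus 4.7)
The plan is to verify each of the four identities by direct manipulation of the recurrences in Lemma \ref{recforbm}, and then deduce the sign assertion from the explicit formulas together with positivity of the sequences.

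For $m=1$, Lemma \ref{recforbm} specializes to the classical pair $b_1(2n)=b_1(2n-1)+b_1(n)$ and $b_1(2n+1)=b_1(2n)$. The first identity factors immediately: $b_1(2n)^2-b_1(2n-1)b_1(2n+1)=b_1(2n)^2-b_1(2n-1)b_1(2n)=b_1(2n)b_1(n)$. For the second, noting that $b_1(2n-1)=b_1(2(n-1)+1)=b_1(2n-2)$, the left-hand side equals $b_1(2n-2)(b_1(2n-2)-b_1(2n))=-b_1(2n-2)b_1(n)$.

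For $m=2$, I would first use the binomial-sum form of the recurrences from Lemma \ref{recforbm} (specialized to $m=2$), namely
\[
b_2(2n)=\sum_{j=0}^n(2(n-j)+1)b_2(j),\qquad b_2(2n+1)=\sum_{j=0}^n(2(n-j)+2)b_2(j),
\]
to extract two clean increments
\[
b_2(2n+1)-b_2(2n)=S_n,\qquad b_2(2n)-b_2(2n-1)=S_n,
\]
where $S_n:=\sum_{j=0}^n b_2(j)$; the first follows by termwise subtraction of the two displayed sums, and the second after substituting the formula for $b_2(2n-1)=b_2(2(n-1)+1)$ and separating off the $j=n$ term. The first identity of the proposition is then a difference of squares: $b_2(2n)^2-(b_2(2n)-S_n)(b_2(2n)+S_n)=S_n^2$. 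For the second identity, I would write $b_2(2n-2)=b_2(2n-1)-S_{n-1}$ and $b_2(2n)=b_2(2n-1)+S_n$, expand the product $(b_2(2n-1)-S_{n-1})(b_2(2n-1)+S_n)$, and simplify using $S_n-S_{n-1}=b_2(n)$; the computation collapses to the right-hand side of the stated identity.

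Finally, each $b_m(k)$ is a positive integer by its combinatorial interpretation, so $S_n>0$. For $n=2k$ the first and third identities immediately give positivity, matching $(-1)^n=1$; for $n=2k-1$ and $m=1$ the second identity yields $-b_1(2k-2)b_1(k)<0$, matching $(-1)^n=-1$. The main obstacle is the $m=2$, $n=2k-1$ case: the fourth identity produces $S_{k-1}^2-b_2(2k-2)b_2(k)$, and one must show this is strictly negative, i.e., $b_2(2k-2)b_2(k)>S_{k-1}^2$. I would prove this by induction on $k$, using the algebraic identity $b_2(2k)=b_2(2k-2)+2S_{k-1}+b_2(k)$ (obtained by summing the two increment formulas above) to bound
\[
S_k^2=S_{k-1}^2+2S_{k-1}b_2(k)+b_2(k)^2<b_2(2k-2)b_2(k)+2S_{k-1}b_2(k)+b_2(k)^2=b_2(k)b_2(2k),
\]
and then invoking the strict monotonicity $b_2(k)<b_2(k+1)$---which follows from the convolution representation $b_2(n)=\sum_{j=0}^n b(j)b(n-j)$ together with the nondecreasingness of the binary partition function $b$---to conclude $S_k^2<b_2(k+1)b_2(2k)$, completing the induction.
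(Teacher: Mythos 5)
Your proof is correct, and it is worth comparing carefully with the paper's. For $m=1$ your computations coincide with the paper's. For $m=2$ you take a genuinely different and cleaner route: the paper multiplies out the two binomial sums from Lemma \ref{recforbm} and collapses the cross terms via the algebraic identity $2xy-(x-1)(y+1)-(x+1)(y-1)=2$, whereas you first extract the increments $b_2(2n+1)-b_2(2n)=b_2(2n)-b_2(2n-1)=S_n$ with $S_n=\sum_{j=0}^{n}b_2(j)$ and reduce the third identity to a difference of squares; both are valid, yours is shorter. (One caveat your own computation exposes: the fourth identity as printed should read $\bigl(\sum_{j=0}^{n-1}b_2(j)\bigr)^2-b_2(2n-2)b_2(n)$; the paper itself restates it with upper limit $n-1$ a few lines after the proof, and the value at $n=1$ confirms that the displayed $\sum_{j=0}^{n}$ is a typo.) The substantive difference is in the final sign assertion. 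The paper's argument yields $(-1)^n\bigl(b_m(n)^2-b_m(n-1)b_m(n+1)\bigr)>0$ only for $m=1$ and for $m=2$ with $n$ even; for $m=2$ and $n$ odd one needs $S_{n-1}^2<b_2(2n-2)b_2(n)$, and the authors explicitly write that they were unable to prove this inequality, so the Proposition's ``in particular'' claim is not actually established in the paper in that case. Your induction closes this gap: from the hypothesis $S_{k-1}^2<b_2(2k-2)b_2(k)$ and the identity $b_2(2k)=b_2(2k-2)+2S_{k-1}+b_2(k)$ you obtain $S_k^2<b_2(k)b_2(2k)$, and strict monotonicity of $b_2$ (which follows either from your convolution argument or, even more directly, from the positive increments $S_n$ you already derived) upgrades this to $S_k^2<b_2(k+1)b_2(2k)$, which is the claim at $k+1$; the base case is $b_2(0)b_2(1)=2>1=S_0^2$. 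I verified the step and several numerical instances; your argument is sound and proves strictly more than the paper does.
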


\begin{proof}
We perform direct calculations using the first part of Lemma \ref{recforbm}. If $m=1$ then $b_1(2n)=b_1(2n+1)$ for each $n\in\N$. We thus have
\begin{align*}
 b_1(2n)^2-b_1(2n-1)b_1(2n+1)&=b_1(2n)^2-b_1(2n-2)b_1(2n)\\
                              &=b_1(2n)\left(b_1(2n)-b_1(2n-2)\right)=b_{1}(2n)b_{1}(n).\\
\end{align*}
Similarly,
\begin{align*}
 b_1(2n-1)^2-b_1(2n-2)b_1(2n)&=b_1(2n-2)^2-b_1(2n-2)b_1(2n)\\
                             &=b_1(2n-2)\left(b_1(2n-2)-b_1(2n)\right)=-b_1(2n-2)b_1(n).
\end{align*}

For $m=2$ the computations are more complicated:
\begin{align*}
 b_2(2n)^2&-b_2(2n-1)b_2(2n+1)\\
 &=\left(\sum_{j=0}^n \left(2(n-j)+1\right)b_2(j)\right)^2-\left(\sum_{j=0}^n2(n-j)b_2(j)\right)\left(\sum_{j=0}^n\left(2(n-j)+2\right)b_2(j)\right)\\
 &= \left(\sum_{j=0}^n \left[\left(2(n-j)+1\right)^2-2(n-j)\left( 2(n-j)+2\right)\right]b_2(j)^2\right)+\\
 &+ \left(\sum_{0\leq j<k\leq n} F(2(n-j)+1,2(n-k)+1)b_2(j)b_2(k)\right)\\
 &= \left(\sum_{j=0}^n b_2(j)^2\right)+\left(\sum_{0\leq j<k\leq n} 2b_2(j)b_2(k)\right)=\left(\sum_{j=0}^n b_2(j)\right)^2.
\end{align*}
In the equality between the third and fourth expression in the computation above we applied the identity $F(x,y)=2xy-(x-1)(y+1)-(x+1)(y-1)=2$ for $x=2(n-j)+1$ and $y=2(n-k)+1$.
\end{proof}

\begin{rem}
{\rm The first to prove the first equality in Proposition \ref{equations}, was D. Knuth, as was pointed out by B. Reznick in \cite{Rez}.
}
\end{rem}

We also have the identity:
\begin{align*}
b_2(2n-1)^2-b_2(2n-2)b_2(2n)=\left(\sum_{j=0}^{n-1} b_2(j)\right)^2-b_2(2n-2)b_2(n).
\end{align*}
Indeed, we have the following chain of inequalities:
\begin{align*}
 b_2&(2n-1)^2-b_2(2n-2)b_2(2n)\\
     & =\left(\sum_{j=0}^{n-1} 2(n-j)b_2(j)\right)^2-\left(\sum_{j=0}^{n-1} \left(2(n-j)-1\right)b_2(j)\right)\left(\sum_{j=0}^n
     \left(2(n-j)+1\right)b_2(j)\right)\\
     &= \left(\sum_{j=0}^{n-1} \left[\left(2(n-j)\right)^2-\left( 2(n-j)-1\right)\left( 2(n-j)+1\right)\right]b_2(j)^2\right)+\\
     &+ \left(\sum_{0\leq j<k\leq n-1} G(2(n-j)+1,2(n-k)+1)b_2(j)b_2(k)\right)-\left(\sum_{j=0}^{n-1} \left(2(n-j)-1\right)b_2(j)\right)b_2(n)\\
     &= \left(\sum_{j=0}^{n-1} b_2(j)^2\right)+\left(\sum_{0\leq j<k\leq n-1}2b_2(j)b_2(k)\right)-b_2(2n-2)b_2(n)=\left(\sum_{j=0}^{n-1} b_2(j)\right)^2-b_2(2n-2)b_2(n).
\end{align*}
In the equality between third and fourth expression in the computation above we applied the identity $G(x,y)=2xy-(x-1)(y+1)-(x+1)(y-1)=2$ for $x=2(n-j)$ and $y=2(n-k)$.

It seems that in this case $b_2(2n-1)^2-b_2(2n-2)b_2(2n)<0$ for all $n\in\N_+$, but we were unable to prove this statement.

\subsection{Some congruences involving ${\bf b}_{m}$}

In this subsection we present several congruences involving the sequence ${\bf b}_{m}$ for various values of $m$. We are mainly interested in the congruences $\pmod{2^k}$ for various values of $k$. In particular we give a precise description of the 2-adic valuation of the elements of the sequence ${\bf b}_{2^{k}-1}$  for $k\in\N_{+}$, thereby generalizing the result of Churchhouse.

First we prove a simple lemma concerning the characterization of parity of the number $b_{m}(n)$.

\begin{lem}\label{parity}
 Let $m\in \N_{+}$ be fixed and write $m=2^{k}(2u+1)$ with $k\in\N$. Then:
 \begin{enumerate}
  \item We have $b_{m}(n) \equiv \binom{m}{n}+2^{k+1}\binom{m-2}{n-2} \pmod {2^{k+2}}$ for $m$ even;
  \item We have $b_{m}(n) \equiv \binom{m}{n} \pmod {2}$ for $m$ odd;
  \item For infinitely many $n$ we have $b_{m}(n)\not\equiv 0\pmod{4}$ for $m$ odd.
 \end{enumerate}
\end{lem}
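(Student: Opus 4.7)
The plan is to handle the three parts of the lemma in sequence, with parts (1) and (3) both exploiting the functional equation $(1-x)^{m}H_{m}(x) = H_{m}(x^{2})$ together with the uniqueness of its solution with constant term $1$ modulo any power of $2$. For (2), the classical identity $\prod_{n\geq 0}(1+x^{2^{n}}) = (1-x)^{-1}$ combined with $1-y\equiv 1+y\pmod 2$ gives $F(x)\equiv (1-x)^{-1}\pmod 2$, so $H_{m}(x)\equiv (1-x)^{m}\pmod 2$, and extracting the coefficient of $x^{n}$ yields $b_{m}(n)\equiv (-1)^{n}\binom{m}{n}\equiv \binom{m}{n}\pmod 2$. (In fact this congruence holds for every $m$; the statement is singled out for odd $m$ only because (1) is sharper in the even case.)

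For (1), I would introduce the candidate
$$P_{m}(x) := (1+x)^{m}+2^{k+1}x^{2}(1+x)^{m-2}.$$
Since $P_{m}(0)=1$ and the functional equation $(1-x)^{m}H(x)\equiv H(x^{2})\pmod{2^{k+2}}$ uniquely determines $H\in(\Z/2^{k+2}\Z)[[x]]$ once $H(0)=1$ is fixed, it is enough to verify $(1-x)^{m}P_{m}(x)\equiv P_{m}(x^{2})\pmod{2^{k+2}}$. Expanding with $(1-x)^{m}(1+x)^{m}=(1-x^{2})^{m}$ and $(1-x)^{2}=(1+x^{2})-2x$, the two error terms $-2^{k+2}x^{3}(1-x^{2})^{m-2}$ and $2^{k+1}x^{4}\big[(1-x^{2})^{m-2}-(1+x^{2})^{m-2}\big]$ already vanish modulo $2^{k+2}$ (the second because its bracket is $2$ times an integer series). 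After substituting $y=x^{2}$, the remaining requirement is the polynomial congruence
$$(1+y)^{m}-(1-y)^{m}\equiv 2^{k+1}y(1-y)^{m-2}\pmod{2^{k+2}}.$$
For $j$ even and positive this asks that $\binom{m-2}{j-1}$ be even, which is immediate from Kummer's theorem, since $m-2$ is even and $j-1$ is odd so adding $j-1$ and $m-j-1$ (both odd) produces a carry at the ones bit.

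The main obstacle is the odd-$j$ coefficient congruence $\binom{m}{j}\equiv 2^{k}\binom{m-2}{j-1}\pmod{2^{k+1}}$. I would prove it by writing $m=2^{k}m'$ with $m'$ odd and using $\binom{m}{j}=(m/j)\binom{m-1}{j-1}$ together with $\gcd(j,2^{k})=1$ to conclude that $\binom{m}{j}/2^{k}=m'\binom{m-1}{j-1}/j$ is an integer which, because both $m'$ and $j$ are odd, is $\equiv \binom{m-1}{j-1}\pmod 2$. Pascal's rule then gives $\binom{m-1}{j-1}-\binom{m-2}{j-1}=\binom{m-2}{j-2}$, and a second application of Kummer's theorem (with $m-2$ even and $j-2$ odd) forces this last binomial to be even, closing the identity.

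Finally, for (3), part (2) allows us to write $H_{m}(x)\equiv (1+x)^{m}+2D(x)\pmod 4$ for some integer series $D(x)=\sum d_{n}x^{n}$. Applying (1) to $2m$ (whose $2$-adic parameter is $k=1$) yields $H_{2m}(x)\equiv (1+x)^{2m}+4x^{2}(1+x)^{2m-2}\pmod 8$; squaring $H_{m}$, using $H_{m}^{2}=H_{2m}$, and invoking $D(x)^{2}\equiv D(x^{2})\pmod 2$ produces the functional congruence
$$(1+x)^{m}D(x)+D(x^{2})\equiv x^{2}(1+x)^{2m-2}\pmod 2.$$
Were $D(x)\bmod 2$ a polynomial of degree $N$ with $d_{N}\equiv 1$, a degree comparison would give a contradiction: the right-hand side has degree exactly $2m$ with leading coefficient $1$, while the left-hand side has degree $\max(m+N,2N)$; this exceeds $2m$ when $N>m$, falls short of $2m$ when $N<m$, and when $N=m$ the two contributions to the $x^{2m}$-coefficient on the left cancel to $0$. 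Hence $d_{n}\equiv 1\pmod 2$ for infinitely many $n$, and for each such $n>m$ one has $\binom{m}{n}=0$ and $b_{m}(n)\equiv 2d_{n}\equiv 2\pmod 4$, yielding the conclusion of (3).
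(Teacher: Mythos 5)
Your proof is correct, but for parts (1) and (3) it takes a genuinely different route from the paper. For (1), the paper works directly with the infinite product: it first shows $(1+x)^{2^k}-(1-x)^{2^k}\equiv 2^{k+1}x(1+x)^{2^k-2}\pmod{2^{k+2}}$ and then telescopes $\sum_{n\ge 0}x^{2^n}/(1+x)^{2^{n+1}}$ into $x/(1-x)$ modulo the relevant power of $2$, arriving at $H_m(x)\equiv(1+x)^m+2^{k+1}x^2(1+x)^{m-2}\pmod{2^{k+2}}$ by successive rewritings of $\prod_n(1-x^{2^n})^{-m}$. You instead guess this closed form and verify it against the functional equation $(1-x)^mH(x)=H(x^2)$, invoking the (easily justified) uniqueness of the solution with $H(0)=1$ over $\Z/2^{k+2}\Z$; this trades the product manipulation for the elementary binomial congruence $\binom{m}{j}\equiv 2^k\binom{m-2}{j-1}\pmod{2^{k+1}}$ for odd $j$, which your Kummer/Pascal argument establishes correctly. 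Your route is arguably more self-contained and easier to check line by line, at the cost of having to know the answer in advance. For (3), the paper expands $H_m(x)\equiv(1-x)^m\bigl(1+2\sum_n x^{2^n}/(1+x^{2^n})\bigr)\pmod 4$ and reduces the claim to the fact that the series $\xi(x)=\sum_{n\in A}x^n$ (supported on integers whose binary expansion ends in an even number of zeros) satisfies $\xi+\xi^2=x/(1+x)$ in $\F_2[[x]]$ and hence $(1+x)^m\xi$ cannot be a polynomial; you instead exploit $H_m^2=H_{2m}$ together with your part (1) at $k=1$ to derive the quadratic relation $D^2+(1+x)^mD=x^2(1+x)^{2m-2}$ in $\F_2[[x]]$ for the error series $D$, and rule out polynomiality by a clean three-case degree comparison. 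The two arguments are structurally parallel (both reduce to a power series satisfying a quadratic equation over $\F_2(x)$ that no polynomial can satisfy), but yours avoids introducing $\xi$ and makes the contradiction fully explicit, whereas the paper's version identifies the combinatorial meaning of the obstruction. Part (2) is identical to the paper's. The only nitpick is that in (3) you should dispose explicitly of the degenerate case $D\equiv 0\pmod 2$ (the functional congruence then fails outright since its right-hand side is nonzero), but this is immediate.
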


\begin{proof}
To prove the first part, we write $m = 2^k(2u+1)$. Let us observe that for $i=0,1,\ldots,2^{k-1}-1$ we have (by Legendre's formula for the $2$-adic valuation of a factorial) $v_2(\binom{2^k}{2i+1}) = s_2(2i+1)+s_2(2^k-(2i+1))-1 = k$. Thus the following congruence holds
\begin{equation*}
 (1+x)^{2^k}-(1-x)^{2^k} = 2\sum_{i=0}^{2^{k-1}-1} \binom{2^k}{2i+1}x^{2i+1} \equiv 2^{k+1} \sum_{i=0}^{2^{k-1}-1} x^{2i+1} \equiv 2^{k+1}x(1+x)^{2^k-2} \pmod {2^{k+2}}.
\end{equation*}
% Now our theorem is an easy consequence of the identity $\prod_{n=0}^{\infty}\left(1+x^{2^{n}}\right)^{-1}=1-x$.
%  Indeed, working modulo $2^{k+1}$ we have
Using simple calculations we get the following equality:
\begin{align*}
H_{m}(x) &= \prod_{n=0}^{\infty}\left(1-x^{2^{n}}\right)^{-m} \\
	 &\equiv \prod_{n=0}^{\infty} \left( ((1+x^{2^n})^{2^k}+2^{k+1}x^{2^n}(x+1)^{2^n(2^k-2)}) \right)^{-(2u+1)} \\
	 &\equiv \prod_{n=0}^{\infty} \left( (1+x^{2^n})^m+2^{k+1}x^{2^n}(1+x)^{2^n(m-2)} \right)^{-1} \\
	 &\equiv \prod_{n=0}^{\infty} \left( (1+x^{2^n})^m(1+2^{k+1} \frac{x^{2^n}}{(1+x)^{2^{n+1}}} )\right)^{-1} \\
	 &\equiv (1-x)^m \left(1+2^{k+1} \sum_{n=0}^{\infty} \frac{x^{2^n}}{(1+x)^{2^{n+1}}} \right)^{-1} \\
	 &\equiv (1-x)^m \left(1+2^{k+1} \frac{x}{1-x}\right)  \\	
\end{align*}
\begin{align*}
	 &\equiv (1-x)^m +2^{k+1}x(1-x)^{m-1} \\
	 &\equiv (1+x)^m +2^{k+1}x((1+x)^{m-1}+(1+x)^{m-2}) \\
	 &\equiv (1+x)^m+2^{k+1}x^2(1+x)^{m-2} \pmod {2^{k+2}},
\end{align*}
and comparing the coefficients of $x^{n}$ on both sides of the above congruence we get the result.

For the second part; let us observe that
\begin{equation*}
H_m(x) = \prod_{n=0}^{\infty}\left(1-x^{2^{n}}\right)^{-m} \equiv \prod_{n=0}^{\infty}\left(1+x^{2^{n}}\right)^{-m} \equiv (1+x)^m \pmod 2.
\end{equation*}

For the third part; let us observe that $(1+x)\equiv (1-x)+2x \pmod 4$. Using analogous computations as before we get
\begin{equation*}
H_m(x) \equiv (1-x)^m\left(1+2\sum_{n=0}^{\infty} \frac{x^{2^n}}{1+x^{2^n}}\right) \pmod 4.
\end{equation*}
We let $A$ denote the set of those $u \in \mathbb{N}_+$ which end with even number of zeros in binary expansion. We have
\begin{equation*}
\sum_{n=0}^{\infty} \frac{x^{2^n}}{1+x^{2^n}} \equiv \sum_{n=0}^{\infty} \sum_{m=1}^{\infty} x^{m2^n} \equiv \sum_{n \in A} x^n := \xi(x)  \pmod 2.
\end{equation*}
If we had $b_{m}(n)\equiv 0\pmod 4$ for $n$ large enough, then $(1+x)^m\xi(x)$ would be a polynomial in $\mathbb{F}_2[[x]]$. But this is impossible,
since $\xi$ satisfies the formula $\xi+\xi^2 = \frac{x}{1+x}$ (in $\F_2[[x]]$).
\end{proof}

In the sequel we will also need the following simple observations concerning the binomial coefficients modulo 2 and 8.
\begin{lem}\label{binomial8}
 Let $m$ be a positive integer $\geq 2$. Then
\begin{equation*}
{2^{m}-1\choose k}\equiv 1\pmod{2},\quad\mbox{for}\quad k=0,1,\ldots, 2^{m}-1,
\end{equation*}
and
\begin{equation*}
{2^{m}\choose k}\equiv\left\{
\begin{array}{ll}
1 & \mbox{for}\;k=0, 2^{m} \\
4 & \mbox{for}\;k=2^{m-2}, 3\cdot 2^{m-2} \\
6 & \mbox{for}\;k=2^{m-1} \\
0 & \; \mbox{in the remaining cases}
                                     \end{array}
\right\}\pmod{8},\quad\mbox{for}\quad k=0,1,\ldots, 2^{m}.
\end{equation*}
\end{lem}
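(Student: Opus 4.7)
The plan is to handle the two congruences separately: the mod $2$ statement follows at once from Lucas' theorem, while the mod $8$ statement needs a slightly finer analysis based on the $2$-adic valuation of $\binom{2^{m}}{k}$ together with a short induction on $m$.

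For the first claim, I would invoke Lucas' theorem. Since the binary expansion of $2^{m}-1$ consists of $m$ ones, and any $k\in\{0,1,\ldots,2^{m}-1\}$ has binary digits $k_{i}\in\{0,1\}$, we get
\begin{equation*}
\binom{2^{m}-1}{k}\equiv \prod_{i=0}^{m-1}\binom{1}{k_{i}}\equiv 1\pmod{2},
\end{equation*}
because $\binom{1}{0}=\binom{1}{1}=1$.

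For the second claim, I would first compute $\nu_{2}\binom{2^{m}}{k}$ for $0<k<2^{m}$ using Kummer's theorem (equivalently, Legendre's formula $\nu_{2}(n!)=n-s_{2}(n)$ already invoked in the proof of Theorem \ref{v2powerof2}). Writing $k=2^{j}(2a+1)$ with $j=\nu_{2}(k)$, one checks that $s_{2}(2^{m}-k)=m-j-s_{2}(k)+1$, which yields the clean formula
\begin{equation*}
\nu_{2}\binom{2^{m}}{k}=m-\nu_{2}(k),\qquad 0<k<2^{m}.
\end{equation*}
This immediately disposes of three subcases: the endpoints $k\in\{0,2^{m}\}$ give $1$; for $k\in\{2^{m-2},\,3\cdot 2^{m-2}\}$ the valuation is exactly $2$, so $\binom{2^{m}}{k}\equiv 4\pmod{8}$; and for every remaining $k$ with $\nu_{2}(k)\leq m-3$ the valuation is at least $3$, giving $\binom{2^{m}}{k}\equiv 0\pmod{8}$.

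The only case the valuation alone leaves open is the central coefficient $k=2^{m-1}$, where $\nu_{2}\binom{2^{m}}{2^{m-1}}=1$ only narrows the residue to $\{2,6\}$ mod $8$. To pin it down I would induct on $m$, starting from the base case $\binom{4}{2}=6$. The squaring identity $(1+x)^{2^{m+1}}=\bigl((1+x)^{2^{m}}\bigr)^{2}$ combined with Vandermonde gives
\begin{equation*}
\binom{2^{m+1}}{2^{m}}=\sum_{k=0}^{2^{m}}\binom{2^{m}}{k}^{2}.
\end{equation*}
By the valuation formula, every term with $0<k<2^{m}$ and $k\neq 2^{m-1}$ has $\nu_{2}\geq 2(m-(m-2))=4$, hence vanishes mod $8$; the two endpoint terms each contribute $1$; and the central term $\binom{2^{m}}{2^{m-1}}^{2}$, of exact $2$-adic valuation $2$, is $4$ times an odd square and so is $\equiv 4\pmod{8}$. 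Summing gives $\binom{2^{m+1}}{2^{m}}\equiv 2+4\equiv 6\pmod{8}$, closing the induction.

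The main (indeed only) obstacle is pinning down the central coefficient modulo $8$; the inductive argument above handles it cleanly, and everything else is bookkeeping of $2$-adic valuations.
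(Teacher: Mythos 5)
Your proof is correct, but it follows a genuinely different route from the paper's. For the first congruence the paper avoids Lucas' theorem and instead observes that $\nu_{2}(2^{m}-j)=\nu_{2}(j)$ for $1\leq j\leq 2^{m}-1$, so that $\nu_{2}\bigl((2^{m}-1)\cdots(2^{m}-k)\bigr)=\nu_{2}(k!)$; the two arguments are equally elementary. For the second congruence the paper does not isolate the clean valuation formula $\nu_{2}\binom{2^{m}}{k}=m-\nu_{2}(k)$ that you derive from Kummer/Legendre; instead it handles the cases separately by writing $\binom{2^{m}}{2^{m-2}}=4\binom{2^{m}-1}{2^{m-2}-1}$ and $\binom{2^{m}}{k}=\frac{2^{m}}{k}\binom{2^{m}-1}{k-1}$ and invoking the first part. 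Your single formula is tidier and disposes of everything except the central coefficient in one stroke. The largest divergence is at $k=2^{m-1}$: the paper evaluates the product $\prod_{j=1}^{2^{m-1}}\frac{2^{m-1}+j}{j}$ modulo $8$ directly, noting that all factors with $\nu_{2}(j)\leq m-4$ are $\equiv 1\pmod 8$ and multiplying out the four remaining factors to get $5\cdot 3\cdot\frac{7}{3}\cdot 2\equiv 6$; you instead use the Vandermonde identity $\binom{2^{m+1}}{2^{m}}=\sum_{k}\binom{2^{m}}{k}^{2}$, in which only the endpoint and central terms survive modulo $8$, giving $1+1+4\equiv 6$. Both are valid; your convolution argument is arguably cleaner and, as a side remark, it does not actually need the inductive hypothesis (only the valuation $\nu_{2}\binom{2^{m}}{2^{m-1}}=1$ enters the step), so the ``induction'' is really just the base case $\binom{4}{2}=6$ plus a direct computation for every larger $m$.
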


\begin{proof}
For each $j\in\{1,...,2^m-1\}$ we have $\nu_2(2^m-j)=\nu_2(j)$. Hence $\nu_2((2^m-1)\cdot ...\cdot (2^m-k))=\nu_2(k!)$ for $k\in\{1,...,2^m-1\}$. We thus have
\begin{equation*}
\nu_2\left( 2^m-1\choose k\right)=\nu_2\left(\frac{(2^m-1)\cdot ...\cdot (2^m-k)}{k!}\right)=0,
\end{equation*}
which means that $2^m-1\choose k$ is odd.

Obviously, ${2^m\choose 0}={2^m\choose 2^m}=1$.

We have
\begin{equation*}
{2^m\choose 2^{m-2}}={2^m\choose 3\cdot 2^{m-2}}=\frac{2^m\cdot ...\cdot (3\cdot 2^{m-2}+1)}{2^{m-2}!}=4 \frac{(2^m-1)\cdot ...\cdot (3\cdot 2^{m-2}+1)}{(2^{m-2}-1)!}=4{2^m-1\choose 2^{m-2}-1}.
\end{equation*}
Since $2^m-1\choose 2^{m-2}-1$ is odd, thus ${2^m\choose 2^{m-2}}={2^m\choose 3\cdot 2^{m-2}}\equiv 4\pmod{8}$.

Let us write ${2^m\choose 2^{m-1}}=\prod_{j=1}^{2^{m-1}} \frac{2^{m-1}+j}{j}$. Each $j\in\{1,...,2^{m-1}\}$ can be written in the form $j=2^{k}i$ for some $k,i\in\N$, where $i$ is an odd number. If $k\leq m-4$ then
\begin{equation*}
\frac{2^{m-1}+j}{j}=\frac{2^{m-1}+2^ki}{2^ki}=\frac{2^{m-k-1}+i}{i}\equiv 1\pmod{8}.
\end{equation*}
We thus obtain
\begin{equation*}
{2^m\choose 2^{m-1}}=\prod_{j=1}^{2^{m-1}} \frac{2^{m-1}+j}{j}\equiv\frac{5\cdot 2^{m-3}}{2^{m-3}}\cdot\frac{3\cdot 2^{m-2}}{2^{m-2}}\cdot\frac{7\cdot 2^{m-3}}{3\cdot 2^{m-3}}\cdot\frac{2^m}{2^{m-1}}=5\cdot 3\cdot \frac{7}{3}\cdot 2\equiv 6\pmod{8}.
\end{equation*}
If $\nu_2(k)\leq m-3$ then
\begin{equation*}
{2^m\choose k}=\frac{2^m\cdot ...\cdot (2^m-k+1)}{k!}=\frac{2^m}{k}\cdot\frac{(2^m-1)\cdot ...\cdot (2^m-k+1)}{(k-1)!}=\frac{2^m}{k}\cdot{2^m-1\choose k-1}\equiv 0\pmod{8}
\end{equation*}
and our lemma follows.
\end{proof}

One of the main results of this paper is the following result concerning the computation of the 2-adic valuation of the members of the sequence $(b_{2^{k}-1}(n))_{n\in\N}$ with fixed positive integer $k$. Our next theorem can be seen as a generalization of the identity
\begin{equation*}
\nu_{2}(b_{1}(n))=
\begin{cases}
\frac{1}{2}|t_{n}-2t_{n-1}+t_{n-2}|,& \mbox{if}\;n\geq 2\\
0,& \mbox{if}\;n\in\{0,1\}
\end{cases}
\end{equation*}
obtained by Churchhouse (however, in a slightly different form). Here, $t_{n}$ is the $n$-th term of the Prouhet-Thue-Morse sequence.

\begin{thm}\label{2valpowerof2-1}
Let $k\in\N_{+}$ be given. We then have the following equality
\begin{equation*}
\nu_{2}(b_{2^{k}-1}(2^{k+2}n+i))=\begin{cases}
\begin{array}{ll}
  \nu_{2}(b_{2^{k}-1}(2^{k+2}n))=\nu_{2}(b_{1}(8n)), & \quad\mbox{for}\quad i=0,1,\ldots, 2^{k}-1, \\
  1, & \quad\mbox{for}\quad i=2^{k}, 2^{k}+1,\ldots, 2^{k+1}-1, \\
  2, & \quad\mbox{for}\quad i=2^{k+1}, 2^{k+1}+1,\ldots, 3\cdot 2^{k}-1, \\
  1, & \quad\mbox{for}\quad i=3\cdot 2^{k}, 3\cdot 2^{k}+1,\ldots, 2^{k+2}-1,
\end{array}
\end{cases}
\end{equation*}
for $n\in\N$. In particular $\nu_{2}(b_{2^{k}-1}(n))\in\{0,1,2\}$ and $\nu_{2}(b_{2^k-1}(n))=0$ if and only if $n\leq 2^{k}-1$.
\end{thm}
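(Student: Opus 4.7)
The plan is to derive a clean generating-function congruence for $H_{2^k-1}(x)$ modulo $8$ and then run a case analysis on the residue $i$ of $N$ modulo $2^{k+2}$. Setting $d:=2^{k-2}$ for $k\geq 2$, I would first use Lemma~\ref{binomial8} to show
\begin{equation*}
(1-y)^{2^k}\equiv(1+y^{d})^{4}\pmod 8,
\end{equation*}
since the binomial coefficients $\binom{2^k}{j}$ vanish mod $8$ outside $j\in\{0,d,2d,3d,2^k\}$ (where they are $1,4,6,4,1$) and the exponents $d,2d,3d,2^k$ are all even when $k\geq 2$. Substituting $y=x^{2^n}$, taking the product over $n\geq 0$, and using the telescoping identity $\prod_{m\geq k-2}(1+x^{2^m})=(1-x^{d})^{-1}$ gives $F(x)^{2^k}\equiv(1-x^{d})^{-4}\pmod 8$, and hence
\begin{equation*}
H_{2^k-1}(x)\equiv F(x)(1-x^{d})^{4}\pmod 8,\qquad b_{2^k-1}(N)\equiv\sum_{j=0}^{4}(-1)^{j}\binom{4}{j}\,t_{N-jd}\pmod 8,
\end{equation*}
where $t_r=0$ for $r<0$.

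With this formula in hand, for $N=2^{k+2}n+i$ I would decompose each $t_{N-jd}$ via the block multiplicativity $t_{a\cdot 2^{k+2}+b}=t_a t_b$ (for $0\leq b<2^{k+2}$); when $i<jd$ a ``borrow'' replaces $t_n t_{i-jd}$ by $t_{n-1} t_{(16-j)d+i}$. A short tabulation of $s_2$ on $\{0,1,\ldots,15\}$ then evaluates the sum in each range: for $i\in[4d,8d)$ one finds $b_{2^k-1}(N)\equiv 2t_n(\pm 1)\pmod 4$; for $i\in[8d,12d)$ the contributions cancel so that $t_{i-d}+t_{i-3d}=0$, $t_{i-2d}=-t_i$, $t_{i-4d}=t_i$, producing $b_{2^k-1}(N)\equiv-4t_n t_i\pmod 8$; and for $i\in[12d,16d)$ one gets $b_{2^k-1}(N)\equiv\pm 2t_n t_i\pmod 4$. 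These give $\nu_2=1,2,1$ respectively. For $i\in[0,4d)$ I would subdivide further into the four intervals $[jd,(j+1)d)$, $j=0,1,2,3$; in each subrange a direct computation gives $b_{2^k-1}(N)\equiv(\pm t_s)(a\,t_n+c\,t_{n-1})\pmod 8$ for some $s\in[0,d)$ and odd integers $a,c$ with $a+c\equiv 4\pmod 8$, so the $\nu_2$ of the bracket equals $2$ when $t_n=t_{n-1}$ and $1$ when $t_n\neq t_{n-1}$ (for $n=0$ only the $a\,t_n$ term is present, giving an odd value).

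To identify these last valuations with $\nu_2(b_1(8n))$, I would apply the analogous—but simpler—identity $F(x)^2\equiv(1-x)^{-2}\pmod 4$ (from $(1-y)^2\equiv(1+y)^2\pmod 4$), which gives $H_1(x)\equiv F(x)(1-x)^2\pmod 4$ and hence $b_1(8n)\equiv t_n-t_{n-1}\pmod 4$; this has $\nu_2=2$ when $t_n=t_{n-1}$ and $\nu_2=1$ otherwise. Combined with the classical bound $\nu_2(b_1(n))\leq 2$ (equivalent to the absence of three consecutive equal terms in the Prouhet-Thue-Morse sequence, and simultaneously covering the base case $k=1$), this pins down $\nu_2(b_1(8n))$ exactly and shows it coincides with $\nu_2(b_{2^k-1}(2^{k+2}n+i))$ for $i\in[0,2^k)$. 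The ``in particular'' claim then follows, since every subrange except $n=0$, $i\in[0,2^k)$ produces an even value. The main obstacle is the bookkeeping in the subrange $i\in[0,4d)$: tracking the borrows into the $(n-1)$-block and their PTM signs, and verifying that the resulting odd-coefficient combination of $t_n$ and $t_{n-1}$ has exactly the same $2$-adic behaviour as $b_1(8n)$; the other three subranges are mechanical once $s_2$ is tabulated on $\{0,1,\ldots,15\}$.
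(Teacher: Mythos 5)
Your proposal is correct in substance and arrives at the same key congruence as the paper, namely $b_{2^k-1}(N)\equiv t_N+4t_{N-d}+6t_{N-2d}+4t_{N-3d}+t_{N-4d}\pmod 8$ with $d=2^{k-2}$, but by a genuinely different route at both ends. The paper factors $H_{2^k-1}(x)=F_1(x)H_{2^k}(x)$, determines $b_{2^k}(j)\bmod 8$ termwise via part (1) of Lemma \ref{parity} together with Lemma \ref{binomial8}, and then, after discarding the pair $4(t_{N-d}+t_{N-3d})\equiv 0\pmod 8$, exploits the recursion $R_k(2n)=R_{k-1}(n)$, $R_k(2n+1)=-R_{k-1}(n)$ for $R_k(n)=t_n+t_{n-2^k}+6t_{n-2^{k-1}}$ to collapse everything onto $|R_1(2m)|$ and the absence of three consecutive equal terms in the Prouhet--Thue--Morse sequence. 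You instead obtain the congruence globally from $F(x)^{2^k}\equiv(1-x^{d})^{-4}\pmod 8$ by telescoping the product, which bypasses the somewhat delicate Lemma \ref{parity}(1) entirely, and then extract the valuations by a direct sixteen-case tabulation over $i\bmod 2^{k+2}$ using $t_{16dn+r}=t_nt_r$. Your derivation of the congruence is cleaner; the paper's $R_k$-recursion is slicker at the extraction stage and avoids your borrow bookkeeping, while your tabulation has the virtue of producing the four ranges of $i$ in the statement directly (I checked your tabulated values and the borrow cases $q\in\{0,1,2,3\}$; they are right, e.g.\ $q=0$ gives $t_s(t_n+3t_{n-1})$ and $q=1$ gives $t_s(3t_n+t_{n-1})$, both with $a+c\equiv 4$ and $a-c\equiv\pm 2\pmod 8$ as you claim).

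Two small repairs are needed. First, for $k=2$ the exponents $d=1$ and $3d=3$ are odd, so your stated justification for $(1-y)^{2^k}\equiv(1+y^{d})^{4}\pmod 8$ fails there; the congruence itself survives because the coefficients at the odd positions are $\pm 4$ and $-4\equiv 4\pmod 8$, but this should be said. Second, the base case $k=1$ lies outside your $d=2^{k-2}$ setup, and the theorem at $k=1$ is precisely Churchhouse's description of $\nu_2(b_1(8n+i))$ for all $i\in\{0,\dots,7\}$; the bound $\nu_2(b_1(n))\le 2$ together with $b_1(8n)\equiv t_n-t_{n-1}\pmod 4$ only settles the rows $i\in\{0,1\}$. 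You must either invoke the full classical result or run the analogous mod-$8$ analysis from $b_1(N)\equiv t_N+2t_{N-1}+5t_{N-2}\equiv t_N+6t_{N-1}+t_{N-2}\pmod 8$ (using $t_{N-1}\equiv t_{N-2}\pmod 2$), which is exactly how the paper handles $k=1$.
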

\begin{proof}
First of all, let us observe that the second part of Lemma \ref{parity} and the first part of Lemma \ref{binomial8} implies that $b_{2^{k}-1}(n)$ is odd for $n\leq 2^{k}-1$ and thus $\nu_{2}(b_{2^{k}-1}(n))=0$ in this case.

Let us observe that from the identity $H_{2^{k}-1}(x)=F_{1}(x)H_{2^{k}}(x)$ we get the identity
\begin{equation}\label{formula}
 b_{2^{k}-1}(n)=\sum_{j=0}^{n}t_{n-j}b_{2^{k}}(j),
\end{equation}
 where $t_{n}=t_{1}(n)=(-1)^{s_{2}(n)}$ is $n$-th term of the Prouhet-Thue-Morse sequence. Now let us observe that from the first part of Lemma \ref{parity} and the second part of Lemma \ref{binomial8} we have
\begin{equation*}
b_{2^{k}}(n)\equiv {2^{k}\choose n}\pmod{8}
\end{equation*}
for $n=0,1,\ldots,2^{k}$ and $b_{2^{k}}(n)\equiv 0\pmod{8}$ for $n>2^{k}$, provided $k\geq 2$ or $n\neq 2$. Moreover,
\begin{equation*}
b_2(2)\equiv {2\choose 2}+4{0\choose 0}=5\pmod{8}.
\end{equation*}
Summing up this discussion we have the following expression for $b_{2^{k}-1}(n)\pmod{8}$, where $k\geq 2$ and $n\geq 2^{k}$:
\begin{align*}
b_{2^{k}-1}(n)&=\sum_{j=0}^{n}t_{n-j}b_{2^{k}}(j)\\
              &=\sum_{j=0}^{2^{k}}t_{n-j}b_{2^{k}}(j)+\sum_{j=2^{k}+1}^{n}t_{n-j}b_{2^{k}}(j)\\
              &\equiv \sum_{j=0}^{2^{k}}t_{n-j}b_{2^{k}}(j)\pmod{8}\\
              &\equiv \sum_{j=0}^{2^{k}}t_{n-j}{2^{k}\choose j}\pmod{8}\\
              &\equiv t_{n}+t_{n-2^{k}}+4t_{n-2^{k-2}}+4t_{n-3\cdot 2^{k-2}}+6t_{n-2^{k-1}}\pmod{8}.
\end{align*}
However, it is clear that $t_{n-2^{k-2}}+t_{n-3\cdot2^{k-2}}\equiv 0\pmod{2}$ and thus we can simplify the above expression and get
\begin{equation*}
b_{2^{k}-1}(n)\equiv t_{n}+t_{n-2^{k}}+6t_{n-2^{k-1}}\pmod{8}
\end{equation*}
for $n\geq 2^{k}$. If $k=1$ and $n\geq 2$ then, analogously, we get
\begin{equation*}
b_{1}(n)\equiv \sum_{j=0}^{2^{k}}t_{n-j}b_{2^{k}}(j)\pmod{8}\equiv t_{n}+5t_{n-2}+2t_{n-1}\pmod{8}
\end{equation*}
and since $t_{n-1}\equiv t_{n-2}\pmod{2}$, we thus conclude that
\begin{equation*}
b_{1}(n)\equiv t_{n}+t_{n-2^{k}}+6t_{n-2^{k-1}}\pmod{8}.
\end{equation*}
Let us put $R_{k}(n)=t_{n}+t_{n-2^{k}}+6t_{n-2^{k-1}}$. Using now the recurrence relations for $t_{n}$, i.e., $t_{2n}=t_{n}, t_{2n+1}=-t_{n}$ we easily deduce the identities
\begin{equation*}
 R_{k}(2n)=R_{k-1}(n),\quad R_{k}(2n+1)=-R_{k-1}(n)
\end{equation*}
for $k\geq 2$. Using a simple induction argument, one can easily obtain the following identities:
\begin{equation}\label{reduction1}
 |R_{k}(2^{k}m+j)|=|R_{1}(2m)|
\end{equation}
for $k\geq 2, m\in\N$ and $j\in\{0,...,2^{k}-1\}$. From the above identity we easily deduce that $R_{k}(n)\not\equiv0\pmod{8}$ for each $n\in\N$ and each $k\geq 1$. If $k=1$ then $R_{1}(n)=t_{n}+6t_{n-1}+t_{n-2}$ and $R_{1}(n)\equiv 0\pmod{8}$ if and only if $t_{n}=t_{n-1}=t_{n-2}$. However, a well known property of the Prouhet-Thue-Morse sequence is that there are no three consecutive terms which are equal. If $k\geq 2$ then our statement about $R_{k}(n)$ is clearly true for $n\leq 2^{k}$. If $n>2^{k}$ then we can write $n=2^{k}m+j$ for some $m\in\N$ and $j\in\{0,1,\ldots,2^{k}-1\}$. Using the reduction (\ref{reduction1}) and the property obtained for $k=1$, we get the result. Summing up our discussion, we have proved that $\nu_{2}(b_{2^{k}-1}(n))\leq 2$ for each $n\in\N$, since $\nu_{2}(b_1(n))=\in\{0,1,2\}$. Moreover, as an immediate consequence of our reasoning we get the equality
\begin{equation*}
 \nu_{2}(b_{2^{k}-1}(2^kn+j))=\nu_{2}(b_{1}(2n))
\end{equation*}
for $j\in\{0,...,2^{k}-1\}$. Using the above identity and the properties of $\nu_{2}(b_{1}(2n))$ we easily get the identities presented in the statement of our theorem.
\end{proof}

It is an interesting question whether we can say something non-trivial about 2-adic valuation of the number $b_{m}(n)$ for $m\neq 2^{k}-1$. In order to do this in the sequel we will need the following lemma concerning the form of the generating function of the subsequence $(b_{m}(2^{k}n+i))_{n\in\N}$, where $k\in\N$ is given and $i\in\{0,\ldots, 2^{k}-1\}$. We have the following

\begin{lem}\label{hik}
Let $m\in\N_{+}$ be fixed. Let $i,k\in\N$ be given and consider the function $H_{i,k}(x)=H_{i,k,m}(x)$ which is the generating function for the sequence $(b_{m}(2^{k}n+i))_{n\in\N}$, where $0\leq i<2^{k}$, i.e., $H_{i,k}(x)=\sum_{n=0}^{\infty}b_{m}(2^{k}n+i)x^{n}$. Then
\begin{equation*}
H_{i,k}(x)=\frac{h_{i,k}(x)}{(1-x)^{mk}}H_{m}(x),
\end{equation*}
where the (double) sequence of polynomials $(h_{i,k}(x))_{k\in\N, 0\leq i<2^{k}}=(h_{i,k,m}(x))_{k\in\N, 0\leq i<2^{k}}$ satisfies $h_{0,0}(x)=1$ and for $k\in\N_{+}$ and $0\leq i<2^{k}$ we have
\begin{equation*}
h_{i,k}(x)=
\begin{cases}
\begin{array}{lll}
  \frac{1}{2}(h_{i,k-1}(\sqrt{x})(1+\sqrt{x})^{mk}+h_{i,k-1}(-\sqrt{x})(1-\sqrt{x})^{mk}), &  & \mbox{for}\quad i=0,1,\ldots,2^{k-1}-1 \\
  \frac{1}{2\sqrt{x}}(h_{i-2^{k-1},k-1}(\sqrt{x})(1+\sqrt{x})^{mk}-h_{i-2^{k-1},k-1}(-\sqrt{x})(1-\sqrt{x})^{mk}), &  & \mbox{for}\quad i=2^{k-1},\ldots,2^{k}-1
\end{array}
\end{cases}.
\end{equation*}
\end{lem}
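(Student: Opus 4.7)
The plan is to prove the statement by induction on $k$, using the standard trick of extracting the even and odd subsequences of a power series via $G(x) \mapsto \frac{1}{2}(G(\sqrt{x}) \pm G(-\sqrt{x}))$, together with the functional equation $H_m(y) = (1-y)^{-m} H_m(y^2)$.

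The base case $k=0$ forces $i=0$, and $H_{0,0}(x) = H_m(x)$ matches the claim with $h_{0,0}(x) = 1$. For the inductive step, I assume the formula for $k-1$ and for every $i' \in \{0,1,\ldots,2^{k-1}-1\}$. Given the expansion $H_{i',k-1}(y) = \sum_{n=0}^{\infty} b_m(2^{k-1}n + i')\,y^n$, splitting into even and odd indices yields
\begin{equation*}
H_{i',k}(x) = \tfrac{1}{2}\bigl(H_{i',k-1}(\sqrt{x}) + H_{i',k-1}(-\sqrt{x})\bigr), \quad H_{i'+2^{k-1},k}(x) = \tfrac{1}{2\sqrt{x}}\bigl(H_{i',k-1}(\sqrt{x}) - H_{i',k-1}(-\sqrt{x})\bigr),
\end{equation*}
since $2^{k-1}\cdot(2n) + i' = 2^k n + i'$ and $2^{k-1}\cdot(2n+1) + i' = 2^k n + (2^{k-1} + i')$. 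This is the main structural identity that governs the whole recursion.

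Next, I substitute the induction hypothesis $H_{i',k-1}(y) = h_{i',k-1}(y)(1-y)^{-m(k-1)} H_m(y)$ at $y = \pm\sqrt{x}$. Using $H_m(\sqrt{x}) = (1-\sqrt{x})^{-m} H_m(x)$ and $H_m(-\sqrt{x}) = (1+\sqrt{x})^{-m} H_m(x)$ (which is the functional equation $H_m(y) = (1-y)^{-m} H_m(y^2)$), I obtain
\begin{equation*}
H_{i',k-1}(\pm\sqrt{x}) = \frac{h_{i',k-1}(\pm\sqrt{x})}{(1\mp\sqrt{x})^{mk}}\, H_m(x).
\end{equation*}
Plugging into the two displayed identities and multiplying through by $(1-x)^{mk} = (1-\sqrt{x})^{mk}(1+\sqrt{x})^{mk}$ then gives exactly the two cases in the stated recurrence, identifying $h_{i,k}(x)$ on the nose.

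The only subtlety, and the step that needs a word of care, is that $h_{i,k}(x)$ must be a genuine polynomial in $x$, not merely a rational function in $\sqrt{x}$. In the first case the expression $h_{i,k-1}(\sqrt{x})(1+\sqrt{x})^{mk} + h_{i,k-1}(-\sqrt{x})(1-\sqrt{x})^{mk}$ is invariant under $\sqrt{x}\mapsto -\sqrt{x}$, hence an even polynomial in $\sqrt{x}$, hence a polynomial in $x$. In the second case the corresponding expression (with minus sign) is an odd polynomial in $\sqrt{x}$, so after dividing by $\sqrt{x}$ it becomes even in $\sqrt{x}$, again a polynomial in $x$. This parity observation, combined with the inductive polynomiality of $h_{i',k-1}$, closes the induction. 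The rest is bookkeeping.
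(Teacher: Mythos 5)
Your proof is correct and follows essentially the same route as the paper's: induction on $k$, even/odd extraction of the subsequence via $\frac{1}{2}(G(\sqrt{x})\pm G(-\sqrt{x}))$, the functional equation $H_m(y)=(1-y)^{-m}H_m(y^2)$, and the parity argument for polynomiality of $h_{i,k}$. The only cosmetic difference is that the paper works with $H_{i,k+1}(x^2)$ and substitutes $x\mapsto x^{1/2}$ at the end, whereas you introduce $\sqrt{x}$ from the start.
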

\begin{proof}
We proceed by induction on $k$. We have the obvious equality $h_{0,0}(x)=1$. Let us suppose that our result is true for $k$. We then have for $i=0,1,\ldots,2^{k-1}-1$:
\begin{align*}
H_{i,k+1}(x^2) &=\sum_{n=0}^{\infty}b_{m}(2^{k+1}n+i)x^{2n}\\
               &=\sum_{n=0}^{\infty}b_{m}(2^{k}\cdot 2n+i)x^{2n}\\
               &=\frac{1}{2}(H_{i,k}(x)+H_{i,k}(-x))\\
               &=\frac{1}{2}\left(\frac{h_{i,k}(x)}{(1-x)^{mk}}H_{m}(x)+\frac{h_{i,k}(-x)}{(1+x)^{mk}}H_{m}(-x)\right)\\
               &=\frac{1}{2}\left(\frac{h_{i,k}(x)}{(1-x)^{mk}}\cdot\frac{1}{(1-x)^{m}}+\frac{h_{i,k}(-x)}{(1+x)^{mk}}\cdot\frac{1}{(1+x)^{m}}\right)H_{m}(x^2)\\
               &=\frac{1}{2}\left((1+x)^{m(k+1)}h_{i,k}(x)+(1-x)^{m(k+1)}h_{i,k}(-x)\right)\frac{H_{m}(x^2)}{(1-x^2)^{m(k+1)}}\\
               &=\frac{h_{i,k+1}(x^2)}{(1-x^2)^{m(k+1)}}H_{m}(x^2).
\end{align*}
%\begin{align*}

%\end{align*}
It is clear that $\frac{1}{2}\left((1+x)^{m(k+1)}h_{i,k}(x)+(1-x)^{m(k+1)}h_{i,k}(-x)\right)$ is an even function of $x$. Comparing now the first and the last expression from the above and replacing $x$ by $x^{1/2}$, we get the statement of our lemma. If $i=2^{k-1},\ldots,2^{k}-1$ then we write $i=2^k+i'$, where $i'=0,1,\ldots,2^{k-1}-1$, and perform analogous calculations.
\begin{align*}
H_{i,k+1}(x^2) &=\sum_{n=0}^{\infty}b_{m}(2^{k+1}n+i)x^{2n}\\
               &=\sum_{n=0}^{\infty}b_{m}(2^{k}\cdot 2n+2^k+i')x^{2n}\\
               &=\frac{1}{x}\sum_{n=0}^{\infty}b_{m}(2^{k}\cdot (2n+1)+i')x^{2n+1}\\
               &=\frac{1}{2x}(H_{i',k}(x)-H_{i',k}(-x))\\
               &=\frac{1}{2x}\left(\frac{h_{i',k}(x)}{(1-x)^{mk}}H_{m}(x)-\frac{h_{i',k}(-x)}{(1+x)^{mk}}H_{m}(-x)\right)\\
               &=\frac{1}{2x}\left(\frac{h_{i',k}(x)}{(1-x)^{mk}}\cdot\frac{1}{(1-x)^{m}}-\frac{h_{i',k}(-x)}{(1+x)^{mk}}\cdot\frac{1}{(1+x)^{m}}\right)H_{m}(x^2)\\
               &=\frac{1}{2x}\left((1+x)^{m(k+1)}h_{i',k}(x)-(1-x)^{m(k+1)}h_{i',k}(-x)\right)\frac{H_{m}(x^2)}{(1-x^2)^{m(k+1)}}\\
               &=\frac{h_{i,k+1}(x^2)}{(1-x^2)^{m(k+1)}}H_{m}(x^2).
\end{align*}
Replacing $x$ by $x^{\frac{1}{2}}$, we get the result.
\end{proof}

The above lemma is a useful tool which sometimes allows to get congruences involving the (sub)sequence $(b_{m}(2^{k}n+i))_{n\in\N}$. We have the following results concerning the behaviour of $F_{i,k}(x)\pmod{p}$, where $p$ is a prime number, $m=p^{s}$ for some $s\in\N_{+}$ and $k=1,2,3$.

\begin{lem}
Let $p$ be an odd prime number and $m=p^{s}$ for some $s\in\N_{+}$.

For $k=1$ the following congruences hold:
\begin{align*}
&h_{0,1}(x)\equiv 1\pmod{p},\\
&h_{1,1}(x)\equiv x^{\frac{m-1}{2}}\pmod{p}.
\end{align*}

For $k=2$ the following congruences hold:
\begin{align*}
&h_{0,2}(x)\equiv 1+x^{m}\pmod{p},\\
&h_{1,2}(x)\equiv
\begin{cases}
\begin{array}{lll}
x^{\frac{m-1}{2}}+x^{\frac{5m-1}{4}}\pmod{p}, &  &\mbox{if}\; m\equiv 1\pmod{4}  \\
2x^{\frac{3m-1}{4}}\pmod{p}, &  &\mbox{if}\; m\equiv 3\pmod{4}
\end{array}
\end{cases},\\
&h_{2,2}(x)\equiv 2x^{\frac{m-1}{2}}\pmod{p},\\
&h_{3,2}(x)\equiv
\begin{cases}
\begin{array}{lll}
2x^{\frac{3}{4}(m-1)}\pmod{p}, &  & \mbox{if}\; m\equiv 1\pmod{4}\\
x^{\frac{m-3}{4}}+x^{\frac{5m-3}{4}}\pmod{p}, &  &\mbox{if}\; m\equiv 3\pmod{4}
\end{array}
\end{cases}.
\end{align*}
\end{lem}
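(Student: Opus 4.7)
The plan is to apply the recurrence from Lemma~\ref{hik} directly, combined with the fact that $m=p^{s}$ is a power of the odd prime $p$, which makes available the Frobenius congruence $(1\pm y)^{p^{s}}\equiv 1\pm y^{p^{s}}\pmod{p}$ for a formal indeterminate $y$. Taking $y=\sqrt{x}$ and using that $m$ is odd gives $(1\pm\sqrt{x})^{m}\equiv 1\pm\sqrt{x}\cdot x^{(m-1)/2}\pmod{p}$; squaring this yields the two key identities $(1+\sqrt{x})^{2m}+(1-\sqrt{x})^{2m}\equiv 2(1+x^{m})$ and $(1+\sqrt{x})^{2m}-(1-\sqrt{x})^{2m}\equiv 4\sqrt{x}\cdot x^{(m-1)/2}\pmod{p}$. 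All manipulations with $\sqrt{x}$ are legitimate because $h_{i,k}(x)\in\Z[x]$ a priori, so any half-integer powers of $x$ that appear in intermediate steps are guaranteed to cancel.

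For $k=1$, start from $h_{0,0}(x)=1$ and plug into both branches of the recurrence, obtaining $h_{0,1}(x)=\tfrac{1}{2}((1+\sqrt{x})^{m}+(1-\sqrt{x})^{m})$ and $h_{1,1}(x)=\tfrac{1}{2\sqrt{x}}((1+\sqrt{x})^{m}-(1-\sqrt{x})^{m})$. The analogous sum/difference formulas at exponent $m$ collapse these immediately to $h_{0,1}(x)\equiv 1$ and $h_{1,1}(x)\equiv x^{(m-1)/2}\pmod{p}$.

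For $k=2$, feed the $k=1$ output back into the recurrence. The pivotal observation is that
\[
h_{1,1}(-\sqrt{x})\equiv(-\sqrt{x})^{(m-1)/2}=(-1)^{(m-1)/2}(\sqrt{x})^{(m-1)/2}\pmod{p},
\]
so the sign is $+1$ when $m\equiv 1\pmod{4}$ and $-1$ when $m\equiv 3\pmod{4}$. For $h_{0,2}$ and $h_{2,2}$ the input is the constant $1$, and the sum/difference formulas for $(1\pm\sqrt{x})^{2m}$ at once give the stated $1+x^{m}$ and $2x^{(m-1)/2}$ without any case split. For $h_{1,2}$ and $h_{3,2}$ one substitutes the explicit monomial $h_{1,1}(\pm\sqrt{x})\equiv(\pm 1)^{(m-1)/2}(\sqrt{x})^{(m-1)/2}$ into the recurrence; the $(\pm 1)^{(m-1)/2}$ factor makes the sum of the two branches survive in one residue class and the difference in the other, which is precisely the origin of the dichotomy in the stated formulas. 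Simplifying $(\sqrt{x})^{(m\pm 1)/2}$ to $x^{(m\pm 1)/4}$ whenever the exponent is integral produces the claimed monomials and binomials in $x$.

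The argument is essentially mechanical and I do not anticipate any substantial obstacle; the only point that demands genuine care is the bookkeeping of half-integer powers of $x$ in the two cases $h_{1,2}$ and $h_{3,2}$, which is also the location where the parity of $(m-1)/2$ forces the case split modulo $4$. Once the Frobenius reduction $(1\pm\sqrt{x})^{p^{s}}\equiv 1\pm(\sqrt{x})^{p^{s}}\pmod{p}$ is in hand, every remaining step is a one-line algebraic simplification.
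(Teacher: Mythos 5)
Your proposal is correct and follows essentially the same route as the paper: direct application of the recurrence from Lemma~\ref{hik} together with the Frobenius congruence $(1\pm\sqrt{x})^{p^{s}}\equiv 1\pm(\sqrt{x})^{p^{s}}\pmod{p}$, with the case split coming from the sign $(-1)^{(m-1)/2}$ in $h_{1,1}(-\sqrt{x})$. Note that your computation (like the paper's own) yields $x^{(m-1)/4}+x^{(5m-1)/4}$ for $h_{1,2}$ when $m\equiv 1\pmod 4$, so the exponent $\frac{m-1}{2}$ in the lemma's displayed statement is a typo for $\frac{m-1}{4}$.
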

\begin{proof}
Direct calculations give:
\begin{align*}
&h_{0,1}(x)=\frac{1}{2}\left((1+\sqrt{x})^m+(1-\sqrt{x})^m\right)\equiv \frac{1}{2}\left(1+x^{\frac{m}{2}}+1-x^{\frac{m}{2}}\right)=1\pmod{p},\\
&h_{1,1}(x)=\frac{1}{2\sqrt{x}}\left((1+\sqrt{x})^m-(1-\sqrt{x})^m\right)\equiv \frac{1}{2\sqrt{x}}\left(1+x^{\frac{m}{2}}-1+x^{\frac{m}{2}}\right)=x^{\frac{m-1}{2}}\pmod{p},
\end{align*}
\begin{equation*}
\begin{split}
h_{0,2}(x) &=\frac{1}{2}\left(h_{0,1}(\sqrt{x})(1+\sqrt{x})^{2m}+h_{0,1}(-\sqrt{x})(1-\sqrt{x})^{2m}\right)\equiv\frac{1}{2}\left((1+\sqrt{x})^{2m}+(1-\sqrt{x})^{2m}\right) \\
&\equiv\frac{1}{2}\left(\left(1+x^{\frac{m}{2}}\right)^2+\left(1-x^{\frac{m}{2}}\right)^2\right)=\frac{1}{2}\left(1+2x^{\frac{m}{2}}+x^m+1-2x^{\frac{m}{2}}+x^m\right)=1+x^m\pmod{p},
\end{split}
\end{equation*}
\begin{align*}
h_{2,2}(x) &=\frac{1}{2\sqrt{x}}\left(h_{0,1}(\sqrt{x})(1+\sqrt{x})^{2m}-h_{0,1}(-\sqrt{x})(1-\sqrt{x})^{2m}\right)\equiv\frac{1}{2\sqrt{x}}\left((1+\sqrt{x})^{2m}-(1-\sqrt{x})^{2m}\right) \\
&\equiv\frac{1}{2\sqrt{x}}\left(\left(1+x^{\frac{m}{2}}\right)^2-\left(1-x^{\frac{m}{2}}\right)^2\right)=\frac{1}{2\sqrt{x}}\left(1+2x^{\frac{m}{2}}+x^m-1+2x^{\frac{m}{2}}-x^m\right)=2x^{\frac{m-1}{2}}\pmod{p},
\end{align*}
\begin{equation*}
\begin{split}
h_{1,2}(x) &=\frac{1}{2}\left(h_{1,1}(\sqrt{x})(1+\sqrt{x})^{2m}+h_{1,1}(-\sqrt{x})(1-\sqrt{x})^{2m}\right) \\
&\equiv\frac{1}{2}\left(x^{\frac{m-1}{4}}(1+\sqrt{x})^{2m}+(-x)^{\frac{m-1}{4}}(1-\sqrt{x})^{2m}\right) \\
&\equiv\frac{1}{2}\left(x^{\frac{m-1}{4}}\left(1+x^{\frac{m}{2}}\right)^2+(-x)^{\frac{m-1}{4}}\left(1-x^{\frac{m}{2}}\right)^2\right) \\
&=\frac{1}{2}\left(x^{\frac{m-1}{4}}\left(1+2x^{\frac{m}{2}}+x^m\right)+(-x)^{\frac{m-1}{4}}\left(1-2x^{\frac{m}{2}}+x^m\right)\right) \\
&=
\begin{cases}
x^{\frac{m-1}{4}}+x^{\frac{5m-1}{4}} \pmod{p}, & \mbox{ if } m\equiv 1\pmod{4} \\
2x^{\frac{3m-1}{4}} \pmod{p}, & \mbox{ if } m\equiv 3\pmod{4}
\end{cases},
\end{split}
\end{equation*}
\begin{equation*}
\begin{split}
h_{3,2}(x) &=\frac{1}{2\sqrt{x}}\left(h_{1,1}(\sqrt{x})(1+\sqrt{x})^{2m}-h_{1,1}(-\sqrt{x})(1-\sqrt{x})^{2m}\right) \\
&\equiv\frac{1}{2\sqrt{x}}\left(x^{\frac{m-1}{4}}(1+\sqrt{x})^{2m}-(-x)^{\frac{m-1}{4}}(1-\sqrt{x})^{2m}\right) \\
&\equiv\frac{1}{2\sqrt{x}}\left(x^{\frac{m-1}{4}}\left(1+x^{\frac{m}{2}}\right)^2-(-x)^{\frac{m-1}{4}}\left(1-x^{\frac{m}{2}}\right)^2\right) \\
&=\frac{1}{2\sqrt{x}}\left(x^{\frac{m-1}{4}}\left(1+2x^{\frac{m}{2}}+x^m\right)-(-x)^{\frac{m-1}{4}}\left(1-2x^{\frac{m}{2}}+x^m\right)\right) \\
&=
\begin{cases}
2x^{\frac{3m-3}{4}} \pmod{p}, & \mbox{ if } m\equiv 1\pmod{4} \\
x^{\frac{m-3}{4}}+x^{\frac{5m-3}{4}} \pmod{p}, & \mbox{ if } m\equiv 3\pmod{4}
\end{cases}.
\end{split}
\end{equation*}
\end{proof}

As a consequence of the above lemma we get the following interesting

\begin{cor}\label{congrup}
Let $p$ be an odd prime number and $m=p^{s}$ for some $s\in\N_{+}$. Then the following congruences are true:
\begin{equation*}
b_{m}(2n+i)-b_{m}(2(n-m)+i)\equiv
\begin{cases}
\begin{array}{lll}
b_{m}(n)\pmod{p} &  & \mbox{for}\quad i=0 \\
b_{m}\left(n-\frac{m-1}{2}\right)\pmod{p} &  & \mbox{for}\quad i=1
\end{array}
\end{cases},
\end{equation*}
\begin{align*}
b_{m}(4n+i)-&2b_{m}(4(n-m)+i)+b_{m}(4(n-2m)+i)\equiv \\
&\begin{cases}
\begin{array}{lll}
  b_{m}(n)+b_{m}(n-m) &  & \mbox{for}\quad i=0 \\
  b_{m}\left(n-\frac{m-1}{4}\right)+b_{m}\left(n-\frac{5m-1}{4}\right)&  & \mbox{for}\quad i=1\quad\mbox{and}\quad s\equiv 0\pmod{2} \\
  2b_{m}\left(n-\frac{3m-1}{4}\right) &  & \mbox{for}\quad i=1\quad\mbox{and}\quad s\equiv 1\pmod{2} \\
  2b_{m}\left(n-\frac{m-1}{2}\right)  &  & \mbox{for}\quad i=2 \\
  2b_{m}\left(n-\frac{3}{4}(m-1)\right)) &  & \mbox{for}\quad i=3\quad\mbox{and}\quad s\equiv 0\pmod{2} \\
  b_{m}\left(n-\frac{m-3}{4}\right)+b_{m}\left(n-\frac{5m-3}{4}\right) &  & \mbox{for}\quad i=3\quad\mbox{and}\quad s\equiv 0\pmod{2}
\end{array}
\end{cases}\pmod{p}.
\end{align*}
\end{cor}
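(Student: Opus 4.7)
My plan is to read off the congruences directly from the generating function identity supplied by Lemma \ref{hik}, exploiting the Frobenius-like identity $(1-x)^{p^s}\equiv 1-x^{p^s}\pmod{p}$.

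The starting point is the identity
$$
(1-x)^{mk}H_{i,k}(x)=h_{i,k}(x)H_{m}(x)
$$
from Lemma \ref{hik}. Since $m=p^{s}$ and $p$ is prime, iterating the Frobenius congruence gives $(1-x)^{m}\equiv 1-x^{m}\pmod{p}$, hence
$$
(1-x^{m})^{k}H_{i,k}(x)\;\equiv\;h_{i,k}(x)H_{m}(x)\pmod{p}.
$$
I would specialize this to $k=1$ and $k=2$, plug in the explicit residues of $h_{i,k}(x)\bmod p$ proved in the preceding lemma, and compare coefficients of $x^{n}$.

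For $k=1$, the coefficient of $x^{n}$ in $(1-x^{m})H_{i,1}(x)$ is $b_{m}(2n+i)-b_{m}(2(n-m)+i)$, where by convention $b_{m}$ vanishes on negative arguments. On the right, using $h_{0,1}(x)\equiv 1$ and $h_{1,1}(x)\equiv x^{(m-1)/2}\pmod{p}$, the coefficient of $x^{n}$ in $h_{i,1}(x)H_{m}(x)$ equals $b_{m}(n)$ for $i=0$ and $b_{m}(n-(m-1)/2)$ for $i=1$. This yields the first pair of congruences immediately.

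For $k=2$, the coefficient of $x^{n}$ in $(1-x^{m})^{2}H_{i,2}(x)=(1-2x^{m}+x^{2m})H_{i,2}(x)$ is
$$
b_{m}(4n+i)-2b_{m}(4(n-m)+i)+b_{m}(4(n-2m)+i),
$$
which matches the left-hand side in the statement. On the right, the residues $h_{i,2}(x)\bmod p$ are monomials (or sums of two monomials) with exponents depending on the parity of $s$ through the condition $m\equiv 1$ or $3\pmod 4$ (recall $p$ odd, so $m\bmod 4$ is governed by the parity of $s$ when $p\equiv 3\pmod 4$, and is $1$ always when $p\equiv 1\pmod 4$, which is consistent with the case distinction in the statement). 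Multiplication by $H_{m}(x)$ shifts $b_{m}$, so the coefficient of $x^{n}$ in $h_{i,2}(x)H_{m}(x)$ is the corresponding sum of shifted values of $b_{m}$. Reading off each of the four cases $i=0,1,2,3$ from the formulas for $h_{i,2}(x)\bmod p$ produces exactly the right-hand sides claimed.

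There is no substantial obstacle: once the generating function identity is reduced modulo $p$ via the Frobenius collapse $(1-x)^{m}\equiv 1-x^{m}$, everything is bookkeeping with coefficients. The only point requiring mild care is the convention $b_{m}(k)=0$ for $k<0$, which makes the statement correct for small $n$, and matching the parity cases for $i\in\{1,3\}$ with the formulas for $h_{i,2}(x)\bmod p$ given in the preceding lemma.
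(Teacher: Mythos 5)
Your proposal is correct and follows essentially the same route as the paper: both reduce the identity $(1-x)^{mk}H_{i,k}(x)=h_{i,k}(x)H_m(x)$ from Lemma \ref{hik} modulo $p$ via $(1-x)^{p^s}\equiv 1-x^{p^s}\pmod{p}$ and compare coefficients for $k=1,2$ using the explicit residues of $h_{i,k}(x)\bmod p$. Your side remark that the case split should really be on $m\bmod 4$ rather than on the parity of $s$ (these differ when $p\equiv 1\pmod 4$) is a fair observation about the statement itself, not a gap in the argument.
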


\begin{proof}
In order to obtain the first congruence it suffices to compare the coefficients of functions $(1-x^m)H_{i,1}(x)\equiv (1-x)^mH_{i,k}(x)\pmod{m}$ and $h_{i,1}(x)H_m(x)$, which are equivalent modulo $m$ by the previous lemma. For the second one we compare the functions $(1-2x^m+x^{2m})H_{i,2}(x)\equiv (1-x)^{2m}H_{i,k}(x)\pmod{m}$ and $h_{i,2}(x)H_m(x)$.
\end{proof}

Using a different approach we get the following:

\begin{thm}\label{4.10}
Let $m\in\N_{\geq 2}$. Then for $n\in\N$ the following identity holds:
\begin{equation*}
nb_{m}(n)=m\sum_{i=0}^{n}(n-i)b_{1}(n-i)b_{m-1}(i).
\end{equation*}
In particular, if $\gcd(m,n)=1$ then
$$
b_{m}(n)\equiv 0\pmod{m}.
$$
\end{thm}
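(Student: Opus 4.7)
The plan is to exploit the identity $H_m(x) = H_1(x)^m$, which is immediate from the product definition of $F_{-m}$, and differentiate.

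First I would observe that $H_m(x) = \prod_{n=0}^\infty (1-x^{2^n})^{-m} = H_1(x)^m$ as a formal power series identity. Applying the chain rule for formal power series gives
\begin{equation*}
H_m'(x) = m\,H_1(x)^{m-1} H_1'(x) = m\,H_{m-1}(x)\,H_1'(x).
\end{equation*}
Now I would compare the coefficient of $x^{n-1}$ on both sides. On the left, $H_m'(x) = \sum_{n \geq 1} n\,b_m(n)\,x^{n-1}$, yielding $n\,b_m(n)$. On the right, writing $H_1'(x) = \sum_{j \geq 1} j\,b_1(j)\,x^{j-1}$ and multiplying by $H_{m-1}(x) = \sum_{i \geq 0} b_{m-1}(i)\,x^i$, the coefficient of $x^{n-1}$ is $\sum_{j=1}^{n} j\,b_1(j)\,b_{m-1}(n-j)$, which after the reindexing $i = n-j$ becomes $\sum_{i=0}^{n-1}(n-i)\,b_1(n-i)\,b_{m-1}(i)$. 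Since the $i=n$ term vanishes (the factor $n-i$ is zero), we may freely extend the range to $i=0,\ldots,n$, giving exactly the claimed identity.

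For the divisibility corollary, the right-hand side of the identity is manifestly an integer multiple of $m$, so $m \mid n\,b_m(n)$. If $\gcd(m,n) = 1$, then $m \mid b_m(n)$, as required.

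There is no real obstacle here: the only thing to be careful about is to justify the chain rule at the level of formal power series (which is standard, since $H_1(x) \in 1 + x\Z[[x]]$ so all manipulations converge in the $(x)$-adic topology) and to verify that shifting the summation index does not alter the sum because of the trivial $i=n$ term. The whole argument is essentially one formal differentiation.
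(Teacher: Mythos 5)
Your proof is correct and follows essentially the same route as the paper: the authors likewise write $xH_m'(x)=mxH_1'(x)H_{m-1}(x)$ using $H_m=H_1^m$ and compare coefficients, then deduce the divisibility statement immediately. The only cosmetic difference is that they multiply through by $x$ before extracting coefficients, whereas you compare coefficients of $x^{n-1}$ directly.
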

\begin{proof}
We have the following equality: $xH_{m}'(x)=x(H(x)^{m})'=mxH'(x)(H(x))^{m-1}=mxH'(x)H_{m-1}(x)$. Equivalently:
$$
\sum_{n=0}^{\infty}nb_{m}(n)x^{n}=m\left(\sum_{n=0}^{\infty}nb_{i}(n)x^{n}\right)\left(\sum_{n=0}^{\infty}b_{m-1}(n)\right)=\sum_{n=0}^{\infty}\left(m\sum_{i=0}^{n}(n-i)b_{1}(n-i)b_{m-1}(i)\right)x^{n}.
$$
Comparing now the coefficients on the both sides of the above identity, we get the first statement of our theorem. The second one is immediate.
\end{proof}

\begin{rem}
{\rm Let $p$ be an odd prime number and $m=p^{s}$ for some $s\in\N_{+}$. Let us observe that for each $n\in\N$ the following congruence holds:
\begin{equation}\label{cong:p}
b_{m}((2n+1)m)\equiv b_{m}(2nm)\pmod{p}.
\end{equation}
Indeed, we have
\begin{equation*}
H_{p^{s}}(x)\equiv H_{1}(x)^{p^{s}}\equiv H_{1}(x^{p^{s}})\pmod{p}
\end{equation*}
and thus
\begin{equation*}
b_{p^{s}}(n)\equiv
\begin{cases}
\begin{array}{lll}
 0 & \pmod{p} & \mbox{if}\quad n\not\equiv 0\pmod{p^{s}} \\
 b_{1}\left(\frac{n}{p^{s}}\right) & \pmod{p} & \mbox{if}\quad n\equiv 0\pmod{p^{s}}
\end{array}
\end{cases}.
\end{equation*}
From the above congruence, we have $b_{m}(nm)\equiv b_1(n)\pmod{p}$ for $n\in\N$. Since $b_{1}(2n+1)=b_{1}(2n)$, we obtain (\ref{cong:p}). Actually, in the same way we can prove that for $r,s\in\N_+$, a prime number $p$ and $n\in\N$ we have
\begin{equation*}
b_{rp^s}(n)\equiv
\begin{cases}
0, & \mbox{ if } n\not\equiv 0\pmod{p^s}\\
b_r(\frac{n}{p^s})\pmod{p}, & \mbox{ if } n\equiv 0\pmod{p^s}
\end{cases}.
\end{equation*}
In particular, if $m=p_1^{s_1}\cdots p_k^{s_k}$ is the factorization of a given positive integer $m$, $n\in\N$ and $p_i^{s_i}\nmid n$ for each $i\in\{1,...,k\}$ then $p_1\cdots p_k\mid b_m(n)$.}
\end{rem}

\begin{rem}
{\rm The method used in the proof of Corollary \ref{congrup} can be also used in order to get some congruences involving sums of certain values of $b_{m}$ modulo primes $p$ which are co-prime to $m$. Indeed, the definition of the function $H_{i,k,m}$ and the corresponding polynomials $h_{i,k,m}$ guarantees the identity
$$
(1-x)^{km}\sum_{n=0}^{\infty}b_{m}(2^{k}n+i)=h_{i,k,m}(x)H_{m}(x)
$$
and the congruence
$$
(1-x)^{km}\sum_{n=0}^{\infty}b_{m}(2^{k}n+i)\equiv h_{i,k,m}(x)H_{m}(x)\pmod{p},
$$
where $p$ is a given prime number. If we are lucky, the reduction $h_{i,k,m}(x)\pmod{p}$ is simple, i.e., contains far fewer non-zero coefficients $\pmod{p}$ than the non-reduced polynomial, and we can deduce new congruences with $i,k,m$ and $p$ satisfying $\gcd(p,m)=1$. Using this approach, one can find many congruences with few summands and, in fact, there are some instances of $i, k, m$ and $p$ such that $h_{i,k,m}(x)\equiv cx^{s}\pmod{p}$ for some $c\in\Z$ and $s\in\N$. In particular we get the following
\begin{thm}\label{4.13}
The following congruences are true:
\begin{equation*}
\begin{array}{lll}
  \sum_{i=0}^{8}b_{4}(4(n-i)+1) & \equiv & b_{4}(n)\pmod{3}, n\geq 8, \\
  \sum_{i=0}^{4}b_{2}(4(n-i)) & \equiv & b_{2}(n)\pmod{5}, n\geq 4, \\
  \sum_{i=0}^{4}b_{2}(4(n-i)+2) & \equiv & b_{2}(n-2)\pmod{5}, n\geq 4.
\end{array}
\end{equation*}
\end{thm}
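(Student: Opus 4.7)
The plan is to apply Lemma \ref{hik} to each of the three congruences. For each triple $(i,k,m)$ appearing in the theorem, that lemma provides the identity
$$(1-x)^{km}\sum_{n=0}^\infty b_m(2^k n + i)\, x^n \;=\; h_{i,k,m}(x)\, H_m(x),$$
so my strategy is to reduce $h_{i,k,m}(x)$ modulo the relevant prime $p$, reduce $(1-x)^{km}$ modulo $p$, and then compare coefficients of $x^n$. The congruence will be immediate once we verify that $h_{i,k,m}(x)$ collapses to a single monomial modulo $p$; this is precisely why these particular triples are singled out in the statement.

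I would compute the three required polynomials by unwinding the recursion in Lemma \ref{hik} twice. For $(i,k,m)=(1,2,4)$ one first obtains $h_{0,1,4}(x)=1+6x+x^2$ and $h_{1,1,4}(x)=4(1+x)$, and hence
$$h_{1,2,4}(x)=2\bigl((1+\sqrt{x})^{9}+(1-\sqrt{x})^{9}\bigr)=4\sum_{j=0}^{4}\binom{9}{2j}x^{j}.$$
Since the binomials $\binom{9}{2}=36$, $\binom{9}{4}=126$, $\binom{9}{6}=84$ and $\binom{9}{8}=9$ are all divisible by $3$, this reduces to $h_{1,2,4}(x)\equiv 1\pmod 3$. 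The same two-step computation for $(i,k,m)=(0,2,2)$ and $(2,2,2)$ gives
$$h_{0,2,2}(x)=\tfrac12\bigl((1+\sqrt x)^{5}+(1-\sqrt x)^{5}\bigr)=1+10x+5x^{2}\equiv 1\pmod 5,$$
$$h_{2,2,2}(x)=\tfrac{1}{2\sqrt x}\bigl((1+\sqrt x)^{5}-(1-\sqrt x)^{5}\bigr)=5+10x+x^{2}\equiv x^{2}\pmod 5.$$

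It remains to expand $(1-x)^{km}$ modulo $p$. Using the Frobenius identity $(1-x)^{p}\equiv 1-x^{p}\pmod p$ I get $(1-x)^{8}\equiv(1-x)^{2}(1-x^{3})^{2}\pmod 3$, which after expansion equals $1+x+x^{2}+\cdots+x^{8}$, while $(1-x)^{4}\equiv 1+x+x^{2}+x^{3}+x^{4}\pmod 5$ since $(1-x)(1+x+\cdots+x^{4})=1-x^{5}\equiv(1-x)^{5}\pmod 5$. Substituting the monomial reductions of $h_{i,k,m}(x)$ and these cyclotomic-like expansions into the displayed identity and equating coefficients of $x^{n}$ on both sides (valid for $n\ge km$, so that every argument $2^{k}(n-i)+i$ of $b_{m}$ on the left-hand side is non-negative) delivers the three stated congruences; the shift in the right-hand side is read off from the exponent of the monomial to which $h_{i,k,m}(x)$ reduces.

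The proof is essentially a direct computation; the only conceptual step is to pick triples $(i,k,m,p)$ for which $h_{i,k,m}$ degenerates to a monomial modulo $p$, and this rests on the vanishing modulo $p$ of all the intermediate binomial coefficients $\binom{9}{2j}$ (resp.\ $\binom{5}{2j}$, $\binom{5}{2j+1}$), which is a straightforward consequence of Lucas' theorem for $p\in\{3,5\}$. I therefore expect no substantive obstacle.
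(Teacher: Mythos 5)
Your proposal is correct and follows essentially the same route as the paper: both proofs invoke Lemma \ref{hik}, compute $h_{1,2,4}(x)=4(1+36x+126x^2+84x^3+9x^4)\equiv 1\pmod 3$, $h_{0,2,2}(x)=1+10x+5x^2\equiv 1\pmod 5$, $h_{2,2,2}(x)=5+10x+x^2\equiv x^2\pmod 5$, and combine these with $(1-x)^8\equiv 1+x+\cdots+x^8\pmod 3$ and $(1-x)^4\equiv 1+x+\cdots+x^4\pmod 5$ before comparing coefficients. Your only addition is spelling out the recursive computation of the $h$-polynomials and the Lucas-theorem justification, which the paper leaves implicit.
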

\begin{proof}
The congruences given above are consequences of the following equalities
\begin{align*}
h_{1,2,4}(x)&=4(3x+1)(3x^3+27x^2+33x+1)\equiv 1\pmod{3},\\
h_{0,2,2}(x)&=5 x^2+10 x+1\equiv 1\pmod{5},\\
h_{2,2,2}(x)&=x^2+10 x+5\equiv x^2\pmod{5}
\end{align*}
and the fact that $(1-x)^{8}\equiv \frac{1-x^9}{1-x}\pmod{3}$ and $(1-x)^{4}\equiv \frac{1-x^{5}}{1-x}\pmod{5}$.
\end{proof}
}
\end{rem}

It is quite interesting to look at the family of polynomials $(h_{i,k,m}(x))$ as an independent object of study and to ask which members of this family are reducible. It seems that this is a rather difficult question. Based on numerical observations, we state the following

\begin{thm}\label{8(x+1)}
We have
\begin{align*}
&h_{2^{k}+1,k+1,2}(x)\equiv 0\pmod{8(x+1)},\quad k\in\N_{+},\\
&h_{2^{k}+2,k+1,4}(x)\equiv 0\pmod{8(x+1)},\quad k\in\N_{\geq 2}.
\end{align*}
\end{thm}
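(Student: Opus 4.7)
My plan is to treat both divisibilities in parallel by reducing each statement to a mod-$2$ plus palindromy analysis of the predecessor polynomials $h_{1,k,2}$ (for the first assertion) and $h_{2,k,4}$ (for the second). I describe the first in detail; the second goes through by the same template. Since $2^{k}+1 \geq 2^{(k+1)-1}$, the ``case 2'' branch of Lemma~\ref{hik} gives
\begin{equation*}
h_{2^{k}+1,k+1,2}(x) = \frac{P_{k}(u)(1+u)^{2(k+1)} - P_{k}(-u)(1-u)^{2(k+1)}}{2u}, \qquad P_{k}(y):=h_{1,k,2}(y),\ u=\sqrt{x}.
\end{equation*}
Introducing the parity splittings $P_{k}(u)=A_{k}(u^{2})+u B_{k}(u^{2})$ and $(1+u)^{2(k+1)}=C_{k}(u^{2})+u D_{k}(u^{2})$ collapses the displayed formula to the clean identity $h_{2^{k}+1,k+1,2}(x)=A_{k}(x)D_{k}(x)+B_{k}(x)C_{k}(x)$. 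The trinomial expansion $(1+u)^{2(k+1)}=((1+u^{2})+2u)^{k+1}$ further yields the closed forms $C_{k}(y)=\sum_{l\geq 0}\binom{k+1}{2l}(y+1)^{k+1-2l}(4y)^{l}$ and $D_{k}(y)=2\sum_{l\geq 0}\binom{k+1}{2l+1}(y+1)^{k-2l}(4y)^{l}$.

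\emph{Divisibility by $x+1$.} From the closed forms, $C_{k}(-1)=0$ when $k$ is even and $D_{k}(-1)=0$ when $k$ is odd, since only the terms with $l=(k+1)/2$ or $l=k/2$ survive and each of these requires a compatible parity of $k$. Meanwhile, a routine induction on $k$ using the ``case 1'' recurrence for $P_{k+1}$ shows that $P_{k}$ is palindromic of degree $2(k-1)$, so $A_{k}$ (degree $k-1$) and $B_{k}$ (degree $k-2$) are palindromic as well. Pairing coefficients then gives $A_{k}(-1)=0$ when $k$ is even and $B_{k}(-1)=0$ when $k$ is odd, so $h_{2^{k}+1,k+1,2}(-1)=0$ in either parity.

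\emph{Divisibility by $8$.} A similar induction gives $2\mid P_{k}$, so I may write $A_{k}=2\alpha_{k}$, $B_{k}=2\beta_{k}$. Expanding $(1+u)^{2(k+1)}$ through the $b=2$ term of the trinomial and extracting parities yields
\begin{equation*}
h_{2^{k}+1,k+1,2}(x) \equiv 4(k+1)\alpha_{k}(1+x)^{k}+2\beta_{k}(1+x)^{k+1}+4k(k+1) x \beta_{k}(1+x)^{k-1} \pmod{8}.
\end{equation*}
For $k$ odd the first and third terms vanish mod~$8$, so divisibility by $8$ reduces to $\beta_{k}\equiv 0\pmod{4}$. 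For $k$ even the right-hand side collapses to $2(1+x)^{k}\bigl(2\alpha_{k}+\beta_{k}(1+x)\bigr)$ mod~$8$, and writing $\beta_{k}=2\gamma_{k}$ it reduces further to $\alpha_{k}\equiv\gamma_{k}(1+x)\pmod{2}$. These two parity-dependent invariants are proved together by induction on $k$ from the recurrence $P_{k+1}=\frac{1}{2}\bigl(P_{k}(u)(1+u)^{2(k+1)}+P_{k}(-u)(1-u)^{2(k+1)}\bigr)$, tracking how the even and odd parts of $P_{k}$ propagate mod~$4$ and using the elementary fact that $\binom{2n}{2l+1}$ is always even.

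\emph{Second statement and main obstacle.} The assertion for $h_{2^{k}+2,k+1,4}$, $k\geq 2$, follows the same template with $h_{2,k,4}$ replacing $P_{k}$; the only subtlety is that the recurrence for $h_{2,k,4}$ uses ``case 2'' at $k=2$ but ``case 1'' for $k\geq 3$, so the base case $h_{6,3,4}(x)$ must be verified by hand. The real work is the induction in the divisibility-by-$8$ step: locating the precise mod-$4$ invariant on $(\alpha_{k},\beta_{k})$ that is preserved by the averaging operation $P\mapsto\frac{1}{2}\bigl(P(u)(1+u)^{2(k+1)}+P(-u)(1-u)^{2(k+1)}\bigr)$ and is strong enough to force congruence mod~$8$ requires careful bookkeeping of the binomial coefficients $\binom{2(k+1)}{j}\bmod 8$.
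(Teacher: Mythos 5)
Your reduction of the first statement to the identity $h_{2^k+1,k+1,2}=A_kD_k+B_kC_k$, and your treatment of divisibility by $x+1$ via palindromy of $P_k=h_{1,k,2}$ together with the closed forms for $C_k,D_k$, are sound; this is a genuinely different route from the paper, which instead writes $h_{1,k,2}$ in the basis $x^j(1+x)^{2(k-1)-2j}$ (Lemmas \ref{palind} and \ref{h12}) and then checks the divisibility of the case-2 recurrence summand by summand. The gap is in the divisibility by $8$. You correctly reduce it, for $k$ odd, to $\beta_k\equiv 0\pmod 4$ (equivalently, the odd part of $h_{1,k,2}$ is $\equiv 0\pmod 8$) and, for $k$ even, to $\alpha_k\equiv\gamma_k(1+x)\pmod 2$, but you neither prove these conditions nor pin down an inductive hypothesis that would propagate them --- you say yourself that ``locating the precise mod-$4$ invariant \ldots requires careful bookkeeping.'' Since the two conditions alternate with the parity of $k$ while the recurrence $P_{k+1}=\frac12\bigl(P_k(u)(1+u)^{2(k+1)}+P_k(-u)(1-u)^{2(k+1)}\bigr)$ flips that parity at every step, neither condition can be run as a standalone induction; one needs a single $k$-independent invariant implying both. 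Moreover, the tool you cite for the induction, namely that $\binom{2n}{2l+1}$ is even, is not strong enough: it yields only $\beta_k\equiv 0\pmod 2$, whereas for odd $k$ you need $\beta_k\equiv 0\pmod 4$, which rests on $4\mid\binom{4n}{2l+1}$ together with control of the error terms to the modulus $8$.

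The invariant that closes this gap is exactly the paper's Lemma \ref{h12}: $h_{1,k+1,2}(x)=\sum_{j=0}^{k}a_{j,k}x^j(1+x)^{2k-2j}$ with $a_{0,k}=2$ and $8\mid a_{j,k}$ for $j>0$, whose preservation under the averaging step is secured by the congruence $\binom{4n}{2j}\equiv\binom{2n}{j}\pmod 4$ (Lemma \ref{4div}). Both of your parity-dependent conditions are consequences of it (for instance, for $k$ odd the odd part of the dominant term $2(1+x)^{2(k-1)}$ has coefficients $2\binom{4n}{2i+1}\equiv 0\pmod 8$, and the remaining terms contribute multiples of $8$). As written, your proposal asserts the needed mod-$4$ facts without proof and without a formulated induction hypothesis, so the theorem is not established; supplying that hypothesis amounts to proving the paper's key lemma. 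The same omission affects the second statement, where the analogous invariant for $h_{2,k,4}$ (with $b_{0,k}=14$) is also required.
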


In order to prove the above theorem, we will show three lemmas.

\begin{lem}\label{4div}
For every $n,j\in\N$ we have $4\mid {4n\choose 2j}-{2n\choose j}$.
\end{lem}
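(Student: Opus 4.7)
The plan is to prove the congruence $\binom{4n}{2j}\equiv\binom{2n}{j}\pmod{4}$ by working at the level of generating functions, rather than manipulating binomial coefficients directly. This avoids case analysis on the $2$-adic valuations of $n$ and $j$.

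First I would write $(1+x)^{4n}=((1+x)^2)^{2n}=((1+x^2)+2x)^{2n}$ and then expand via the binomial theorem with $A=1+x^2$ and $B=2x$:
\begin{equation*}
(1+x)^{4n}=\sum_{k=0}^{2n}\binom{2n}{k}(1+x^2)^{2n-k}(2x)^k.
\end{equation*}
Next I would observe that for $k\geq 2$ the factor $(2x)^k$ already carries $2^k\geq 4$, so those terms vanish modulo $4$, while for $k=1$ the term equals $4nx(1+x^2)^{2n-1}$, which is trivially divisible by $4$. Only the $k=0$ term survives, giving the key congruence
\begin{equation*}
(1+x)^{4n}\equiv (1+x^2)^{2n}\pmod{4}
\end{equation*}
in $\Z[x]$.

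Finally I would extract the coefficient of $x^{2j}$ on both sides: on the left this is $\binom{4n}{2j}$, and on the right, since $(1+x^2)^{2n}=\sum_{k=0}^{2n}\binom{2n}{k}x^{2k}$, it is $\binom{2n}{j}$. Comparing yields $\binom{4n}{2j}-\binom{2n}{j}\equiv 0\pmod{4}$, as required.

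I do not anticipate a genuine obstacle here; the only thing to be careful about is confirming that the $k=1$ contribution really is divisible by $4$ (not merely by $2$), which is what forces the exponent $4n$ rather than $2n$ on the left-hand side and is precisely the reason the statement is stated for $\binom{4n}{2j}$ and not $\binom{2n}{2j}$.
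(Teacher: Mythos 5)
Your proof is correct, and it takes a genuinely different route from the paper's. You establish the polynomial congruence $(1+x)^{4n}\equiv(1+x^2)^{2n}\pmod{4}$ in $\Z[x]$ by expanding $((1+x^2)+2x)^{2n}$ and noting that the $k\geq 2$ terms die because of the factor $2^k$, while the $k=1$ term $4nx(1+x^2)^{2n-1}$ is killed precisely because the outer exponent $2n$ is even; comparing coefficients of $x^{2j}$ then gives the claim at once, with the degenerate cases $j>2n$ and $n=0$ handled automatically. The paper instead works directly with the binomial coefficients: it factors the difference as $\binom{2n}{j}$ times $\bigl((4n-1)(4n-3)\cdots(4n-2j+1)-1\cdot 3\cdots(2j-1)\bigr)$ divided by the odd number $1\cdot 3\cdots(2j-1)$, observes that the difference of the two odd products is always even, and then splits into cases: for $j$ even the two products agree modulo $4$ (same count of factors $\equiv 3\pmod 4$), while for $j$ odd one picks up the extra factor of $2$ from $\nu_2\binom{2n}{j}\geq 1$. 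Your argument is shorter and avoids the parity case analysis entirely; it also isolates the reusable congruence $(1+x)^{4n}\equiv(1+x^2)^{2n}\pmod 4$, which is well suited to the surrounding material (the paper manipulates exactly such power-series congruences elsewhere, e.g.\ in Lemma \ref{parity}). The paper's computation, on the other hand, makes visible exactly where each factor of $2$ comes from, which is in the spirit of the $2$-adic valuation bookkeeping used throughout Section 4. Your closing remark correctly identifies the crux: the $k=1$ term is divisible by $4$ only because the exponent is $4n$ and not $2n$, which is why the lemma is stated for $\binom{4n}{2j}$.
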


\begin{proof}
We write
\begin{align*}
{4n\choose 2j}-{2n\choose j} & ={2n\choose j}\left(\frac{(4n-1)(4n-3)\cdots (4n-2j+1)}{1\cdot 3\cdots (2j-1)}-1\right)\\
& ={2n\choose j}\frac{(4n-1)(4n-3)\cdots (4n-2j+1)-1\cdot 3\cdots (2j-1)}{1\cdot 3\cdots (2j-1)}.
\end{align*}
The factor $(4n-1)(4n-3)\cdots (4n-2j+1)-1\cdot 3\cdots (2j-1)$ is even, since numbers $(4n-1)(4n-3)\cdots (4n-2j+1)$ and $1\cdot 3\cdots (2j-1)$ are odd. If $j$ is even then the products $(4n-1)(4n-3)\cdots (4n-2j+1)$ and $1\cdot 3\cdots (2j-1)$ have the same number of factors congruent to $3$ modulo $4$, hence they are congruent modulo $4$ and their difference is divisible by $4$. If $j$ is odd then
\begin{equation*}
\nu_2\left(2n\choose j\right)=\nu_2\left(\frac{2n}{j}{2n-1\choose j-1}\right)\geq 1.
\end{equation*}
Thus the numbers $2n\choose j$ and $(4n-1)(4n-3)\cdots (4n-2j+1)-1\cdot 3\cdots (2j-1)$ are even which implies that $4\mid {4n\choose 2j}-{2n\choose j}$.
\end{proof}

The lemma given below is well known and we omit of its proof.

\begin{lem}\label{palind}
We will call a polynomial $P(x)=\sum_{j=s}^d c_jx^j\in\Z[x]$, $c_s,c_d\neq 0$, palindromic, if $c_{d-j}=c_{s+j}$ for each $j\in\{0,1,...,d-s\}$. Each palindromic polynomial $P\in\Z[x]$ of order $s$ and degree $d$ can be uniquely written in the form (we recall that if $P(x)=\sum_{j=s}^d a_jx^j$, where $a_s\neq 0$, then we define the order of polynomial $P$ as the number $\ord P=s$)
\begin{equation*}
P(x)=\sum_{j=0}^{\left\lfloor\frac{d-s}{2}\right\rfloor} a_{s+j}x^{s+j}(1+x)^{d-s-2j},
\end{equation*}
where $a_{s+j}\in\Z$. Then $a_s=c_s$.
Moreover, if polynomials $P_1, P_2, ..., P_r$ are palindromic and there exists $c\in\N$ such that $\ord P_i+\deg P_i=c$ for every $i\in\{1,2,...,r\}$ then the polynomial $\sum_{i=1}^r P_i$ is palindromic.
\end{lem}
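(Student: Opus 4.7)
The plan is to work with the symmetry characterization of palindromicity, namely that a polynomial $P(x)=\sum c_kx^k$ is palindromic with $\ord P + \deg P = c$ iff $c_k=c_{c-k}$ for every $k\in\Z$ (extending $c_k=0$ outside $[s,d]$). This immediately yields the \emph{last} assertion of the lemma: if each $P_i$ satisfies $c_k^{(i)}=c_{c-k}^{(i)}$ with the same constant $c$, then the same identity persists under summation, so $\sum_iP_i$ is palindromic (and its order-plus-degree is still $c$, unless it vanishes).

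Next, I would check that each building block is palindromic with the right symmetry center. Expanding,
\[
x^{s+j}(1+x)^{d-s-2j}=\sum_{i=0}^{d-s-2j}\binom{d-s-2j}{i}x^{s+j+i},
\]
so by $\binom{d-s-2j}{i}=\binom{d-s-2j}{d-s-2j-i}$ this polynomial is palindromic of order $s+j$ and degree $d-j$; in particular $\ord+\deg=s+d$ for every $j$. Hence any $\Z$-linear combination lives in the module of palindromic polynomials with $\ord+\deg=s+d$.

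Existence I would prove by induction on the width $w:=d-s$. The cases $w\in\{0,1\}$ are trivial. For the inductive step, set $a_s:=c_s$ and form
\[
Q(x):=P(x)-a_s\,x^s(1+x)^{d-s}.
\]
By the observations above, $Q$ is palindromic with $\ord Q+\deg Q=s+d$; and the coefficients of $x^s$ (by construction) and of $x^d$ (by palindromy of $P$ and of the subtracted block, both with trailing coefficient $c_s$) cancel. Therefore $\ord Q\ge s+1$ and $\deg Q\le d-1$, so the width of $Q$ is at most $w-2$. Applying the inductive hypothesis to $Q$ gives a decomposition indexed by $j'=0,1,\dots,\lfloor(\deg Q-\ord Q)/2\rfloor$; the substitution $j=j'+(\ord Q-s)$ together with the identity $\deg Q-\ord Q-2j'=d-s-2j$ (which uses $\ord Q+\deg Q=s+d$) recasts it as the required sum with indices $j\ge 1$. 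Uniqueness, and the identification $a_s=c_s$, follow by reading off coefficients in order: the coefficient of $x^s$ in the proposed expansion receives a contribution only from $j=0$, forcing $a_s=c_s$; then $x^{s+1}$ receives contributions only from $j\in\{0,1\}$, determining $a_{s+1}$; and so on inductively.

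The only point that requires a moment's attention is why the remainder $Q$ has its width drop by $2$ rather than by $1$ in the inductive step — but this is forced, since palindromy of $Q$ together with the cancellation of the leading coefficient $x^d$ (coming for free from the cancellation at $x^s$ and the symmetry $c_k=c_{s+d-k}$) guarantees the drop at \emph{both} ends simultaneously. Everything else is bookkeeping with binomial sums.
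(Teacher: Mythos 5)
Your proof is correct: the symmetry characterization $c_k=c_{c-k}$ handles the closure-under-addition claim, the subtract-$c_sx^s(1+x)^{d-s}$-and-induct-on-width argument gives existence (with the width dropping by two precisely because the cancellation at $x^s$ propagates to $x^d$ by symmetry), and the triangular structure of the coefficients of $x^s, x^{s+1},\dots$ gives uniqueness, integrality of the $a_{s+j}$, and the identification $a_s=c_s$. The paper itself declares this lemma ``well known'' and omits the proof entirely, so there is nothing to compare against; your argument is the standard one and fills the gap the authors left.
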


\begin{lem}\label{h12}
We have
\begin{align*}
h_{1,k+1,2}(x) & =\sum_{j=0}^k a_{j,k}x^j(1+x)^{2k-2j},\quad k\in\N,\\
h_{2,k+1,4}(x) & =\sum_{j=0}^k b_{j,k}x^j(1+x)^{2k-2j},\quad k\in\N_+,
\end{align*}
where $a_{0,k}=2$, $b_{0,k}=14$, $8\mid a_{j,k}$ and $8\mid b_{j,k}$ for $j>0$.
\end{lem}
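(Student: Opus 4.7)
My plan is to establish two properties of $h_{1,k+1,2}$ by simultaneous induction on $k\in\N$:
\begin{itemize}
\item[(P)] $h_{1,k+1,2}(x)$ is palindromic of order $0$ and degree $2k$, with trailing (equivalently, leading) coefficient equal to $2$;
\item[(M)] $h_{1,k+1,2}(x)\equiv 2(1+x)^{2k}\pmod 8$.
\end{itemize}
Once (P) and (M) are proved, Lemma~\ref{palind} supplies the expansion $h_{1,k+1,2}(x)=\sum_{j=0}^{k}a_{j,k}\,x^{j}(1+x)^{2k-2j}$ and identifies $a_{0,k}$ with the trailing coefficient, so $a_{0,k}=2$. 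The coefficients of $h_{1,k+1,2}$ determine the $a_{j,k}$ via a lower-triangular unipotent integer system (explicitly $c_{i}=\sum_{j\le i}\binom{2k-2j}{i-j}\,a_{j,k}$), so this bijection commutes with reduction modulo $8$. The palindromic expansion of $2(1+x)^{2k}$ is just $a_{0}=2$, $a_{j}=0$ for $j\ge 1$, so (M) forces $a_{j,k}\equiv 0\pmod 8$ for $j\ge 1$.

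The inductive step for (M) is the key computation. Writing $h_{1,k,2}(x)=2(1+x)^{2(k-1)}+8R(x)$ with $R\in\Z[x]$ and substituting into the recurrence of Lemma~\ref{hik},
\begin{equation*}
h_{1,k+1,2}(x)=\tfrac{1}{2}\bigl[h_{1,k,2}(\sqrt{x})(1+\sqrt{x})^{2(k+1)}+h_{1,k,2}(-\sqrt{x})(1-\sqrt{x})^{2(k+1)}\bigr],
\end{equation*}
the result splits as $T_{1}(x)+T_{2}(x)$, where
\begin{equation*}
T_{1}(x)=(1+\sqrt{x})^{4k}+(1-\sqrt{x})^{4k}=2\sum_{l=0}^{2k}\binom{4k}{2l}x^{l}
\end{equation*}
and $T_{2}(x)=4\bigl[R(\sqrt{x})(1+\sqrt{x})^{2(k+1)}+R(-\sqrt{x})(1-\sqrt{x})^{2(k+1)}\bigr]$. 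Lemma~\ref{4div} applied with $n=k$ gives $\binom{4k}{2l}\equiv\binom{2k}{l}\pmod 4$, so $T_{1}\equiv 2(1+x)^{2k}\pmod 8$. The bracket in $T_{2}$ has the form $F(\sqrt{x})+F(-\sqrt{x})$ with $F\in\Z[y]$, hence equals twice a polynomial in $\Z[x]$, which together with the outer factor $4$ makes $T_{2}\equiv 0\pmod 8$.

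For (P), the same substitution, written as $h_{1,k,2}(\sqrt{x})=P_{e}(x)+\sqrt{x}\,P_{o}(x)$ and $(1+\sqrt{x})^{2(k+1)}=A(x)+\sqrt{x}\,B(x)$, reduces the recurrence to $h_{1,k+1,2}(x)=P_{e}(x)A(x)+xP_{o}(x)B(x)$. The inductive palindromy of $h_{1,k,2}$ gives $x^{k-1}P_{e}(1/x)=P_{e}(x)$ and $x^{k-2}P_{o}(1/x)=P_{o}(x)$, while the symmetry $\binom{2k+2}{i}=\binom{2k+2}{2k+2-i}$ gives $x^{k+1}A(1/x)=A(x)$ and $x^{k}B(1/x)=B(x)$; multiplying through yields $x^{2k}h_{1,k+1,2}(1/x)=h_{1,k+1,2}(x)$. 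The trailing coefficient of $h_{1,k+1,2}$ equals $P_{e}(0)A(0)=h_{1,k,2}(0)\cdot 1=2$, and its leading term arises from $P_{e}$'s leading monomial $2x^{k-1}$ times $A$'s leading monomial $x^{k+1}$, confirming degree $2k$ and leading coefficient $2$. The base case $h_{1,1,2}(x)=2$ is immediate from Lemma~\ref{hik}.

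The statement for $h_{2,k+1,4}$ is proved by identical machinery, starting from the direct computation $h_{2,2,4}(x)=14+232x+532x^{2}+232x^{3}+14x^{4}$ together with the analogous congruence (leading coefficient $14$ in place of $2$); one additionally uses $14\cdot 4\equiv 0\pmod 8$ to convert the mod-$4$ congruence from Lemma~\ref{4div} into a mod-$8$ statement. The main obstacle throughout is the exact bookkeeping of powers of $2$ in (M): the inductive error $8R$ combined with the $\tfrac12$ from the recurrence yields only a factor of $4$, and the extra factor of $2$ needed to reach $8$ is supplied precisely by the evenness in $\sqrt{x}$ of $F(\sqrt{x})+F(-\sqrt{x})$.
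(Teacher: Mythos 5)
Your proof is correct and follows essentially the same route as the paper's: induction through the recurrence of Lemma \ref{hik}, palindromy via Lemma \ref{palind}, and mod-$8$ control of the even binomial coefficients via Lemma \ref{4div}; the only difference is packaging, in that you carry the single congruence $h_{1,k+1,2}(x)\equiv 2(1+x)^{2k}\pmod{8}$ through the induction and recover the coefficient statement at the end from the unipotent triangular change of basis, whereas the paper inducts on the $a_{j,k}$ directly (your version makes explicit a transfer step the paper leaves implicit). One remark: your own base case $h_{2,2,4}(x)=14+232x+532x^{2}+232x^{3}+14x^{4}$ has degree $4$, so the printed form for $h_{2,k+1,4}$ must be read as $\sum_{j=0}^{2k}b_{j,k}x^{j}(1+x)^{4k-2j}$; with that correction your ``identical machinery'' (using $14\cdot 4\equiv 0\pmod{8}$) does go through.
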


\begin{proof}
We proceed by induction on $k$. If $k=0$ then $h_{1,0+1,2}(x)=2$. Assume now that $h_{1,k+1,2}(x)=\sum_{j=0}^k a_{j,k}x^j(1+x)^{2k-2j}$ for some $k\in\N$, where $a_{0,k}=2$ and $8\mid a_{j,k}$ for $j>0$. Then
\begin{equation}\label{hrec}
\begin{split}
h_{1,k+2,2}(x)&=\frac{1}{2}\left((1+\sqrt{x})^{2k+4}h_{1,k+1,2}(\sqrt{x})+(1-\sqrt{x})^{2k+4}h_{1,k+1,2}(-\sqrt{x})\right)\\
&=\sum_{j=0}^k \frac{1}{2}a_{j,k}\left(\sqrt{x}^j(1+\sqrt{x})^{4k-2j+4}+(-\sqrt{x})^j(1-\sqrt{x})^{4k-2j+4}\right).
\end{split}
\end{equation}
For each $j\in\{0,1,...,k\}$ the expression $\frac{1}{2}a_{j,k}\left(\sqrt{x}^j(1+\sqrt{x})^{4k-2j+4}+(-\sqrt{x})^j(1-\sqrt{x})^{4k-2j+4}\right)$ is a palindromic polynomial in $\Z[x]$ of degree $2k-j+\left\lfloor\frac{j}{2}\right\rfloor+2$ and order $\left\lceil\frac{j}{2}\right\rfloor$. Hence the sum of degree and  the order of  the $j$th summand is equal to $2k+2$ and by Lemma \ref{palind} the polynomial $h_{1,k+2,2}(x)$ is palindromic. The degree of $h_{1,k+2,2}$ is $2k+2$ and its leading coefficient is equal to $a_{0,k}=2$, thus by Lemma \ref{palind} this polynomial can be uniquely written in the form $h_{1,k+2,2}(x)=\sum_{j=0}^{k+1} a_{j,k+1}x^j(1+x)^{2k+2-2j}$, where $a_{0,k+1}=2$. It suffices to show that $8\mid a_{j,k+1}$ for $j>0$. Since $8\mid a_{j,k}$ for $j>0$, the $j$th summand in (\ref{hrec}), $j>0$, has no influence on the values $a_{j,k+1}\pmod{8}$, so we can skip them. Let us consider the $0$th summand in (\ref{hrec}).
\begin{align*}
&\left((1+\sqrt{x})^{4k+4}+(1-\sqrt{x})^{4k+4}\right)=2\sum_{j=0}^{2k+2} {4k+4\choose 2j}x^j\\
=&2(1+x)^{2k+2}+2\sum_{j=1}^{2k+1} \left({4k+4\choose 2j}-{2k+2\choose j}\right)x^j=2(1+x)^{2k+2}+8x\sum_{j=0}^{2k} \alpha_jx^j\\
=& 2(1+x)^{2k+2}+8x\sum_{j=0}^{k} a'_jx^j(1+x)^{2k-j},
\end{align*}
where we use Lemma \ref{4div} to write ${4k+4\choose 2j}-{2k+2\choose j}=4\alpha_j$ and clearly $\alpha_{2k-j}=\alpha_j$. Finally, $a_{j,k+1}\equiv 8a'_{j-1,k+1}\equiv 0\pmod{8}$.

The proof for polynomials $h_{2,k+1,4}$ is analogous, so we leave its details for the reader.
\end{proof}

Now we are ready to prove Theorem \ref{8(x+1)}.

\begin{proof}[Proof of Theorem \ref{8(x+1)}]
Since the proof for polynomials $h_{2^k+2,k+1,4}$ is completely analogous, we will only present the proof for polynomials $h_{2^k+1,k+1,2}$.

We compute $h_{2^k+1,k+1,2}$ using Lemma \ref{h12}.
\begin{equation*}
\begin{split}
h_{2^k+1,k+1,2}&=\frac{1}{2\sqrt{x}}\left((1+\sqrt{x})^{2k+2}h_{1,k,2}(\sqrt{x})-(1-\sqrt{x})^{2k+2}h_{1,k,2}(-\sqrt{x})\right)\\
&=\sum_{j=0}^{k-1} \frac{1}{2\sqrt{x}}a_{j,k-1}\left(\sqrt{x}^j(1+\sqrt{x})^{4k-2j}-(-\sqrt{x})^j(1+\sqrt{x})^{4k-2j}\right).
\end{split}
\end{equation*}
It remains to show that $4(x+1)\mid \frac{1}{2\sqrt{x}}\left(\sqrt{x}^j(1+\sqrt{x})^{4k-2j}-(-\sqrt{x})^j(1+\sqrt{x})^{4k-2j}\right)$ for even $j$ and $x+1\mid \frac{1}{2\sqrt{x}}\left(\sqrt{x}^j(1+\sqrt{x})^{4k-2j}-(-\sqrt{x})^j(1+\sqrt{x})^{4k-2j}\right)$ for odd $j$, since $2\mid a_{0,k-1}$ and $8\mid a_{j,k-1}$ for $j>0$. If $j$ is even then
$$\frac{1}{2\sqrt{x}}\left(\sqrt{x}^j(1+\sqrt{x})^{4k-2j}-(-\sqrt{x})^j(1+\sqrt{x})^{4k-2j}\right)=\frac{1}{2\sqrt{x}}x^{\frac{j}{2}}\left((1+\sqrt{x})^{4k-2j}-(1+\sqrt{x})^{4k-2j}\right)$$
is divisible by $\frac{1}{2\sqrt{x}}\left((1+\sqrt{x})^4-(1-\sqrt{x})^4\right)=4(1+x)$. If $j$ is odd then
$$\frac{1}{2\sqrt{x}}\left(\sqrt{x}^j(1+\sqrt{x})^{4k-2j}-(-\sqrt{x})^j(1+\sqrt{x})^{4k-2j}\right)=\frac{1}{2}x^{\frac{j-1}{2}}\left((1+\sqrt{x})^{4k-2j}-(1+\sqrt{x})^{4k-2j}\right)$$
is divisible by $\frac{1}{2}\left((1+\sqrt{x})^2+(1-\sqrt{x})^2\right)=1+x$.
\end{proof}

Let $i\in\{0,1\}$ and observe that the generating function for the sequence $(h_{i,1,m}(x))_{m\in\N}$ is rational. More precisely, we have
\begin{align*}
&G_{0,1}(x,T)=\sum_{m=0}^{\infty}h_{0,1,m}(x)T^{m}=\sum_{m=0}^{\infty}\frac{1}{2}((1+\sqrt{x})^{m}+(1-\sqrt{x})^{m})T^{m}=\frac{T-1}{(x-1)T^2+2T-1},\\
&G_{1,1}(x,T)=\sum_{m=0}^{\infty}h_{1,1,m}(x)T^{m}=\sum_{m=0}^{\infty}\frac{1}{2\sqrt{x}}((1+\sqrt{x})^{m}-(1-\sqrt{x})^{m})T^{m}=-\frac{T}{(x-1)T^2+2T-1}.
\end{align*}
This suggests that for any given $k\in\N_{+}$ and $i\in\{0,\ldots, 2^{k}-1\}$ the sequence $(h_{i,k,m}(x))_{m\in\N}$ should satisfy a linear recurrence. We confirm this in the following result.

\begin{thm}\label{recurrence}
Let $k\in\N_{+}$ and $i\in\{0,\ldots,2^{k}-1\}$ and define
$$
G_{i,k}(x,T)=\sum_{m=0}^{\infty}h_{i,k,m}(x)T^{m}.
$$
Then the function $G_{i,k}$ is rational (as a function in two variables $x, T$). In particular the sequence $(h_{i,k,m}(x))_{m\in\N}$ is annihilated by the difference operator $V_{k}$ defined recursively in the following way:
\begin{equation*}
V_{1}(x,\theta)=(x-1)\theta ^2+2\theta-1,\quad V_{k+1}(x,\theta)=V_{k}(\sqrt{x},(1+\sqrt{x})^{k+1}\theta)V_{k}(-\sqrt{x},(1-\sqrt{x})^{k+1}\theta),
\end{equation*}
where $\theta((a_{n})_{n\in\N_{\geq r}})=(a_{n-1})_{n\in\N_{\geq r+1}}$.
\end{thm}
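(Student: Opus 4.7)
The plan is to proceed by induction on $k\in\N_{+}$ with the hypothesis that $V_{k}(x,\theta)\in\Q[x][\theta]$ and that $V_{k}(x,\theta)$ annihilates every sequence $(h_{i,k,m}(x))_{m\in\N}$ for $0\leq i<2^{k}$. Once this is granted, rationality of $G_{i,k}(x,T)$ follows automatically: writing $V_{k}(x,\theta)=\sum_{r=0}^{d}c_{r}(x)\theta^{r}$ with $c_{r}(x)\in\Q[x]$, the annihilation relation $\sum_{r=0}^{d}c_{r}(x)h_{i,k,m-r}(x)=0$ (valid for $m\geq d$) implies that $\widetilde{V}_{k}(x,T)G_{i,k}(x,T)$ is a polynomial in $T$ of degree less than $d$, where $\widetilde{V}_{k}(x,T):=\sum_{r=0}^{d}c_{r}(x)T^{r}$.

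For the base case $k=1$, the recurrence in Lemma \ref{hik} (starting from $h_{0,0}(x)=1$) yields
\begin{equation*}
h_{0,1,m}(x)=\frac{1}{2}\left((1+\sqrt{x})^{m}+(1-\sqrt{x})^{m}\right),\qquad h_{1,1,m}(x)=\frac{1}{2\sqrt{x}}\left((1+\sqrt{x})^{m}-(1-\sqrt{x})^{m}\right),
\end{equation*}
so each sequence is a linear combination of the geometric sequences $((1\pm\sqrt{x})^{m})_{m}$. A direct computation shows that $V_{1}(x,\theta)c^{m}=-c^{m-2}((c-1)^{2}-x)$, which vanishes exactly at $c=1\pm\sqrt{x}$; hence $V_{1}$ annihilates both sequences.

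For the inductive step the key auxiliary fact is the elementary operator-shift observation: if $U(x,\theta)=\sum_{r}c_{r}(x)\theta^{r}$ annihilates $(a_{m})_{m}$, then $U(x,c\theta)$ annihilates $(c^{m}a_{m})_{m}$, which is immediate from
\begin{equation*}
\sum_{r}c_{r}(x)c^{r}\cdot c^{m-r}a_{m-r}=c^{m}\sum_{r}c_{r}(x)a_{m-r}=0.
\end{equation*}
Combined with Lemma \ref{hik}, which expresses $h_{i,k+1,m}(x)$ as a fixed $\Q(\sqrt{x})$-linear combination (with coefficients independent of $m$) of terms of the form $(1\pm\sqrt{x})^{m(k+1)}h_{j,k,m}(\pm\sqrt{x})$, this shows that each such term is annihilated by $V_{k}(\pm\sqrt{x},(1\pm\sqrt{x})^{k+1}\theta)$ by the induction hypothesis. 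Since these two difference operators have scalar coefficients (in $\Q[\sqrt{x}]$) they commute, so their product $V_{k+1}(x,\theta)$ annihilates both terms and therefore $(h_{i,k+1,m}(x))_{m}$.

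The main obstacle will be verifying that $V_{k+1}(x,\theta)$, though defined via $\sqrt{x}$, actually lies in $\Q[x][\theta]$---this is essential for the induction to close. The resolution is a symmetry argument: the product defining $V_{k+1}$ is manifestly invariant under the involution $\sqrt{x}\mapsto -\sqrt{x}$, which merely swaps its two factors, so every coefficient of $\theta^{r}$ in $V_{k+1}(x,\theta)$ is fixed by this involution and hence lies in $\Q[x]$. This completes the induction, and combined with the first paragraph establishes that $G_{i,k}(x,T)$ is rational as a function in $(x,T)$.
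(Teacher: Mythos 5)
Your proof is correct, and it takes the dual route to the paper's. The paper inducts on the rational form of the generating function itself: it writes $G_{i,k}=P_{i,k}/Q_{i,k}$, pushes the recursion of Lemma \ref{hik} through to obtain explicit formulas for the new numerator and denominator, with
$\hat{Q}_{i,k+1}(\sqrt{x},T)=Q_{i,k}(\sqrt{x},(1+\sqrt{x})^{k+1}T)\,Q_{i,k}(-\sqrt{x},(1-\sqrt{x})^{k+1}T)$ --- exactly your recursion for $V_{k+1}$ --- checks that both numerator and denominator are even in $\sqrt{x}$, and only at the end converts the identity $Q_{i,k}(x,T)G_{i,k}(x,T)=P_{i,k}(x,T)$ together with $\deg_T P_{i,k}<\deg_T Q_{i,k}$ into the annihilation statement. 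You instead induct on the annihilation statement directly: the base case via the explicit geometric-sequence form of $h_{0,1,m}$ and $h_{1,1,m}$ and the factorization $V_1(x,\theta)c^m=-c^{m-2}((c-1)^2-x)$, and the inductive step via the operator-shift observation combined with the commutativity of the two factors of $V_{k+1}$; rationality is then recovered as a corollary. The two arguments carry the same content (the paper's denominator recursion is your operator recursion), but yours avoids tracking the numerators $P_{i,k}$ altogether and replaces the paper's evenness check for both $\hat{P}$ and $\hat{Q}$ by the single clean symmetry observation that the involution $\sqrt{x}\mapsto-\sqrt{x}$ merely swaps the two commuting factors of $V_{k+1}$. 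What the paper's version buys in exchange is an explicit exhibition of each $G_{i,k}$ as a ratio with a common denominator $Q_{i,k}$ for all residues $i$ at level $k$. Both proofs rest on the same recursion from Lemma \ref{hik}, and your degree bookkeeping (annihilation valid for $m\geq d$, hence $\widetilde{V}_k(x,T)G_{i,k}(x,T)$ a polynomial of $T$-degree less than $d$) is sound.
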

\begin{proof}
If $k=1$ then the function $G_{i,1}(x,T)$ is rational with respect to variables $x$ and $T$ for $i\in\{0,1\}$. Assume now that $G_{i,k}(x,T)=\frac{P_{i,k}(x,T)}{Q_{i,k}(x,T)}$ for some $k\in\N_+$ and every $i\in\{0,1,...,2^k-1\}$, where $P_{i,k},Q_{i,k}\in\Z[x,T]$, $Q_{i,k}\neq 0$. Then we use the recurrence from Lemma \ref{hik}. We thus have for $i\in\{0,1,...,2^k-1\}$:
\begin{align*}
G_{i,k+1}(x,T) & =\sum_{m=0}^{\infty}h_{i,k+1,m}(x)T^{m}\\
& =\sum_{m=0}^{\infty}\frac{1}{2}\left(h_{i,k,m}(\sqrt{x})(1+\sqrt{x})^{m(k+1)}+h_{i,k,m}(-\sqrt{x})(1-\sqrt{x})^{m(k+1)}\right)T^{m}\\
& =\frac{1}{2}\left(G_{i,k}(\sqrt{x},(1+\sqrt{x})^{k+1}T)+G_{i,k}(-\sqrt{x},(1-\sqrt{x})^{k+1}T)\right)\\
& =\frac{1}{2}\left(\frac{P_{i,k}(\sqrt{x},(1+\sqrt{x})^{k+1}T)}{Q_{i,k}(\sqrt{x},(1+\sqrt{x})^{k+1}T)}+\frac{P_{i,k}(-\sqrt{x},(1-\sqrt{x})^{k+1}T)}{Q_{i,k}(-\sqrt{x},(1-\sqrt{x})^{k+1}T)}\right)\\
& =\frac{\hat{P}_{i,k+1}(\sqrt{x},T)}{\hat{Q}_{i,k+1}(\sqrt{x},T)},\\
&\\
G_{i+2^k,k+1}(x,T) & =\sum_{m=0}^{\infty}h_{i+2^k,k+1,m}(x)T^{m}\\
& =\sum_{m=0}^{\infty}\frac{1}{2\sqrt{x}}\left(h_{i,k,m}(\sqrt{x})(1+\sqrt{x})^{m(k+1)}-h_{i,k,m}(-\sqrt{x})(1-\sqrt{x})^{m(k+1)}\right)T^{m}\\
& =\frac{1}{2\sqrt{x}}\left(G_{i,k}(\sqrt{x},(1+\sqrt{x})^{k+1}T)-G_{i,k}(-\sqrt{x},(1-\sqrt{x})^{k+1}T)\right)\\
& =\frac{1}{2\sqrt{x}}\left(\frac{P_{i,k}(\sqrt{x},(1+\sqrt{x})^{k+1}T)}{Q_{i,k}(\sqrt{x},(1+\sqrt{x})^{k+1}T)}-\frac{P_{i,k}(-\sqrt{x},(1-\sqrt{x})^{k+1}T)}{Q_{i,k}(-\sqrt{x},(1-\sqrt{x})^{k+1}T)}\right)\\
& =\frac{\hat{P}_{i+2^k,k+1}(\sqrt{x},T)}{\hat{Q}_{i+2^k,k+1}(\sqrt{x},T)},
\end{align*}
where
\footnotesize
\begin{equation}\label{polyrec}
\begin{split}
\hat{P}_{i,k+1}(\sqrt{x},T) & =\frac{1}{2}\left(P_{i,k}(\sqrt{x},(1+\sqrt{x})^{k+1}T)Q_{i,k}(-\sqrt{x},(1-\sqrt{x})^{k+1}T)+P_{i,k}(-\sqrt{x},(1-\sqrt{x})^{k+1}T)Q_{i,k}(\sqrt{x},(1+\sqrt{x})^{k+1}T)\right),\\
\hat{P}_{i+2^k,k+1}(\sqrt{x},T) & =\frac{1}{2\sqrt{x}}P_{i,k}(\sqrt{x},(1+\sqrt{x})^{k+1}T)Q_{i,k}(-\sqrt{x},(1-\sqrt{x})^{k+1}T)-P_{i,k}(-\sqrt{x},(1-\sqrt{x})^{k+1}T)Q_{i,k}(\sqrt{x},(1+\sqrt{x})^{k+1}T),\\
\hat{Q}_{i,k+1}(\sqrt{x},T) & =\hat{Q}_{i+2^k,k+1}(\sqrt{x},T)=Q_{i,k}(\sqrt{x},(1+\sqrt{x})^{k+1}T)Q_{i,k}(-\sqrt{x},(1-\sqrt{x})^{k+1}T).
\end{split}
\end{equation}
\normalsize
One can easily check that the polynomials $\hat{P}_{i,k+1}$, $\hat{Q}_{i,k+1}$, $\hat{P}_{i+2^k,k+1}$, $\hat{Q}_{i+2^k,k+1}\in\Z[\sqrt{x},T]$, treated as functions, are even with respect to the first variable. Hence
\begin{equation}\label{polyrec2}
\begin{split}
\hat{P}_{i,k+1}(\sqrt{x},T) & =P_{i,k+1}(x,T),\\
\hat{P}_{i+2^k,k+1}(\sqrt{x},T) & =P_{i+2^k,k+1}(x,T),\\
\hat{Q}_{i,k+1}(\sqrt{x},T) & =Q_{i,k+1}(x,T),\\
\hat{Q}_{i+2^k,k+1}(\sqrt{x},T) & =Q_{i+2^k,k+1}(x,T)
\end{split}
\end{equation}
and as a result $G_{i,k+1}(x,T)=\frac{P_{i,k+1}(x,T)}{Q_{i,k+1}(x,T)}$ and $G_{i+2^k,k+1}(x,T)=\frac{P_{i+2^k,k+1}(x,T)}{Q_{i,k+1}(x,T)}$. Since $\deg P_{i,0}<\deg Q_{i,0}$ for $i\in\{0,1\}$, by formulae (\ref{polyrec}) and (\ref{polyrec2}) we conclude that $\deg P_{i,k}<\deg Q_{i,k}$ for any $k\in\N$ and $i\in\{0,1,...,2^k-1\}$. Hence the equality $Q_{i,k}(x,T)G_{i,k}(x,T)=P_{i,k}(x,T)$ implies annihilation of the sequence $(h_{i,k,m}(x))_{m\in\N}$ by the operator $V_k(x,\theta)=Q_{i,k}(x,\theta)$.
\end{proof}

\bigskip

\section{Questions, remarks and conjectures}

In this section we present several questions and conjectures which appeared during our work on this paper. We also present some related results.

We proved that the sequence $\nu_{2}(t_{2^{k}}(n))$ is 2-regular. This result motivates the following

\begin{ques}
Let $m\in\N_{\geq 2}$ be given. Is the sequence $(\nu_{2}(t_{m}(n))_{n\in\N}$ $2$-regular?
\end{ques}

Numerical computations in Wolfram Mathematica \cite{Wol} make us state the following conjecture on the 2-adic valuation of numbers $t_{2^k+1}(n)$, $n\in\N$, where $k\in\N_2$ is fixed.

\begin{conj}
For each $n\in\N$ we have the following equalities:
\begin{align*}
\nu_2(t_5(4n+j))=4\left\lceil\frac{\nu_2(n+1)}{2}\right\rceil-(\nu_2(n+1)\pmod{2}),\quad j\in\{0,1,2,3\},\\
\nu_2(t_9(8n+j))=5\left\lceil\frac{\nu_2(n+1)}{2}\right\rceil-2(\nu_2(n+1)\pmod{2}),\quad j\in\{0,1,...,7\}.
\end{align*}
In general, for each $k\in\N_2$ there exists a strictly increasing sequence $(A_{k,n})_{n\in\N}$ of nonnegative integers such that $A_{k,0}=0$ and
\begin{equation*}
\nu_2(t_{2^k+1}(2^kn+j))=A_{k,\nu_2(n+1)}
\end{equation*}
for each $n\in\N$ and $j\in\{0,1,...,2^k-1\}$.
\end{conj}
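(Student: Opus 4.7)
The plan is to prove the conjecture by induction on $n$, using the recurrences from Lemma \ref{rectmn} specialized to $m = 2^k+1$, namely $t_{2^k+1}(2N) = \sum_{\ell=0}^{2^{k-1}} \binom{2^k+1}{2\ell} t_{2^k+1}(N-\ell)$ and $t_{2^k+1}(2N+1) = -\sum_{\ell=0}^{2^{k-1}} \binom{2^k+1}{2\ell+1} t_{2^k+1}(N-\ell)$, together with Legendre's formula $\nu_2\binom{2^k+1}{j} = s_2(j) + s_2(2^k+1-j) - 2$. A key structural feature is that each of these sums contains exactly two odd binomial coefficients, at $\ell = 0$ and $\ell = 2^{k-1}$, while all intermediate coefficients have positive 2-adic valuation; moreover, the non-extremal $\nu_2\binom{2^k+1}{2\ell+i}$ grow as $\ell$ moves away from the center. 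The pattern $A_{k, 2\mu} = (k+2)\mu$ and $A_{k, 2\mu+1} = (k+2)\mu + 3$ suggested by the data for $k=2,3$ would give both the strict monotonicity in the statement and the two-term increment structure needed for the inductive bookkeeping.

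First I would reformulate the conjecture as: for every $n \in \N$, $\nu_2(t_{2^k+1}(n)) = A_{k,\, \nu_2(\lfloor n/2^k \rfloor + 1)}$, so the inductive hypothesis can be read off directly from the index. Assuming this for all $m < n$, expand $t_{2^k+1}(n)$ via the appropriate recurrence and assign 2-adic valuations to the summands $t_{2^k+1}(N-\ell)$ using the hypothesis. The dominant contributions come from $\ell = 0$ and $\ell = 2^{k-1}$, whose values are $t_{2^k+1}(N)$ and $t_{2^k+1}(N - 2^{k-1})$. A case split based on $N \bmod 2^{k+1}$ and the parity of $\lfloor N/2^k \rfloor$ isolates the subcases in which a single odd-binomial term has strictly smaller valuation than the other (immediately yielding the claim) from those in which both dominant terms carry the same valuation and must be summed with the sub-leading intermediate contributions.

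The hard part will be this last scenario: when the two dominant terms have equal valuation $V$, their binomial-weighted sum lies at valuation at least $V+1$ by cancellation, and the intermediate terms (weighted by binomials with positive but not too large valuation) then enter at the same order and must be tallied precisely to pin down the resulting exponent. A crude $\nu_2$ bound is therefore insufficient, so I expect one must strengthen the induction hypothesis to simultaneously control the odd parts $t_{2^k+1}(2^k n + j)/2^{A_{k, \nu_2(n+1)}}$ modulo a higher power of $2$, with the exponent of the modulus depending on the parity of $\nu_2(n+1)$. The alternating increments $3$ and $k-1$ in $(A_{k,\nu})$ suggest the refined congruences themselves have period two in $\nu$, so the induction naturally runs in pairs of steps. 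Formulating this strengthened invariant uniformly in $k$, and verifying that it survives the recurrence for both branches $i \in \{0,1\}$, is expected to constitute the bulk of the proof; the generating-function decomposition technique of Lemma \ref{hik} (adapted to positive $m$) may provide an auxiliary tool for pinning down the precise residues.
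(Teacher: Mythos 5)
First, note that this statement is left as an open conjecture in the paper: the authors offer only numerical evidence from Mathematica and give no proof, so there is no argument of theirs to compare yours against. Your proposal is an attack plan modeled on the proof of Theorem \ref{v2powerof2}, and its structural observations are sound: for $m=2^k+1$ the recurrences of Lemma \ref{rectmn} each contain exactly two odd binomial coefficients (at $\ell=0$ and $\ell=2^{k-1}$), your candidate closed form $A_{k,2\mu}=(k+2)\mu$, $A_{k,2\mu+1}=(k+2)\mu+3$ matches both displayed cases, and you have correctly located the obstruction. But the proposal is not a proof: the step you yourself identify as "the bulk of the proof" --- formulating the strengthened induction hypothesis on the odd parts and verifying that it propagates through both branches of the recurrence --- is never carried out, and without it the induction cannot close.

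To see concretely why the gap is fatal, take $k=2$, so $t_5(2N)=t_5(N)+10\,t_5(N-1)+5\,t_5(N-2)$. When $N\equiv 2\pmod 4$ all three arguments $N,N-1,N-2$ lie in the same block $\{4q,\dots,4q+3\}$, so by the (reformulated) induction hypothesis the two unit-weight summands $t_5(N)$ and $5\,t_5(N-2)$ share the valuation $V=A_{2,\nu_2(q+1)}$ while the middle term sits at $V+1$. The conjecture then demands that the full combination gain \emph{exactly} $3$ or \emph{exactly} $1$ additional factor of $2$ according to the parity of $\nu_2(q+1)$; a bare knowledge of the valuations of the summands only yields $\nu_2\ge V+1$, and pinning the answer down requires knowing the residues of $t_5(N)/2^V$, $t_5(N-1)/2^{V}$, $t_5(N-2)/2^V$ modulo $8$. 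That is precisely the refined invariant you defer. Until you state it explicitly (including how the required modulus and the residues depend on the parity of $\nu_2(n+1)$ and on $k$), check it in the base cases, and push it through both the even and odd recurrences --- including the bookkeeping of which $2^k$-block each argument $N-\ell$ falls into, which depends on $j$ --- the argument establishes nothing beyond what Lemma \ref{parityrest} already gives. The statement therefore remains, as in the paper, a conjecture.
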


Using Lemma \ref{parityrest} we can verify that $\nu_2(t_{2^k+1}(2^kn+j))=0$ if and only if $n$ is even.

We believe that the following more general statement is true.

\begin{conj}
Let $m\in\N_{\geq 2}$ be given and suppose that $m$ is not of the form $2^{k}-1$ for $k\in\N_{+}$. Then the sequence $(\nu_{2}(b_{m}(n)))_{n\in\N}$ is unbounded.
\end{conj}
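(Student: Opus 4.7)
The plan is to combine the factorization from Lemma~\ref{hik} with an analysis of the 2-adic valuations of the polynomials $h_{i,k,m}(x)$. From
\begin{equation*}
H_{i,k,m}(x)=\frac{h_{i,k,m}(x)}{(1-x)^{km}}H_m(x)
\end{equation*}
and the fact that $(1-x)^{-km}H_m(x)\in\Z[[x]]$, one obtains $\nu_2(b_m(2^k n+i))\geq \nu_2(h_{i,k,m})$ uniformly in $n\in\N$, where $\nu_2(h)$ denotes the minimum 2-adic valuation among the coefficients of $h$. Hence it suffices to prove that $\sup_{k,i}\nu_2(h_{i,k,m})=+\infty$ whenever $m$ is not of the form $2^L-1$.

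To analyze the $h_{i,k,m}$, I would rewrite the recursion from Lemma~\ref{hik} as follows. Splitting $h_{i,k-1,m}(\sqrt{x})=P(x)+\sqrt{x}\,Q(x)$ into even and odd parts and setting $A_k(x)=\sum_{j\geq 0}\binom{mk}{2j}x^j$ and $B_k(x)=\sum_{j\geq 0}\binom{mk}{2j+1}x^j$, a short calculation yields
\begin{equation*}
h_{i,k,m}(x)=P(x)A_k(x)+xQ(x)B_k(x),\quad h_{i+2^{k-1},k,m}(x)=P(x)B_k(x)+Q(x)A_k(x),
\end{equation*}
for $0\leq i<2^{k-1}$. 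By Kummer's theorem $\nu_2(\binom{mk}{j})$ equals the number of carries when adding $j$ and $mk-j$ in base~$2$, so when the binary expansion of $mk$ is sparse, many coefficients of $A_k$ and $B_k$ are divisible by high powers of $2$. The hypothesis that $m$ is not of the form $2^L-1$ is exactly what permits, by choosing suitable $k$, the expansion of $mk$ to have arbitrarily long blocks of zeros in its interior; for $m=2^L-1$ this mechanism is blocked, in harmony with the boundedness asserted by Theorem~\ref{2valpowerof2-1}.

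The main step is then an inductive claim: for every $r\geq 1$ there exist $k_r$ and $0\leq i_r<2^{k_r}$ such that $\nu_2(h_{i_r,k_r,m})\geq r$. The divisibilities $\nu_2(h_{2^k+1,k+1,2})\geq 3$ and $\nu_2(h_{2^k+2,k+1,4})\geq 3$ of Theorem~\ref{8(x+1)} serve as base cases, and the recursion above provides the mechanism for boosting the valuation: given $h_{i_{r-1},k_{r-1},m}$ with $P,Q$ already divisible by $2^{r-1}$, one picks $k_r>k_{r-1}$ so that some appropriate coefficient of $A_{k_r}$ or $B_{k_r}$ contributes an additional factor of $2$. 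A convenient source of such $k_r$ is $k_r=k_{r-1}+2^{N}$ for very large $N$, so that the bit pattern of $mk_r$ contains a long block of zeros in the desired range. The case of even $m$ would be handled either by the same analysis or by reducing to odd $m$ via $H_{2m}(x)=H_m(x)^2$ and a convolution estimate.

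The principal obstacle is controlling cancellation in the sums $PA_k+xQB_k$ and $PB_k+QA_k$: even when all four factors have large 2-adic valuation, the two summands may coincide modulo $2^{r+1}$ and collapse to a polynomial of strictly smaller valuation than expected from the individual summands. One must therefore arrange the inductive choice of $i_r$ so that at least one of the two branches has only one summand contributing to the minimal valuation; for instance, by inductively forcing $Q\equiv 0\pmod{2^{r-1}}$ while $P\not\equiv 0$, and then selecting the ``even branch'' $i<2^{k_r-1}$, so that the gain from $A_{k_r}$ is not offset by the $B_{k_r}$-term. Making this choice precise and verifying that the recursion indeed propagates such a family from level to level is the crux of the problem; this is exactly where the distinction between $m=2^L-1$ (where Theorem~\ref{2valpowerof2-1} forces the cancellation to occur) and general $m$ should manifest itself.
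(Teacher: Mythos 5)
The statement you are trying to prove is stated in the paper only as a conjecture; the authors give no proof, and in Section 5 they report exactly the computational strategy you describe (reducing questions about $\nu_2(b_2(2^kn+i))$ to the congruences $h_{i,k,2}(x)\equiv 0\pmod{2^a}$) without being able to push it to unboundedness. Your opening reduction is sound: since $(1-x)^{-km}H_m(x)$ has non-negative integer coefficients, Lemma~\ref{hik} does give $\nu_2(b_m(2^kn+i))\geq\min_j\nu_2\bigl([x^j]h_{i,k,m}(x)\bigr)$ uniformly in $n$, so it would indeed suffice to show $\sup_{k,i}\nu_2(h_{i,k,m})=+\infty$. But everything after that is a plan rather than a proof, and you say so yourself: the inductive step that ``boosts'' the valuation from $r-1$ to $r$ is never carried out, and the cancellation problem you correctly identify in $PA_k+xQB_k$ versus $PB_k+QA_k$ is precisely the open difficulty. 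Deferring the crux is not a gap that can be papered over; it is the whole problem.

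Two of your supporting heuristics are also wrong as stated. First, the claim that $m\neq 2^L-1$ is ``exactly what permits'' the binary expansion of $mk$ to contain long blocks of zeros is false: for $m=3=2^2-1$ and $k=11$ one has $mk=33=100001_2$, and more generally for any $m=2^L-1$ one can choose $k$ making $mk$ as sparse as desired. So sparseness of $mk$ cannot be the mechanism that separates the excluded case from the conjectured one; the boundedness for $m=2^k-1$ in Theorem~\ref{2valpowerof2-1} is proved by an entirely different device (the identity $H_{2^k-1}(x)=F_1(x)H_{2^k}(x)$ and a mod-$8$ analysis of the resulting Prouhet--Thue--Morse convolution), and your framework does not explain where that obstruction enters. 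Second, the proposed reduction of even $m$ to odd $m$ via $H_{2m}(x)=H_m(x)^2$ does not follow from ``a convolution estimate'': unboundedness of $\nu_2(b_m(n))$ gives no control on $\nu_2\bigl(\sum_j b_m(j)b_m(n-j)\bigr)$, since the sum is dominated by the low-valuation terms near $j=0$ and $j=n$. The base cases you cite from Theorem~\ref{8(x+1)} only reach valuation $3$, consistent with the table in Section 5, and no argument in your proposal produces a single pair $(i,k)$ with $\nu_2(h_{i,k,m})\geq 8$, let alone arbitrarily large valuation.
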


We performed extensive calculations in the case of $m=2$ and observed some interesting phenomena concerning the solutions of the equation $\nu_{2}(b_{2}(n))=a$, where $a$ is fixed positive integer. In order to investigate this equation we computed all polynomials $h_{i,k,2}(x)$ for $k\leq 8$ and $i\leq 2^{k}-1$ and looked for these satisfying the congruence $h_{i,k,2}(x)\equiv 0\pmod{2^{a}}$ in $\Z[x]$. For any given polynomial of this type we immediately got the inequality $\nu_{2}(b_{2}(2^{k}n+i))\geq a$ for each $n\in\N$. Then we checked whether the inequality is in essence an equality. In order to verify this it is enough to check whether the following congruence holds
$$
\frac{H_{i,k,2}(x)}{2^{a}}=\frac{h_{i,k,2}(x)}{2^{a}(1-x)^{2k}}H_{2}(x)\equiv \frac{1}{1-x}\pmod{2}.
$$
However, $H_{2}(x)\equiv (1-x)^{2}\pmod{2}$ and thus it is enough to check whether
$$
\frac{h_{i,k,2}(x)}{2^{a}(1-x)^{2(k-1)}}\equiv \frac{1}{1-x}\pmod{2}\Longleftrightarrow \frac{h_{i,k,2}(x)}{2^{a}}\equiv (1-x)^{2k-3}\pmod{2}.
$$
This is easy (at least in the range we considered) and we were able to prove the following
\begin{thm}
The following equalities are true:
\begin{align*}
&\nu_{2}(b_{2}(4n+3))=3,\\
&\nu_{2}(b_{2}(8n+5))=3,\\
&\nu_{2}(b_{2}(16n+i))=3 \;for\; i\in\{6,9,12\},\\
&\nu_{2}(b_{2}(32n+i))=3 \;for\; i\in\{8,17,26\},\\
&\nu_{2}(b_{2}(64n+i))=3 \;for\; i\in\{16,33,50\},\\
&\nu_{2}(b_{2}(128n+i))=3 \;for\; i\in\{32,65,98\},\\
&\nu_{2}(b_{2}(256n+i))=3 \;for\; i\in\{64,129,194\},\\
&\\
\end{align*}
\begin{align*}
&\nu_{2}(b_{2}(32n+i))=4\;for\;i\in\{4,30\},\\
&\nu_{2}(b_{2}(64n+i))=4 \;for\; i\in\{10,56\},\\
&\nu_{2}(b_{2}(128n+i))=4 \;for\; i\in\{48,82\},\\
&\nu_{2}(b_{2}(256n+i))=4 \;for\; i\in\{96,162\},\\
&\\
&\nu_{2}(b_{2}(64n+i))=5 \;for\; i\in\{20,46\},\\
&\nu_{2}(b_{2}(128n+i))=5 \;for\; i\in\{42,88\},\\
&\nu_{2}(b_{2}(256n+i))=5 \;for\; i\in\{18,240\},\\
&\\
&\nu_{2}(b_{2}(128n+i))=6 \;for\; i\in\{14,116\},\\
&\nu_{2}(b_{2}(256n+i))=6 \;for\; i\in\{106,152\},\\
&\\
&\nu_{2}(b_{2}(256n+i))=7 \;for\; i\in\{78,180\}.\\
\end{align*}
\end{thm}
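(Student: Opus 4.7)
My plan is to reduce each assertion in the theorem to a verifiable congruence for a single polynomial $h_{i,k,2}(x)$ via the generating-function framework of Lemma \ref{hik}. For a fixed pair $(i,k)$ we have
$$\sum_{n=0}^{\infty} b_{2}(2^{k}n+i)\,x^{n} \;=\; \frac{h_{i,k,2}(x)}{(1-x)^{2k}}\,H_{2}(x),$$
and I claim that the equality $\nu_{2}(b_{2}(2^{k}n+i))=a$ for all $n\in\N$ will follow from two conditions: (i) $2^{a}\mid h_{i,k,2}(x)$ in $\Z[x]$, and (ii) $h_{i,k,2}(x)/2^{a}\equiv (1-x)^{2k-3}\pmod{2}$.

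First I would compute $h_{i,k,2}(x)$ explicitly for each pair $(i,k)$ appearing in the statement by iterating the recurrence of Lemma \ref{hik}, starting from $h_{0,0,2}(x)=1$; this yields concrete polynomials in $\Z[x]$ (which the paragraph preceding the theorem indicates were found by computer search over $k\le 8$). Then (i) is checked by dividing through by $2^{a}$ and observing that the quotient still has integer coefficients, which gives the lower bound $\nu_{2}(b_{2}(2^{k}n+i))\geq a$ directly from the generating function identity.

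For the matching upper bound I use Lemma \ref{parityrest} (applied to the reciprocal) to obtain $H_{2}(x)\equiv (1-x)^{2}\pmod{2}$, so assuming (ii) we compute
$$\frac{1}{2^{a}}\sum_{n=0}^{\infty} b_{2}(2^{k}n+i)\,x^{n} \;\equiv\; \frac{(1-x)^{2k-3}(1-x)^{2}}{(1-x)^{2k}} \;=\; \frac{1}{1-x} \;\equiv\; \sum_{n=0}^{\infty}x^{n} \pmod{2}.$$
Consequently every coefficient $b_{2}(2^{k}n+i)/2^{a}$ is odd, which is exactly $\nu_{2}(b_{2}(2^{k}n+i))\le a$, completing the equality.

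The main obstacle is not conceptual but computational: the polynomials $h_{i,k,2}(x)$ for $k$ up to $8$ grow quickly in both degree and coefficient size, and verification of (ii) must be performed for each of the roughly two dozen pairs $(i,k)$ listed. However, since the pairs in the theorem are exactly those enumerated by the search procedure described just before the statement, and since both (i) and (ii) are polynomial identities in a finite number of coefficients, no further theoretical input is needed; a direct case-by-case check via the recurrence of Lemma \ref{hik} suffices.
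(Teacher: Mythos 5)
Your proposal is correct and is essentially identical to the paper's own argument: the authors likewise compute the polynomials $h_{i,k,2}(x)$ for $k\le 8$, obtain the lower bound $\nu_2(b_2(2^kn+i))\ge a$ from $2^a\mid h_{i,k,2}(x)$, and obtain the matching upper bound by verifying $h_{i,k,2}(x)/2^a\equiv(1-x)^{2k-3}\pmod 2$, which via $H_2(x)\equiv(1-x)^2\pmod 2$ forces every coefficient of $2^{-a}H_{i,k,2}(x)$ to be odd.
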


We also expect that the following congruences are true.

\begin{conj}\label{cong:powerof2}
Let $m$ be a fixed positive integer. Then for each $n\in\N$ and $k\geq m+2$ the following congruence holds:
\begin{equation*}
b_{2^{m}}(2^{k+1}n)\equiv b_{2^{m}}(2^{k-1}n)\pmod{2^{k}}.
\end{equation*}
\end{conj}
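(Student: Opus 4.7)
The plan is to convert the divisibility assertion into a polynomial congruence and prove it by induction on $k$. First, using the representation $H_{0,k,2^m}(x)=h_{0,k,2^m}(x)(1-x)^{-2^m k}H_{2^m}(x)$ from Lemma \ref{hik} and the fact that $H_{2^m}(x)$ is a unit in $\Z_2[[x]]$ (constant term $1$), the conjecture $b_{2^m}(2^{k+1}n)\equiv b_{2^m}(2^{k-1}n)\pmod{2^k}$ for all $n\in\N$ is equivalent to the congruence $H_{0,k+1,2^m}(x)\equiv H_{0,k-1,2^m}(x)\pmod{2^k}$ in $\Z_2[[x]]$, which in turn reduces to the polynomial congruence
$$h_{0,k+1,2^m}(x)\equiv (1-x)^{2^{m+1}}\,h_{0,k-1,2^m}(x)\pmod{2^k}$$
in $\Z[x]$.

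Setting $M=2^m$ and $\psi_k(x):=h_{0,k+1,2^m}(x)-(1-x)^{2M}h_{0,k-1,2^m}(x)$, the algebraic identities $(1-x^2)^{2M}(1+x)^{Mk}=(1-x)^{2M}(1+x)^{M(k+2)}$ and $(1-x^2)^{2M}(1-x)^{Mk}=(1+x)^{2M}(1-x)^{M(k+2)}$, combined with the recursion of Lemma \ref{hik}, show that $\psi_k$ obeys the same functional equation as $h_{0,k,2^m}$:
$$\psi_{k+1}(x^2)=\tfrac{1}{2}\bigl[\psi_k(x)(1+x)^{M(k+2)}+\psi_k(-x)(1-x)^{M(k+2)}\bigr].$$
The goal then becomes to prove by induction on $k\ge m+2$ that $\psi_k\equiv 0\pmod{2^k}$, the base case $\psi_{m+2}\equiv 0\pmod{2^{m+2}}$ being checked by direct computation starting from $h_{0,0,2^m}=1$.

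For the inductive step, write $\psi_k(x)=2^k\phi(x)$ and decompose $\phi(x)=\phi_e(x^2)+x\phi_o(x^2)$ into its even and odd parts. Using $(1+x)^N\pm(1-x)^N=2\sigma_N(x^2)$ respectively $2x\,\tau_N(x^2)$ with $N=M(k+2)$, $\sigma_N(y)=\sum_j\binom{N}{2j}y^j$ and $\tau_N(y)=\sum_j\binom{N}{2j+1}y^j$, the recursion rearranges into
$$\psi_{k+1}(y)=2^k\bigl[\phi_e(y)\,\sigma_N(y)+y\,\phi_o(y)\,\tau_N(y)\bigr].$$
Thus the inductive step is reduced to showing $\phi_e\sigma_N+y\phi_o\tau_N\equiv 0\pmod{2}$. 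Since $N$ is even (as $m\ge 1$), Kummer's theorem forces $\nu_2\binom{N}{2j+1}\ge 1$ for every $j$, so $\tau_N\equiv 0\pmod{2}$ and the odd contribution $y\phi_o\tau_N$ is automatically divisible by $2$. The extra factor of $2$ in the even contribution $\phi_e\sigma_N$ must therefore come from extra divisibility of $\phi_e$ itself.

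The main obstacle is to propagate this extra divisibility of the even part: one needs $\phi_e\equiv 0\pmod{2}$ at every step, which is equivalent to the even-index coefficients of $\psi_k(y)$ being divisible by $2^{k+1}$, that is, one power higher than the odd-index ones. Numerical evidence (for example, $\psi_3$ in the case $m=1$ has $2$-adic valuation profile $(3,5,4,5,3)$ on its five consecutive coefficients) strongly supports this enhancement. I therefore expect the complete argument to require strengthening the induction hypothesis into a two-tier statement, tracking separately the $2$-adic valuations of even- and odd-indexed coefficients of $\psi_k$, and combining this with Kummer-type estimates for $\nu_2\binom{2^m(k+2)}{2j}$ to show that the even-part gain of $2$ is preserved through each application of the recursion; the condition $k\ge m+2$ should appear naturally in this valuation bookkeeping, which is the delicate part of the proof.
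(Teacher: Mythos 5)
You are proving Conjecture \ref{cong:powerof2}; the paper offers no proof of it, so there is no argument of the authors' to compare yours against --- your proposal stands or falls on its own. On its own terms, the reduction is correct as far as it goes: since $(1-x)^{-2^m(k+1)}H_{2^m}(x)$ is a unit in $\Z[[x]]$ with constant term $1$, the congruence on $b_{2^m}(2^{k\pm1}n)$ is indeed equivalent to $h_{0,k+1,2^m}(x)\equiv(1-x)^{2^{m+1}}h_{0,k-1,2^m}(x)\pmod{2^k}$; the verification that $\psi_k$ satisfies the same two-term functional equation as $h_{0,k,2^m}$, via $(1-x^2)^{2M}(1\pm x)^{Mk}=(1\mp x)^{2M}(1\pm x)^{M(k+2)}$ and Lemma \ref{hik}, is a genuinely useful observation; and the identity $\psi_{k+1}(y)=2^k\bigl[\phi_e\sigma_N+y\,\phi_o\tau_N\bigr]$ together with $\tau_N\equiv 0\pmod 2$ for even $N$ is also correct (I checked your valuation profile $(3,5,4,5,3)$ for $\psi_3$ with $m=1$; it is right).

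Nevertheless this is a strategy, not a proof, and the gap sits exactly at the step that carries all the difficulty. The induction closes only if $\phi_e\equiv 0\pmod 2$, i.e., only if the even-indexed coefficients of $\psi_k$ carry one extra power of $2$ beyond the $2^k$ you are inducting on --- and at that point you switch from proving to expecting. This enhanced two-tier hypothesis is not routine bookkeeping: to propagate it you must separately control the parities of the even- and odd-indexed coefficients of $\tilde\phi_e\sigma_N+y\,\phi_o\tilde\tau_N$ (where $\tau_N=2\tilde\tau_N$), which requires valuation estimates for $\binom{2^m(k+2)}{2j}$ and $\tfrac12\binom{2^m(k+2)}{2j+1}$ that depend on the parity of $j$, and it is not evident that the two-tier statement is even stable under the recursion --- your own data show the odd-indexed valuations sitting at the bare minimum $k$, so there is no slack to absorb an error. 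The base case $\psi_{m+2}\equiv 0\pmod{2^{m+2}}$ is likewise asserted rather than established, and for general $m$ it is a nontrivial claim about $h_{0,m+3,2^m}$ (this is presumably where the hypothesis $k\geq m+2$ must enter, but you never exhibit the mechanism). As it stands, the conjecture is exactly as open after your argument as before it; what you have contributed is a correct reformulation and a candidate induction scheme whose key lemma remains unproved.
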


\begin{conj}\label{cong:powerof2-1}
Let $m$ be a fixed positive integer. Then for each $n\in\N$ and $k\geq m+2$ the following congruence holds:
\begin{equation*}
 b_{2^{m}-1}(2^{k+1}n)\equiv b_{2^{m}-1}(2^{k-1}n)\pmod{2^{4\lfloor \frac{k+1}{2}\rfloor-2}}.
\end{equation*}
\end{conj}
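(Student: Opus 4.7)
The plan is to work through the generating function machinery from Lemma \ref{hik}. Writing $M := 2^m-1$ for brevity, the claim is equivalent to the assertion that every coefficient of the formal power series
\begin{equation*}
\Delta_k(x) := H_{0,k+1,M}(x) - H_{0,k-1,M}(x) = \frac{h_{0,k+1,M}(x) - (1-x)^{2M}\,h_{0,k-1,M}(x)}{(1-x)^{M(k+1)}}H_{M}(x)
\end{equation*}
is divisible by $2^{4\lfloor(k+1)/2\rfloor-2}$. Since $1/(1-x)^{M(k+1)}$ and $H_M(x)$ both have non-negative integer coefficients, it would suffice to prove the polynomial congruence
\begin{equation*}
h_{0,k+1,M}(x) \equiv (1-x)^{2M}h_{0,k-1,M}(x) \pmod{2^{4\lfloor(k+1)/2\rfloor-2}}
\end{equation*}
in $\Z[x]$, an intrinsic statement about the $h_{0,\ell,M}$ alone.

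I would attempt this by induction on $k\geq m+2$, using a two-step iteration of the recursion from Lemma \ref{hik}. Substituting the one-step formula
\begin{equation*}
h_{0,k+1,M}(x) = \tfrac{1}{2}\bigl(h_{0,k,M}(\sqrt{x})(1+\sqrt{x})^{M(k+1)} + h_{0,k,M}(-\sqrt{x})(1-\sqrt{x})^{M(k+1)}\bigr)
\end{equation*}
into itself expresses $h_{0,k+1,M}(x)$ in terms of $h_{0,k-1,M}(\pm x^{1/4})$ together with products $(1\pm x^{1/4})^{Mk}(1\pm x^{1/2})^{M(k+1)}$. The step-function shape $4\lfloor(k+1)/2\rfloor - 2$, constant on pairs of consecutive $k$, suggests pairing two iterations of the recursion so as to gain a factor of $2^4$ per pair. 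The base case $k = m+2$ would be verified directly using Theorem \ref{2valpowerof2-1}, which yields $\nu_2(b_M(2^{m+2}n)) = \nu_2(b_1(8n))$ together with analogous exact expressions for the remaining residues modulo $2^{m+2}$, pinning down the exponent in the first nontrivial instance.

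An alternative route worth pursuing in parallel is to exploit the identity $H_M(x) = F(x)H_{2^m}(x)$, which yields the convolution $b_M(N) = \sum_{j=0}^N t_{N-j}\,b_{2^m}(j)$. The difference to control then becomes $\sum_j (t_{2^{k+1}n-j} - t_{2^{k-1}n-j}) b_{2^m}(j)$, and one would split on dyadic blocks $j\in[2^\ell,2^{\ell+1})$, combining Conjecture \ref{cong:powerof2} with the self-similar cancellation properties of the Prouhet--Thue--Morse sequence to accumulate the required power of $2$.

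The main obstacle I foresee is the appearance of the $(1\pm\sqrt{x})^{M\ell}$ factors in the recursive approach: because $M=2^m-1$ is odd, the binomials $\binom{M\ell}{j}$ do not exhibit the clean $2$-adic behaviour available when $M$ is a power of two, so the anticipated gain of $2^4$ per pair of iterations will require an identity-level cancellation rather than a termwise estimate. A secondary technical issue is matching the sharp constant $4\lfloor(k+1)/2\rfloor-2$ rather than a merely linear-in-$k$ lower bound; its step-function nature strongly suggests that the induction must distinguish $k$ even from $k$ odd, and the correct inductive hypothesis is likely to involve a finer congruence modulo a higher power of $2$ that only tightens to the stated exponent after cancellation between the two parity classes.
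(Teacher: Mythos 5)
The statement you are attempting is not proved in the paper at all: it appears in Section 5 as Conjecture \ref{cong:powerof2-1}, an open problem, so there is no argument of the authors' to compare yours against. More importantly, your proposal is a plan of attack rather than a proof, and the plan as written does not close the conjecture. Your reduction of the claim to the polynomial congruence $h_{0,k+1,M}(x)\equiv(1-x)^{2M}h_{0,k-1,M}(x)\pmod{2^{4\lfloor(k+1)/2\rfloor-2}}$ is a correct sufficiency step (the numerator of $\Delta_k$ is a polynomial, and $H_M(x)/(1-x)^{M(k+1)}$ has non-negative integer coefficients), but everything after that is conditional: the ``gain of $2^4$ per pair of iterations'' is exactly the content of the conjecture, and you supply no mechanism for the required cancellation --- you yourself flag that the odd exponent $M=2^m-1$ destroys the clean $2$-adic behaviour of the binomial factors $(1\pm\sqrt{x})^{M\ell}$ that makes such arguments work when the exponent is a power of two.

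Two further concrete problems. First, the proposed base case $k=m+2$ cannot be settled by Theorem \ref{2valpowerof2-1}: that theorem shows $\nu_2(b_{2^m-1}(N))\in\{0,1,2\}$ for every $N$, so it can at best certify agreement of the two sides modulo $8$, whereas already at $k=m+2\geq 3$ the conjecture demands a congruence modulo $2^6$ or more; knowing the (small) valuations of the two terms individually says nothing about the valuation of their difference. Second, your alternative route through $b_M(N)=\sum_j t_{N-j}b_{2^m}(j)$ invokes Conjecture \ref{cong:powerof2}, which is itself unproven in the paper, so even if the dyadic-block cancellation could be carried out it would only yield a conditional result. The statement remains open, and your write-up should be presented as a strategy discussion, not as a proof.
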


In fact we expect the following.
\begin{conj}\label{cong:general}
 Let $m$ be a fixed positive integer. Then for each $n\in\N$ and given $k\gg 1$ there is a non-decreasing function $f:\N\rightarrow \N$ such that $f(k)=O(k)$ and the following congruence holds
\begin{equation*}
 b_{m}(2^{k+1}n)\equiv b_{m}(2^{k-1}n)\pmod{2^{f(k)}}.
\end{equation*}
\end{conj}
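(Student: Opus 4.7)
The strategy is to translate the conjecture into a polynomial congruence on the polynomials $h_{0,k,m}$ from Lemma \ref{hik}. The statement $b_m(2^{k+1}n)\equiv b_m(2^{k-1}n)\pmod{2^{f(k)}}$ for all $n\in\N$ is equivalent to the coefficient-wise congruence $H_{0,k+1,m}(x)\equiv H_{0,k-1,m}(x)\pmod{2^{f(k)}}$ in $\Z[[x]]$. Using the factorization $H_{0,k,m}(x)=h_{0,k,m}(x)H_m(x)/(1-x)^{km}$ one has
\[
H_{0,k-1,m}(x)-H_{0,k+1,m}(x)\;=\;\frac{H_m(x)}{(1-x)^{(k+1)m}}\cdot D_k(x),\quad D_k(x):=(1-x)^{2m}h_{0,k-1,m}(x)-h_{0,k+1,m}(x).
\]
Because $H_m(x)$ and $(1-x)^{-(k+1)m}$ both lie in $\Z[[x]]$, multiplication by their product cannot decrease $2$-adic valuations of coefficients, so it suffices to prove that every coefficient of the integer polynomial $D_k(x)$ is divisible by $2^{f(k)}$.

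To handle $D_k(x)$, unwind the recurrence of Lemma \ref{hik} twice. Setting $p=h_{0,k-1,m}$ and iterating $h_{0,k,m}(y^2)=\frac{1}{2}(p(y)(1+y)^{mk}+p(-y)(1-y)^{mk})$ together with the analogous step from $h_{0,k,m}$ to $h_{0,k+1,m}$ yields the identity
\begin{align*}
4\,h_{0,k+1,m}(z^4)&=(1+z^2)^{m(k+1)}\bigl[p(z)(1+z)^{mk}+p(-z)(1-z)^{mk}\bigr]\\
&\qquad+(1-z^2)^{m(k+1)}\bigl[p(iz)(1+iz)^{mk}+p(-iz)(1-iz)^{mk}\bigr],
\end{align*}
which is an integer polynomial identity once complex conjugate pairs are combined. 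Comparing with $4(1-z^4)^{2m}p(z^4)=4(1-z^2)^{2m}(1+z^2)^{2m}p(z^4)$, the difference $D_k(z^4)$ breaks into a sum of products of values of $p$ and powers of the form $(1\pm z)^{a},(1\pm iz)^{a},(1\pm z^2)^{a}$ with exponents $a\in\{mk,m(k+1),2m\}$; the $2$-adic valuations of the resulting binomial coefficients are controlled by Kummer's theorem, and, in the spirit of Lemma \ref{4div} and Theorem \ref{8(x+1)}, should yield a bounded gain in $\nu_2$ per step of an induction on $k$. Aggregating these gains gives the desired linear bound $f(k)=O(k)$. The dependence of the slope on $m$, which the predictions of Conjectures \ref{cong:powerof2} and \ref{cong:powerof2-1} show must be non-trivial, can likely be isolated using Theorem \ref{recurrence}: since $(h_{0,k,m}(x))_{m\in\N}$ satisfies a linear recurrence in $m$, the general case reduces to finitely many residues of $m$ modulo a suitable power of $2$.

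The main obstacle is that the bound must be \emph{uniform in $n$}: since the degree of $D_k(x)$ grows roughly linearly in $k$, the induction must control every coefficient of $D_k$, not merely the low-order ones. Equivalently, one needs the $2$-adic limit of $h_{0,k,m}(x)\bmod 2^{f(k)}$ as $k\to\infty$ to stabilize to a form amenable to direct analysis. Obtaining such a description for general $m$, rather than for the specific residue classes treated in Theorem \ref{2valpowerof2-1}, appears to require a genuinely new idea, and is the reason this statement is only a conjecture.
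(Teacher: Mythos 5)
This statement is posed in the paper as Conjecture \ref{cong:general}; the authors give no proof, and it remains open. So there is nothing in the paper to compare your argument against, and the relevant question is whether your proposal actually closes the problem. It does not, and you say so yourself. Your reduction is sound as far as it goes: since $H_m(x)$ and $(1-x)^{-(k+1)m}$ both have constant term $1$ and integer coefficients, their product is a unit in $\Z[[x]]$, so the conjecture is genuinely equivalent to the divisibility of every coefficient of $D_k(x)=(1-x)^{2m}h_{0,k-1,m}(x)-h_{0,k+1,m}(x)$ by $2^{f(k)}$, and your two-step unwinding identity for $4\,h_{0,k+1,m}(z^4)$ is a correct consequence of Lemma \ref{hik}. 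But the step that matters --- that the $2$-adic valuation of \emph{every} coefficient of $D_k$ grows at least linearly in $k$ --- is asserted, not proved. ``Should yield a bounded gain in $\nu_2$ per step'' is precisely the content of the conjecture restated at the level of the polynomials $h_{0,k,m}$: Kummer's theorem bounds individual binomial coefficients, but the expression for $D_k$ is a signed sum in which the dominant terms cancel, and extracting a uniform valuation gain from such cancellation is exactly what Lemma \ref{4div} and Theorem \ref{8(x+1)} accomplish only in very special, low-order situations.

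Two further points. First, the proposed use of Theorem \ref{recurrence} to ``reduce to finitely many residues of $m$'' does not work as stated: the linear recurrence there is in the variable $m$ for fixed $k$, its order grows like $2^k$, and its coefficients are polynomials in $x$, so it gives no reduction of a fixed-$m$, all-$k$ divisibility statement to finitely many cases. Second, even granting a per-step gain, one must control coefficients of $D_k$ whose index grows with $k$ (the degree of $h_{0,k,m}$ grows), so an induction that only propagates valuation information on low-order coefficients --- which is what the two-step identity most naturally yields --- cannot conclude uniformity in $n$. You correctly identify this as the obstruction; the honest summary is that your proposal is a reasonable plan of attack consistent with the techniques of Section 4, but it contains a genuine, self-acknowledged gap and does not prove the statement, which the paper itself leaves as a conjecture.
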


%\begin{conj}
%Let $m\in\N_{\geq 2}$. Then $\nu_{2}(b_{2^{m}}(n))\geq m$ for $n\geq 2^{m}$, i.e., $b_{2^{m}}(n)\equiv 0\pmod{2^{m}}$ for $n\geq 2^{m}$.
%\end{conj}

According to numerical computations we noticed that for $m\geq 2$ we have
\begin{equation*}
t_m(3n)>0,\quad t_m(3n+1)<0
\end{equation*}
for most values $n\in\N$. However, $t_m(3n)<0$ for some $n$ and similarly $t_m(3n+1)>0$ for some $n$. Hence we have the following supposition.

\begin{conj}
Let $m$ be a positive integer $\geq 2$. Let us define
\begin{equation*}
\cal{A}_{m,j}=\{n\in\N: {\rm sgn}\; t_m(3n+j)\neq (-1)^j\},\mbox{  } j\in\{0,1\}.
\end{equation*}
Then the sets $\cal{A}_{m,j}$ are infinite and they have asymptotic density equal to $0$, i.e.,
\begin{equation*}
\lim_{n\rightarrow +\infty} \frac{\sharp\cal{A}_{m,j}\cap\{0,1,...,n-1\}}{n}=0.
\end{equation*}
\end{conj}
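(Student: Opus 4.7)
The plan is to study the six subsequences $t_m(6n+r)$, $r \in \{0,1,2,3,4,5\}$, derived from applying Lemma \ref{rectmn} twice, and to show that the conjectured signs propagate through these recurrences for all but a density-zero set of indices. Write $U_n = t_m(3n)$, $V_n = t_m(3n+1)$, $W_n = t_m(3n+2)$; the goal is to establish $U_n>0$ and $V_n<0$ for almost all $n$. Substituting indices of the form $6j+r$ into Lemma \ref{rectmn} and reducing mod $3$, one obtains a system of six recurrences expressing $U_{2j+\epsilon}, V_{2j+\epsilon}, W_{2j+\epsilon}$ as linear combinations of $U_{j-i}, V_{j-i}, W_{j-i}$ with binomial coefficients. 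Several of these recurrences automatically propagate the sign pattern: for example, for $m=2$ one has $U_{2j+1} = -2V_j$ and $V_{2j} = -2U_j$, so exceptions at such positions arise only from exceptions at the parent index; for general $m\geq 2$, roughly half of the six recurrences are of this "auto-propagating" type.

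The problematic recurrences express $U_{2j}, V_{2j+1}$ (and analogues for $m\geq 3$) as mixed sums involving the auxiliary sequence $W$, whose sign is not pinned down by the conjecture. I would iterate the reduction: the exceptional set in $[0,2N)$ is bounded in terms of the frequency, within $[0,N)$, of indices where the $W$-cross-term both has the wrong sign and dominates the companion in magnitude. Using the log-concavity inequality of Theorem \ref{concave} (for $m=2$) together with the growth bound $t_m(n) = O(n^{m/2})$, one would aim to show that the set of $j$ with $|W_{j-1}| > |U_j|$ and $W_{j-1}<0$ has density zero. A natural route is a transfer-matrix argument on the vector $(U_n,V_n,W_n)$ read through the binary expansion of $n$, in the spirit of the $2$-regularity results proved earlier in the paper. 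Infiniteness of $\cal{A}_{m,j}$ should then follow from exhibiting explicit families of exceptions, for instance by backtracking the recurrences from the extremal indices $n=2^k-1$, where $|t_m|$ attains its maximum by Theorem \ref{maxmin23}.

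The main obstacle is upgrading "positive-density exception bound" to "density zero": any naive propagation argument yields only that the exception density is bounded above by some $\alpha<1$, not that it vanishes. Quantifying how rarely the cross-terms $U_j+W_{j-1}$ and $V_j+W_j$ have unexpected signs requires a finer statistical understanding of the joint distribution of magnitudes of $U_n, V_n, W_n$, which is not immediate from the tools developed here. For $m\geq 3$ the recurrences involve $\lfloor m/2\rfloor + 1$ summands each, so the sign cancellations become considerably more delicate; I expect that a complete proof will need either a singularity analysis of $F_m(x)$ near the primitive cube roots of unity (in the style of classical analytic partition theory) or a dedicated automatic-sequence argument tailored to extracting the $3$-arithmetic-progression behaviour of an essentially $2$-automatic sign pattern.
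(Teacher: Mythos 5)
This statement is one of the open conjectures collected in Section 5: the authors offer no proof, only the remark that numerical computations suggest the sign pattern holds ``for most values'' of $n$. So there is no proof in the paper to compare yours against, and your text is, by your own account, a plan rather than a proof. The honest assessment is that the plan does not close, and the gap you name yourself is exactly the fatal one. Your setup is correct as far as it goes: for $m=2$, writing $U_n=t_2(3n)$, $V_n=t_2(3n+1)$, $W_n=t_2(3n+2)$ and applying Lemma \ref{rectmn} does give $V_{2j}=-2U_j$, $U_{2j+1}=-2V_j$, $W_{2j}=U_j+V_j$, $W_{2j+1}=-2W_j$, together with the two mixed relations $U_{2j}=U_j+W_{j-1}$ and $V_{2j+1}=V_j+W_j$. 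The auto-propagating relations transfer exceptions one-to-one from parent to child, but the mixed relations create \emph{new} exceptions whenever the $W$-term has the wrong sign and dominates, and nothing in the paper controls the sign or relative magnitude of $W_n$: the conjecture itself is silent about the residue class $3n+2$, and indeed $W_n$ must change sign infinitely often (e.g. $W_{2j}=U_j+V_j$ mixes a positive and a negative quantity). Neither the log-concavity of ${\bf t}_2$ (Theorem \ref{concave}) nor the bound $t_m(n)=O(n^{m/2})$ says anything about how often $|W_{j-1}|>|U_j|$ with opposing sign, so the iteration yields at best a fixed-point inequality of the form $\delta\le c+\tfrac12\delta$ for the upper exception density $\delta$, i.e.\ a bound $\delta\le\alpha<1$, never $\delta=0$.

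Two further points. First, even the easier half of the claim --- that $\cal{A}_{m,j}$ is infinite --- is not established by your sketch: ``backtracking from the extremal indices $n=2^k-1$'' does not by itself exhibit any $n$ with $\sgn t_m(3n)\ne +1$ or $\sgn t_m(3n+1)\ne -1$; one would need explicit infinite families (for $m=2$ this is plausibly doable by hand from the recurrences above, but it has to be done). Second, for $m\ge3$ the reduction modulo $6$ no longer produces any purely auto-propagating relations in general, since each of $t_m(2n)$ and $t_m(2n+1)$ involves $\lfloor m/2\rfloor+1$ or $\lfloor (m-1)/2\rfloor+1$ consecutive terms, so the whole architecture of your argument degrades as $m$ grows. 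Your closing suggestion --- singularity analysis of $F_m(x)$ near the primitive cube roots of unity, or a dedicated automatic-sequence argument --- is a reasonable direction, but it is a research programme, not a proof. As it stands the statement remains a conjecture both in the paper and after your proposal.
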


We also have seen that for initial values $m,n\in\N_+$, $m\geq 2$, the numbers $t_m(n-1)$, $t_m(n)$, $t_m(n+1)$ do not have the same sign, as well.  Moreover we noted that $t_m(n)^2>t_m(n-1)t_m(n+1)$.

\begin{conj}
For any $m,n\in\N_+$, $m\geq 2$, the numbers $t_m(n-1)$, $t_m(n)$, $t_m(n+1)$ do not have the same sign and $t_m(n)^2>t_m(n-1)t_m(n+1)$.
\end{conj}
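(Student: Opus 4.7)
The plan is to prove the log-concavity $t_m(n)^2 > t_m(n-1)t_m(n+1)$ by strong induction on $n$, and then derive the no-three-same-sign property as a consequence, imitating the treatment of $m=2$ in Proposition \ref{mean} and Theorem \ref{concave}. For the inductive step, I would split by parity of $n$ and expand $t_m(2k)^2 - t_m(2k-1)t_m(2k+1)$ and $t_m(2k+1)^2 - t_m(2k)t_m(2k+2)$ using the recurrences of Lemma \ref{rectmn}, obtaining quadratic forms in the values $t_m(k), t_m(k-1),\ldots, t_m(k-\lceil m/2\rceil)$. The target is to rearrange these forms as non-negative expressions controlled by the inductive log-concavity at smaller indices.

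Once log-concavity has been secured, the sign property is obtained by an argument modelled on the last theorem of Section 3. Proceeding by induction on $n$, assume the triple $t_m(n-1), t_m(n), t_m(n+1)$ is not of one sign, and consider the triples $t_m(2n-1), t_m(2n), t_m(2n+1)$ and $t_m(2n), t_m(2n+1), t_m(2n+2)$. If any of these were of one sign, the recurrences of Lemma \ref{rectmn} would force simultaneous sign and magnitude conditions on $t_m(n-1), t_m(n), t_m(n+1)$ that imply $t_m(n)^2 \leq t_m(n-1)t_m(n+1)$, contradicting log-concavity.

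The hardest step will be the inductive step for log-concavity. For $m=2$ the clean identity $t_2(2n)^2 - t_2(2n-1)t_2(2n+1) = (t_2(n) - t_2(n-1))^2$ gives an immediate sum-of-squares certificate, but no such identity survives for $m\geq 3$: already for $m=3$ the analogous expansion yields a ternary quadratic form that is not manifestly non-negative and whose positivity depends on subtle sign cancellations. A further complication is that $t_m$ can vanish (e.g.\ $t_3(2)=0$, and more generally at the indices $a_k$ of Theorem \ref{23zero}), so the mean-value estimate of Proposition \ref{mean} cannot generalize verbatim; at a vanishing index, log-concavity reduces to showing $t_m(n-1)$ and $t_m(n+1)$ have opposite signs, which itself depends on the sign property we are trying to prove. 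A successful argument will therefore likely need to carry a strengthened, mutually recursive inductive hypothesis that combines the log-concavity inequality, a refined local sign pattern, and a quantitative magnitude bound of the type $|t_m(n)| \gtrsim_{m} |t_m(n-1)+t_m(n+1)|$ valid away from the zero set, permitting the required cancellations in the quadratic forms.
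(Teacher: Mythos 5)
The statement you are addressing is listed in the paper as a conjecture: the authors prove it only for $m=2$ (via Proposition \ref{mean} and the two proofs of Theorem \ref{concave}) and explicitly leave the case $m\geq 3$ open. Your proposal correctly identifies the $m=2$ template, but it does not close the gap for $m\geq 3$; it is a plan rather than a proof, and you essentially say so yourself. The decisive missing step is the inductive log-concavity estimate: expanding $t_m(2k)^2-t_m(2k-1)t_m(2k+1)$ and $t_m(2k+1)^2-t_m(2k)t_m(2k+2)$ via Lemma \ref{rectmn} produces quadratic forms in the values $t_m(k),\dots,t_m(k-\lfloor m/2\rfloor)$, and for $m\geq 3$ these forms are indefinite as quadratic forms; their positivity on the actual orbit of the sequence is exactly the content of the conjecture, so asserting that they can be ``rearranged as non-negative expressions controlled by the inductive hypothesis'' is a restatement of the goal, not a proof. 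For $m=2$ the whole argument rests on the identity $t_2(2n)^2-t_2(2n-1)t_2(2n+1)=\left(t_2(n)-t_2(n-1)\right)^2$ together with the auxiliary inequality $|t_2(n)|\geq \frac{1}{2}|t_2(n-1)+t_2(n+1)|$ of Proposition \ref{mean}, and neither has a known analogue for $m\geq 3$.

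The second structural problem, which you name but do not resolve, is the circularity introduced by the zero set: for $m=3$ the sequence vanishes at the infinitely many indices $a_k$ of Theorem \ref{23zero}, and at such an index log-concavity reduces precisely to the assertion that $t_3(a_k-1)$ and $t_3(a_k+1)$ have opposite signs, i.e., to the sign half of the conjecture. So the two halves cannot be proved sequentially (log-concavity first, then signs) as your outline proposes; any viable argument must establish them simultaneously, and the ``strengthened, mutually recursive inductive hypothesis'' you invoke is never formulated, let alone verified, even for $m=3$. In short, your proposal is an accurate diagnosis of why the problem is hard, but it does not prove the statement --- which, to be fair, the paper itself leaves open for all $m\geq 3$.
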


The above conjecture was proved in Section \ref{Section3} for $m=2$.

We checked that $(-1)^n\left(b_m(n)^2-b_m(n-1)b_m(n+1)\right)>0$ for $m\in\{1,2\}$ and $n\in\N_+$. We noticed that the behaviour of the expressions $b_m(n)^2-b_m(n-1)b_m(n+1)$ for $m>2$ is different. Namely, we expect that following is true.

\begin{conj}
If $m\geq 4$ then $b_m(n)^2-b_m(n-1)b_m(n+1)>0$ for any $n\in\N_+$.

For $m=3$ there exists $n_0\in\N_+$ such that $(-1)^n\left(b_3(n)^2-b_3(n-1)b_3(n+1)\right)>0$ for $n\leq n_0$ and $b_3(n)^2-b_3(n-1)b_3(n+1)>0$ for $n>n_0$.
\end{conj}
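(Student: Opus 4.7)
The plan is to generalize the method used in Proposition \ref{equations}. Writing $D_m(n) := b_m(n)^2 - b_m(n-1)b_m(n+1)$ and treating the cases $n=2l$ and $n=2l+1$ separately via the second pair of recurrences in Lemma \ref{recforbm}, one expresses each of $b_m(n)^2$ and $b_m(n-1)b_m(n+1)$ as a double sum over products $b_m(j)b_m(k)$ with explicit binomial coefficients. Expanding gives a quadratic form $D_m(n) = \sum_{0 \le j \le k} c_{j,k}^{(m,l)}\, b_m(j)\, b_m(k)$, whose coefficients are signed combinations of products of two binomial coefficients of the shape $\binom{2u+m-1+\varepsilon}{m-1}$ with $u = l-j$ or $l-k$ and $\varepsilon \in \{-1,0,1\}$.

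For $m \ge 4$, the heart of the proof is showing that each $c_{j,k}^{(m,l)}$ is non-negative, with strict positivity on the diagonal. The $m = 2$ case of Proposition \ref{equations} hinges on the identity $(2u+1)(2v+1) - 2u(2v+2) = 1$; the task here is to establish its $m$-analogue, which after symmetrizing in $j, k$ reduces to a polynomial inequality in $u$ and $v$. I would verify it by direct expansion in $u$ and $v$, checking that all coefficients are non-negative for $m \ge 4$. Once that is done, positivity of $D_m(n)$ follows immediately from $b_m(j) > 0$ for every $j \in \N$, and log-concavity for $m \ge 4$ is concluded. For $m = 3$, the very same expansion produces coefficients with mixed signs, which accounts for the alternating-sign regime predicted for small $n$; I would determine $n_0$ numerically, verify the first part of the conjecture for $n \le n_0$ directly, and for $n > n_0$ split $D_3(n)$ into a near-diagonal part (handled by the restricted $m=3$ version of the above binomial identity, which remains positive near the diagonal) and a far-off-diagonal correction, then dominate the latter using asymptotic growth estimates for $b_3(n)$ drawn from Lemma \ref{recforbm}.

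The main obstacle is the binomial inequality underlying the $m \ge 4$ case: while it is straightforward to verify for any fixed $m$, obtaining a uniform proof for every $m \ge 4$ may require either a careful induction on $m$ or a generating-function argument, for instance by rewriting the relevant convolutions via $H_m(x) = H_1(x)^m$ and exploiting positivity of the coefficients of $H_1(x)$. A secondary difficulty is quantitative: for $m = 3$ the transition at $n_0$ is not a structural feature but a genuinely asymptotic one, so matching the numerical $n_0$ with the analytic threshold coming from growth estimates for $b_3(n)$ will likely require sharper asymptotics than Lemma \ref{recforbm} provides directly, perhaps through a singularity analysis of the functional equation $H_3(x) = (1-x)^{-3} H_3(x^2)$.
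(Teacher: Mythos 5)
You are reviewing a statement that the paper does not prove: it is listed in Section 5 as a conjecture supported only by numerical evidence, so there is no argument of the authors' to measure yours against. Your proposal is, as you concede, a plan rather than a proof, and its central step fails. Write $a_s={s+m-1\choose m-1}$; the second pair of recurrences in Lemma \ref{recforbm} gives $b_m(2n)=\sum_j a_{2(n-j)}b_m(j)$, $b_m(2n+1)=\sum_j a_{2(n-j)+1}b_m(j)$ and $b_m(2n-1)=\sum_j a_{2(n-j)-1}b_m(j)$, so the coefficient of $b_m(j)b_m(k)$ with $j\neq k$ in $b_m(2n)^2-b_m(2n-1)b_m(2n+1)$ is $2a_{2u}a_{2v}-a_{2u-1}a_{2v+1}-a_{2u+1}a_{2v-1}$ with $u=n-j$, $v=n-k$. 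For $m=2$ this is identically $2$, which is exactly what makes Proposition \ref{equations} work; but for $m\geq 4$ it is \emph{not} non-negative. For instance, with $m=4$, $u=0$, $v=2$ one gets $2{3\choose 3}{7\choose 3}-{2\choose 3}{8\choose 3}-{4\choose 3}{6\choose 3}=70-0-80=-10<0$, and the defect worsens as $v$ grows (for $u=0$, $v=4$ the value is $330-480=-150$). The diagonal terms $a_{2u}^2-a_{2u-1}a_{2u+1}$ are indeed positive (the $a_s$ are log-concave), but the "all coefficients non-negative, hence done" route is closed for every $m\geq 4$, not just $m=3$: any proof along these lines must show that the positive near-diagonal contributions dominate the negative far-off-diagonal ones, which is precisely the quantitative problem you defer.

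The rest of the plan is unfinished rather than wrong, but the gaps are substantial. The odd-index case $D_m(2n+1)$ is never written down, and it is the genuinely hard one even for $m=2$: the paper itself states that it could not settle the sign of $b_2(2n-1)^2-b_2(2n-2)b_2(2n)$. The threshold $n_0$ for $m=3$ is to be "determined numerically" with no a priori bound, so the finite verification for $n\leq n_0$ is not actually finite until the asymptotic part is complete. And the growth estimates for $b_3(n)$ needed to dominate the correction terms do not follow from Lemma \ref{recforbm}; asymptotics for solutions of $H_m(x)=(1-x)^{-m}H_m(x^2)$ are of Mahler--de Bruijn type and would have to be imported and made effective. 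In short, the statement remains a conjecture: you have identified the natural starting point (generalizing Proposition \ref{equations}), but the key positivity claim underpinning the $m\geq 4$ case is false as stated, and what remains is a research program, not an argument.
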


\bigskip

{\bf Acknowledgment} The authors are grateful to the referee for a careful reading of the manuscript and valuable suggestions, which improved the
quality of the paper.

\vskip 1cm

\section{Appendix by Andrzej Schinzel}

\begin{lem}\label{t2lem1}
The sequence $(t_{2}(n))_{n\in\N}$ satisfies the following recurrence relation: $t_{2}(0)=1, t_{2}(1)=-2$ and for $n\geq 1$ we have
\begin{equation*}
t_{2}(2n)=t_{2}(n)+t_{2}(n-1),\quad t_{2}(2n+1)=-2t_{2}(n).
\end{equation*}
\end{lem}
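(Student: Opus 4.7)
The plan is to derive this lemma as a direct specialization of Lemma \ref{rectmn} to the case $m=2$. The initial values $t_2(0)=1$ and $t_2(1)=-m=-2$ are read off immediately from the statement of Lemma \ref{rectmn}. The substance of the claim therefore reduces to checking that the two general recurrences collapse, for $m=2$, to the simple two-term forms given here.

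For the even-index recurrence, I would substitute $m=2$ into
\[
t_m(2n)=\sum_{j=0}^{\lfloor m/2\rfloor}\binom{m}{2j}t_m(n-j).
\]
Since $\lfloor 2/2\rfloor=1$, the sum contains exactly the terms $j=0$ and $j=1$, which contribute $\binom{2}{0}t_2(n)=t_2(n)$ and $\binom{2}{2}t_2(n-1)=t_2(n-1)$, respectively. This gives $t_2(2n)=t_2(n)+t_2(n-1)$ as required.

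For the odd-index recurrence, I would likewise substitute $m=2$ into
\[
t_m(2n+1)=-\sum_{j=0}^{\lfloor (m-1)/2\rfloor}\binom{m}{2j+1}t_m(n-j).
\]
Here $\lfloor 1/2\rfloor=0$, so only the $j=0$ term survives, giving $-\binom{2}{1}t_2(n)=-2t_2(n)$. The convention $t_2(n-1)=0$ for $n=0$ (inherited from Lemma \ref{rectmn}) makes the $n=1$ case of the first recurrence consistent with $t_2(2)=t_2(1)+t_2(0)=-1$. Since both identities follow by a one-line specialization of an already established result, there is essentially no obstacle; the only thing worth stating explicitly is that the convention on negative indices is compatible.
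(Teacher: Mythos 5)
Your proof is correct, and it takes a genuinely different route from the one in the paper. The Appendix proves this lemma from scratch: it starts from the convolution formula $t_{2}(n)=\sum_{a+b=n}(-1)^{s_{2}(a)+s_{2}(b)}$ (formula (\ref{deftm}) with $m=2$) and splits the pairs $(a,b)$ according to their parities, observing that an odd sum forces opposite parities (each arrangement contributing a factor $-1$), while an even sum forces equal parities (splitting into the two sub-sums $t_{2}(n_{1})$ and $t_{2}(n_{1}-1)$). You instead specialize Lemma \ref{rectmn} at $m=2$, which collapses immediately to the two-term recurrences; your bookkeeping of the binomial coefficients and of the floor bounds on the summation indices is accurate, and the remark about the convention $t_{2}(-1)=0$ correctly handles the boundary case. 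Your route is shorter because it leans on machinery already established in Section 3 (ultimately the functional equation $F_{2}(x)=(1-x)^{2}F_{2}(x^{2})$), whereas the paper's proof is elementary and self-contained within the Appendix --- a natural choice given that the Appendix is by a different author and is written so as not to depend on the body of the paper beyond the definition (\ref{deftm}). Both arguments are complete and correct.
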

\begin{proof}
We have by the formula (\ref{deftm}) with $m=2$
\begin{equation}\label{t2}
t_{2}(n)=\sum_{a+b=n}(-1)^{s_{2}(a)+s_{2}(b)}.
\end{equation}
If $n=2n_{1}+1$, then either $a=2a_{1}+1, b=2b_{1}, a_{1}+b_{1}=n_{1}$ and $s_{2}(a)+s_{2}(b)=s_{2}(a_{1})+s_{2}(b_{1})+1$ or $a=2a_{1}, b=2b_{1}+1, a_{1}+b_{1}=n_{1}$ and then $s_{2}(a)+s_{2}(b)=s_{2}(a_{1})+s_{2}(b_{1})+1$, thus $t_{2}(2n_{1}+1)=-2t_{2}(n_{1})$.

If $n=2n_{1}$, then either $a=2a_{1}, b=2b_{1}, a_{1}+b_{1}=n_{1}$ and $s_{2}(a)+s_{2}(b)=s_{2}(a_{1})+s_{2}(b_{1})$ or $a=2a_{1}+1, b=2b_{1}+1, a_{1}+b_{1}=n_{1}-1$ and $s_{2}(a)+s_{2}(b)=s_{2}(a_{1})+s_{2}(b_{1})$, thus $t_{2}(2n_{1})=t_{2}(n_{1})+t_{2}(n_{1}-1)$.
\end{proof}

\begin{lem}\label{t2lem2}
For $n\in\N$ we have $t_{2}(2n)\equiv 1+2n\pmod{4}$.
\end{lem}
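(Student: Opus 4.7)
The plan is to proceed by strong induction on $n$, combining the two recurrences from Lemma \ref{t2lem1} with the parity formula from Lemma \ref{parityrest}, which specializes at $m=2$ to $t_2(k)\equiv k+1\pmod{2}$. The base case $n=0$ is immediate from $t_2(0)=1$.

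For the inductive step, assuming the congruence for every $k<n$, I would write $t_2(2n)=t_2(n)+t_2(n-1)$ and split on the parity of $n$. When $n=2m$ with $m\geq 1$, the first summand $t_2(2m)$ is supplied by the inductive hypothesis as $\equiv 1+2m\pmod{4}$, while the second becomes $t_2(2m-1)=-2t_2(m-1)$ via the odd-index recurrence. At this point Lemma \ref{parityrest} enters: $t_2(m-1)\equiv m\pmod{2}$, so $2t_2(m-1)\equiv 2m\pmod{4}$. The two contributions cancel, giving $t_2(2n)\equiv 1\equiv 1+4m\pmod{4}$.

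When $n=2m+1$, the roles reverse: $t_2(n)=t_2(2m+1)=-2t_2(m)$, and $t_2(n-1)=t_2(2m)$ is provided by the inductive hypothesis. The parity of $t_2(m)$, namely $m+1\pmod{2}$, forces $2t_2(m)\equiv 2m+2\pmod{4}$, so
\[
t_2(2n)\equiv (1+2m)-(2m+2)\equiv -1\equiv 3\equiv 1+2(2m+1)\pmod{4},
\]
as required.

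I do not expect a genuine obstacle. The only nontrivial ingredient is the parity of $t_2$, which is already pinned down by Lemma \ref{parityrest}; once that is granted, everything reduces to symmetric bookkeeping modulo $4$ in the two parity cases. The mild point to watch is that in the splittings $n=2m$ (with $m\geq 1$) and $n=2m+1$ one has $m<n$, so the value fed into the inductive hypothesis is legitimately at a smaller index.
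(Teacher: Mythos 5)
Your proof is correct, but it takes a genuinely different route from the paper's. The paper argues directly from the convolution formula $t_2(2n)=\sum_{a+b=2n}(-1)^{s_2(a)+s_2(b)}$: the diagonal term $a=b=n$ contributes $1$, the off-diagonal terms pair up into $2\sum_{a<b}(-1)^{s_2(a)+s_2(b)}$, and modulo $2$ that sum is simply the number of pairs with $0\le a<b$, $a+b=2n$, which is $n$; hence $t_2(2n)\equiv 1+2n\pmod{4}$ in one stroke, with no induction and no appeal to the recurrences. Your argument instead runs a strong induction through the recurrences of Lemma \ref{t2lem1}, feeding in the parity $t_2(k)\equiv k+1\pmod{2}$ from Lemma \ref{parityrest} to upgrade the factor $-2t_2(\cdot)$ to a congruence modulo $4$; the case analysis is sound, the indices you pass to the inductive hypothesis are legitimately smaller, and the small sign discrepancy (you write $2t_2(m-1)$ where the recurrence gives $-2t_2(m-1)$) is harmless since $-2x\equiv 2x\pmod{4}$. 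The paper's proof is shorter and self-contained; yours has the advantage of using only the recurrence structure, which is how $t_2$ is manipulated throughout the rest of the paper, and the parity input you import from Lemma \ref{parityrest} could even be re-derived from Lemma \ref{t2lem1} by the same induction if you wanted the appendix to remain self-contained.
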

\begin{proof}
By (\ref{t2}) we have
\begin{equation}\label{rel1}
t_{2}(2n)=2\sum_{a+b=2n, a<b}(-1)^{s_{2}(a)+s_{2}(b)}+1
\end{equation}
and clearly
\begin{equation}\label{rel2}
\sum_{a+b=2n, a<b}(-1)^{s_{2}(a)+s_{2}(b)}\equiv |\{(a,b)\in\Z\times\Z:\;0\leq a<b, a+b=2n\}|\equiv n\pmod{2}.
\end{equation}
Lemma \ref{t2lem2} follows from (\ref{rel1}) and (\ref{rel2}).
\end{proof}

We have the following result concerning the existence of solutions of the equation $t_{2}(n)=m$ with fixed $m$.

\begin{thm}\label{Appthm1}
For all integers $n\geq 0$ and $m$ if $t_{2}(n)=m$, then $t_{2}(n)=-t_{2}(n')$, where
\begin{equation*}
n'=n+(-1)^{\nu_{2}(m)+\frac{m-2^{\nu_{2}(m)}}{2^{\nu_{2}(m)+1}}}2^{\nu_{2}(m)+1}.
\end{equation*}
\end{thm}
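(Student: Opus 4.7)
The plan is to proceed by strong induction on $n$, using Lemmas \ref{t2lem1} and \ref{t2lem2} as the primary tools. The cornerstone will be the three-term identity
\begin{equation*}
t_{2}(2l+1) + 2\,t_{2}(2l) + t_{2}(2l-1) = 0 \qquad (l \geq 0,\ t_{2}(-1):=0),
\end{equation*}
which is immediate from Lemma \ref{t2lem1}: the three terms equal $-2t_{2}(l)$, $2(t_{2}(l)+t_{2}(l-1))$, and $-2t_{2}(l-1)$, summing to $0$. Throughout, I will write $m = 2^{v}u$ with $u$ odd and $v = \nu_{2}(m)$, and set $\epsilon := (-1)^{v+(u-1)/2}$, so that $n' = n + \epsilon \cdot 2^{v+1}$.

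For $n = 2k$ even, Lemma \ref{t2lem2} gives $m \equiv 1+2k \pmod{4}$, forcing $v = 0$ and $\epsilon = (-1)^{(m-1)/2} = (-1)^{k}$. When $k = 2l$ is even, so $n' = 2k+2$, Lemma \ref{t2lem1} collapses $t_{2}(n') + t_{2}(n)$ to $t_{2}(k+1)+2t_{2}(k)+t_{2}(k-1) = t_{2}(2l+1)+2t_{2}(2l)+t_{2}(2l-1)$, which vanishes by the identity; when $k = 2l+1$ is odd, so $n' = 2k-2$, the analogous rearrangement yields $t_{2}(k-2)+2t_{2}(k-1)+t_{2}(k) = t_{2}(2l-1)+2t_{2}(2l)+t_{2}(2l+1)$, again zero. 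For $n = 2k+1$ odd, I will set $m_{1} := t_{2}(k)$, $v_{1} := \nu_{2}(m_{1})$, $u_{1} := m_{1}/2^{v_{1}}$, so that $m = -2m_{1} = 2^{v_{1}+1}(-u_{1})$ gives $v = v_{1}+1$ and odd part $u = -u_{1}$. A short calculation shows that the two exponents $(v_{1}+1) + (-u_{1}-1)/2$ and $v_{1}+(u_{1}-1)/2$ differ by $1 - u_{1}$, which is even, so $\epsilon(m) = \epsilon(m_{1})$. The inductive hypothesis applied to $k < n$ then furnishes $k' = k + \epsilon(m_{1})\cdot 2^{v_{1}+1}$ with $t_{2}(k') = -m_{1}$, and $n' := 2k'+1$ satisfies $t_{2}(n') = -2t_{2}(k') = 2m_{1} = -m$ and $n' = n + \epsilon(m)\cdot 2^{v+1}$, as required. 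The base case $n = 0$ gives $m = 1$, $\epsilon = 1$, $n' = 2$, and directly $t_{2}(2) = -1$.

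The main obstacle is the bookkeeping in the odd case: $m$ may be negative, so $u$ is a \emph{signed} odd integer, and one must verify that the parity of $v + (u-1)/2$ is preserved under the substitution $m_{1} \mapsto -2m_{1}$. Once this single parity calculation is made explicit, the entire theorem reduces to the two recurrences of Lemma \ref{t2lem1} combined with the residue information from Lemma \ref{t2lem2}, with the three-term identity above doing all the substantive work in the even case.
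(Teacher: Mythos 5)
Your proof is correct and follows essentially the same route as the paper's (Schinzel's) proof: induction on $n$, with Lemma \ref{t2lem2} pinning the displacement to $\pm 2$ in the even case and a halving/sign-preservation argument in the odd case; your three-term identity $t_2(2l+1)+2t_2(2l)+t_2(2l-1)=0$ is precisely the relation the paper verifies at the end of its chain of equivalences. The only cosmetic differences are that you package that identity up front and run the odd case bottom-up (from $k$ to $2k+1$) rather than top-down, and that the paper additionally makes the non-negativity of $n'$ explicit at the outset.
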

\begin{proof}
First, we shall show that $n'\geq 0$. Assuming the contrary, we have $n'<0$, thus $n<2^{\nu_{2}(m)+1}$ and since by (\ref{t2}) $n+1\geq|m|$, it follows that $m=\pm 2^{\nu_{2}(m)}, n=2^{\nu_{2}(m)}-1, m=(-2)^{\nu_{2}(m)}$ and finally $n'=2^{\nu_{2}(m)+1}+2^{\nu_{2}(m)}-1>0$, a contradiction.

In order to prove that $t_{2}(n)=-t_{2}(n')$ we proceed by induction on $n$. For $n=0,1,2$ the theorem is true, since $t_{2}(0)=1, t_{2}(1)=-2, t_{2}(2)=-1, t_{2}(5)=2$. Assume now that the theorem is true for all $n<N$ with $N\geq 3$. If $N$ is odd, we have by Lemma \ref{t2lem1}
\begin{equation*}
t_{2}(N)=-2t_{2}\left(\frac{N-1}{2}\right), \quad t_{2}(N')=-2t_{2}\left(\frac{N'-1}{2}\right)
\end{equation*}
and by the induction hypothesis it suffices to show that $\left(\frac{N-1}{2}\right)'=\frac{N'-1}{2}$. However, $t_{2}(\frac{N-1}{2})=-\frac{m}{2}, \nu_{2}\left(-\frac{m}{2}\right)=\nu_{2}(m)-1$ and
\begin{equation*}
(-1)^{\nu_{2}(m)-1+\frac{-\frac{m}{2}-2^{\nu_{2}(m)-1}}{2^{\nu_{2}(m)}}}2^{\nu_{2}(m)}=\frac{1}{2}\left((-1)^{\nu_{2}(m)+\frac{m-2^{\nu_{2}(m)}}{2^{\nu_{2}(m)+1}}}2^{\nu_{2}(m)+1}\right)
\end{equation*}
and our result follows in the case of $N$ odd.

If $N$ is even, $N=2N_{1}, N_{1}\geq 2$ we have by Lemma \ref{t2lem1} the identity $t_{2}(N)=t_{2}(N_{1})+t_{2}(N_{1}-1)$ and by Lemma \ref{t2lem2}
\begin{equation*}
N'=N+(-1)^{\frac{t_{2}(2N_{1})-1}{2}}2=N+(-1)^{N_{1}}2.
\end{equation*}
Therefore, if $N_{1}\equiv 0\pmod{2}$ we have by Lemma \ref{t2lem1}
\begin{equation*}
t_{2}(N')=t_{2}(N+2)=t_{2}(N_{1}+1)+t_{2}(N_{1}),
\end{equation*}
which gives the equivalence
\begin{align*}
t_{2}(N)=-t_{2}(N')&\Longleftrightarrow t_{2}(N_{1}-1)+t_{2}(N_{1}+1)=-2t_{2}(N_{1})\\
                   &\Longleftrightarrow -2t_{2}\left(\frac{N_{1}}{2}-1\right)-2t_{2}\left(\frac{N_{1}}{2}\right)=-2t_{2}(N_{1})\\
                   &\Longleftrightarrow t_{2}(N_{1})=t_{2}\left(\frac{N_{1}}{2}\right)+t_{2}\left(\frac{N_{1}}{2}-1\right).
\end{align*}
The last equality is true by Lemma \ref{t2lem1} with $n=N_{1}/2$.

If $N_{1}\equiv 1\pmod{2}$ we have by Lemma \ref{t2lem1}
\begin{equation*}
t_{2}(N')=t_{2}(N-2)=t_{2}(N_{1}-1)+t_{2}(N_{1}-2),
\end{equation*}
which gives the equivalence
\begin{align*}
t_{2}(N)=-t_{2}(N')&\Longleftrightarrow t_{2}(N_{1})+t_{2}(N_{1}-2)=-2t_{2}(N_{1}-1)\\
                   &\Longleftrightarrow -2t_{2}\left(\frac{N_{1}-1}{2}\right)-2t_{2}\left(\frac{N_{1}-3}{2}\right)=-2t_{2}(N_{1}-1)\\
                   &\Longleftrightarrow t_{2}(N_{1}-1)=t_{2}\left(\frac{N_{1}-1}{2}\right)+t_{2}\left(\frac{N_{1}-3}{2}\right).
\end{align*}
The last equality is true by Lemma \ref{t2lem1} with $n=\frac{N_{1}-1}{2}$. Our theorem is proved.
\end{proof}

\begin{lem}\label{Applem2}
We have
\begin{equation*}
S_{1}=\sum_{\begin{array}{c}
              j_{1}+2j_{2}+\ldots+nj_{n}=n \\
              j_{1}+j_{2}+\ldots+j_{n}\leq m
            \end{array}}\frac{m!}{\left(m-\sum_{\nu =1}^{n}j_{\nu}\right)!j_{1}!\cdot\ldots\cdot j_{n}!}={n+m-1\choose m-1},
\end{equation*}
where $j_{\nu}\;(1\leq\nu\leq n)$ are non-negative integers.
\end{lem}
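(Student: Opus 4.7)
The plan is to recognize the summand as a multinomial coefficient and evaluate $S_1$ via the multinomial theorem applied to a truncated geometric polynomial.

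First I would rewrite the summand. Setting $j_0 := m - \sum_{\nu=1}^{n} j_\nu$, the constraint $j_1+\ldots+j_n \le m$ says $j_0 \ge 0$, and the general term becomes the honest multinomial coefficient
$$\binom{m}{j_0, j_1, \ldots, j_n} = \frac{m!}{j_0!\, j_1! \cdots j_n!},$$
with $j_0 + j_1 + \ldots + j_n = m$ and $0\cdot j_0 + 1\cdot j_1 + 2 \cdot j_2 + \ldots + n \cdot j_n = n$.

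Next I would apply the multinomial theorem to $(1 + x + x^2 + \ldots + x^n)^m$. Expanding,
$$(1 + x + x^2 + \ldots + x^n)^m = \sum_{j_0+\ldots+j_n = m} \binom{m}{j_0,\ldots,j_n} x^{j_1 + 2j_2 + \ldots + n j_n}.$$
Extracting the coefficient of $x^n$ and translating back to the original indexing, I see that $S_1 = [x^n](1 + x + x^2 + \ldots + x^n)^m$.

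The key observation is then that truncating the geometric series at $x^n$ does not affect the coefficient of $x^n$ in the $m$-th power: any contribution to $[x^n]$ comes from choosing in each of the $m$ factors an exponent in $\{0,1,\ldots,n\}$ whose total is $n$, so higher powers of $x$ in the factors cannot contribute. Therefore
$$S_1 = [x^n](1 + x + x^2 + \ldots)^m = [x^n](1-x)^{-m} = \binom{n+m-1}{m-1},$$
the last equality being the standard binomial series. No step here is a real obstacle; the only care needed is the change of summation index via $j_0$ and the justification that truncating the inner series is harmless, both of which are immediate.
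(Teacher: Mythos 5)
Your proof is correct. It is worth noting how it relates to the paper's own argument, which is the same combinatorics in a different dress: Schinzel introduces the auxiliary sum $S_{0}=\sum_{i_{1}+\cdots+i_{m}=n}1$ over non-negative integers, observes that grouping the tuples $(i_{1},\ldots,i_{m})$ according to how many times each value $\nu$ occurs (namely $j_{\nu}$ times, with the value $0$ occurring $m-\sum_{\nu}j_{\nu}$ times) shows $S_{1}=S_{0}$, and then quotes Perron for the stars-and-bars evaluation $S_{0}=\binom{n+m-1}{m-1}$. Your multinomial expansion of $(1+x+\cdots+x^{n})^{m}$ is precisely the generating-function encoding of that same grouping, and your final step $[x^{n}](1-x)^{-m}=\binom{n+m-1}{m-1}$ is the generating-function proof of the stars-and-bars count that the paper outsources to a reference. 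What your route buys is self-containedness --- no external citation, and the slightly delicate point (why the multinomial coefficient with the ``missing'' part $j_{0}$ counts what it should) is handled automatically by the algebra; what the paper's route buys is brevity and a directly visible bijection. Your remark that truncating the geometric series at degree $n$ cannot affect the coefficient of $x^{n}$ is the one point that needs saying, and you say it correctly.
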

\begin{proof}
%The condition $j_{1}+2j_{2}+\ldots+nj_{n}=n$ implies $j_{1}+j_{2}+\ldots+j_{n}\leq n\leq m$, thus the summands of the sum $S_{1}$ are well defined.
Consider the sum
\begin{equation*}
S_{0}=\sum_{i_{1}+\ldots+i_{m}=n}1,
\end{equation*}
where $i_{\mu}\;(1\leq\mu\leq m)$ are non-negative integers. Let $\nu$ occur among $i_{\mu}$ exactly $j_{\nu}$ times, thus $S_{1}=S_{0}$. By \cite[Satz 18]{Per} we have $S_{0}= {n+m-1\choose m-1}$.
\end{proof}

\begin{thm}\label{Appthm2}
If $m>\frac{n^2}{\log 2}$, then $t_{m}(n)\neq 0$.
\end{thm}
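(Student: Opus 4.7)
The plan is to combine the expansion (\ref{deftm}) of $t_m(n)$ with the combinatorial identity of Lemma \ref{Applem2}. Regrouping the ordered tuples $(i_1,\ldots,i_m)$ in (\ref{deftm}) by the multiplicities $j_\nu=|\{k:i_k=\nu\}|$ for $\nu\ge 1$ (so $m-|j|$ coordinates are zero, where $|j|:=j_1+\cdots+j_n$), the standard multinomial count together with $s_2(0)=0$ gives the explicit expansion
\begin{equation*}
t_m(n)=\sum_{\substack{j_1+2j_2+\cdots+nj_n=n\\|j|\le m}}\frac{m!}{(m-|j|)!\,j_1!\cdots j_n!}\,(-1)^{\sum_{\nu=1}^{n} j_\nu s_2(\nu)}.
\end{equation*}
The tuple $j=(n,0,\ldots,0)$ is the unique one with $|j|=n$ (since $|j|=n$ and $\sum\nu j_\nu=n$ force $j_\nu=0$ for $\nu\ge 2$), and it contributes the dominant summand $(-1)^n\binom{m}{n}$.

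The second step is to control the remaining summands. The triangle inequality and Lemma \ref{Applem2} give
\begin{equation*}
\left|t_m(n)-(-1)^n\binom{m}{n}\right|\le\sum_{j\ne(n,0,\ldots,0)}\frac{m!}{(m-|j|)!\,j_1!\cdots j_n!}=\binom{n+m-1}{m-1}-\binom{m}{n},
\end{equation*}
hence $|t_m(n)|\ge 2\binom{m}{n}-\binom{n+m-1}{m-1}$. Positivity of this lower bound is equivalent to
\begin{equation*}
\prod_{i=1}^{n-1}\frac{m+i}{m-i}<2.
\end{equation*}

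The third and final step is a short calculus estimate. Writing $\log\frac{m+i}{m-i}=\int_{-i}^{i}\frac{du}{m+u}$ and using convexity of $u\mapsto 1/(m+u)$ on $(-m,\infty)$, the trapezoidal rule yields $\log\frac{m+i}{m-i}\le\frac{2im}{m^2-i^2}$, so that
\begin{equation*}
\sum_{i=1}^{n-1}\log\frac{m+i}{m-i}\le\frac{m\,n(n-1)}{m^2-(n-1)^2}.
\end{equation*}
The inequality $\frac{m\,n(n-1)}{m^2-(n-1)^2}<\log 2$ is a quadratic condition on $m$ whose positive root, using $\sqrt{n^2+4(\log 2)^2}\le n+\frac{2(\log 2)^2}{n}$, is bounded above by $n^2/\log 2$. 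Thus $m>n^2/\log 2$ forces $\prod_{i=1}^{n-1}\frac{m+i}{m-i}<2$, and the theorem follows.

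The main obstacle is the recognition, at the outset, that Lemma \ref{Applem2} controls precisely the error term after the dominant summand $(-1)^n\binom{m}{n}$ has been separated; without this identification, the trivial inequality $|t_m(n)|\le\binom{n+m-1}{m-1}$ carries no useful information. The calibration of the constant $1/\log 2$ is also delicate: the crude left-endpoint estimate $\log\frac{m+i}{m-i}\le 2i/(m-i)$ only yields $m\gtrsim 2n^2/\log 2$, so the symmetric trapezoidal bound is essential to reach the sharp threshold stated in the theorem.
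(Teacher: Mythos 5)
Your proof is correct and follows essentially the same route as the paper's: the same multinomial regrouping via Lemma \ref{Applem2}, the same isolation of the dominant summand $(-1)^n\binom{m}{n}$, and the same reduction to the binomial inequality $2\binom{m}{n}>\binom{n+m-1}{m-1}$, differing only in how the final logarithmic estimate is organized (you pair the factors as $\frac{m+i}{m-i}$ and use a trapezoidal bound, while the paper pairs them as $\frac{m+i}{m-n+1+i}$, applies $\log(1+x)\le x$, and compares with $\int_{m-n}^{m}\frac{dt}{t}$). One small quibble: your closing claim that the symmetric trapezoidal bound is \emph{essential} is not accurate --- the cruder estimate $\log\frac{m+i}{m-i}\le\frac{2i}{m-n+1}$ already gives $\sum_{i=1}^{n-1}\log\frac{m+i}{m-i}\le\frac{n(n-1)}{m-n+1}$, which is below $\log 2$ whenever $m>\frac{n(n-1)}{\log 2}+n-1$, a threshold that is itself smaller than $\frac{n^2}{\log 2}$.
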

\begin{proof}
We have
\begin{equation*}
t_{m}(n)=\sum_{i_{1}+\ldots+i_{m}=n}(-1)^{\sum_{\nu =1}^{m}s_{2}(i_{\nu})}
\end{equation*}
and by the argument used in the proof of Lemma \ref{Applem2}
\begin{equation*}
t_{m}(n)=\sum_{j_{1}+2j_{2}+\ldots+nj_{n}=n}(-1)^{\sum_{\nu =1}^{n}j_{\nu} s_{2}(\nu)}\frac{m!}{\left(m-\sum_{\nu =1}^{n}j_{\nu}\right)!j_{1}!\cdot\ldots\cdot j_{n}!}.
\end{equation*}
The summand corresponding to $j_{1}=n$ is
\begin{equation*}
(-1)^{n}\frac{m!}{(m-n)!n!}=(-1)^{n}{m\choose n}.
\end{equation*}
The sum of the absolute values of the remaining terms is by Lemma \ref{Applem2}
$$
{n+m-1\choose m-1}-{m\choose n}.
$$
Therefore, $t_{m}(n)=0$ implies
$$
{n+m-1\choose m-1}\geq 2{m\choose n}
$$
thus
$$
\frac{(m+n-1)!}{(m-1)!}\geq 2\frac{m!}{(m-n)!}
$$
and on taking logarithms
\begin{equation}\label{Appeq1}
\sum_{i=0}^{n-1}\frac{n-1}{m-n+1+i}\geq \sum_{i=0}^{n-1}\log \left(1+\frac{n-1}{m-n+1+i}\right)\geq \log 2.
\end{equation}
However,
\begin{equation}\label{Appeq2}
\sum_{i=0}^{n-1}\frac{1}{m-n+1+i}<\int_{m-n}^{m}\frac{dt}{t}=\log\frac{m}{m-n}<\frac{n}{m-n}
\end{equation}
and for $m>\frac{n^2}{\log 2}$ we obtain
$$
m-n>\frac{n^2-n}{\log 2}
$$
and from (\ref{Appeq1}) and (\ref{Appeq2}) we get
$$
\log 2> \log 2,
$$
which is impossible.
\end{proof}

\bigskip

\noindent Jagiellonian University, Faculty of Mathematics and Computer Science, Institute of Mathematics, {\L}ojasiewicza 6, 30 - 348 Krak\'{o}w, Poland;
 email: {\tt maciej.ulas@uj.edu.pl}

 \end{document}